\renewcommand{\a}{\alpha}
\renewcommand{\b}{\beta}
 \renewcommand{\L}{\Lambda}
\renewcommand{\l}{\lambda} \renewcommand{\O}{\Omega}
 \renewcommand{\to}{\rightarrow}
 \newcommand{\s}{\sigma}
\newcommand{\la}{\langle}
\newcommand{\ra}{\rangle}
\newcommand{\leqs}{\leqslant}
\newcommand{\geqs}{\geqslant}
 \newcommand{\vs}{\vspace{3mm}}
\newcommand{\GL}{\operatorname{GL}}
\newcommand{\rad}{\operatorname{Rad}}
\newcommand{\imod}[1]{\allowbreak\mkern4mu({\operator@font mod}\,\,#1)}
\newtheorem{theorem}{Theorem} 
\newtheorem*{theorem*}{Theorem} 
\newtheorem*{conj*}{Conjecture}
\newtheorem{corol}[theorem]{Corollary}
\newtheorem{thm}{Theorem}[section] 
\newtheorem{prop}[thm]{Proposition} 
\newtheorem{lem}[thm]{Lemma}
\newtheorem{cor}[thm]{Corollary}
\theoremstyle{definition}
\newtheorem{rem}[thm]{Remark}
\newtheorem{remk}{Remark}
\begin{document}

\author{Timothy C. Burness}
 \address{T.C. Burness, School of Mathematics, University of Bristol, Bristol BS8 1UG, UK}
 \email{t.burness@bristol.ac.uk}
 
\author{Spencer Gerhardt}
 \address{S. Gerhardt, Department of Mathematics, University of Southern California,
Los Angeles, CA 90089-2532, USA}
 \email{sgerhard@usc.edu}
 
\author{Robert M. Guralnick}
\address{R.M. Guralnick, Department of Mathematics, University of Southern California,
Los Angeles, CA 90089-2532, USA}
\email{guralnic@usc.edu}
  
\title[Topological generation of exceptional algebraic groups]{Topological generation of exceptional  \\ algebraic groups}

\begin{abstract}
Let $G$ be a simple algebraic group over an algebraically closed field $k$ and let 
$C_1, \ldots, C_t$ be non-central conjugacy classes in $G$. In this paper, we consider the problem of determining whether there exist $g_i \in C_i$ such that $\langle g_1, \ldots, g_t \rangle$ is Zariski dense in $G$. First we establish a general result, which shows that if $\O$ is an irreducible subvariety of $G^t$, then the set of tuples in $\Omega$ generating a dense subgroup of $G$ is either empty or dense in $\Omega$. In the special case $\O = C_1 \times \cdots \times C_t$, by considering the   dimensions of fixed point spaces, we prove that this set is dense when $G$ is an exceptional algebraic group and $t \geqs 5$, assuming $k$ is not algebraic over a finite field. In fact, for $G=G_2$ we only need $t \geqs 4$ and both of these bounds are best possible. As an application, we show that many faithful representations of exceptional algebraic groups are generically free. We also establish new results on the topological generation of exceptional groups in the special case $t=2$, which have applications to random generation of finite exceptional groups of Lie type. In particular, we prove a conjecture of Liebeck and Shalev on the random $(r,s)$-generation of exceptional groups.
\end{abstract}

\date{\today}

\thanks{We thank an anonymous referee for their careful reading of the paper and for many helpful comments and suggestions. The third author was partially supported by the NSF grant DMS 1901595.} 

\maketitle

\section{Introduction}\label{s:intro}

In this paper, we study the problem of topologically generating a simple algebraic group $G$ defined over an algebraically closed field, with respect to the Zariski topology on $G$. Recall that a subset of $G$ is a topological generating set if it generates a dense subgroup. We are primarily interested in finding small subsets with this property, where the generators are contained in specified conjugacy classes. We will typically work with the simply connected form of the group, but the isogeny class makes no difference. Indeed, the center of $G$ is contained in the Frattini subgroup, so a subgroup $H$ is dense in $G$ if and only if $HZ/Z$ is dense in $G/Z$, where $Z$ is any central subgroup of $G$. One can also extend the main results presented below to semisimple groups in an obvious way.    

Let $G$ be a simply connected simple algebraic group over an algebraically closed field $k$ of characteristic $p \geqs 0$. The following theorem of Guralnick \cite{gurnato} is presumably well known to the experts.

\begin{theorem*}[\cite{gurnato}]\label{NATO}   
If $p=0$ then 
\[
\{(x,y) \in G \times G \,:\, \overline{\langle x, y \rangle} = G \}
\] 
is a nonempty open subset of $G \times G$.
\end{theorem*}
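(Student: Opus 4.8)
\emph{Strategy.} The statement combines two assertions about
\[
D=\{(x,y)\in G\times G : \overline{\la x,y\ra}=G\}:
\]
that $D\neq\varnothing$ and that $D$ is open. I would prove them separately, as only the openness is formal; nonemptiness requires genuine input, since no soft argument can manufacture a dense $2$-generated subgroup. For nonemptiness I would fix a regular semisimple $x$ of infinite order, so that $\overline{\la x\ra}=T$ is a maximal torus, and take $y$ in a suitable dense open subset of $G$: then $\overline{\la x,y\ra}$ contains the closed subgroup generated by $T$ and $T^{y}$, and two maximal tori in sufficiently general position generate $G$. (Over $\mathbb{C}$ one may instead invoke the classical fact that a compact connected semisimple Lie group is topologically $2$-generated, noting that a dense subgroup of a maximal compact subgroup is Zariski dense in $G$; by the Lefschetz principle this suffices in general.)

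\emph{Reduction to the adjoint module.} For openness the plan is to replace the condition ``$\overline{\la x,y\ra}=G$'' by an open condition plus a manifestly closed correction term. Since $\mathrm{char}\,k=0$ and $G$ is simple, $\mathfrak g=\operatorname{Lie}(G)$ is an irreducible $G$-module. Set
\[
I=\{(x,y)\in G\times G : \la x,y\ra\ \text{acts irreducibly on}\ \mathfrak g\}.
\]
By Burnside's theorem, $(x,y)\in I$ iff the associative subalgebra of $\operatorname{End}(\mathfrak g)$ generated by $\operatorname{Ad}(x)$ and $\operatorname{Ad}(y)$ is all of $\operatorname{End}(\mathfrak g)$, and a standard dimension count shows this holds iff some bounded-length tuple of words in $\operatorname{Ad}(x),\operatorname{Ad}(y)$ has linearly independent image in $\operatorname{End}(\mathfrak g)$. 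Since there are only finitely many such word-tuples and for each the condition is the non-vanishing of a determinant of regular functions on $G\times G$, the set $I$ is open. Now $D\subseteq I$, because a dense subgroup acts on $\mathfrak g$ exactly as $G$ does; and $I^{c}\subseteq D^{c}$, because $G$ is irreducible on $\mathfrak g$, so a subgroup that is reducible on $\mathfrak g$ cannot be dense. Hence $I^{c}$ is a closed subset of $D^{c}$, and it remains only to analyse $D^{c}\cap I=I\setminus D$.

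\emph{The residual finite piece.} Suppose $(x,y)\in I\setminus D$ and put $H=\overline{\la x,y\ra}\subsetneq G$. Then $H$ is irreducible on $\mathfrak g$, so its submodule $\operatorname{Lie}(H^{\circ})$ equals $0$ or $\mathfrak g$; the latter would give $H=G$, so $H$ is finite, and the same reasoning shows $H$ lies in no proper positive-dimensional closed subgroup, i.e.\ $H$ is Lie primitive. A Lie primitive finite subgroup has bounded order: by Jordan's theorem it has an abelian normal subgroup of bounded index, this subgroup is toral (being finite in characteristic $0$), and Lie primitivity forces it to be central. Consequently there are only finitely many conjugacy classes of Lie primitive finite subgroups of $G$ (bounded order yields finitely many isomorphism types, and in characteristic $0$ each admits only finitely many embeddings into $G$ up to conjugacy), so $I\setminus D$ is contained in a finite union of orbits — under simultaneous conjugation — of pairs drawn from these finitely many subgroups. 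Each such orbit is closed, since a finite, hence reductive, subgroup has a closed conjugation orbit by Richardson's criterion, and each lies in $D^{c}$. Therefore
\[
D^{c}=I^{c}\cup (I\setminus D)\subseteq I^{c}\cup Z\subseteq D^{c},
\]
where $Z$ is the finite union of closed orbits just described; so $D^{c}$ equals the closed set $I^{c}\cup Z$, and $D$ is open.

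\emph{Main obstacle.} The formal skeleton — the dichotomy ``dense or finite'' coming from irreducibility on $\mathfrak g$, and the openness of $I$ via Burnside — is soft. I expect the real obstacle to be the nonemptiness, which cannot be obtained formally, together with the finite-subgroup bookkeeping in the last step: bounding the order of Lie primitive finite subgroups, deducing finiteness of their conjugacy classes, and invoking Richardson's closed-orbit theorem. These rest on Jordan's theorem and elementary invariant theory, inputs external to the structure of the argument itself.
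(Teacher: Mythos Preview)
The paper does not give its own proof of this theorem; it is quoted from \cite{gurnato} as a known background result, so there is no in-paper argument to compare against directly. Your reconstruction is essentially the original argument and is correct.

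Two remarks are worth making. First, the mechanism you isolate --- that irreducibility of $\la x,y\ra$ on a fixed $kG$-module is an open condition on $(x,y)$ --- is exactly what the paper later exploits in the second proof of Lemma~\ref{l:prel}, where a finite set $\mathcal S$ of modules is chosen so that every positive-dimensional maximal closed subgroup is reducible on at least one. In characteristic~$0$ the single module $\mathfrak g$ suffices, for the reason you give ($\mathrm{Lie}(M^0)$ is a nonzero proper $M$-submodule for any proper positive-dimensional closed $M$). What is genuinely characteristic-zero in your argument, and what the paper does not attempt, is the disposal of the residual set $I\setminus D$ of pairs generating finite Lie-primitive subgroups; the paper's Lemma~\ref{l:prel} only produces an open set between $\Delta$ and $\Lambda$, which is all that is needed there.

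Second, your claim that the abelian normal subgroup $A$ furnished by Jordan's theorem is toral is correct, but it tacitly uses the paper's standing hypothesis that $G$ is simply connected. In characteristic~$0$ the elements of $A$ are semisimple, and by Steinberg's theorem centralizers of semisimple elements in simply connected groups are connected with simply connected derived subgroup; an induction then shows $C_G(A)$ is connected, whence $A\leqs Z(C_G(A))$ lies in a maximal torus. (For adjoint groups this fails: there are non-toral elementary abelian subgroups.) With torality in hand, Lie primitivity forces $N_G(A)=G$ and hence $A\leqs Z(G)$, giving $|H|\leqs j(\dim\mathfrak g)\,|Z(G)|$. The remaining bookkeeping --- finitely many $G$-classes of embeddings of a fixed finite group in characteristic~$0$, and closedness of each orbit by Richardson since the stabilizer $C_G(H)$ of a finite $H$ is reductive --- is as you describe.
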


This result also holds for semisimple groups, and the analogous statement for $t$-tuples with $t \geqs 3$ is an immediate corollary. Note that the conclusion is false in positive characteristic. Indeed, if $k_0 \subseteq k$ is the algebraic closure of the prime field, then the set of $k_0$-points in $G \times G$ is dense and of course any pair of elements in $G(k_0)$  generates a finite subgroup, where
$G(k_0)$ is the set of $k_0$-points in $G(k) = G$. The following extension to positive characteristic is proved in \cite[Theorem 11.7]{GT} (also see \cite[Proposition 4.4] {BGGT} for a generalization to pairs of noncommuting words). 

\begin{theorem*}[\cite{GT}]\label{t:GT} 
If $p>0$ then the set of elements $(x,y) \in G \times G$ such that either  
$\overline{\langle x, y \rangle} = G$, or $\langle x, y \rangle$ contains a conjugate subgroup of the form $G(p^a)$ (possibly twisted), is a nonempty open subset of $G \times G$. 
\end{theorem*}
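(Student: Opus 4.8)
The plan is to treat nonemptiness and openness separately, the first being routine and the second carrying all the weight. For nonemptiness, fix any Steinberg endomorphism $F$ of $G$ with $q = p^a$ large, so that $G^F$ is a quasisimple finite group of Lie type; it is $2$-generated by Steinberg's theorem, so $G^F = \langle x, y\rangle$ for suitable $x,y$, and then $(x,y) \in G^F \times G^F \subseteq G \times G$ lies in the set in question because $\langle x,y\rangle = G^F$ trivially contains a conjugate of $G(p^a)$. Note that one is forced to produce nonemptiness via the second clause, since the first one is empty when $k$ is algebraic over $\mathbb{F}_p$.

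For openness, write $\Sigma$ for the set in the theorem and first describe its complement. If $(x,y) \in \Sigma^c$ then $H := \overline{\langle x,y\rangle}$ is a proper closed subgroup of $G$ and $\langle x,y\rangle$ contains no $G$-conjugate of any (possibly twisted) $G(p^a)$. When $H$ is infinite, $H^\circ$ is a nontrivial proper connected subgroup and $H \leqslant N_G(H^\circ)$, a proper closed subgroup of positive dimension. When $H$ is finite I would appeal to the structure theory of finite subgroups of simple algebraic groups (the reduction theorem of Liebeck--Seitz, together with Larsen--Pink and the classification of the large subgroups of the finite groups of Lie type) to conclude that, in the absence of a conjugate of a possibly twisted $G(p^a)$, either $\langle x,y\rangle$ lies in a proper closed subgroup of positive dimension, or $|\langle x,y\rangle| \leqslant N$ for an absolute constant $N = N(G)$. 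Hence $\Sigma^c \subseteq X \cup Y$, where $X$ is the set of pairs lying in a common proper positive-dimensional closed subgroup and $Y = \{(x,y) : |\langle x,y\rangle| \leqslant N\}$. One also checks that $X \cup Y$ meets $\Sigma$ only at the finitely many pairs generating one of the finitely many non-Lie-primitive $G(p^a)$ of small order, which are dealt with by hand.

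The next step is to prove that $X$ and $Y$ are closed. For $Y$ this is routine: $Y$ is the finite union, over the finitely many $2$-generated groups $\Gamma$ with $|\Gamma| \leqslant N$ and over their finitely many generating pairs, of the homomorphism loci $\{(x,y) \in G \times G : w(x,y) = 1 \text{ for every relator } w \text{ of } \Gamma\}$, each of which is Zariski closed. For $X$ one uses the theorem of Liebeck--Seitz that $G$ has only finitely many conjugacy classes of maximal closed subgroups of positive dimension, say with representatives $M_1, \dots, M_k$; then $X$ is the union of the images of the maps $G \times M_i \times M_i \to G \times G$, $(g, m_1, m_2) \mapsto (g m_1 g^{-1}, g m_2 g^{-1})$, each of which is constructible of dimension $\dim G + \dim M_i < 2\dim G$. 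When $M_i$ is parabolic the image is closed, since $G/M_i$ is complete.

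The main obstacle is the reductive, non-parabolic case, where $G/M_i$ is affine and the image above need not be closed: one must show that the limit points acquired on passing to its closure still fail to topologically generate $G$ and still contain no conjugate of $G(p^a)$, so that $\Sigma^c$ is closed after all. Concretely, given $(x_n,y_n) \to (x,y)$ with all $(x_n,y_n) \in \Sigma^c$, I would rule out $(x,y) \in \Sigma$: if $\overline{\langle x,y\rangle} = G$ I would adapt Guralnick's characteristic-zero argument for the theorem of \cite{gurnato}, working within the subvariety of pairs not contained in a positive-dimensional proper subgroup and again invoking finiteness of the relevant families of maximal subgroups; if instead $\overline{\langle x,y\rangle} \neq G$ but $\langle x,y\rangle$ contains some $(G^F)^g$, I would use the Lang--Steinberg machinery to show that a limit of pairs each confined to a bounded union of translates of proper positive-dimensional subgroups and of subgroups of order $\leqslant N$ cannot acquire a conjugate of $G^F$ with $p^a$ large without already satisfying the positive-dimensional alternative. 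I expect this control of closures of conjugates of reductive maximal subgroups to be the principal difficulty; it is also precisely the point at which positive characteristic forces the ``contains $G(p^a)$'' clause upon us, since over $\overline{\mathbb{F}}_p$ the dense set of rational points makes the bare topological-generation statement false.
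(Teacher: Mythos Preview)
This theorem is cited from \cite{GT} and not proved in the paper; the closest thing here is the second proof of Lemma~\ref{l:prel}, which reproduces the key mechanism of \cite[Theorem~11.7]{GT} and is quite different from what you propose.

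Your outline has a genuine gap precisely at the point you flag as the ``main obstacle''. You need the set $X$ of pairs lying in a common proper positive-dimensional closed subgroup to be closed, and for a reductive maximal $M$ the image of $G \times M \times M \to G \times G$ need not be. Your proposed fix --- controlling limit points by appeal to ``Guralnick's characteristic-zero argument'' and ``Lang--Steinberg machinery'' --- is not an argument: nothing in it explains why a limit of pairs each confined to a conjugate of a reductive $M$ must again lie in $X \cup Y$. (There is also a smaller slip: $\Sigma \cap (X \cup Y)$ is not a finite set of pairs but a positive-dimensional $G$-stable locus --- all conjugates of generating pairs of the finitely many small $G(p^a)$ that happen to sit inside some proper positive-dimensional subgroup --- so ``dealt with by hand'' would not finish even if $X$ were closed.)

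The device in \cite{GT}, and in the second proof of Lemma~\ref{l:prel}, sidesteps the closedness of $X$ entirely. One selects a finite set $\mathcal{S}$ of irreducible $kG$-modules such that every positive-dimensional maximal closed subgroup of $G$ acts reducibly on some $V \in \mathcal{S}$, while $G$ itself and every sufficiently large $G(q)$ act irreducibly on all of them. Then $\Gamma = \{(x,y) : \langle x,y\rangle \text{ is irreducible on every } V \in \mathcal{S}\}$ is open, because the locus of pairs preserving a proper nonzero subspace of a fixed $V$ is the image in $G \times G$ of a closed incidence variety in $G \times G \times \mathrm{Gr}(d,V)$, and the Grassmannian is projective (this is \cite[Lemma~11.1]{GT}). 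Projectivity of $\mathrm{Gr}(d,V)$ thus replaces the projectivity of $G/M$ that you only have in the parabolic case; this is the idea your argument is missing.
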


The fact that $G$ is $2$-generated topologically (as long as $k$ is not algebraic over a finite field) follows from Tits' result \cite{Tits}  that any semisimple algebraic group contains a Zariski dense free subgroup on two generators (a variant of the \emph{Tits alternative}).

Let $V$ be an irreducible variety defined over $k$. We say that a subset  $S$ of $V$ is   \emph{generic} if it contains the complement of a countable union of proper closed subvarieties. Note that if $k'$ is an uncountable algebraically closed field containing $k$ then $S(k')$ is dense in $V(k')$; in particular, if $k$ is itself uncountable then $S(k)$ is dense in $V(k)$. On the other hand, if $k$ is countable, then $S(k)$ may be empty (for instance, consider the countably many 
one-point subvarieties of $V(k)$). If we avoid the  countably many subvarieties of pairs $(x,y) \in G \times G$ such that $|\overline{\langle x, y \rangle}| \leqs n$
for $n=1, 2, \ldots$, then the previous result implies that   
 \[
 \{(x,y) \in G \times G \,  : \, \overline{\langle x, y \rangle} = G \}
 \] 
 is a generic subset of $G \times G$.   See \cite{BGGT} for a stronger result, which establishes the genericity of the set of pairs $(x,y)$ such that $\langle x, y \rangle$ is a \emph{strongly dense} free subgroup of $G$ (that is, $\langle x, y \rangle$ is free and every nonabelian subgroup is dense).    

In this paper, we will focus on the topological generation of simple algebraic groups with respect to generators from a finite number of fixed conjugacy classes, with the aim of extending some of the results highlighted above. To do this, we first consider a somewhat more general situation. 

Let $\Omega$ be an irreducible subvariety of $G^t = G \times \cdots \times G$ ($t \geqs 2$ factors). Here we do not insist that $\O$ is closed, but only open in its closure (that is, we assume $\O$ is locally closed in $G^t$). Some important special cases of interest include $G^t$ itself, $\{g\} \times G$ for $g \in G$ 
non-central and $C_1 \times \cdots \times C_t$ where each $C_i$ is a non-central conjugacy class of $G$. For $x = (x_1, \ldots, x_t) \in G^t$, let $G(x)$ be the closure of the subgroup of $G$ generated by the $x_i$ and define
\begin{align}\label{e:deltap}
\Delta & = \{ x \in \Omega \,:\, G(x) = G\} \nonumber \\
\Delta^{+} & = \{ x \in \Omega \,:\, \dim G(x) >0 \} \\
\L & = \{ x \in \Omega \,:\, \mbox{$G(x) \not\leqs H$ for all $H \in \mathcal{M}$}\} \nonumber
\end{align}
where $\mathcal{M}$ is the set of positive dimensional maximal closed subgroups of $G$. Note that 
\[
\Delta = \Delta^+ \cap \L.
\]

In characteristic zero, the property of topologically generating $G$ is open (see \cite{gurnato}  -- we will also discuss this in Section \ref{s:top}), so $\Delta$ is an open subset of $\Omega$ and therefore empty or dense. As noted above for $\O = G^t$, in positive characteristic this need not be the case. In this setting, let us also observe that if $\O = C_1 \times \cdots \times C_t$ then $\Delta$ is open if at least one of the conjugacy classes contains elements of infinite order (this is a trivial corollary of the previous theorem;  see \cite{GT}). It is also clear from the discussion above that $\Delta$ and $\Delta^+$ are either empty or generic. We refer the reader to \cite[Proposition 2.1]{BGR} for a more general result on the genericity of certain varieties. In this paper, we will focus on groups defined over countable algebraically closed fields in positive characteristic.  

Our first main result addresses the density of $\Delta^+$. Note that if $k'$ is a field extension of $k$, then we can consider $G(k')$, $\O(k')$, $\Delta(k')$, etc., which are defined in the obvious way.

\begin{theorem} \label{t:density}  
Let $G$ be a simply connected simple algebraic group over an algebraically closed field $k$ of characteristic $p \geqs 0$. Assume that $k$ is not algebraic over a finite field. Let $\O$ be an irreducible subvariety of $G^t$.  Then either
\begin{itemize}\addtolength{\itemsep}{0.2\baselineskip}
\item[{\rm (i)}] $\Delta^+$ is a dense subset of $\Omega$; or
\item[{\rm (ii)}] $\Delta^+(k')$ is empty for every field extension $k'$ of $k$.
\end{itemize}
\end{theorem}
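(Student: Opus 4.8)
The plan is to reduce the statement to a clean dichotomy of scheme-theoretic conditions on $\Omega$, and then handle the substantive case by a genericity argument inside the adjoint quotient of $G$.

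First I would record that $\Omega\setminus\Delta^{+}=\bigcup_{n\geqs 1}Z_n$, where $Z_n=\{x\in\Omega:|\langle x_1,\dots,x_t\rangle|\leqs n\}$, and that each $Z_n$ is a \emph{closed} subvariety of $\Omega$ defined over the prime field. The point is the standard observation that a group generated by $S=\{x_1,\dots,x_t\}$ has order $\leqs n$ if and only if the set of values of all words of length $\leqs n$ in $S$ has at most $n$ elements (a pigeonhole/breadth‑first‑search stabilisation argument); since each such word defines a morphism $\Omega\to G$ and $G$ is separated, the locus where finitely many morphisms take $\leqs n$ distinct values is closed. Note that the $Z_n$ form an \emph{increasing} chain which need not stabilise, so $\bigcup_n Z_n$ need not be closed and $\Delta^{+}$ need not be open: this is exactly where the problem has content, and where the hypothesis on $k$ will be used.

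Now exactly one of the following holds: (B) $\Omega\subseteq Z_N$ for some $N$; or (C) $\Omega\not\subseteq Z_n$ for all $n$. In case (B), base change gives $\Omega(k')\subseteq Z_N(k')$ for every extension $k'/k$, so every tuple in $\Omega(k')$ generates a group of order $\leqs N$ and $\Delta^{+}(k')=\varnothing$; this is conclusion (ii). So the work is to show that (C) implies (i), and for this it suffices to produce a point of $\Delta^{+}$ in every nonempty open $U\subseteq\Omega$. Let $\eta$ be the generic point of $\Omega$ (hence of every such $U$). Condition (C) says precisely that the tautological tuple $x^{\eta}$ generates an infinite subgroup of $G(k(\Omega))$; since this group is finitely generated and linear, and finitely generated torsion linear groups are finite (Schur, and a standard fact in positive characteristic), some word $w$ has $w(x^{\eta})$ of infinite order, and the same $w$ serves for every $U$. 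Let $c\colon G\to G/\!\!/G\cong\mathbb{A}^{\ell}$ be the adjoint quotient, defined over the prime field and embedded by character coordinates, let $\mathcal{T}\subseteq G/\!\!/G$ be the (countable) set of conjugacy invariants of torsion elements, and set $h=c\circ f_w\colon\Omega\to G/\!\!/G$, where $f_w$ is evaluation of $w$. In positive characteristic, $w(x^{\eta})$ non‑torsion forces its semisimple part non‑torsion, so $h(\eta)\notin\mathcal{T}$; and crucially $\mathcal{T}\subseteq(G/\!\!/G)(\overline{\mathbb{F}_p})$, since the eigenvalues of a torsion element are roots of unity. If $h$ is constant, its value lies outside $\mathcal{T}$, hence is the invariant of a non‑torsion element, so $w(x)$ is non‑torsion for all $x\in\Omega(k)$ and $\Delta^{+}=\Omega$. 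If $h$ is non‑constant, then for each $U$ the image closure $V:=\overline{h(U)}\subseteq(G/\!\!/G)_k$ is irreducible of dimension $\geqs 1$, $h(U)$ contains a nonempty open subset of $V$, and for a coordinate $\phi$ non‑constant on $V$ the image of that open on $k$‑points is cofinite in $k$; since $\phi(\mathcal{T})\subseteq\overline{\mathbb{F}_p}$ while $k\setminus\overline{\mathbb{F}_p}$ is infinite — this is where "$k$ not algebraic over a finite field" enters — we can pick $\lambda$ in that image with $\lambda\notin\phi(\mathcal{T})$, pull back to a $k$‑point $x\in U$ with $c(w(x))\notin\mathcal{T}$, and conclude $w(x)$ is non‑torsion and $x\in\Delta^{+}$. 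In characteristic $0$ the same argument works, replacing $\overline{\mathbb{F}_p}$ by a bounded subset of $k\subseteq\mathbb{C}$ (character coordinates of torsion elements are bounded) and using that $\mathbb{Z}\subseteq k$ is unbounded; but here one must first deal with the possibility that $w(x^{\eta})$ has infinite order only because of a nontrivial unipotent part. In that case replace $w$ by $w^{m}$ with $m$ the order of the semisimple part of $w(x^{\eta})$, so that $w(x^{\eta})$ becomes a nontrivial unipotent; then "$w(x)$ unipotent" is a closed condition holding at $\eta$, hence everywhere on $\Omega$, while "$w(x)\neq 1$" holds on a dense open subset, and that subset already lies in $\Delta^{+}$.

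The hard part is precisely the step that uses the hypothesis. The bad locus $\bigcup_n Z_n$ is Zariski‑dense in $\Omega$ (it already contains a dense subset of $G(\overline{\mathbb{F}_p})^{t}$), so neither a dimension count nor a Baire‑category argument can succeed over a countable field; one must locate the arithmetic thinness — that torsion elements have conjugacy invariants lying over, and in characteristic $0$ of bounded height above, the prime field — and then exploit that $k$ is genuinely larger than the algebraic closure of its prime field. Making the passage to the adjoint quotient carry out this transfer cleanly, and absorbing the unipotent contributions in characteristic $0$, are the delicate points; everything else is formal.
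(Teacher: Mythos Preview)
Your argument is correct and takes a genuinely different route from the paper's. The paper splits on whether the semisimple conjugacy class of $w(x)$ is constant on $\Omega$ for \emph{every} word $w$; when it is, it invokes a representation-theoretic lemma (Lemma~\ref{l:words}, from \cite{FGG}) to conclude that $G(x)/\rad(G(x))$ is independent of $x$ up to isomorphism, and then disposes of the unipotent radical separately in each characteristic. You instead split on whether $|G(x)|$ is uniformly bounded on $\Omega$, and in the unbounded case pass to the generic point and invoke Schur's theorem (finitely generated torsion linear groups are finite) to locate a single word $w$ with $w(x^{\eta})$ of infinite order. This sidesteps the FGG lemma entirely: the paper's sub-case ``$|\mathcal{S}(w)|=1$ for all $w$ but $\Delta^{+}=\Omega$'' is absorbed into your branch ``$h$ constant with value outside $\mathcal{T}$''. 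Both proofs then converge on the same endgame---a non-constant morphism from an irreducible variety to $\mathbb{A}^1$ has cofinite image in $k$, and the hypothesis on $k$ provides values outside the torsion locus---so the substantive content is shared. Your packaging via the generic point is cleaner and avoids an external input, while the paper's decomposition makes the structure of the constant-invariants case more transparent and yields the slightly sharper by-product that $\Delta^{+}$ is open in characteristic $0$.

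One imprecision to repair: in characteristic $0$ you assume $k\subseteq\mathbb{C}$ and argue by boundedness, but $k$ need not embed in $\mathbb{C}$. The fix is immediate and is exactly what the paper does: observe that $\phi(\mathcal{T})$ lies in the subfield of $k$ generated by roots of unity, which is a proper subfield of any algebraically closed $k$ of characteristic $0$ and hence has infinite complement; the cofinite image of $\phi\circ h$ then meets that complement.
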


The next result shows that the existence of a single $x \in \O$ with $G(x)=G$ implies that the set of such $x$ is dense in $\O$.   

\begin{theorem} \label{t:dense2}  
Let $G$ be a simply connected simple algebraic group over an algebraically closed field $k$ of characteristic $p \geqs 0$. Assume that $k$ is not algebraic over a finite field. Let $\O$ be an irreducible subvariety of $G^t$. Then the following are equivalent:
\begin{itemize}\addtolength{\itemsep}{0.2\baselineskip}
\item[\rm (i)]  $G(x)=G(k')$ for some $x \in \Omega(k')$ and field extension $k'$ of $k$.
\item[{\rm (ii)}]  $\Delta$ is a dense subset of $\O$.
\end{itemize}
Moreover, if either (i) or (ii) hold, then $\L$ contains a nonempty open subset of $\O$. 
\end{theorem}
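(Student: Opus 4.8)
The plan is to reduce the whole theorem to the ``moreover'' assertion, and to prove that assertion by passing to an uncountable base field. The implication $(\mathrm{ii})\Rightarrow(\mathrm{i})$ is immediate: a dense $\Delta$ is nonempty, and any $x\in\Delta$ witnesses $(\mathrm{i})$ with $k'=k$. For $(\mathrm{i})\Rightarrow(\mathrm{ii})$, suppose $G(x_0)=G(k')$ for some $x_0\in\O(k')$. Then $\dim G(x_0)=\dim G>0$, so $\Delta^+(k')\neq\emptyset$, alternative (ii) of Theorem~\ref{t:density} is excluded, and hence $\Delta^+$ is dense in $\O$. Granting that $\L$ contains a nonempty open subset $U$ of $\O$, it follows that $\Delta^+\cap U$ is dense in $\O$ (as $U$ is dense in the irreducible $\O$ and $\Delta^+$ is dense in $\O$), and since $\Delta^+\cap U\subseteq\Delta^+\cap\L=\Delta$ we obtain $(\mathrm{ii})$. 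As $(\mathrm{ii})\Rightarrow(\mathrm{i})$, the ``moreover'' clause under hypothesis $(\mathrm{ii})$ also reduces to the case of hypothesis $(\mathrm{i})$. So everything comes down to showing: if $(\mathrm{i})$ holds, then $\L$ contains a nonempty open subset of $\O$.

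The first step is to rewrite $\L$. Let $H_1,\ldots,H_m$ be representatives for the conjugacy classes of positive-dimensional maximal closed subgroups of $G$; there are only finitely many such classes, by the classification of the maximal closed subgroups of positive dimension in simple algebraic groups. For each $i$ the set $Y_i:=\bigcup_{g\in G}(H_i^g)^t$ is the image of the conjugation morphism $G\times H_i^t\to G^t$, so it is constructible, and since each $H_i$ is closed we have $G(x)\leqs H$ for some $H\in\mathcal{M}$ exactly when $x\in\bigcup_{i=1}^m Y_i$. Therefore $\L=\O\setminus\bigcup_{i=1}^m(\O\cap Y_i)$ is a constructible subset of $\O$, and ``$\L$ contains a nonempty open subset of $\O$'' is equivalent to ``$\L$ is dense in $\O$'' and to the inequalities $\dim(\O\cap Y_i)<\dim\O$ for $i=1,\ldots,m$. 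These inequalities involve only the varieties $\O$ and $Y_i$, all defined over $k$, and are unaffected by extending the algebraically closed base field. I will therefore replace $k$ by an uncountable algebraically closed field containing $k'$; since $G(k')$ remains Zariski dense in $G$ over this larger field, the hypothesis becomes simply: there exists $x_0\in\O$ with $G(x_0)=G$.

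Now $x_0\in\Delta$, so $\Delta$ is nonempty. By the discussion in Section~\ref{s:intro} (see also \cite[Proposition~2.1]{BGR}), $\Delta$ is either empty or generic, so here it is generic; and over an uncountable field a generic subset of an irreducible variety is dense. Hence $\Delta$ is dense in $\O$, and since $\Delta=\Delta^+\cap\L\subseteq\L$, the constructible set $\L$ is dense in $\O$ as well; a dense constructible subset of an irreducible variety contains a dense open subset, so $\L$ contains a nonempty open subset of $\O$. This establishes the claim, and with it the theorem.

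I expect the main difficulty to be the passage from genericity to the existence of an open subset. The set $\Delta$ itself is only \emph{generic}---the complement of a \emph{countable} union of proper closed subvarieties---and so may be empty over a countable field, whereas the conclusion asks for a genuine nonempty open set. The way around this is to work with $\L$ rather than with $\Delta$: because there are only \emph{finitely many} conjugacy classes of positive-dimensional maximal closed subgroups, $\L$ is \emph{constructible}, and a dense constructible set automatically contains an open set. The remaining technical point---that the generating tuple in $(\mathrm{i})$ is only given over some extension $k'$---is handled by reducing at the outset to an uncountable base field, over which genericity and density coincide and one may take the hypothesis over $k$ itself.
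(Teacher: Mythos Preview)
Your proof is correct and takes a genuinely different route from the paper's. The paper first proves a preliminary lemma (Lemma~\ref{l:prel}) that constructs, by an explicit dimension count on the conjugation morphisms $G \times M_i^t \to G^t$ (or alternatively via an irreducibility-on-modules criterion), a nonempty open subset $\Gamma$ of $G^t$ satisfying $\Delta_{G^t} \subseteq \Gamma \subseteq \Lambda_{G^t}$. Intersecting with $\Omega$ gives an open $\Gamma' \subseteq \Omega$ with $\Delta \subseteq \Gamma' \subseteq \Lambda$, and since $\Gamma$ is defined over $k$, the non-emptiness of $\Gamma'(k')$ (witnessed by the generating tuple) forces $\Gamma'(k)$ itself to be nonempty; then $\Delta = \Gamma' \cap \Delta^+$ is dense by Theorem~\ref{t:density}. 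The paper never changes the base field and never invokes the genericity of $\Delta$.

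Your argument instead leans on the assertion from the introduction (with reference to \cite{BGR}) that $\Delta$ is empty or generic, passes to an uncountable extension where generic subsets are automatically dense, and then uses constructibility of $\Lambda$---derived from the same finiteness of conjugacy classes of positive-dimensional maximal closed subgroups that the paper uses---to extract an open subset via the field-independent dimension inequalities $\dim(\Omega\cap Y_i)<\dim\Omega$. This is more conceptual and avoids building $\Gamma$ explicitly, at the cost of importing the genericity statement as a black box and performing a base change. Both approaches are valid; the paper's is more self-contained (Lemma~\ref{l:prel} is proved in a few lines), while yours makes transparent that the ``moreover'' clause is really a dimension statement that can be verified over any convenient algebraically closed extension.
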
 

\begin{remk}
As noted above, $\Delta$ is open if $p=0$, or if $\O = C_1 \times \cdots \times C_t$ and one of the  $C_i$ consists of elements of infinite order. On the other hand, if $p>0$ and $\Omega$ is defined over $k_0$, the algebraic closure of the prime field,
then $G(x)$ is finite for all $x$ in the dense subset $\O(k_0)$ and thus $\Delta$ (and also $\Delta^+$) is only open in $\O$ if it is empty. By Theorem \ref{t:dense2}, if $\Delta$ is generic, then it is also dense as long as $k$ is not algebraic over a finite field.  Similarly, Theorem \ref{t:density} implies that the same conclusion holds for $\Delta^+$.
\end{remk}

It is well known that if $k$ is not algebraic over a finite field then for every non-central element $g \in G$, there exists $h \in G$ such that $G = \overline{\la g,h \ra}$ (see \cite{gurnato} or \cite{BGGT}). Therefore, as an immediate corollary of Theorem \ref{t:dense2} we get the following.

\begin{corol}\label{c:density}  
Let $G$ be a simply connected simple algebraic group over an algebraically closed field $k$ of characteristic $p \geqs 0$. Assume that $k$ is not algebraic over a finite field.
\begin{itemize}\addtolength{\itemsep}{0.2\baselineskip}
\item[{\rm (i)}] If $t \geqs 2$, then $\{ x \in G^t \, : \, G(x)=G \}$ is a dense subset of $G^t$.
\item[{\rm (ii)}] If $g \in G$ is non-central, then $\{h \in G \,:\, G = \overline{\la g,h \ra}\}$ is a dense subset of $G$.
\end{itemize}
\end{corol}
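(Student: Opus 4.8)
The plan is to derive Corollary \ref{c:density} directly from Theorem \ref{t:dense2}, the only additional ingredient being the quoted fact that, since $k$ is not algebraic over a finite field, every non-central element of $G$ belongs to a topological generating pair (see \cite{gurnato, BGGT}).

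For part (ii), I would apply Theorem \ref{t:dense2} to the subvariety $\O = \{g\} \times G \subseteq G^2$, which is closed and irreducible (it is isomorphic to $G$, and $G$ is irreducible). The cited result provides some $h \in G$ with $\overline{\la g, h \ra} = G$, so the point $x = (g, h) \in \O$ satisfies $G(x) = G$; this is precisely hypothesis (i) of Theorem \ref{t:dense2}, taken with $k' = k$. Hence conclusion (ii) of that theorem gives that $\Delta$ is dense in $\O$, and transporting this back along the isomorphism $\O \cong G$, $(g,h) \mapsto h$, shows that $\{h \in G : G = \overline{\la g, h \ra}\}$ is dense in $G$, as required.

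For part (i), I would instead take $\O = G^t$, which is irreducible. Fixing any non-central $g \in G$ together with $h$ as above, the tuple $x = (g, h, g, \ldots, g) \in G^t$ has $G(x) = \overline{\la g, h \ra} = G$ (here the hypothesis $t \geqs 2$ is used, so that there is room for both entries $g$ and $h$; note $t=1$ genuinely fails since $\overline{\la x\ra}$ is abelian). Thus hypothesis (i) of Theorem \ref{t:dense2} holds for $\O = G^t$, and we conclude that $\Delta = \{x \in G^t : G(x) = G\}$ is dense in $G^t$.

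No genuine difficulty arises beyond Theorem \ref{t:dense2}: one only needs the (routine) irreducibility of $\{g\} \times G$ and of $G^t$, together with a single topologically generating witness tuple, which is exactly what the cited statement supplies. The substantive content — both the existence of such witnesses for an arbitrary non-central $g$ and the spreading-out principle that promotes one witness to a dense set — has already been isolated in Theorem \ref{t:dense2} and the references \cite{gurnato, BGGT}, so the corollary is immediate.
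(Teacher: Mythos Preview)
Your proposal is correct and follows essentially the same approach as the paper: the paper also deduces the corollary immediately from Theorem~\ref{t:dense2}, using the cited fact that for every non-central $g \in G$ there is an $h \in G$ with $\overline{\langle g,h\rangle}=G$ to supply the required witness in $\Omega$.
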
 

By the above, we can see that if $p=0$ then the sets in parts (i) and (ii) of Corollary \ref{c:density} are nonempty and open. The same conclusion holds in (ii) if $p>0$ and $g$ has infinite order.

We now turn our attention to the special case where 
$\Omega = C_1 \times \cdots \times C_t$ and each $C_i$ is a 
non-central conjugacy class.  First, using the main theorem of \cite{GMT}
and the proof of Theorem \ref{t:density}, we obtain the following result.

\begin{corol}  \label{c:gmt}
Let $G$ be a simply connected simple algebraic group over an algebraically closed field $k$ of characteristic $p \geqs 0$. Let $\O = C_1 \times \cdots \times C_t$ with $t \geqs 2$, where the $C_i$ are non-central conjugacy classes of $G$. Then $\Delta^+$ is empty if and only if either
\begin{itemize}\addtolength{\itemsep}{0.2\baselineskip}
\item[{\rm (i)}] $k$ is algebraic over a finite field; or 
\item[{\rm (ii)}] $p > 0$, $t=2$, $C_1C_2$ is a finite union of conjugacy classes of $G$ and 
the pair $C_1,C_2$ is described in \cite[Theorem 1.1]{GMT}. 
\end{itemize}
\end{corol}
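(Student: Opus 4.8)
The plan is to combine the mechanism behind Theorem~\ref{t:density} with the classification of \cite{GMT}. First observe that, for any $x\in\O$, the condition $\dim G(x)>0$ is equivalent to $G(x)$ being infinite, and hence to the abstract subgroup $\la x_1,\ldots,x_t\ra$ being infinite. If $k$ is algebraic over a finite field, then every finitely generated subgroup of $G$ is finite, so $\Delta^+=\emptyset$; this settles the implication from (i). For the remainder assume that $k$ --- and therefore every algebraically closed extension of $k$ --- is not algebraic over a finite field, so that Theorem~\ref{t:density} applies. Since $\O=C_1\times\cdots\times C_t$ is nonempty, alternative (i) of that theorem would force $\Delta^+\neq\emptyset$; hence $\Delta^+=\emptyset$ if and only if $\la x_1,\ldots,x_t\ra$ is finite for every $x=(x_1,\ldots,x_t)\in\O(k')$ and every field extension $k'$ of $k$.

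The next step is to translate this into a property of the product set $C_1\cdots C_t=\{x_1\cdots x_t:x_i\in C_i\}\subseteq G$. Let $\pi\colon G\to G/\!/G$ be the adjoint quotient: recall that $G/\!/G$ is an affine space of dimension $\operatorname{rank}G\geqs 1$, that each fibre of $\pi$ is a finite union of conjugacy classes, and that $\pi(g)$ depends only on the semisimple part of $g$. Since an element of finite order has semisimple part of finite order, and semisimple elements of finite order fall into only countably many $G$-classes, $\pi$ maps the torsion elements of $G(k')$ into a fixed countable subset $\Xi\subseteq(G/\!/G)(k')$. Now fix an uncountable extension $k'$ of $k$ and suppose $\Delta^+=\emptyset$. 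Then every element of $(C_1\cdots C_t)(k')$ has finite order, so the constructible set $\pi(C_1\cdots C_t)$ is contained in the countable set $\Xi$; but a constructible subset of an affine space over an uncountable field that is countable must be finite, and since $\pi(C_1\cdots C_t)$ is also irreducible, it is a single point $c_0$. Hence $C_1\cdots C_t\subseteq\pi^{-1}(c_0)$, so $C_1\cdots C_t$ is a finite union of conjugacy classes of $G$.

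It remains to pin down $t$ and the pair. First I would exclude $t\geqs 3$: choose a non-central conjugacy class $D$ occurring in $C_1\cdots C_{t-1}$ (possible since this set has dimension at least $\dim C_1>0$) and a non-central $z\in C_t$; then $Dz\subseteq C_1\cdots C_t\subseteq\pi^{-1}(c_0)$ would be a finite union of conjugacy classes, contradicting the fact --- verified by a short direct computation --- that the translate of a non-central conjugacy class by a non-central element meets infinitely many conjugacy classes. Hence $t=2$, and $C_1C_2$ is a finite union of conjugacy classes, so by the main theorem of \cite{GMT} we have $p>0$ and the pair $C_1,C_2$ appears in \cite[Theorem 1.1]{GMT}; this gives the forward implication. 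For the converse, if $(C_1,C_2)$ is one of the pairs there, then --- using the explicit description, in which $C_1$ and $C_2$ consist of elements of finite order and one checks in each case that $\la x_1,x_2\ra$ is finite for all $x_i\in C_i$ over every extension $k'$ --- we find $G(x)=\la x_1,x_2\ra$ finite for every $x$, so $\Delta^+=\emptyset$. Together with case (i), this proves the corollary.

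I expect the main obstacle to be the passage, in the second paragraph, from ``$\la x_1,\ldots,x_t\ra$ finite for all tuples over all extensions'' to ``$C_1\cdots C_t$ a finite union of conjugacy classes'': one must control the torsion locus of $G$ uniformly over all algebraically closed extensions --- in particular that its image under $\pi$ is countable, with the appropriate care about unipotent parts in characteristic zero --- and then invoke that a countable constructible subset of an affine space over an uncountable field is finite, having checked that constructibility and irreducibility of $\pi(C_1\cdots C_t)$ persist under extension of scalars. The exclusion of $t\geqs 3$ and the bookkeeping needed to extract finiteness of $\la x_1,x_2\ra$, rather than just of the set $C_1C_2$, from each case of \cite[Theorem 1.1]{GMT} are more routine but still need attention.
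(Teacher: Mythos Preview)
Your overall strategy is in the right spirit, but there are two genuine gaps.

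\textbf{The exclusion of $t\geqs 3$.} Your claim that ``the translate of a non-central conjugacy class by a non-central element meets infinitely many conjugacy classes'' is false, and the counterexamples are precisely the pairs in \cite[Theorem~1.1]{GMT}. For instance, take $(G,p)$ one of the cases there, let $D=C_1$ be a class of long root elements and $z\in C_2$ a long root element; then $Dz\subseteq C_1C_2$ is contained in a finite union of classes. So your reduction from $t\geqs 3$ to $t=2$ does not go through as written. In the paper's proof this step is not done by hand at all: one takes the single word $w=x_1\cdots x_t$, observes that the set of semisimple parts of $\{w(x):x\in\O\}=C_1\cdots C_t$ is exactly $\mathcal{S}(w)$, and then \cite[Theorem~1.1]{GMT} is invoked in full strength~--- it already asserts that for $t\geqs 3$ the product $C_1\cdots C_t$ is an infinite union of classes, so $|\mathcal{S}(w)|=\infty$ and Lemma~\ref{l:words3} gives $\Delta^+$ dense. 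Your adjoint-quotient/countability argument, while correct, is then superfluous: it reproves a weak form of Lemma~\ref{l:words3} for this particular word.

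\textbf{The converse direction.} Writing ``one checks in each case that $\la x_1,x_2\ra$ is finite'' is not a proof; this is a structural fact about the exceptional pairs, not a bookkeeping matter. The paper handles it uniformly: for the pairs with $p>0$ one invokes \cite[Corollary~5.14]{GMT}, which shows that $G(x)$ lies in a Borel subgroup of $G$ for every $x\in\O$. Since both classes consist of torsion elements, $G(x)/R_u(G(x))$ is then a finitely generated torsion subgroup of a torus, hence finite, and $R_u(G(x))$ is a finitely generated unipotent group in characteristic $p>0$, hence finite as well (as in the proof of Lemma~\ref{l:words2}). This is what actually forces $\Delta^+=\emptyset$. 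Note also that the paper separately dispatches the $p=0$ entries of \cite[Theorem~1.1]{GMT}: there at least one $C_i$ consists of unipotent (hence infinite-order) elements, so $\Delta^+=\O$ and these pairs do \emph{not} give $\Delta^+=\emptyset$; your case-check would have to recover this distinction.
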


There are only a small number of pairs arising in \cite[Theorem 1.1]{GMT} and they exist if and only if the root system of $G$ has both long and short roots.   

\vs

In \cite{Ge0, Ge1}, Gerhardt studies the topological generation of the classical algebraic groups ${\rm SL}_n(k)$ and ${\rm Sp}_{2n}(k)$ by elements in specified conjugacy classes. The conditions for ${\rm SL}_n(k)$ with $n \geqs 3$ are especially nice and just depend upon the action of the conjugacy class representatives on the natural $n$-dimensional module (see \cite[Theorem 1.1]{Ge1}). 

In this paper, we focus on the simple algebraic groups of exceptional type. We will establish our main results by studying the primitive actions of these groups and the dimensions of the corresponding fixed point spaces. Here the key tool is Theorem \ref{t:main3} below (also see \cite[Lemma 2.4]{Ge0}), which relies on the fact that a simple algebraic group has only finitely many conjugacy classes of maximal closed subgroups of positive dimension (see \cite[Corollary 3]{LS04}).

Let $M$ be a maximal closed subgroup of $G$ and consider the natural transitive action of $G$ on the coset variety $X = G/M$. For $g \in G$, let 
\[
X(g) = \{ x \in X \,:\, x^g = x\}
\]
be the fixed point space of $g$ on $X$, which is a subvariety of $X$, and set
\[
\a(G,M,g) = \frac{\dim X(g)}{\dim X}.
\]
This is a natural analogue for algebraic groups of the classical notion of fixed point ratio for actions of finite groups and there is a close connection between this quantity and fixed point ratios for the corresponding finite groups of Lie type. See \cite{Bur1, LLS} for more details. 

\begin{theorem}\label{t:main3} 
Let $G$ be a simple algebraic group over an algebraically closed field and let $M_1, \ldots, M_s$  represent the conjugacy classes of maximal closed subgroups of $G$ of positive dimension. If $C_1, \ldots, C_t$ are non-central conjugacy classes such that
\[
\sum_{i=1}^t \a(G,M_j,g_i)  <  t-1 
\]
for all $j$, where $C_i = g_i^G$, then $\L$ contains a nonempty open subset of $\O = C_1 \times \cdots \times C_t$.
\end{theorem}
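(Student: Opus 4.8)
The plan is to show that $\O\setminus\L$ is contained in a proper closed subvariety of $\O$; since $\O=C_1\times\cdots\times C_t$ is irreducible (a product of conjugacy classes, each an orbit and hence irreducible), the complement of this subvariety is then a nonempty open subset of $\O$ lying in $\L$. First I identify $\O\setminus\L$. If $x=(x_1,\dots,x_t)\in\O$ and $G(x)\leqs H$ for some $H\in\mathcal{M}$, then $H$ is a positive-dimensional maximal closed subgroup, hence $G$-conjugate to a unique $M_j$, and $x_i\in H$ for all $i$; conversely, if all the $x_i$ lie in a single conjugate $aM_ja^{-1}$, then $\la x_1,\dots,x_t\ra\leqs aM_ja^{-1}$, so $G(x)\leqs aM_ja^{-1}\in\mathcal{M}$. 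Hence $\O\setminus\L=\bigcup_{j=1}^s Z_j$ with
\[
Z_j=\{x\in\O\,:\,x_i\in aM_ja^{-1}\ (1\leqs i\leqs t)\ \text{for some}\ a\in G\}.
\]
Since there are only finitely many $M_j$ (by \cite[Corollary 3]{LS04}), it suffices to show $\dim Z_j<\dim\O$ for each $j$.

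Fix $j$ and assume $C_i\cap M_j\neq\emptyset$ for all $i$ (otherwise $Z_j=\emptyset$). I would bound $\dim Z_j$ using the incidence variety
\[
W_j=\{(aM_j,x_1,\dots,x_t)\in (G/M_j)\times C_1\times\cdots\times C_t\,:\,x_i\in aM_ja^{-1}\ \text{for all}\ i\},
\]
noting that the conjugate $aM_ja^{-1}$ depends only on the coset $aM_j$. The projection of $W_j$ to $C_1\times\cdots\times C_t$ has image $Z_j$, so $\dim Z_j\leqs\dim W_j$, while the projection to $G/M_j$ has fibre $\prod_{i=1}^t(C_i\cap aM_ja^{-1})$ over $aM_j$, of dimension $\sum_{i=1}^t\dim(C_i\cap M_j)$ by $G$-conjugacy of the $M_j$. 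Hence $\dim W_j=\dim(G/M_j)+\sum_{i=1}^t\dim(C_i\cap M_j)$.

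The crux is to rewrite $\dim(C_i\cap M_j)$ in terms of $\a(G,M_j,g_i)$. Setting $X=G/M_j$ and considering the two morphisms $h\mapsto hM_j$ and $h\mapsto h^{-1}g_ih$ on $\{h\in G:h^{-1}g_ih\in M_j\}$ (the first with fibres of dimension $\dim M_j$, the second with fibres $C_G(g_i)h$ of dimension $\dim C_G(g_i)$), one gets the standard identity $\dim X(g_i)=\dim(g_i^G\cap M_j)+\dim C_G(g_i)-\dim M_j$ (cf.\ \cite{Bur1, LLS}). Since $\dim X(g_i)=\a(G,M_j,g_i)\dim X$, $\dim C_G(g_i)=\dim G-\dim C_i$ and $n_j:=\dim X=\dim G-\dim M_j>0$, this rearranges to $\dim(C_i\cap M_j)=(\a(G,M_j,g_i)-1)n_j+\dim C_i$. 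Substituting,
\[
\dim W_j=n_j\Big(1-t+\sum_{i=1}^t\a(G,M_j,g_i)\Big)+\sum_{i=1}^t\dim C_i<\sum_{i=1}^t\dim C_i=\dim\O,
\]
using the hypothesis $\sum_i\a(G,M_j,g_i)<t-1$ and $n_j>0$. Thus $\dim(\O\setminus\L)=\max_j\dim Z_j<\dim\O$, and the closure of $\O\setminus\L$ is a proper closed subvariety of the irreducible variety $\O$, completing the proof.

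The argument is essentially a single dimension count, so I do not anticipate a genuine obstacle; the only care needed is in the fixed-point identity — checking that $aM_ja^{-1}$ is well defined on cosets, that the fibres of $h\mapsto h^{-1}g_ih$ and $h\mapsto hM_j$ restricted to $\{h\in G:h^{-1}g_ih\in M_j\}$ have dimensions $\dim C_G(g_i)$ and $\dim M_j$ respectively, and that $\dim(C_i\cap aM_ja^{-1})=\dim(C_i\cap M_j)$ by conjugation — after which the strict inequality is immediate from $n_j>0$.
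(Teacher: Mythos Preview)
Your argument is correct and is essentially identical to the paper's: your incidence variety $W_j$ coincides with the paper's $Y=\{(g_1,\dots,g_t,x):g_i\in C_i,\ x\in X(g_i)\}$ (since $x=aM_j\in X(g_i)$ iff $g_i\in aM_ja^{-1}$), and both compute $\dim W_j$ via the projection to $G/M_j$, invoke the identity \eqref{e:lls} to rewrite $\dim(C_i\cap M_j)$, and conclude that the projection to $\O$ is not dominant. The paper likewise finishes by citing \cite[Corollary 3]{LS04} for the finiteness of conjugacy classes of positive-dimensional maximal closed subgroups.
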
 

In order to apply this theorem in the context of exceptional algebraic groups, we present the following result on the dimensions of fixed point spaces, which may be of independent interest. We refer the reader to Theorem \ref{t:mainex} for a more detailed statement (the stronger form will be needed for the proof of Theorem \ref{t:main5s} below).

\begin{theorem}\label{t:main4}
Let $G$ be a simple exceptional algebraic group over an algebraically closed field and let $\mathcal{M}$ be the set of positive dimensional maximal closed subgroups of $G$. Then 
\[
\max\{\a(G,M,g) \,:\, \mbox{$g \in G$ non-central, $M \in \mathcal{M}$}\} = \kappa(G),
\]
where 
\[
\begin{array}{cccccc} \hline
G & E_8 & E_7 & E_6 & F_4 & G_2 \\ 
\kappa(G) & 15/19 & 7/9 & 10/13 & 3/4 & 2/3 \\ \hline
\end{array}
\]
\end{theorem}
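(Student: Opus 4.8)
The plan is to compute the maximum $\max\{\a(G,M,g)\}$ by working through the list of positive-dimensional maximal closed subgroups $M$ of each exceptional group $G=E_8,E_7,E_6,F_4,G_2$, and for each such $M$ identifying the non-central $g\in G$ for which $\dim X(g)/\dim X$ is largest, where $X=G/M$. The maximal closed subgroups of positive dimension in the exceptional groups are classified (parabolic subgroups, reductive subgroups of maximal rank, and the handful of other reductive maximal subgroups), so the set $\mathcal{M}$ is explicit and finite up to conjugacy. For a fixed $M$, the function $g\mapsto \dim X(g)$ is upper semicontinuous and constant on $G$-classes, so it suffices to range over class representatives; moreover $\dim X(g)$ is maximized on classes of small dimension, i.e.\ we expect the extremal $g$ to be a well-chosen unipotent element (a long or short root element) or a semisimple involution, depending on $G$ and $p$.

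The key computational input is a formula for $\dim X(g)$. First I would use the standard fact that for $g$ semisimple, $X(g) = \{xM : x^{-1}gx \in M\}$ decomposes into finitely many $C_G(g)$-orbits, so $\dim X(g) = \max_{y} \dim C_G(g) - \dim C_M(y)$ over representatives $y$ of the $M$-classes fused into $g^G$; equivalently one can use $\dim X(g) = \dim X - \min(\dim g^G - \dim(g^G\cap M))$ type estimates. For $g$ unipotent the analysis is parallel using centralizer dimensions in $G$ and in $M$. In practice this reduces each case to: (a) determine which classes of $G$ meet $M$, and (b) for those, compare $\dim C_M(\cdot)$ with $\dim C_G(\cdot)$. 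For parabolic subgroups $M=P$ one instead works directly on the flag variety $G/P$, where $\dim X(g)$ can be read off from the action of $g$ on the relevant (co)adjoint or minuscule-type module, or from Lawther's tables of $\dim X(g)$ for unipotent classes on $G/P$; here the maximum over $g$ is typically attained at a root element.

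The bulk of the work is a finite but substantial case-check across all $M\in\mathcal{M}$ and all relevant classes, and the main obstacle is handling the reductive maximal subgroups of maximal rank (e.g.\ $A_1A_2$, $A_8$, $D_8$, $E_7A_1$ in $E_8$, and their analogues), where the fusion of $G$-classes into $M$ and the comparison of centralizer dimensions is most delicate and can be characteristic-dependent; the small-characteristic cases (especially $p=2,3$) will need separate attention because of extra unipotent classes and the behaviour of root elements. Once all cases are tabulated, the claimed values $\kappa(G) = 15/19, 7/9, 10/13, 3/4, 2/3$ emerge as the maxima, and I would double-check each by exhibiting the specific pair $(M,g)$ achieving it — for instance, for $G_2$ one expects the bound $2/3$ to come from a maximal-rank subgroup of type $A_1\tilde A_1$ or a short root element on a suitable $5$-dimensional action — and verifying that all other pairs give strictly smaller ratios. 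I would then record the sharper class-by-class information in the companion statement Theorem \ref{t:mainex}, since that refined data is what feeds into Theorem \ref{t:main3} to prove the $t\geqs 5$ (resp.\ $t\geqs 4$ for $G_2$) generation results.
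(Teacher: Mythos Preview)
Your overall strategy is essentially the paper's: Theorem \ref{t:main4} is deduced from the sharper Theorem \ref{t:mainex}, which is proved by a case-by-case analysis over the finitely many conjugacy classes of positive-dimensional maximal closed subgroups $M$ of $G$, using the formula $\dim X(g) = \dim X - \dim g^G + \dim(g^G\cap M)$ and drawing heavily on the fixed-point-space bounds of Lawther--Liebeck--Seitz for parabolics and on Lawther's fusion tables for reductive $M$.

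Two remarks, however. First, the paper opens with an explicit reduction to $g\in\mathcal{P}$ (elements of prime order, together with all nontrivial unipotents if $p=0$), using $X(g)\subseteq X(g^n)$ and, for infinite-order semisimple $g$, the fact that the closure of $\langle g\rangle$ contains a positive-dimensional torus and hence elements of order $2$ or $3$. You should state this reduction rather than rely on the heuristic that ``small classes maximise $\dim X(g)$'', which is not literally true and does not by itself cover infinite-order semisimple elements. Second, your illustrative $G_2$ example is wrong: for $M^0=A_1\tilde{A}_1$ one has $\a(G,M,g)\leqs 1/2$ for every $g$, and the $5$-dimensional parabolic quotients $G/P_i$ give at most $3/5$. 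The value $\kappa(G_2)=2/3$ is attained only at $M^0=A_2$ (or $\tilde{A}_2$ when $p=3$) with $g$ a long (respectively short) root element, where $\dim X=6$ and $\dim X(g)=4$. This does not affect your strategy, but it does indicate that the actual tabulation --- which is where all the content lies --- still needs to be carried out.
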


By combining this with Theorems \ref{t:density}, \ref{t:dense2} and \ref{t:main3}, together with Corollary \ref{c:gmt}, we immediately obtain the following result.  

\begin{theorem}\label{t:main5} 
Let $G$ be a simple exceptional algebraic group over an algebraically closed field that is not algebraic over a finite field and set $\O = C_1 \times \cdots \times C_t$, where each $C_i$ is a non-central conjugacy class of $G$. If $t \geqs 5$ then $\Delta$ is a dense subset of $\O$. Moreover, if $G = G_2$ then the same conclusion holds for $t \geqs 4$.
\end{theorem}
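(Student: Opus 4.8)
The plan is to combine the abstract density results (Theorems~\ref{t:density}, \ref{t:dense2}), the reduction to fixed point ratios (Theorem~\ref{t:main3}), the explicit bounds $\kappa(G)$ of Theorem~\ref{t:main4}, and Corollary~\ref{c:gmt}, and simply check that the numerical hypothesis of Theorem~\ref{t:main3} is satisfied. Concretely, fix non-central classes $C_1,\dots,C_t$ with $C_i=g_i^G$. For any positive dimensional maximal closed subgroup $M\in\mathcal M$ we have $\a(G,M,g_i)\leqs\kappa(G)$ for each $i$ by Theorem~\ref{t:main4}, so
\[
\sum_{i=1}^t \a(G,M,g_i) \leqs t\,\kappa(G).
\]
Thus it suffices to find the smallest $t$ for which $t\,\kappa(G) < t-1$, i.e. $t > 1/(1-\kappa(G))$. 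Since $1-\kappa(G)$ equals $4/19, 2/9, 3/13, 1/4, 1/3$ for $G=E_8,E_7,E_6,F_4,G_2$ respectively, the threshold $1/(1-\kappa(G))$ is $19/4=4.75$, $9/2=4.5$, $13/3\approx4.33$, $4$, $3$, so $t\geqs 5$ works in all cases, and $t\geqs 4$ already works for $G=G_2$. (For $F_4$ one must be slightly careful that $t=4$ gives equality $4\kappa=3=t-1$, not strict inequality, which is why $t\geqs 5$ is imposed uniformly; for $G_2$, $t=4$ gives $4\cdot\frac23=\frac83<3$, strict.)

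Having verified the inequality, Theorem~\ref{t:main3} yields that $\L$ contains a nonempty open subset of $\O=C_1\times\cdots\times C_t$; in particular $\L$ is nonempty. Next I would produce a single tuple $x\in\O(k')$, over some extension $k'$, with $G(x)=G(k')$. One clean way: since $\L\neq\emptyset$ and $\O$ is irreducible, it is enough to know that a generic tuple in $\O$ lies in $\Delta^+$ as well, for then $\Delta=\Delta^+\cap\L$ meets a nonempty open set. The nonemptiness of $\Delta^+$ (equivalently, that we are not in the exceptional cases of Corollary~\ref{c:gmt}) holds here because $k$ is not algebraic over a finite field and, for $t\geqs 4$, we are certainly not in case~(ii) of Corollary~\ref{c:gmt}, which only occurs when $t=2$. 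Hence by Theorem~\ref{t:density}, $\Delta^+$ is dense in $\O$. Then $\Delta^+$ meets the nonempty open set provided by Theorem~\ref{t:main3} inside $\L$, giving a point of $\Delta=\Delta^+\cap\L$; equivalently, some $x\in\O$ has $G(x)=G$. Finally, Theorem~\ref{t:dense2} upgrades the existence of one such point to the density of $\Delta$ in $\O$.

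The only genuinely substantive inputs are Theorems~\ref{t:main3} and~\ref{t:main4}; given those, the argument above is essentially bookkeeping, so there is no real obstacle in this particular deduction — the work has been front-loaded into the fixed-point-ratio estimates. The one point requiring mild care is the boundary behavior at $F_4$ with $t=4$ (equality rather than strict inequality), which forces the uniform bound $t\geqs 5$ and makes $G_2$ genuinely special; I would state this explicitly. I would also remark that the bounds are best possible: for suitable choices of $C_i$ realizing the maximum $\kappa(G)$ and lying inside a common maximal subgroup $M$, no tuple in $\O$ generates densely, which accounts for the claim (made elsewhere in the paper) that $t\geqs 5$ (resp.\ $t\geqs 4$ for $G_2$) cannot be lowered.
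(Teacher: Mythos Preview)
Your proposal is correct and follows essentially the same approach as the paper, which states that Theorem~\ref{t:main5} follows ``immediately'' by combining Theorems~\ref{t:density}, \ref{t:dense2}, \ref{t:main3} and Corollary~\ref{c:gmt} with Theorem~\ref{t:main4}. You have simply spelled out that deduction: verify $t\kappa(G)<t-1$ numerically, apply Theorem~\ref{t:main3} to get $\L$ containing a nonempty open set, use Corollary~\ref{c:gmt} and Theorem~\ref{t:density} to get $\Delta^+$ dense (since $t\geqs 4>2$), intersect to find a point of $\Delta=\Delta^+\cap\L$, and then invoke Theorem~\ref{t:dense2}.
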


\begin{remk}
We will show below in Theorem \ref{t:best} that the bounds on $t$ in Theorem \ref{t:main5}  are best possible. For example, $\Delta$ is empty when $G = E_8$, $t=4$ and each $C_i$ is the class of long root elements.  
\end{remk}

If we exclude certain classes, then the proof of Theorem \ref{t:main5} yields the following result on triples (with some additional effort, we could remove the prime order assumption). 

\begin{theorem}\label{t:main5s}  
Let $G$ be a simple exceptional algebraic group over an algebraically closed field 
of characteristic $p \geqs 0$ that is not algebraic over a finite field. Let $C_1,C_2,C_3$  be non-central conjugacy classes of $G$ consisting of either unipotent elements or elements of prime order modulo the center of $G$. Then one of the following holds:
\begin{itemize}\addtolength{\itemsep}{0.2\baselineskip}
\item[{\rm (i)}] There exist $x_i \in C_i$ such that $G = \overline{\la x_1, x_2, x_3\ra}$ (and therefore the set of such triples is dense in $C_1 \times C_2 \times C_3$); 
\item[{\rm (ii)}]  Some $C_i$ consists of long root elements (or short root elements if $(G,p)$ is $(F_4,2)$ or $(G_2,3)$); 
\item[{\rm (iii)}] $G=F_4$, $p \ne 2$ and some $C_i$ consists of involutions with centralizer $B_4$.
\end{itemize} 
\end{theorem}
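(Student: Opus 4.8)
The plan is to deduce the statement from the fixed point space criterion of Theorem~\ref{t:main3} together with the density results of Theorems~\ref{t:density} and~\ref{t:dense2}; the one new ingredient is a sharpening of Theorem~\ref{t:main4} which identifies exactly the non-central classes $g^G$ for which $\a(G,M,g)$ can be as large as $2/3$, these being precisely the classes excluded in~(ii) and~(iii).

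First I would record the reduction. Write $\O = C_1 \times C_2 \times C_3$; each $C_i$ is an irreducible locally closed subvariety of $G$ (a conjugacy class, hence the image of a morphism from the connected group $G$), so $\O$ is irreducible and locally closed in $G^3$ and Theorems~\ref{t:density}, \ref{t:dense2} and~\ref{t:main3} all apply. Since $t=3>2$ and $k$ is not algebraic over a finite field, Corollary~\ref{c:gmt} gives $\Delta^+ \ne \emptyset$, and then Theorem~\ref{t:density} forces $\Delta^+$ to be dense in $\O$. Recall that $\Delta = \Delta^+ \cap \L$. Hence it suffices to show that $\L$ contains a nonempty open subset $U$ of $\O$: for then $U$ is dense (as $\O$ is irreducible), the intersection of the dense set $\Delta^+$ with the nonempty open set $U$ is again dense in $\O$, and so $\Delta \supseteq \Delta^+ \cap U$ is dense; in particular $\Delta \ne \emptyset$, which is assertion~(i). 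So from now on assume that none of $C_1,C_2,C_3$ is of the type appearing in~(ii) or~(iii), and aim to produce such a $U$. By Theorem~\ref{t:main3} (with $t=3$) it is enough to check that
\[
\a(G,M,g_1) + \a(G,M,g_2) + \a(G,M,g_3) < 2
\]
for every positive-dimensional maximal closed subgroup $M$ of $G$, where $C_i = g_i^G$; and since the $C_i$ range over an arbitrary triple of non-central classes of the permitted kind, this follows at once from the uniform bound $\a(G,M,g) < 2/3$ valid for every $M \in \mathcal{M}$ and every non-central $g \in G$ that is unipotent or of prime order modulo $Z(G)$ with $g^G$ not among the excluded classes.

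The substance of the proof is therefore this uniform bound, which strengthens Theorem~\ref{t:main4}: for $G \ne G_2$ one has $\kappa(G) > 2/3$, so one must show that the only non-central classes with $\a(G,M,g) \geqs 2/3$ for some $M \in \mathcal{M}$ are the long root element classes --- together with the short root element classes when $(G,p) = (F_4,2)$ or $(G_2,3)$, and the class of involutions with centralizer of type $B_4$ when $G = F_4$, $p \ne 2$ --- while for $G = G_2$ the value $\kappa(G_2)=2/3$ is attained only by those excluded classes. To establish this I would run through the five exceptional types. By Borel--Tits, a positive-dimensional maximal closed subgroup $M$ is either a maximal parabolic or has reductive identity component, and in the latter case there are only finitely many classes and they are known explicitly (see \cite{LS04}). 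For each such $M$ and each relevant class $g^G$, bound $\dim X(g)$ for $X = G/M$ via the standard estimate expressing it in terms of $\dim C_G(g)$ and the dimensions $\dim C_M(t)$ as $t$ runs over the $M$-classes meeting $g^G$ (equivalently, by fibering $\{xM : g \in {}^xM\}$ over the $M$-classes inside $g^G \cap M$), and compare with $\dim X$. The largest ratios occur when $g^G$ is as small as possible --- a root element class --- and $M$ is a maximal parabolic; these cases must be computed exactly and they account for $\kappa(G)$ and for~(ii). For every other unipotent class $\dim g^G$ is large enough to yield $\a(G,M,g) < 2/3$, and for elements of prime order modulo $Z(G)$ the needed bounds may, for the purpose of the displayed inequality, be read off from those for their images in the adjoint group.

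The main obstacle is this last case analysis, and in particular the ``near-miss'' classes: the small unipotent classes just above the root elements, and the involution classes in $E_7$, $E_8$ and $F_4$, for which one needs the sharp conclusion $\a(G,M,g) < 2/3$ rather than merely $\a(G,M,g) \leqs \kappa(G)$, and for which the identification of the genuinely exceptional cases --- the $B_4$-involutions of $F_4$ in odd characteristic --- must be made carefully. Keeping the bookkeeping correct across all triples $(G,M,g)$ is where essentially all of the work lies; its detailed output is recorded in Theorem~\ref{t:mainex}, and granting that, the reductions above complete the proof.
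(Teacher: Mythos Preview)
Your proposal is correct and follows essentially the same route as the paper: one combines Corollary~\ref{c:gmt} (to get $\Delta^+$ dense, since $t=3$) with Theorem~\ref{t:main3} (to get $\L$ containing a nonempty open set, using the bound $\a(G,M,g_i)<2/3$), and the required bound is precisely the content of Corollary~\ref{c:1} to Theorem~\ref{t:mainex}. The paper simply asserts that Theorem~\ref{t:main5s} ``follows immediately'' once Theorem~\ref{t:mainex} is established, and your write-up correctly unpacks that assertion.
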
 

\begin{remk}\label{r:lie}  
If $\mathfrak{g}$ is the Lie algebra of the simply connected simple algebraic group $G$, then it is natural to consider the existence of generators for $\mathfrak{g}$ in given orbits under the adjoint representation of $G$. This problem was studied in \cite{CW} and \cite{GG}. The answer is essentially the same as in Theorem \ref{t:main5}, except for the cases when the underlying characteristic $p$ is \emph{special} for $G$, which occurs when 
$(G,p)$ is one of $(A_1,2)$, $(B_n,2)$, $(C_n,2)$, $(F_4,2)$ or $(G_2,3)$. 
Note that these are precisely the cases where $\mathfrak{g}$ contains nontrivial non-central proper ideals. See \cite{GG3} for a discussion of the special cases. 
\end{remk}

\begin{remk}\label{r:gs}   
In \cite{GS}, Guralnick and Saxl study the analogous problem for finite simple groups,   determining an upper bound on the number of elements in a fixed conjugacy class that are needed to generate the group. For classical groups, the given upper bound is almost always the dimension of the natural module (and this is best possible, with known exceptions). For the exceptional groups, an upper bound of the form of $\ell + 3$ is given, where $\ell$ is the rank of the ambient algebraic group (for $F_4(q)$, the bound is $\ell + 4= 8$). We conjecture that the bounds in Theorem \ref{t:main5} extend in the obvious way to the corresponding finite simple groups of Lie type (so $5$ conjugates should be sufficient for $E_8(q)$, etc.). Moreover, such bounds would be best possible (see Remark \ref{r:finitebest}).

The bounds presented in \cite{GS} have proved to be useful in a wide range of problems. Indeed, several applications are already given in \cite{GS}. This includes a classification of the irreducible modules for quasisimple groups containing bireflections (and more generally, elements acting with a large fixed point space), as well as a description  of the simple primitive permutation groups of special degrees. The bounds have also played an important role in several papers concerning fixed point ratios and base sizes for almost simple primitive permutation groups (see \cite{fpr4, Base}, for example).
\end{remk} 

Recall that if an algebraic group $G$ acts on a variety $X$, then we say that a closed subgroup $H$ of $G$ is the \emph{generic stabilizer} for this action if there exists a nonempty open subset $X_0$ of $X$ such that the stabilizer $G_x$ is conjugate to $H$ for all $x \in X_0$. In particular, we say that the generic stabilizer is trivial if $H =1$ (more generally, one can consider the stabilizer as a group scheme). By arguing as in \cite{GG}, using the bounds above in Theorem \ref{t:main5}, we can recover a version of \cite[Theorem 2]{GL} for exceptional groups.

In order to state this result, let $V$ be a $kG$-module and set
\begin{equation}\label{e:VG}
V^G = \{v \in V \,:\, \mbox{$v^g = v$ for all $g \in G$}\}.
\end{equation}
We say that $V$ is \emph{generically free} if the generic stabilizer for the action of $G$ on $V$ is trivial.

\begin{theorem} \label{t:generic}  
Let $G$ be a simple exceptional algebraic group over an algebraically closed field $k$ and consider the action of $G$ on a faithful rational finite dimensional $kG$-module $V$. If $\dim V/V^G > d(G)$, where $d(G) = 3(\dim G - {\rm rank}\, G)$, then $V$ is generically free. 
\end{theorem}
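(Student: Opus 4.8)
The plan is to adapt the argument of \cite{GG}, reducing \emph{generic freeness} to fixed-space estimates that are then supplied by Theorem~\ref{t:main4} and its sharper form Theorem~\ref{t:mainex}. Write $r = {\rm rank}\,G$ and $N = \dim G - r$, so that $d(G) = 3N$ and the hypothesis reads $\dim V - \dim V^G > 3N$. The stabilizer in $G$ of $v_0 + w$, with $v_0 \in V^G$, equals the stabilizer of $w$, so I would first replace $V$ by a $G$-module complement of $V^G$ and assume $V^G = 0$ and $\dim V > 3N$ (in positive characteristic, where $V^G$ need not split off, one argues with $V/V^G$ instead; the estimates below are insensitive to this). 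Since $V\setminus\{v : G_v=1\}$ is the image of the morphism $\{(g,v) : g\ne 1,\ gv=v\}\to V$, it is constructible, so $V$ is generically free if and only if the set $Y = \{v\in V : G_v\ne 1\}$ is not dense in $V$. This is what I would prove.

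Suppose, for a contradiction, that $Y$ is dense. On a dense open subset of $V$ the stabilizers form a single conjugacy class of closed subgroups (their identity components by Richardson's theorem, and the full class generically as well); call a representative $S$, so that density of $Y$ forces $S\ne 1$. Now $N_G(R)$ stabilizes $V^R$ for any closed $R$, and $GV^R$ is the image of $G\times_{N_G(R)}V^R\to V$, so $\dim GV^R\leqs \dim(G/N_G(R)) + \dim V^R$; likewise $\dim GV^g\leqs \dim g^G + \dim V^g$ for non-central $g$ (using $\dim N_G(\langle g\rangle)=\dim C_G(g)$). If $R := S^{\circ}\ne 1$, then the generic $v\in V$ is fixed by a conjugate of $R$, so $GV^R$ is dense and $\dim V\leqs \dim(G/N_G(R)) + \dim V^R$; if $S$ is finite, picking $g\in S$ of prime order gives $GV^g$ dense and $\dim V\leqs \dim g^G + \dim V^g$. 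Thus it suffices to establish
\[
\dim V^g + \dim g^G < \dim V \quad\text{and}\quad \dim V^R + \dim(G/N_G(R)) < \dim V
\]
for all non-central $g\in G$ and all nontrivial closed connected subgroups $R\leqs G$; equivalently, ${\rm codim}_V V^g > \dim g^G$ and ${\rm codim}_V V^R > \dim(G/N_G(R))$.

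The remaining, and genuinely hard, step is to verify these codimension bounds using $\dim V > 3N$ together with the structure of exceptional groups. For a connected $R$ one has $\dim(G/N_G(R))\leqs \dim G - \dim R$, and $V^R\subseteq V^D$ for any one-dimensional closed connected $D\leqs R$, so matters reduce to lower bounds on ${\rm codim}_V V^D$ for $D$ a one-dimensional torus or a root subgroup, and on ${\rm codim}_V V^g$ for $g$ unipotent or semisimple. The extremal cases — where the fixed space is as large as possible while the class (or subgroup orbit) is small — are precisely the long root elements and root subgroups (and short root elements when $(G,p)$ is $(F_4,2)$ or $(G_2,3)$), which is exactly why the value $d(G) = 3(\dim G - {\rm rank}\,G)$, and not something smaller, is required, and why the analysis runs parallel to the exceptional classes singled out in Theorems~\ref{t:main5} and \ref{t:main5s}. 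Here I would appeal to the detailed fixed-point-space information in Theorem~\ref{t:mainex} (and, via Theorem~\ref{t:main3}, to the finiteness of the set $\mathcal{M}$), reducing each remaining case to a bounded weight computation: bounds of the form $\dim V^g\leqs \kappa(G)\dim V$ dispose of non-root classes, while a separate argument for root elements, valid once $\dim V$ exceeds $3N$, finishes the proof. I expect the main obstacle to be exactly this last point — controlling $\dim V^g$ for long/short root elements on an arbitrary faithful module — which is the weight-theoretic core of Theorem~\ref{t:main4}/\ref{t:mainex} and the place where the explicit constant $d(G)$ is forced.
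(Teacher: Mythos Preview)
Your overall architecture is right: reduce to $V^G=0$ and then verify the single inequality
\[
\dim V^g + \dim g^G < \dim V
\]
for every non-central $g$ of prime order (or nontrivial unipotent in characteristic $0$). The detour through connected subgroups $R$ is harmless but unnecessary, since a nontrivial generic stabilizer always contains such a $g$.

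The gap is in how you propose to bound $\dim V^g$. Theorem~\ref{t:mainex} and the constant $\kappa(G)$ concern the ratio $\dim X(g)/\dim X$ on a \emph{coset variety} $X=G/M$; they say nothing directly about $\dim V^g/\dim V$ on an arbitrary faithful module, and a statement of the form ``$\dim V^g\leqs \kappa(G)\dim V$'' is neither asserted nor proved anywhere in the paper. The ``bounded weight computation'' you allude to would be a substantial separate project.

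The bridge you are missing is elementary and is exactly what the paper uses: if $G$ is topologically generated by $t$ conjugates of $g$ and $V^G=0$, then the intersection of their fixed spaces is $0$, whence $\dim V^g\leqs \tfrac{t-1}{t}\dim V$. By Theorem~\ref{t:main5s}, for $g$ not among the exceptional elements listed there one may take $t=3$, so $\dim V^g\leqs \tfrac{2}{3}\dim V$; since $\dim g^G\leqs N=\dim G-\operatorname{rank}G$, the inequality holds once $\dim V>3N=d(G)$. For the remaining elements (long root elements, short root elements for $(F_4,2)$ and $(G_2,3)$, and the $B_4$-involution in $F_4$) one uses Theorem~\ref{t:main5} with $t=5$ (or $t=4$ for $G_2$), obtaining $\dim V^g\leqs \tfrac{4}{5}\dim V$; the inequality then holds once $\dim V>5\dim g^G$, and one checks in each case that $5\dim g^G<d(G)$ (for instance, the long root class in $E_8$ has dimension $58$ and $5\cdot 58=290<720$). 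So the ``main obstacle'' you anticipate does not exist: no weight-theoretic analysis of root elements on an arbitrary $V$ is needed.
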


By inspection of the irreducible modules of low dimension, it follows by \cite{GL} that the same conclusion holds whenever $V$ is irreducible and $\dim V > \dim G$.  Indeed, with a bit more effort, it is not too hard to use the above result to show that  $d(G)$ can be replaced by $\dim G$ (this is for an exceptional group $G$; the same conclusion almost always holds when $G$ is classical, but there are exceptions -- see \cite{GL}).  We refer the reader to \cite{Ge1} for similar bounds in the case $G = {\rm SL}_{n}(k)$. Much more generally, different methods were used in \cite{GL} to compute generic stabilizers for the action of any simple algebraic group on any finite dimensional irreducible module (in particular, \cite[Theorem 1]{GL} establishes the existence of a generic stabilizer in this situation). 

\vs

We will also use our methods to establish new results on the random generation of finite simple exceptional groups of Lie type. More generally, let $H$ be a finite group and let $I_m(H)$ be the set of elements of order $m$ in $H$. Then for positive integers $r$ and $s$, let
\[ 
\mathbb{P}_{r,s}(H) = \frac{|\{(x,y) \,:\, x \in I_r(H), y \in I_s(H), H = \la x, y \ra\}|}{|I_r(H)||I_s(H)|}
\]
be the probability that $H$ is generated by randomly chosen elements of orders $r$ and $s$ (if $I_r(H)$ or $I_s(H)$ is empty, then we set $\mathbb{P}_{r,s}(H) =0$). It is well known that  every finite simple group is $2$-generated and so there is a particular interest in studying this probability when $H$ is a simple group. It is also natural to assume that both $r$ and $s$ are primes.

Clearly, in this setting, there are no such pairs if $(r,s)=(2,2)$. The case $(r,s)=(2,3)$ has attracted significant attention because the groups with such a generating pair coincide with the images of the modular group ${\rm PSL}_2(\mathbb{Z})$. Here one of the main results is \cite[Theorem 1.4]{LSh96}, which states that if $H$ is a finite simple classical group then
\[
\mathbb{P}_{2,3}(H) \to \left\{\begin{array}{ll}
1 & \mbox{if $H \ne {\rm PSp}_{4}(q)$} \\
1/2 & \mbox{if $H = {\rm PSp}_{4}(p^f)$ and $p \geqs 5$} \\
0 & \mbox{if $H = {\rm PSp}_{4}(p^f)$ and $p \in \{2,3\}$}
\end{array}\right.
\]
as $|H|$ tends to infinity. The analogous result for an exceptional group $H$ is \cite[Theorem 9]{GLLS}, which shows that $\mathbb{P}_{2,3}(H) \to 1$ as $|H| \to \infty$ (with the obvious exception of the Suzuki groups, which do not contain elements of order $3$). The proof of the latter result is based on a more general observation in \cite{GLLS}, which implies that it is sufficient to work in the ambient algebraic group (and check which conjugacy classes are invariant under the corresponding Steinberg endomorphism). For arbitrary primes $r$ and $s$ (with $(r,s) \ne (2,2)$), the main theorem of \cite{LSh} shows that if $H$ is a finite simple group of Lie type, then $\mathbb{P}_{r,s}(H) \to 1$ as the rank of $H$ tends to infinity. Similarly, \cite[Theorem 1.3]{Ge1} states that if $H = {\rm PSL}_{n}(q)$ then $\mathbb{P}_{r,s}(H) \to 1$ as $q$ tends to infinity and 
$r, s$ both divide $|{\rm PSL}_n(q)|$.    

Our main result in this direction is Theorem \ref{t:main7} below, which establishes the random $(r,s)$-generation of finite simple exceptional groups for all appropriate primes $r$ and $s$. The proof relies on an extension of  Theorem \ref{t:main4} on the dimensions of fixed point spaces for primitive actions of exceptional algebraic groups. More precisely, we establish a stronger upper bound on $\a(G,M,g)$ for representatives $g \in G$ of certain ``large" conjugacy classes of $G$. This is the content of Theorem \ref{t:main6}. In order to give a precise statement, we need to introduce some notation.

Let $G$ be a simply connected simple algebraic group over the algebraic closure of a finite field of characteristic $p$ and assume that the type of $G$ is one of the following:
\begin{equation}\label{e:gro}
E_8, \, E_7, \, E_6, \, F_4, \, G_2, \, D_4, \, B_2 \, (p=2).
\end{equation}
Let $\s$ be a Steinberg endomorphism of $G$ such that $G_{\s} = G(q)$ is a finite quasisimple exceptional group of Lie type over $\mathbb{F}_q$, where $q=p^f$ (here $G(q)$ could be twisted). Let $r$ be a prime and define
\begin{align*}
G_{[r]}  & = \{ g \in G \, : \, g^r \in Z(G) \} \\
\mathcal{C}(G,r,q) & = \max\{\dim g^G \,:\, \mbox{$g \in G(q)$ has order $r$ modulo $Z(G)$}\},
\end{align*}
so $\mathcal{C}(G,r,q) \leqs \dim G_{[r]}$. Note that if $\bar{G} = G/Z(G)$ denotes the corresponding adjoint group, then $\dim G_{[r]} = \dim \bar{G}_{[r]}$ and so we can read off the dimension of $G_{[r]}$ from \cite{Law05}. In particular, if $r \geqs h$, where $h$ is the Coxeter number of $G$ (recall that $h+1  =\dim G /{\rm rank}\, G$), then $G$ contains a regular element of order $r$ and so we get 
\[
\dim G_{[r]} = \dim G - {\rm rank}\, G.
\] 
The dimension of $G_{[r]}$ for $r<h$ is recorded in Table \ref{tab:gr} in Section \ref{s:random} and the values for $r \in \{2,3\}$ are as follows:
\[
\begin{array}{cccccccc} \hline
 & E_8 & E_7 & E_6 & F_4 & G_2 & D_4 & B_2 \\ \hline
2 & 128 & 70 & 40 & 28 & 8 & 16 & 6 \\
3 & 168 & 90 & 54 & 36 & 10 & 18 & 6 \\ \hline
\end{array}
\]
Set 
\[
\gamma(G,r) = \left\{\begin{array}{ll} 
\dim G_{[r]} & \mbox{if $r=p$ or $r \in\{2,3\}$} \\
\ell(G) & \mbox{otherwise} 
\end{array}\right.
\]
where $\ell(G)$ is defined as follows (here $\delta_{i,j}$ is  the familiar Kronecker delta):
\[
\begin{array}{cccccccc} \hline
G & E_8 & E_7 & E_6 & F_4 & G_2 & D_4 & B_2 \\
\ell(G) & 200 & 100 & 58 & 40 & 10 & 24-2\delta_{5,r} & 8 \\ \hline
\end{array}
\]

\begin{theorem}\label{t:main6}
Let $G$ be a simply connected simple algebraic group as in \eqref{e:gro} and let $G(q)$ be a finite quasisimple exceptional group of Lie type over $\mathbb{F}_q$, where $q=p^f$. Let $r$ be a prime divisor of $|G(q)/Z(G(q))|$. Then the following hold:
\begin{itemize}\addtolength{\itemsep}{0.2\baselineskip}
\item[{\rm (i)}] $\mathcal{C}(G,r,q) \geqs \gamma(G,r)$.
\item[{\rm (ii)}] Let $g_r \in G$ be an element of order $r$ modulo $Z(G)$ with $\dim g_r^G \geqs \gamma(G,r)$ and let $M$ be a positive dimensional maximal closed subgroup of $G$. Then either
\[
\alpha(G,M,g_r) < \frac{2+\delta_{2,r}}{5},
\]
or $G=D_4$, $r=3$ and either $M=A_2$ and $\a(G,M,g_3) = 2/5$, or $M \in \{B_3,C_3\}$ and $\a(G,M,g_3) = 3/7$.
\end{itemize}
\end{theorem}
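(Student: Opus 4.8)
The plan is to prove the two parts separately, with part (i) serving as the existence input that feeds part (ii). For part (i), the task is to exhibit, for each admissible prime $r$, an element of $G(q)$ of order $r$ modulo $Z(G)$ whose conjugacy class in $G$ has dimension at least $\gamma(G,r)$. When $r=p$ this is a unipotent element and $\gamma(G,r) = \dim G_{[r]}$; here I would take a unipotent element of maximal dimension in $G_{[p]}$ and check, using the data in \cite{Law05} together with the structure of unipotent classes over $\mathbb{F}_q$, that a rational representative of (close to) the right dimension exists — the point being that the relevant class is $\sigma$-stable for every Steinberg endomorphism $\sigma$. When $r \in \{2,3\}$ and $r \ne p$ the same strategy applies with semisimple elements: pick a semisimple $g$ of order $r$ (modulo the centre) realizing $\dim G_{[r]}$ (equivalently, with centralizer of minimal dimension among order-$r$ semisimple elements), which again comes from \cite{Law05} / \cite{Der} class data, and verify rationality over $\mathbb{F}_q$. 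For the remaining primes $r \notin \{p,2,3\}$ we only need $\dim g_r^G \geqs \ell(G)$; here I would use a semisimple element lying in a suitable maximal torus whose order is divisible by $r$ (possible since $r \mid |G(q)/Z(G(q))|$), and bound its centralizer dimension crudely — since $r \geqs 5$, any torus containing an element of order $r$ has a centralizer that is not too large, and the values of $\ell(G)$ are chosen precisely so that this elementary bound suffices (with the $\delta_{5,r}$ correction for $D_4$ accounting for the one borderline case).

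For part (ii), the structure mirrors the proof of Theorem \ref{t:main4}: run through the (finitely many) conjugacy classes of positive-dimensional maximal closed subgroups $M$ of each exceptional $G$ — using the classification of \cite{LS04} — and bound $\a(G,M,g_r) = \dim X(g_r)/\dim X$ from above, where $X = G/M$. The standard estimate is
\[
\dim X(g_r) \;\leqs\; \dim X - \dim g_r^G + \dim (g_r^G \cap M),
\]
so it suffices to upper-bound $\dim(g_r^G \cap M)$ in terms of the dimensions of $M$-classes, and this is where the hypothesis $\dim g_r^G \geqs \gamma(G,r)$ does the work: a large lower bound on $\dim g_r^G$ forces $\a(G,M,g_r)$ down. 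For most pairs $(G,M)$ the crude bound $\dim(g_r^G \cap M) \leqs \dim M - \mathrm{rank}\, M$ (or $\dim M$ when $g_r$ can be unipotent in $M$), combined with $\dim g_r^G \geqs \gamma(G,r)$ and $\dim X = \dim G - \dim M$, already yields $\a(G,M,g_r) < (2+\delta_{2,r})/5$. The cases that survive this first pass — the parabolic subgroups and the largest-dimensional reductive maximal subgroups, where $\dim M$ is large relative to $\dim G$ — require finer analysis: for parabolic $M = P$ one computes $X(g_r)$ as the fixed points of $g_r$ on the flag variety $G/P$ and bounds its dimension via the Bruhat decomposition / the action on the relevant Grassmannian-type quotient, while for large reductive $M$ one bounds $\dim(g_r^G \cap M)$ by a case-by-case examination of which $M$-classes fuse into $g_r^G$, using that $g_r^G$ has the prescribed large dimension. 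Finally, the genuine exceptions — $G = D_4$, $r=3$, with $M = A_2$ giving $\a = 2/5$ and $M \in \{B_3,C_3\}$ giving $\a = 3/7$ — must be isolated by exact computation: for $D_4$ with $r=3$ the element $g_3$ has $\dim g_3^G = 18 = \dim G_{[3]}$ (a regular unipotent-type or semisimple element of order $3$, depending on $p$), and one computes $\dim X(g_3)$ explicitly on $G/M$ for these three subgroups, obtaining equality in the claimed bounds; triality on $D_4$ explains why $B_3$ and $C_3$ behave identically.

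The main obstacle will be the careful treatment of the borderline pairs $(G,M)$ where the crude dimension estimate $\dim(g_r^G \cap M) \leqs \dim M - \mathrm{rank}\, M$ is too weak — principally the maximal parabolics of $E_7$ and $E_8$ and the subgroups such as $A_1 E_7 < E_8$, $A_7 < E_8$, $A_1 A_7$-type and $A_2 A_5$-type subgroups, $A_2 G_2 < F_4$, etc. — since for these one must know precisely how the large class $g_r^G$ meets $M$, which depends on the fusion of unipotent/semisimple classes and hence on delicate class data (and on $p$ in the unipotent case, via the special characteristics noted in Remark \ref{r:lie}). Pinning down the $D_4$ exceptions with exact values rather than inequalities is the other place where genuine computation, as opposed to estimation, is unavoidable; here I would rely on explicit descriptions of the $D_4$-conjugacy classes of order $3$ and their intersections with $A_2$, $B_3$ and $C_3$, cross-checked against the triality symmetry.
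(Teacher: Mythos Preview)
Your overall architecture matches the paper's: part (i) is proved by a case split on whether $r=p$, $r\in\{2,3\}$ with $r\ne p$, or $r\geqs 5$ with $r\ne p$ (the paper's Propositions \ref{p:cgr1}--\ref{p:cgr3}), and part (ii) is proved by separating parabolic from non-parabolic $M$ and, within the latter, separating $r\geqs 5$ from $r\in\{2,3\}$ (Propositions \ref{p:par1}--\ref{p:par3}). The crude estimate $\dim(g_r^G\cap M)\leqs \dim M-\mathrm{rank}\,M$ is indeed the workhorse for most non-parabolic $M$, and the $D_4$ exceptions are handled by explicit computation just as you say.

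Two places where your sketch understates what is actually required:

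\emph{Part (i), $r\geqs 5$, $r\ne p$.} A single ``crude bound on the centralizer of any order-$r$ element in a suitable torus'' does not suffice. The paper's argument introduces the parameter $e$ (the multiplicative order of $q$ modulo $r$), observes that $e$ controls the minimal dimension $d$ of the central torus in $C_G(g)$, and then for each $G$ and each small value of $e$ exhibits a specific centralizer type (read off from L\"ubeck's tables \cite{Lub}) and verifies by a containment argument that no larger centralizer can occur. For instance, for $G=E_8$ and $e\in\{1,2\}$ one produces $g$ with $C_G(g)=D_4A_3T_1$ or $A_5A_2T_1$ depending on parity of $q$; this is not a crude estimate but a targeted construction.

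\emph{Part (ii), parabolic $M$.} The paper does not use Bruhat decomposition or Grassmannian geometry directly. For semisimple $g$ it invokes the bounds of \cite[Theorem 2(I)(b)]{LLS} and, where those fail, the exact formula $\dim X(g)=\dim R_u(C_G(g)\cap M)$ from \cite[Lemma 3.1]{LLS}. For unipotent $g$ the first pass is the Springer-fibre bound $\dim X(g)\leqs \tfrac{1}{2}(\dim C_G(g)-\mathrm{rank}\,G)$, and the residual cases (certain $P_i$ in $E_6,E_7,E_8,F_4$ with $r=p\in\{2,3\}$) are settled by computing the permutation character $\chi(g)$ as a polynomial in $q$ via Green functions, using L\"ubeck's implementation of Lusztig's algorithm. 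This last step is the genuinely delicate part of the parabolic analysis and is not visible in your outline.
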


\begin{remk}
Note that part (i) gives $\mathcal{C}(G,r,q) = \dim G_{[r]}$ if $r=p$ or $r \in \{2,3\}$. It is also worth noting that there are examples with $\mathcal{C}(G,r,q) < \dim G_{[r]}$. For instance, if $G=F_4$ then $\dim G_{[7]} = 44$ and one checks that $\mathcal{C}(G,7,2) = 42$ (see \cite[Table 9]{BLS}, for example).
\end{remk} 
 
In the special case recorded in part (ii) of Theorem \ref{t:main6}, where $G = D_4$ and $M \in \{B_3,C_3\}$, we find that $\a(G,M,g_2) = 3/7$ (see Proposition \ref{p:par3}(i)). As an immediate corollary we obtain the following result, which is essential for our main application. In the statement, the elements $g_r, g_s \in G$ satisfy the conditions in Theorem \ref{t:main6}(ii).

\begin{corol}\label{cor:1}
If $r$ and $s$ are prime divisors of $|G(q)/Z(G(q))|$ and $(r,s) \ne (2,2)$, then
\[
\a(G,M,g_r) + \a(G,M,g_s) < 1
\]
for all positive dimensional maximal closed subgroups $M$ of $G$.
\end{corol}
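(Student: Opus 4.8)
The plan is to reduce everything to Theorem~\ref{t:main6}(ii) via a short case analysis on the primes $r$ and $s$. Without loss of generality assume $r \leqs s$; since $(r,s) \ne (2,2)$ this forces $s \geqs 3$, so in particular $\delta_{2,s} = 0$. Fix a positive dimensional maximal closed subgroup $M$ of $G$ and apply Theorem~\ref{t:main6}(ii) to each of $g_r$ and $g_s$.

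First I would dispose of the generic case, in which neither element falls into the exceptional configuration of Theorem~\ref{t:main6}(ii). Then $\a(G,M,g_r) < (2+\delta_{2,r})/5$ and $\a(G,M,g_s) < 2/5$. If $r = 2$ this gives $\a(G,M,g_r) + \a(G,M,g_s) < 3/5 + 2/5 = 1$, and if $r \geqs 3$ it gives the bound $4/5 < 1$; either way the claim holds.

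It remains to treat the exceptional configurations, which by Theorem~\ref{t:main6}(ii) occur only when $G = D_4$, and then only for an element of order $3$ modulo $Z(G)$. So suppose $G = D_4$ and, relabelling if necessary, that $g_s$ lies in the exceptional case with $s = 3$. If $M = A_2$ then $\a(G,M,g_s) = 2/5$, while the exceptional outcome for $M = A_2$ has already been accounted for by $g_s$, so Theorem~\ref{t:main6}(ii) gives the strict bound $\a(G,M,g_r) < 3/5$ (and $< 2/5$ if $r \geqs 3$); hence the sum is $< 1$. If $M \in \{B_3, C_3\}$ then $\a(G,M,g_s) = 3/7$; here the other element $g_r$ contributes $< 2/5$ when $r \geqs 3$ is non-exceptional, contributes $3/7$ when it too lies in the exceptional case (which forces $r = 3$), and contributes exactly $3/7$ when $r = 2$ by Proposition~\ref{p:par3}(i). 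In each subcase the sum is at most $3/7 + 3/7 = 6/7 < 1$, which finishes the proof.

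This is essentially a bookkeeping exercise, so I do not anticipate a serious obstacle. The one point requiring care is the subcase $G = D_4$, $\{r,s\} = \{2,3\}$, $M \in \{B_3, C_3\}$: here the general bound $\a(G,M,g_2) < 3/5$ coming directly from Theorem~\ref{t:main6}(ii) is too weak -- it would only yield $3/5 + 3/7 > 1$ -- and one genuinely needs the sharper equality $\a(G,M,g_2) = 3/7$ supplied by Proposition~\ref{p:par3}(i), which is precisely why that value is singled out in the discussion preceding the corollary.
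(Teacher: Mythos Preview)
Your argument is correct and follows exactly the route the paper takes: the corollary is deduced immediately from Theorem~\ref{t:main6}(ii), with the single delicate case $G=D_4$, $M\in\{B_3,C_3\}$, $\{r,s\}=\{2,3\}$ handled via the equality $\a(G,M,g_2)=3/7$ from Proposition~\ref{p:par3}(i), just as the paper indicates in the sentence preceding the corollary. One tiny wording point: in your $M=A_2$ paragraph the parenthetical ``and $<2/5$ if $r\geqs 3$'' is not literally true when $g_r$ is itself an exceptional order-$3$ element (then $\a(G,M,g_r)=2/5$), but your main bound $\a(G,M,g_r)<3/5$ still holds and the sum is $\leqs 4/5<1$, so the conclusion is unaffected.
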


Finally, we combine Corollary \ref{cor:1} with \cite[Theorems 1 and 2]{GLLS} to establish the following result on the random generation of finite simple exceptional groups of Lie type. This settles a conjecture of Liebeck and Shalev for exceptional groups (see \cite[p.2336]{GLLS}). Additionally, in the special case $(r,s)=(2,3)$, it provides a different proof of \cite[Theorem 9]{GLLS}. 

\begin{theorem}\label{t:main7} 
Let $r$ and $s$ be primes, not both $2$, and let $G_i$ be a sequence of finite simple exceptional groups of Lie type of order divisible by $r$ and $s$ such that $|G_i| \to \infty$ as $i \to \infty$. Then $\mathbb{P}_{r,s}(G_i) \to 1$ as $i \to \infty$. 
\end{theorem}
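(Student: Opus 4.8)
The plan is to obtain Theorem~\ref{t:main7} as a short consequence of Corollary~\ref{cor:1} together with the reduction theorems of \cite{GLLS}; the substantive work has already been carried out in establishing the fixed point space estimates of Theorem~\ref{t:main6}. I would begin with a standard reduction. Since there are only finitely many types of finite simple exceptional group of Lie type and $|G_i| \to \infty$, after passing to a subsequence we may assume that all the $G_i$ have a common type and (discarding finitely many small terms) that the underlying field sizes $q_i$ tend to infinity; write $G$ for the corresponding simply connected simple algebraic group --- one of those listed in \eqref{e:gro} --- and $\sigma_i$ for a Steinberg endomorphism with $G(q_i) = G_{\sigma_i}$. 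Here one uses that, via the various choices of $\sigma$, the types in \eqref{e:gro} account for all of the finite simple exceptional groups, including the twisted families ${}^2E_6(q)$, ${}^2F_4(q)$, ${}^3D_4(q)$, ${}^2G_2(q)$ and ${}^2B_2(q)$. Thus it suffices to fix one such type and prove that $\mathbb{P}_{r,s}(G(q)) \to 1$ as $q \to \infty$, where $G(q)$ runs over the finite simple groups of that type with order divisible by both $r$ and $s$.

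Next I would pin down the conjugacy classes to feed into the fixed point ratio estimates. For a given $G(q)$, Theorem~\ref{t:main6}(i) supplies elements $g_r, g_s \in G$ of orders $r$ and $s$ modulo $Z(G)$, defined over $\mathbb{F}_q$, whose $G$-conjugacy classes have dimension at least $\gamma(G,r)$ and $\gamma(G,s)$ respectively; up to boundedly many conjugacy classes of strictly smaller dimension, these are the classes of maximal dimension among all conjugacy classes of elements of order $r$ (respectively $s$) in $G(q)$, and they realise $\mathcal{C}(G,r,q)$ and $\mathcal{C}(G,s,q)$. Since $(r,s) \ne (2,2)$, Corollary~\ref{cor:1} --- which packages the relevant part of Theorem~\ref{t:main6}(ii), together with the separate estimates $\a(G,B_3,g_2) = \a(G,C_3,g_2) = 3/7$ and $\a(G,A_2,g_3) = 2/5$ that are needed to keep the inequality strict in the $D_4$ case (relevant to ${}^3D_4(q)$) --- yields
\[
\a(G,M,g_r) + \a(G,M,g_s) < 1
\]
for every positive-dimensional maximal closed subgroup $M$ of $G$. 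This strict inequality is the only algebraic-group input required.

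Finally I would invoke \cite[Theorems~1 and 2]{GLLS}, which reduce the estimation of $1 - \mathbb{P}_{r,s}(G(q))$ to precisely this condition. In outline: the contribution to the failure probability coming from maximal subgroups of $G(q)$ of the form $M_\sigma$, with $M$ a $\sigma$-stable positive-dimensional maximal closed subgroup of $G$, is controlled --- using the standard fixed point ratio bounds (see \cite{Bur1,LLS}) and the fact that $|G(q):M_\sigma|$ grows like $q^{\dim(G/M)} \to \infty$ --- by the exponents $\a(G,M,g_r) + \a(G,M,g_s) - 1 < 0$, summed over the boundedly many relevant classes of $M$; the subfield subgroups and the boundedly many conjugacy classes of maximal subgroups not arising from positive-dimensional closed subgroups contribute $o(1)$; and the conjugacy classes of elements of order $r$ or $s$ of non-maximal dimension form a vanishing proportion of $I_r(G(q))$ and $I_s(G(q))$, using the lower bound $|I_r(G(q))| \gtrsim q^{\gamma(G,r)}$ from Theorem~\ref{t:main6}(i). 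Hence the hypotheses of \cite[Theorems~1 and 2]{GLLS} are met and $\mathbb{P}_{r,s}(G(q)) \to 1$; taking $(r,s) = (2,3)$ recovers \cite[Theorem~9]{GLLS}. I do not anticipate a genuine obstacle in this deduction: the difficulty in Theorem~\ref{t:main7} is entirely front-loaded into the proof of Theorem~\ref{t:main6} --- bounding $\a(G,M,g)$ uniformly over all positive-dimensional maximal $M$ and all large classes of prime-order elements --- and into the general reduction theorems of \cite{GLLS}. The only points needing care in the deduction itself are the verification (via Theorem~\ref{t:main6}(i)) that the maximal-dimensional prime-order classes are genuinely attained in $G(q)$, so that $g_r$ and $g_s$ exist as elements of the finite group; the strictness of the inequality in the $D_4$ case; and checking that \cite[Theorems~1 and 2]{GLLS} apply uniformly across the twisted, Suzuki and Ree families.
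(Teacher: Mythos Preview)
Your overall strategy is correct and matches the paper's: the substantive input is Corollary~\ref{cor:1}, and the conclusion is obtained by appealing to \cite[Theorems~1 and 2]{GLLS}, with a Lang--Weil argument to dispose of non-maximal classes. However, you elide the bridge between these two ingredients, and your outline of that bridge does not match what the paper actually does.

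Specifically, \cite[Theorems~1 and 2]{GLLS} do not take the inequality $\a(G,M,g_r)+\a(G,M,g_s)<1$ as their hypothesis, and the paper does not reach the conclusion via a direct union bound over fixed point ratios in the finite group as you sketch. Instead, the paper first applies Theorem~\ref{t:main3} to convert Corollary~\ref{cor:1} into the statement that $\Lambda$ contains a nonempty open subset of $C_r\times C_s$; from this it deduces (using the density results of Section~\ref{s:top}) that over any algebraically closed extension $k'$ of $k$ there is a dense set of pairs generating a dense subgroup; then Theorem~\ref{t:algebraic} translates this into the existence of pairs in $C_r(q)\times C_s(q)$ generating $G(q)$ for arbitrarily large $q$; and it is \emph{this} condition that \cite[Theorems~1 and 2]{GLLS} take as input to conclude $\mathbb{P}_{r,s}\to 1$. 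Your proposal omits Theorems~\ref{t:main3} and~\ref{t:algebraic} from the chain, which are the steps that actually connect the fixed point space estimates to the hypotheses of \cite{GLLS}. The remaining points you raise (the $D_4$ case, Suzuki and Ree groups, the case $p\mid rs$, and the treatment of smaller classes via Lang--Weil) are all handled in the paper essentially as you indicate.
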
 

In a sequel, we will establish similar results on the topological generation of classical algebraic groups. 

\vs

To conclude the introduction, let us say a few words on the layout of the paper. In Section \ref{s:top}, we consider the topological generation of simple algebraic groups in a general setting and we prove Theorems \ref{t:density} and \ref{t:dense2}, as well as Corollary \ref{c:gmt}. In Section \ref{ss:pt3} we give a short proof of Theorem \ref{t:main3} and then the remainder of Section \ref{s:fp} is devoted to deriving upper bounds on the dimensions of fixed point spaces arising in the action of an exceptional algebraic group on a coset variety (see Section \ref{ss:fps}). This is the most technical part of the paper, which culminates in a proof of Theorem \ref{t:main4}. Finally, in Section \ref{s:random} we prove Theorem \ref{t:main6} and we use it to establish Theorem \ref{t:main7}, which is our main result on the random generation of the finite simple exceptional groups of Lie type.

\section{Topological generation}\label{s:top}

In this section we prove Theorems \ref{t:density} and \ref{t:dense2}. Throughout, $G$ will denote a simply connected simple algebraic group over an algebraically closed field $k$ of characteristic $p \geqs 0$.  In addition, $\Omega$ is an irreducible subvariety of $G^t$ with $t \geqs 2$ and we will refer to the sets $\Delta$, $\Delta^+$ and $\Lambda$ defined in \eqref{e:deltap}. 

We first require an elementary lemma (see \cite[Theorem 5.1]{FGG}). In the statement, $F_t$ denotes the free group on $t$ generators. 

\begin{lem} \label{l:words}  
View $G \leqs \GL(V)$, where $V$ is a finite dimensional vector space over $k$. Observe that each  $x = (x_1, \ldots, x_t) \in G^t$ gives rise to a representation
$\rho_x :F_t \rightarrow \GL(V)$ by sending the $i$th generator of $F_t$ to $x_i$.  Let $V_x$ denote the corresponding module. 
For fixed $x, y \in G^t$, the following are equivalent: 
\begin{itemize}\addtolength{\itemsep}{0.2\baselineskip}
\item[{\rm (i)}]    $V_x/\rad(V_x) \cong V_y/\rad(V_y)$.  
\item[{\rm (ii)}]  For every $w \in F_t$, $w(x)$ and $w(y)$ have conjugate semisimple parts.
\end{itemize}
\end{lem}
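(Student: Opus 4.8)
The plan is to prove the two implications separately, using the Brauer--Nesbitt / Brauer characterization of semisimple modules via composition factors together with the Jordan--Chevalley decomposition. For the implication (i)$\Rightarrow$(ii): if $V_x/\rad(V_x)\cong V_y/\rad(V_y)$ as $F_t$-modules, then $V_x$ and $V_y$ have the same composition factors (with multiplicity), since the head of a module and its radical series determine — and are determined by — the multiset of composition factors. Hence for every $w\in F_t$ the two elements $w(x)$ and $w(y)$ of $\GL(V)$ act with the same characteristic polynomial, because the characteristic polynomial of an element on a module is the product of its characteristic polynomials on the composition factors, and $w$ acts on corresponding composition factors by the same matrix up to conjugacy. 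Two elements of $\GL(V)$ with the same characteristic polynomial need not be conjugate, but their \emph{semisimple parts} are conjugate precisely when the characteristic polynomials agree (the semisimple part is the diagonalizable matrix with the same eigenvalues and multiplicities, read off from the characteristic polynomial). This gives (ii).

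For (ii)$\Rightarrow$(i): assuming $w(x)$ and $w(y)$ have conjugate semisimple parts for all $w\in F_t$, they have equal characteristic polynomials for all $w$, hence equal traces: $\operatorname{tr}\rho_x(w)=\operatorname{tr}\rho_y(w)$ for all $w\in F_t$. One then wants to conclude that the semisimplifications $V_x/\rad(V_x)$ and $V_y/\rad(V_y)$ are isomorphic. In characteristic zero this is immediate from the fact that a semisimple module is determined by its character. In positive characteristic the subtlety is that equality of ordinary traces is weaker than equality of Brauer characters; the correct input is that equality of characteristic polynomials of $\rho_x(w)$ and $\rho_y(w)$ for \emph{all} $w$ forces the two semisimplifications to have the same composition factors — this is exactly the content of \cite[Theorem 5.1]{FGG}, which I would invoke here. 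Concretely, one passes to the image algebras $A_x=k[\rho_x(F_t)]\subseteq\operatorname{End}(V)$ and $A_y=k[\rho_y(F_t)]$, and uses that matching characteristic polynomials on all group elements identifies the simple modules appearing and their multiplicities, giving $V_x/\rad(V_x)\cong V_y/\rad(V_y)$.

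I expect the main obstacle to be precisely the positive-characteristic direction of (ii)$\Rightarrow$(i): one must be careful that ``conjugate semisimple parts for all $w$'' is genuinely enough to pin down the semisimplification, since naive trace arguments fail when $p$ divides relevant multiplicities. The clean way around this is to phrase everything in terms of characteristic polynomials (equivalently, the full list of eigenvalues with multiplicities, which \emph{is} recoverable from the semisimple part) rather than traces, and to cite \cite[Theorem 5.1]{FGG} for the statement that a finite set of matrices is determined up to the simultaneous-semisimplification by the characteristic polynomials of all words in them. The remaining steps — the Jordan--Chevalley characterization of the semisimple part, and the multiplicativity of characteristic polynomials across a composition series — are routine and I would not belabor them.
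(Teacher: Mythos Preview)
The paper does not prove this lemma; it simply cites \cite[Theorem~5.1]{FGG}. Your outline is in the same spirit, and for the harder direction (ii)$\Rightarrow$(i) you correctly pass from ``conjugate semisimple parts'' to ``equal characteristic polynomials'' and then invoke \cite{FGG}.

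The gap is in your treatment of (i), and it stems from reading $V_x/\rad(V_x)$ as the \emph{head} of $V_x$. You then assert that isomorphic heads force $V_x$ and $V_y$ to have the same composition factors; this is false, since the head records only the top layer of the radical filtration. Indeed, the equivalence itself fails under the head reading. Take $G=\SL_4$, $V$ the natural module, $\lambda$ a primitive $8$th root of unity, and set $g = J_3(\lambda)\oplus J_1(-\lambda)$, $h=J_1(\lambda)\oplus J_3(-\lambda)$; both lie in $\SL_4$ since $-\lambda^4=1$. With $x=(g,1)$ and $y=(h,1)$ the heads of $V_x$ and $V_y$ are both the $2$-dimensional module on which the first generator acts as $\operatorname{diag}(\lambda,-\lambda)$, yet $g$ and $h$ have non-conjugate semisimple parts (eigenvalue multiplicities $(3,1)$ versus $(1,3)$), so (i)$\Rightarrow$(ii) fails. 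Conversely, for $x=(u,1)$ with $u$ unipotent and $y=(1,1)$, condition (ii) holds while the heads have different dimensions, so (ii)$\Rightarrow$(i) fails too.

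What \cite[Theorem~5.1]{FGG} actually proves, and what the application in Lemma~\ref{l:words2} requires, is the equivalence with (i) read as ``$V_x$ and $V_y$ have isomorphic \emph{semisimplifications}'' (equivalently, the same multiset of composition factors). Under that reading your argument goes through verbatim: equal composition factors give equal characteristic polynomials for every $w$, hence conjugate semisimple parts; and the converse is precisely \cite[Theorem~5.1]{FGG}. So the fix is to your interpretation of the notation, not to your strategy: replace ``head'' by ``semisimplification'' throughout and drop the sentence about the head and the radical series determining the composition factors.
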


In fact, by \cite[Corollary 5.3]{FGG}, we only need (ii) to hold for all words of length at most $2d^2$, where $d = \dim V$.

We can now prove Theorem \ref{t:density}. Indeed, the theorem follows by combining Lemmas \ref{l:words2} and \ref{l:words3} below. In order to state these results, we need to introduce some notation. 

For any word $w \in F_t$, let $\mathcal{S}(w)$ be a set of representatives of the conjugacy classes in $G$ of the semisimple parts of the elements in $\{w(x) \,:\, x \in \O\}$. First we handle the special case where $|\mathcal{S}(w)|=1$ for all $w \in F_t$.

\begin{lem}\label{l:words2}  
Let $\Omega$ be an irreducible subvariety
of $G^t$ with $t \geqs 2$ and assume $|\mathcal{S}(w)|=1$ for all $w \in F_t$.
\begin{itemize}\addtolength{\itemsep}{0.2\baselineskip}
\item[{\rm (i)}] If $p > 0$, then either $\Delta^+$ is empty, or $\Delta^+=\O$. 
\item[{\rm (ii)}]  If $p=0$, then $\Delta^+$ is an open subset of $\O$.  
\end{itemize}
\end{lem}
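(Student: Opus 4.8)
The plan is to exploit Lemma \ref{l:words} together with the hypothesis $|\mathcal{S}(w)|=1$ for all $w \in F_t$. The assumption says that, up to conjugacy, the semisimple part of $w(x)$ does not depend on $x \in \O$; hence by part (ii) $\Rightarrow$ (i) of Lemma \ref{l:words}, all the modules $V_x$ for $x \in \O$ have the same semisimplification $V_x/\rad(V_x)$. In particular, for any two $x,y \in \O$ the representations $\rho_x$ and $\rho_y$ have the same composition factors, so $G(x)$ and $G(y)$ have the same dimension: indeed $\dim G(x)$ is determined by the Zariski closure of $\rho_x(F_t)$, and two completely reducible representations with isomorphic semisimplifications have conjugate closures up to the unipotent radical, which does not affect the dimension of the image in characteristic zero and, more carefully, does not affect whether the image is finite in any characteristic. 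The upshot is that $\dim G(x)$ is constant on $\O$, which gives (i): $\Delta^+$ is either all of $\O$ or empty.

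For part (ii), when $p=0$ I would use the general fact (recorded in \cite{gurnato} and discussed in Section \ref{s:top}) that the property ``$\dim G(x)>0$'' is an open condition on $G^t$ in characteristic zero; intersecting the corresponding open subset of $G^t$ with $\O$ shows $\Delta^+$ is open in $\O$. Alternatively, and in the same spirit as the argument for (i), one observes that the map $x \mapsto \dim \overline{\rho_x(F_t)}$ is lower semicontinuous: the dimension of the Zariski closure of the subgroup generated by $x_1,\dots,x_t$ can only drop on a closed subset, because it is governed by the ranks of suitable word maps (equivalently, by the Lie algebra generated by the logarithms of the $x_i$, whose dimension is a lower-semicontinuous function of $x$). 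Hence $\{x \in \O : \dim G(x) > 0\} = \Delta^+$ is open in $\O$.

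Concretely, the steps I would carry out are: (1) invoke Lemma \ref{l:words} to conclude $V_x/\rad(V_x) \cong V_y/\rad(V_y)$ for all $x,y \in \O$ from the hypothesis $|\mathcal{S}(w)|=1$; (2) deduce that $\rho_x$ and $\rho_y$ have the same composition factors, and in particular the same multiset of eigenvalue-data, so $\rho_x(F_t)$ is finite if and only if $\rho_y(F_t)$ is finite, and more generally $\dim G(x) = \dim G(y)$; (3) conclude (i), since constancy of $\dim G(x)$ forces $\Delta^+$ to be empty or all of $\O$; (4) for (ii), quote the openness of the topological-generation-type condition in characteristic zero (or the semicontinuity argument above) to see $\Delta^+$ is open in $\O$.

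The main obstacle I anticipate is step (2): turning ``same semisimplification of $V_x$'' into ``$\dim G(x)$ is constant'' requires some care, because $G(x)$ is the Zariski closure of the abstract subgroup generated by the $x_i$, not the closure of the image of a nice algebraic map. In characteristic zero this is clean — the identity component of $G(x)$ is the connected subgroup whose Lie algebra is generated by the semisimple and nilpotent parts of $\log x_i$ (all of which are determined up to conjugacy by the semisimplification data plus the unipotent parts, and the unipotent parts are harmless for dimension purposes). In positive characteristic one has to argue instead that $G(x)$ is finite iff all the $x_i$ have finite order with a common bound and the generated group is finite, which again is controlled by the semisimple parts once the semisimplification is fixed; here I would lean on the precise statement of Lemma \ref{l:words} (and the length bound $2d^2$ from \cite{FGG}) to reduce everything to finitely many word conditions, so that $\Delta^+$ is cut out by a constructible, in fact open-or-everything, condition on the irreducible variety $\O$.
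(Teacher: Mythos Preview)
Your approach for part (i) is close to the paper's, but the justification is muddled. The key point, which the paper makes explicit and you only gesture at, is this: by Lemma \ref{l:words} the image of $G(x)$ in $\GL(V_x/\rad(V_x))$ is independent of $x$; if that image is infinite then $\Delta^+=\O$, while if it is finite then the unipotent radical $R_u(G(x))$ has finite index in $G(x)$, hence is a \emph{finitely generated} unipotent group, and such a group is finite when $p>0$. Your phrase ``does not affect whether the image is finite in any characteristic'' is not a proof of this, and in fact is false in characteristic zero.

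For part (ii) your main line of argument has a genuine gap. You claim that $\dim G(x)$ is constant on $\O$, asserting that the unipotent radical ``does not affect the dimension of the image in characteristic zero''. This is wrong: in characteristic zero the unipotent radical of $G(x)$ can be positive-dimensional and can vary with $x$, so $\dim G(x)$ need not be constant. What \emph{is} constant is $G(x)/R_u(G(x))$, and the paper exploits exactly this: assuming the common reductive quotient is finite of order $N$, one has that $G(x)$ is finite if and only if $R_u(G(x))=1$ (nontrivial unipotent groups are infinite when $p=0$), if and only if $|G(x)|\leqs N$. Since $\{x\in\O:|G(x)|\leqs N\}$ is closed, its complement $\Delta^+$ is open. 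Your fallback suggestions (citing \cite{gurnato}, or a semicontinuity argument via ``logarithms'') do not substitute for this: the former concerns $\Delta$, not $\Delta^+$, and the latter is not justified for elements that are not unipotent.
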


\begin{proof}  
We will use the notation from Lemma \ref{l:words}. By the previous lemma, the image of $G(x)$ in $\GL(V_x/\rad(V_x))$ is independent of $x \in \O$ (up to isomorphism) and so we may as well assume that all the images are finite (otherwise $\Delta^+=\O$).

If $p > 0$, then the unipotent radical of $G(x)$ has finite index in $G(x)$ and is therefore a finitely generated unipotent group, which implies that it is finite. Hence $G(x)$ is finite and we conclude that $\Delta^+$ is empty.

Now assume $p=0$. Here $G(x)$ is finite if and only if its unipotent radical is trivial. If the unipotent radical of $G(x)$ is trivial, then $|G(x)|=N$ for some fixed $N$ (since the quotient of $G(x)$ by its unipotent radical is independent of $x$, up to isomorphism). Since $\{x \in \O \,:\, |G(x)| \leqs N\}$ is closed, it follows that either $G(x)$ is finite for all $x \in \O$ (and thus $\Delta^+$ is empty), or the set of $x$ with $\dim G(x)>0$ is nonempty and open.
\end{proof}

\begin{rem}\label{r:ext}
Note that $|\mathcal{S}(w)|=1$ for all $w \in F_t$ if and only if the same property holds over any extension field $k'$ of $k$, whence the density of $\Delta^+$ in this situation is independent of $k'$. In particular, if $k$ is algebraic over a finite field, then $G(x)$ is always finite (and in fact $|G(x)|$ is absolutely bounded).  
\end{rem}

\begin{lem} \label{l:words3}  
Let $\Omega$ be an irreducible subvariety of $G^t$ with $t \geqs 2$ and assume that $|\mathcal{S}(w)|>1$ for some $w \in F_t$. Then $\Delta^+(k')$ is dense in $\O(k')$ for every algebraically
closed field $k'$ containing $k$ that is not algebraic over a finite field. 
\end{lem}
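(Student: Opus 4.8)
The plan is to show that the hypothesis $|\mathcal{S}(w)| > 1$ for some word $w$ forces $\Delta^+$ to be dense over any sufficiently large field, by exhibiting a Zariski-dense set of tuples $x$ for which $w(x)$ has a non-trivial semisimple part, and then arguing that on such tuples $G(x)$ cannot be finite (after passing to a suitable extension). First I would fix a word $w \in F_t$ with $|\mathcal{S}(w)| > 1$, and set $N = \dim V$ (with $G \leqs \GL(V)$ a fixed faithful representation). By Lemma \ref{l:words} and the length bound following it, the isomorphism type of $V_x/\rad(V_x)$ is controlled by the conjugacy classes of semisimple parts of $w'(x)$ over all words $w'$ of length at most $2N^2$; since $|\mathcal{S}(w)| > 1$, the function $x \mapsto (\text{semisimple part of } w(x))$ is genuinely non-constant on $\O$. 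The key observation is that the closed condition "$w(x)$ is unipotent", or more generally "the semisimple part of $w(x)$ lies in a fixed conjugacy class", defines a proper closed subvariety of the irreducible variety $\O$; hence there is a nonempty open subset $U \subseteq \O$ on which the semisimple part of $w(x)$ is non-central, and in particular $G(x)$ contains the non-trivial semisimple element $w(x)_s$.

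Next I would argue that on this open set $U$ (possibly after further shrinking), $G(x)$ is infinite, hence $\dim G(x) > 0$, so that $U \subseteq \Delta^+$ and $\Delta^+$ is dense. Here the point is delicate in positive characteristic: a group generated by a non-central semisimple element need not be infinite if the field is algebraic over a finite field. This is precisely where the hypothesis "$k'$ not algebraic over a finite field" enters. One clean way to proceed: the map $x \mapsto w(x)_s$ (semisimple part) is a constructible map from $U$ to $G$ whose image is an infinite constructible subset of a single maximal torus's worth of semisimple classes (it is non-constant and $U$ is irreducible, so the image is infinite); picking a point $x$ over a field $k'$ not algebraic over a finite field at which $w(x)_s$ has infinite order then gives $|G(x)| = \infty$. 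To upgrade from "one good point" to "a dense set of good points" I invoke the earlier results in the spirit of Theorem \ref{t:dense2}: the set $\{x \in U : w(x)_s \text{ has infinite order}\}$ is the complement in $U$ of the countable union of closed sets $\{x : w(x)_s^n = 1\}$, $n \geqs 1$, and each such set is proper in $U$ by the previous sentence, so over $k'$ not algebraic over a finite field this set is dense in $U$, hence in $\O$.

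I would then assemble these pieces: over any algebraically closed $k' \supseteq k$ not algebraic over a finite field, the set of $x \in \O(k')$ with $w(x)_s$ non-central and of infinite order is a dense subset of $\O(k')$ contained in $\Delta^+(k')$, which is the claim. A couple of routine checks remain — that the length-$2N^2$ truncation in Lemma \ref{l:words} does not cause trouble (it does not: we only need the existence of a single non-constant word, which is given), that the relevant subsets are genuinely constructible/closed (standard, since conjugacy classes and their closures are constructible, and the semisimple-part map is a morphism on each sheet of a stratification of $\O$ by Jordan type of $w(x)$), and that passing to $k'$ commutes with all constructions (this is essentially Remark \ref{r:ext} combined with the fact that irreducibility and the non-constancy of $w$ are preserved under extension of the algebraically closed base field).

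The main obstacle I anticipate is the second step: ruling out the possibility that $G(x)$ is finite for \emph{every} $x$ even though $w(x)_s$ varies. The resolution must exploit that a varying family of non-central semisimple elements contains one of infinite order as soon as the base field is not algebraic over a finite field — concretely, that the image of the non-constant morphism $x \mapsto w(x)_s$ into a torus contains points of infinite multiplicative order over such a field. Making this precise (and in a way that yields density, not just nonemptiness) is where the real content lies; everything else is bookkeeping with constructible sets and the earlier lemmas.
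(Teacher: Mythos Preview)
Your overall strategy---find a word $w$ for which $w(x)$ has infinite order on a dense set of $x$, hence $G(x)$ is infinite there---matches the paper's. You also correctly identify where the difficulty lies. However, the specific density argument you propose has a genuine gap.

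You argue that $\{x : w(x)_s \text{ has infinite order}\}$ is the complement of the countable union $\bigcup_{n \geqs 1}\{x : w(x)^n \text{ is unipotent}\}$, with each term a proper closed subset, and then assert that over $k'$ not algebraic over a finite field this complement is dense. That inference is not valid: the complement of a countable union of proper closed subvarieties is \emph{generic}, but over a countable field such as $\overline{\mathbb{F}_p(t)}$ it need not be dense (indeed, one can cover all of $\mathbb{A}^2(k')$ by a countable union of lines when $k'$ is countable). So as written the argument only delivers genericity, which is weaker than the lemma's conclusion. You flag this as ``where the real content lies'', but the mechanism you offer does not close it.

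The paper's fix is short and worth knowing. View $G \leqs \GL(V)$ and let $f_j(g)$ be the $j$-th coefficient of the characteristic polynomial of $g$. Since $|\mathcal{S}(w)|>1$ and $\Omega$ is irreducible, the characteristic polynomials of $w(x)$ take infinitely many values, so some $f_j \circ w : \Omega \to \mathbb{A}^1$ is non-constant, hence dominant with cofinite image. Let $k_0 \subset k'$ be the subfield generated by roots of unity; since $k'$ is not algebraic over a finite field, $k' \setminus k_0$ is infinite and therefore Zariski-dense in $\mathbb{A}^1$. The preimage of a dense set under a dominant morphism is dense, so $\{x \in \Omega(k') : f_j(w(x)) \notin k_0\}$ is dense in $\Omega(k')$. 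For any such $x$ the eigenvalues of $w(x)$ cannot all be roots of unity, so $w(x)$ has infinite order and $x \in \Delta^+(k')$. The point is that by collapsing the problem to a \emph{single} morphism into $\mathbb{A}^1$, the ``bad'' locus becomes the preimage of a subset of $\mathbb{A}^1$ with dense complement, and density (not merely genericity) follows without any countability hypothesis on $k'$.
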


\begin{proof}  
In view of Remark \ref{r:ext}, we may assume that $k'=k$ is not algebraic over a finite field. 

Since $\O$ is an irreducible variety, the condition $|\mathcal{S}(w)|>1$ implies that $\mathcal{S}(w)$ is infinite. Moreover, the same conclusion holds if we view $G \leqs \GL(V)$ (that is, the set of semisimple parts of the elements in $\{w(x) \,:\, x \in \O\}$ meets infinitely many distinct conjugacy classes of ${\rm GL}(V)$). Indeed, the Weyl group controls fusion of semisimple elements, so each semisimple conjugacy class of $\GL(V)$ intersects $G$ in finitely many $G$-classes. Therefore, there are infinitely many distinct characteristic polynomials of the elements $w(x) \in {\rm GL}(V)$ as $x$ ranges over $\Omega$. Define $f_j: {\rm GL}(V) \to k$, where $f_j(g)$ is the $j$th coefficient of the characteristic polynomial of $g$. Then we may choose $j$ so that the restriction of $f_j$ to $\{w(x) \,:\, x \in \O\}$ is a non-constant function, whence the image of the restriction is cofinite in $k$. In particular, the set of $x \in \O$ such that $f_j(w(x))$ is not in the subfield of $k$ generated by roots of unity is dense (since we are assuming that $k$ is not algebraic over a finite field).  
This implies that $w(x) \in G(x)$ has infinite order for all $x$ in this dense subset of $\O$. The result follows. 
\end{proof}

Now Theorem \ref{t:density} follows by combining Lemmas \ref{l:words2} and \ref{l:words3}. At this point, we can also establish Corollary \ref{c:gmt}.  

\begin{proof}[Proof of Corollary \ref{c:gmt}]
Set $\O = C_1 \times \cdots \times C_t$ and let us assume $k$ is not algebraic over a finite field (of course, if the latter condition does not hold, then $\Delta^{+}$ is empty). Let $w \in F_t$ be the product of the generators of $F_t$ and define $\mathcal{S}(w)$ as above. By \cite[Theorem 1.1]{GMT}, either $\mathcal{S}(w)$ is infinite, or $t=2$ and $C_1C_2$ is a finite union of conjugacy classes, with $C_1$ and $C_2$ given explicitly. Therefore, aside from the special cases, Lemma \ref{l:words3} implies that $\Delta^+$ is dense in $\O$. 

To complete the argument, let us assume $t=2$ and $C_1,C_2$ are classes given in \cite[Theorem 1.1]{GMT}. If $p=0$ then at least one of the $C_i$ is unipotent and thus $\Delta^+ = \O$. Now assume $p>0$. Here $C_1$ and $C_2$ are both torsion classes, $\O$ is defined over the algebraic closure of a finite field and $|\mathcal{S}(w)|=1$ for all $w \in F_t$. By applying \cite[Corollary 5.14]{GMT}, we deduce that $G(x)$ is contained in a Borel subgroup of $G$ for all $x \in \O$. Therefore, since $C_1$ and $C_2$ are torsion classes, it follows that $G(x)/R_u(G(x))$ is a finite abelian group. Finally, as explained in the proof of Lemma \ref{l:words2}, we note that the unipotent radical of $G(x)$ is finite, so $G(x)$ is finite and we conclude that $\Delta^+$ is empty.
\end{proof}

Finally, we turn to Theorem \ref{t:dense2}. Here we need one more preliminary result. Recall that $\mathcal{M}$ is the set of positive dimensional maximal closed subgroups of $G$ and $\L$ is the set of $x \in \Omega$ such that $G(x)$ is not contained in such a subgroup. The next lemma can also be deduced from the stronger result \cite[Theorem 11.7]{GT}. 

\begin{lem}\label{l:prel}  
Set $\O = G^t$ with $t \geqs 2$ and assume $k$ is not algebraic over a finite field. Then there exists a nonempty open subset $\Gamma$ of $\O$ such that $\Delta \subseteq  \Gamma \subseteq \Lambda$.
\end{lem}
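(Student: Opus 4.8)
The plan is to exploit the fact that $G$ has only finitely many conjugacy classes of positive dimensional maximal closed subgroups, together with the fact that for each noncentral $g \in G$ there is some $h$ with $\overline{\langle g, h\rangle} = G$. Concretely, let $M_1, \ldots, M_s$ be representatives of the conjugacy classes of subgroups in $\mathcal{M}$, and for each $j$ consider the set
\[
\Lambda_j = \{x \in \O \,:\, G(x) \not\leqs M_j^{\,y} \text{ for any } y \in G\}.
\]
Then $\L = \bigcap_{j=1}^s \Lambda_j$, so it suffices to show each $\Lambda_j$ contains a nonempty open subset of $\O$ (a finite intersection of nonempty open subsets of the irreducible variety $\O$ is again nonempty and open), and that this common open subset can be taken to contain $\Delta$ — the latter being automatic since $G(x)=G$ certainly forces $x \in \Lambda_j$ for all $j$, i.e. $\Delta \subseteq \L$ always.

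First I would show $\Lambda_j$ is open in $\O$. The set of $x = (x_1, \ldots, x_t) \in \O$ such that all $x_i$ lie in a common conjugate of $M_j$ is the image under the (projective, hence closed) map $G \times_{M_j}(M_j \times \cdots \times M_j) \to G^t$, intersected with $\O$; since $M_j$ is closed and $G/M_j$ is a complete variety, this image is closed in $G^t$, so its complement is open. If $x_i \in M_j^{\,y}$ for all $i$ then $G(x) \leqs M_j^{\,y}$; conversely if $G(x) \leqs M_j^{\,y}$ then each $x_i \in M_j^{\,y}$. Hence $\Lambda_j$ is precisely the complement in $\O$ of this closed set, and is therefore open.

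Next I would show each $\Lambda_j$ is nonempty. This is where Corollary~\ref{c:density} (or the underlying result of \cite{gurnato}) enters: since $k$ is not algebraic over a finite field, part (i) of Corollary~\ref{c:density} gives that $\{x \in G^t : G(x) = G\}$ is nonempty (indeed dense) in $G^t = \O$. Any such $x$ has $G(x) = G \not\leqs M_j^{\,y}$ for every $j$ and $y$, so $x \in \L \subseteq \Lambda_j$; thus $\Lambda_j \neq \emptyset$. Setting $\Gamma = \bigcap_{j=1}^s \Lambda_j = \L$, we obtain a nonempty open subset of $\O$ with $\Delta \subseteq \Gamma \subseteq \L$, and in fact $\Gamma = \L$ here. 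The main obstacle is the topological bookkeeping in the openness step — making sure the ``contained in a common conjugate of $M_j$'' locus really is closed, which rests on the completeness of the coset variety $G/M_j$; once that is in hand the rest is formal. (Note this argument as stated establishes the lemma for $\O = G^t$; the statement is only claimed in that case, but the same reasoning applies verbatim to any irreducible $\O$ for which $\Delta \neq \emptyset$.)
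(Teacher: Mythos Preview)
Your overall strategy---exploit the finiteness of conjugacy classes in $\mathcal{M}$ and show that the locus of tuples lying in a common conjugate of each $M_j$ is proper---is exactly the paper's first proof. But your openness step has a genuine gap: you assert that $G/M_j$ is complete, so that $G \times_{M_j} M_j^t \to G^t$ is proper with closed image. This holds only when $M_j$ is parabolic. For the reductive maximal subgroups (e.g.\ $M^0=D_8$ in $E_8$, or $N_G(T)$ in any simple $G$), the quotient $G/M_j$ is affine and the image is typically \emph{not} closed. Already for $t=1$ and $M_j$ reductive of maximal rank, the union $\bigcup_g gM_jg^{-1}$ contains every semisimple element of $G$ (each lies in a maximal torus, hence in a conjugate of $M_j$) but misses some unipotent ones, so it cannot be closed. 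Consequently your $\Lambda_j$ need not be open, and $\Lambda$ itself is not open in $\Omega$ in general; the conclusion $\Gamma=\Lambda$ that you reach is too strong.

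The paper's first proof repairs this by passing to the \emph{closure} $X_i$ of the image of $G\times M_i^t\to G^t$ and using a dimension count to see the morphism is not dominant: each fibre has dimension at least $\dim M_i$, so the image has dimension at most $\dim G+(t-1)\dim M_i<t\dim G$. Hence each $X_i$ is a proper closed subvariety, and $\Gamma:=G^t\setminus\bigcup_i X_i$ is a nonempty open set contained in $\Lambda$ (possibly strictly). The paper also supplies an independent module-theoretic argument: one exhibits a finite set $\mathcal{S}$ of irreducible $kG$-modules on which every $M\in\mathcal{M}$ acts reducibly, and takes $\Gamma$ to be the (open) set of $x$ for which $G(x)$ is irreducible on each $V\in\mathcal{S}$; this visibly satisfies $\Delta\subseteq\Gamma\subseteq\Lambda$. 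Your appeal to \cite{gurnato} for nonemptiness is fine and matches the paper's second proof, but note that citing Corollary~\ref{c:density} itself would be circular, since that corollary is deduced from Theorem~\ref{t:dense2}, which in turn relies on the present lemma.
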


\begin{proof}   
We will give two different proofs.

The first proof uses the fact that there are only finitely many conjugacy classes of positive dimensional maximal closed subgroups of $G$ (see \cite[Corollary 3]{LS04}).   Let $M_1, \ldots, M_s$ be representatives of the distinct conjugacy classes of such subgroups and let $X_i$ be the closure of the image of the morphism $G \times M_i^t  \rightarrow G^t$ given by simultaneous conjugation. Since $t \geqs 2$ and each fiber has dimension at least $\dim M_i$, we see that this morphism is not dominant. Therefore, the union $X = \bigcup_{i}X_i$ is a proper closed subset of $G^t$ and thus $\Gamma := G^t \setminus X$ is a nonempty open subset of $\O$ contained in $\L$. Clearly, if $x \in \Delta$, then $x$ is not in $X$ and thus $\Delta \subseteq \Gamma$.

For the second proof, we produce a finite set $\mathcal{S}$ of irreducible $kG$-modules  with the property that each $M \in \mathcal{M}$ is reducible on some module in 
$\mathcal{S}$. First observe that 
\begin{equation}\label{e:gammadef}
\Gamma := \{ x \in \O \,:\, \mbox{$G(x)$ is irreducible on each $V \in \mathcal{S}$}\}
\end{equation}
is an open subset of $\O$ (see \cite[Lemma 11.1]{GT}), which is contained in $\L$ by construction. Note that in positive characteristic, $\Gamma$ will also contain all $x$ with $G(x) \cong G(q)$, as long as these subgroups act irreducibly on the modules in $\mathcal{S}$. Again, $\Delta$ is clearly
contained in $\Gamma$ and we know that $\Delta$ is nonempty (see \cite{gurnato}), whence $\Gamma$ is also nonempty. 

In almost all cases we can take $\mathcal{S} = \{V_1\}$, where $V_1$ is a nontrivial irreducible composition factor of the adjoint module. Excluding the special cases dealt with below, either $\dim V_1 \geqs \dim G - 1$, or $G = D_n$, $n \geqs 4$ and $\dim V_1  \geqs  \dim G - 2$. If $M$ is a positive dimensional closed subgroup of $G$, then either $M$ is contained in a proper parabolic subgroup or $\dim M < \dim V_1$. Since the Lie algebra of the connected component of $M$ is $M$-invariant, we deduce that $M$ acts reducibly on $V_1$. 

This argument applies unless $p=2$ and $G$ is of type $A_1$, $F_4$, $B_n$ or $C_n$, or $p=3$ and $G=G_2$. First assume $G = A_1$ with $p=2$. Let $\rho:G \to {\rm GL}(V_2)$ be the representation afforded by the natural module $V_2$ and let $\s$ be the standard Frobenius morphism of $G$ with $G_{\s} = {\rm SL}_{2}(2)$. Then we can take $\mathcal{S} = \{V_2 \otimes V_2^{(2)}\}$, where $V_2^{(2)}$ is the $kG$-module corresponding to the representation $\rho\s$ of $G$. 
Since each subgroup in $\mathcal{M}$ is either a Borel subgroup or the normalizer of a torus, the result follows in this case. If $G=F_4$, we take $\mathcal{S}$ to consist of the two $26$-dimensional modules and for $(G,p)=(G_2,3)$ we take the two $7$-dimensional modules.  By \cite{LSadj}, no subgroup in $\mathcal{M}$ is irreducible on both of these modules. 

Next assume $G=C_n$ and $p=2$. If $n=2$, we set $\mathcal{S} = \{V_1 \otimes V_1^{(2)}, V_2 \otimes V_2^{(2)}\}$ where the $V_i$ are the two $4$-dimensional fundamental $G$-modules. Any maximal positive dimensional reductive subgroup of $G$ is either $A_1A_1.2$ (two classes) or the normalizer of a torus and the result follows by inspection.  

Now assume $n \geqs 3$ and let $V$ be the natural module for $G$.   Take $\mathcal{S}=\{V_1, V_2\}$ where 
 $V_1$ is the nontrivial irreducible composition factor of $\L^2(V)$ of dimension $n(2n-1) - \delta$ (with $\delta \in \{1,2\}$) and $V_2$ is the (restricted) Steinberg module. Suppose that $M \in \mathcal{M}$ is irreducible on $V_1$. By \cite[Theorem 4.6]{GT}, it follows that $M = D_n.2$, or $n=3$ and $M=G_2$ (there is a small gap in the proof of \cite[Theorem 4.6]{GT} when $M$ is almost simple, but the conclusion is easily deduced from the main theorems of \cite{BT1,BT2,Se}). By \cite[Lemma 9.2]{GT}, $D_n.2$ is reducible on $V_2$. If $n=3$, then $\dim V_2= 512$ and trivially we see that $G_2$ has no irreducible representations of that dimension (see \cite[Table A.49]{Lu}, for example). The result follows.   

Finally, let us observe that $B_n$ is isogenous to $C_n$, so the previous argument  also handles the remaining $G=B_n$ case.  
\end{proof}  

We can now prove Theorem \ref{t:dense2}.

\begin{proof}[Proof of Theorem \ref{t:dense2}]  
Let $\Gamma$ be the nonempty open subset of $G^t$ in Lemma \ref{l:prel} and note that $\Gamma' := \Gamma \cap \Omega$ is an open subset of $\Omega$ with $\Delta \subseteq \Gamma' \subseteq \L$. Also recall that $\Delta = \L \cap \Delta^+$. 

Clearly, (ii) implies (i), and it also implies that $\Gamma'$ is a nonempty open subset of $\O$. It remains to show that (i) implies (ii). Suppose (i) holds, so $\Delta(k')$ is nonempty for some field extension $k'$ of $k$. Then $\Delta^+(k')$ is nonempty and thus $\Delta^+$ is dense in $\O$ by Theorem \ref{t:density}. In addition, $\Gamma'(k')$ is a nonempty open subset of $\O(k')$ and since $\Gamma'$ is defined over $k$, we deduce that $\Gamma' = \Gamma'(k)$ is also nonempty. It follows that $\Delta = \L \cap \Delta^+ = \Gamma' \cap \Delta^+$ is dense and (ii) holds. 
\end{proof}

To conclude this section, we consider the case that $k$ is algebraic over a finite field.  Of course, $G$ is locally finite in this situation, so $G(x)$ is always finite. The following result will play a role in the proof of Theorem \ref{t:main7} (see Section \ref{ss:thm7}).

\begin{thm}\label{t:algebraic}  
Let $G$ be a simple algebraic group over $k$, where $k$ is the algebraic closure of a finite field of characteristic $p$. Let $\Omega$ be an irreducible subvariety of $G^t$ with $t \geqs 2$. Then the following are equivalent:
\begin{itemize}\addtolength{\itemsep}{0.2\baselineskip}
\item[{\rm (i)}] $G(x) = G(k')$ for some $x  \in \O(k')$ and field extension $k'$ of $k$.
\item[{\rm (ii)}] For any fixed integer $d$, there exists $x \in \Omega$ with $G(x) \cong G(q)$ for some $p$-power $q > d$.
\end{itemize}
\end{thm}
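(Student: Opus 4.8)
The plan is to prove the two implications separately, with the substantive content lying in (ii)$\Rightarrow$(i). The direction (i)$\Rightarrow$(ii) is essentially bookkeeping: if $G(x) = G(k')$ for some $x \in \O(k')$, then since $k'$ is algebraic over a finite field, $x$ already has all its coordinates defined over some finite subfield $\mathbb{F}_Q \subseteq k'$, so $G(x)$ is a finite subgroup of $G(\mathbb{F}_Q)$ that is Zariski dense in $G$. A Zariski dense finite subgroup of a simple algebraic group over $\overline{\mathbb{F}_p}$ must contain a conjugate of some $G(q)$ (this is a standard consequence of the classification of subgroups of finite groups of Lie type, via the fact that dense finite subgroups cannot lie in a positive-dimensional maximal closed subgroup and the Larsen–Pink / Liebeck–Seitz analysis — alternatively it follows from Theorem~\ref{t:GT} applied over an uncountable extension, then descending). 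To get (ii), one observes that the set $\O(k)$ is a union over the finite subfields $\mathbb{F}_Q$ of the finite sets $\O(\mathbb{F}_Q)$; since the property ``$G(x)$ is Zariski dense in $G$'' is detected by finitely many closed conditions being \emph{avoided} (namely $x$ lying in one of the finitely many conjugacy classes of subvarieties $X_i$ coming from positive-dimensional maximal subgroups, as in the first proof of Lemma~\ref{l:prel}, together with irreducibility on the finite set $\mathcal{S}$ of modules from the second proof), and since $\O$ is irreducible and $\Delta$ is nonempty over $k'$, the subset of $\O$ on which $G(x)$ is dense is the $k$-points of a nonempty subvariety; its intersection with $\O(\mathbb{F}_Q)$ grows without bound as $Q \to \infty$, and for each such $x$ the group $G(x)$ contains a conjugate of $G(q)$ for some $p$-power $q$ that tends to infinity with $Q$.

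For (ii)$\Rightarrow$(i), suppose for every $d$ there is $x_d \in \O$ with $G(x_d) \cong G(q_d)$, $q_d > d$. One wants to produce a single point (over a possibly larger field) whose generated subgroup is all of $G(k')$. The natural strategy is a limiting / specialization argument: pass to the algebraic closure $k'$ of a nonprincipal ultraproduct of the fields $\mathbb{F}_{q_d}$ (equivalently, work over a large algebraically closed field $k' \supseteq k$ that is \emph{not} algebraic over a finite field, e.g. $k' = \overline{\mathbb{F}_p(t)}$, into which each $\mathbb{F}_{q_d}$ need not embed, so one should instead take $k'$ to be an uncountable algebraically closed field of characteristic $p$). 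The point $x = (x_d)$ defines an element of $\O(k')$, and the property that $w(x)$ has infinite order for the various words $w$ — which is exactly what Lemma~\ref{l:words3} exploits — can be read off from the fact that the corresponding coefficients $f_j(w(x_d))$ of characteristic polynomials, lying in $\mathbb{F}_{q_d}$, take unboundedly many values and hence are not roots of unity in the limit. More directly: by Lemma~\ref{l:words3}, since $G(x_d)$ is not contained in any proper positive-dimensional subgroup, the sets $\mathcal{S}(w)$ computed over $\O(k')$ must be infinite for some $w$ (otherwise, by Remark~\ref{r:ext} and Lemma~\ref{l:words2}, $G(y)$ would be finite of bounded order for \emph{all} $y \in \O$, contradicting $q_d \to \infty$); hence $\Delta^+(k')$ is dense in $\O(k')$, and then the argument of Theorem~\ref{t:dense2} — using Lemma~\ref{l:prel} to produce the open set $\Gamma'$ with $\Delta \subseteq \Gamma' \subseteq \L$ — shows $\Delta(k') = \Gamma'(k') \cap \Delta^+(k')$ is dense, in particular nonempty, giving (i).

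The main obstacle I expect is making the limiting step fully rigorous: one must be careful that ``$G(x_d) \cong G(q_d)$ with $q_d \to \infty$'' genuinely forces $|\mathcal{S}(w)| > 1$ over $k'$ for some word $w$. The cleanest way around this is to \emph{avoid} an explicit ultraproduct and instead argue by contradiction using the dichotomy already established: if $|\mathcal{S}(w)| = 1$ for all $w \in F_t$ over $k$, then by Remark~\ref{r:ext} and Lemma~\ref{l:words2}(i) the groups $G(y)$ for $y \in \O$ all have $|G(y)| \leq N$ for a fixed $N$, which directly contradicts the existence of $x_d$ with $|G(x_d)| \geq |G(q_d)| \to \infty$. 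Therefore $|\mathcal{S}(w)| > 1$ for some $w$, and now Lemma~\ref{l:words3} (applied over an uncountable $k' \supseteq k$) plus the proof of Theorem~\ref{t:dense2} deliver a dense, hence nonempty, $\Delta(k')$. A secondary technical point is the descent in (i)$\Rightarrow$(ii) from a dense $\Delta$ over $k'$ back to $k$-points in arbitrarily large finite subfields; this uses that $\Gamma'$ from Lemma~\ref{l:prel} is defined over $k$ and that a nonempty open subvariety of an irreducible variety over $\overline{\mathbb{F}_p}$ has $\mathbb{F}_Q$-points for all large $Q$, of which there are more and more, forced to generate copies of $G(q)$ with $q \to \infty$ since only boundedly many can generate a given finite $G(q_0)$.
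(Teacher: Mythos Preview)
Your direction (ii)$\Rightarrow$(i) is essentially the paper's argument, but there is a circularity you need to break. You write that ``Lemma~\ref{l:words3} plus the proof of Theorem~\ref{t:dense2} deliver a dense $\Delta(k')$''. But the proof of Theorem~\ref{t:dense2} uses the hypothesis $\Delta(k')\ne\emptyset$ to conclude that $\Gamma'(k')$ is nonempty (since $\Delta\subseteq\Gamma'$), and only then intersects $\Gamma'(k')$ with the dense set $\Delta^+(k')$. Here $\Delta(k')\ne\emptyset$ is exactly what you are trying to prove, so you must establish $\Gamma'\ne\emptyset$ independently. The point you are missing is that hypothesis (ii) gives this directly: for $q$ sufficiently large, $G(q)$ acts irreducibly on every module in the finite set $\mathcal{S}$ from the second proof of Lemma~\ref{l:prel}, so the tuple $x$ with $G(x)\cong G(q)$ lies in $\Gamma'$. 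Once $\Gamma'$ is nonempty and open in $\Omega$, it is nonempty and open in $\Omega(k')$, and your argument (dense $\Delta^+(k')$ intersected with $\Gamma'(k')\subseteq\Lambda(k')$) goes through.

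Your direction (i)$\Rightarrow$(ii) has a genuine error at the start. You write ``since $k'$ is algebraic over a finite field, $x$ already has all its coordinates defined over some finite subfield''. But (i) allows $k'$ to be \emph{any} field extension of $k=\overline{\mathbb{F}_p}$, and in fact for $G(x)=G(k')$ to hold, $k'$ must be transcendental over $\mathbb{F}_p$ (otherwise $G(x)$ is finite). So there is no finite subfield containing the coordinates of $x$, and your ``Zariski dense finite subgroup'' picture does not apply. The correct descent is the one the paper uses: from (i), $\Gamma'(k')$ is nonempty, hence $\Gamma'$ over $k$ is nonempty (being defined over $k$); and from (i), $|\mathcal{S}(w)|>1$ for some $w$, so for each $n$ the set $\{x\in\Omega:|G(x)|>n\}$ is an open nonempty subset of $\Omega$ (its complement is closed, and it is nonempty since $w(x)$ takes infinitely many semisimple parts). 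This open set meets $\Gamma'\subseteq\Lambda$, and a point $x$ in the intersection has $G(x)$ finite (we are over $\overline{\mathbb{F}_p}$), of order $>n$, and not contained in any positive-dimensional maximal closed subgroup. For $n$ large enough, the classification of large finite subgroups not lying in a positive-dimensional subgroup (Larsen--Pink, or Aschbacher/Liebeck--Seitz) forces $G(x)$ to be a subfield subgroup $G(q)$. Your counting argument (``only boundedly many can generate a given finite $G(q_0)$'') does not replace this step: without first knowing $|G(x)|$ is large, membership in $\Gamma'$ alone does not rule out $G(x)$ being one of the finitely many conjugacy classes of finite maximal subgroups that are not subfield subgroups.
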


\begin{proof}   
First observe that if $|\mathcal{S}(w)|=1$ for all words $w \in F_t$, then the proof of 
Lemma \ref{l:words} shows that $G(x)$ has finite bounded order for all $x \in \O$. Clearly, this cannot happen if (i) or (ii) hold, so in both cases we see that there is a word $w \in F_t$ with $|\mathcal{S}(w)| > 1$. By the irreducibility of $\O$, it follows that $\mathcal{S}(w)$ is infinite and thus $w(x)$ can have arbitrarily large order.   

Let $\Gamma$ be the open subset of $G^t$ described in the (second) proof of Lemma \ref{l:prel} (see \eqref{e:gammadef}) and set $\Gamma' = \Gamma \cap \O$ as in the proof of Theorem \ref{t:dense2}. If (i) holds, then $\Gamma'(k')$ is nonempty, which implies that $\Gamma$ is nonempty since it is defined over $k$. Similarly, $\Gamma$ is nonempty if (ii) holds. To see this, notice that for all $q$ sufficiently large, $G(q)$ is irreducible on each of the $kG$-modules in the set $\mathcal{S}$ described in the (second) proof of Lemma \ref{l:prel}.

Suppose (ii) holds and let $k'$ be an algebraically closed field containing $k$ that is not algebraic over a finite field. Since $|\mathcal{S}(w)| > 1$ for some $w \in F_t$, by applying Lemma \ref{l:words3} we deduce that $\Delta^+(k') $ is dense in $\Omega(k')$ and thus $\Delta(k') = \L(k') \cap \Delta^+(k')$ is nonempty.  Therefore, (ii) implies (i).

Now assume (i) holds. For any integer $n$, we note that $\{x \in \O \,:\, |G(x)| > n\}$ is a nonempty open subset and therefore meets $\L$. By definition of $\L$, if $x$ is in the intersection then $G(x)$ is not contained in a proper positive dimensional closed subgroup of $G$. Therefore, by taking $n$ sufficiently large, we deduce that $G(x)$ is a subfield subgroup of $G$ (possibly twisted) and (ii) follows. Here the fact
that any sufficiently large finite subgroup of $G$ that is not contained in a proper positive dimensional closed subgroup is a subfield subgroup follows by \cite{As} for classical groups and by \cite{LS01} for exceptional groups. It also follows from a result of Larsen and Pink \cite{LP}. 
\end{proof} 

\section{Fixed point spaces for exceptional algebraic groups}\label{s:fp}

In this section we prove Theorems \ref{t:main3} and \ref{t:main4}, which combine to give Theorem \ref{t:main5}. We adopt the notation introduced in Section \ref{s:intro}. In particular, for $g \in G$ and a coset variety $X=G/M$, we write $X(g)$ for the variety of fixed points of $g$ on $X$ and we define
\[
\a(g) = \a(G,M,g) = \frac{\dim X(g)}{\dim X}.
\]
Let us also recall \cite[Proposition 1.14]{LLS}, which states that
\begin{equation}\label{e:lls}
\dim X(g) = \dim X - \dim g^G + \dim (g^G \cap M)
\end{equation}
for all $g \in M$ (of course, $X(g)$ is nonempty if and only if $g^G \cap M$ is nonempty).

\subsection{Proof of Theorem \ref{t:main3}}\label{ss:pt3}

Let $M$ be a positive dimensional maximal closed subgroup of $G$ and set $X=G/M$ and $\Omega = C_1 \times \cdots \times C_t$. If $C_i \cap M$ is empty for some $i$, then for all $x \in \O$ the subgroup $G(x)$ is not contained in a conjugate of $M$; so let us assume each $C_i$ meets $M$ and consider the variety
\[
Y= \{(g_1, \ldots, g_t, x) \,:\, g_i \in C_i, \, x \in X(g_i), \, i = 1, \ldots, t \}.
\]
By projecting $Y$ onto $X$ by sending each $(t + 1)$-tuple to its last component, and noting that all fibers of this projection have the same dimension, we see that 
\[
\dim Y = \dim X + \sum_{i=1}^{t} \dim (C_i \cap M).
\] 
By applying \eqref{e:lls}, 
\[
\sum_{i=1}^{t} \dim C_i = \sum_{i=1}^{t} \dim(C_i \cap M)  +  \sum_{i=1}^{t} (\dim X  - \dim X(g_i))
\]
and we deduce that
\[
\dim Y  =  \sum_{i=1}^{t}\dim C_i  - (t-1) \dim X + \sum_{i=1}^{t}\a(G,M,g_i)\dim X.
\]
Therefore, the hypothesis implies that $\dim Y < \sum_i \dim C_i$ and thus the projection from $Y$ into $\Omega$ is not dominant. It follows that the set of $x \in \Omega$ such that $G(x)$ is contained in a conjugate of $M$
is contained in a proper closed subset of $\Omega$. 

Theorem \ref{t:main3} now follows since $G$ has only finitely many conjugacy classes of positive dimensional  maximal closed  subgroups (see \cite[Corollary 3]{LS04}).

\subsection{Fixed point spaces for exceptional algebraic groups}\label{ss:fps}

For the remainder of Section \ref{s:fp} we will focus on the proof of Theorem \ref{t:main4}.

Let $G$ be a simple algebraic group and let $g$ be a non-central element of $G$. Set  $X=G/M$, where $M$ is a positive dimensional maximal closed subgroup of $G$. Write $g=su$, where $s$ is the semisimple part of $g$ and $u$ is the unipotent part. Then for $h \in G$ we have $g \in M^h$ if and only if both $s \in M^h$ and $u \in M^h$, so $X(g) = X(s) \cap X(u)$. Therefore, for the purposes of proving Theorem \ref{t:main4}, we may assume that $g$ is either semisimple or unipotent. In addition, if the order of $g$ is finite then we may assume $g$ has prime order since $X(g) \subseteq X(g^n)$ for every positive integer $n$. Finally, suppose $g \in G$ is semisimple of infinite order. Here we observe that 
$X(g) \subseteq X(h)$ for all elements $h$ in the closure of $\langle g \rangle$ and we note that this subgroup contains a positive dimensional torus and therefore elements of order $2$ or $3$.  

This shows that in order to prove Theorem \ref{t:main4} we may assume $g \in \mathcal{P}$, which is the set of elements of prime order in $G$ (as well as all nontrivial unipotent elements if $p=0$). Our main result is as follows, which immediately implies Theorem \ref{t:main4}.

\begin{thm}\label{t:mainex}
Let $G$ be a simply connected simple algebraic group of exceptional type over an algebraically closed field of characteristic $p \geqs 0$, let $M$ be a positive dimensional maximal closed subgroup of $G$ and let $g \in \mathcal{P}$ be non-central. Then 
\[
\a(G,M,g) \geqs \frac{2}{3}
\]
if and only if $(G,M,g)$ is one of the cases recorded in Table \ref{tab:main}.
\end{thm}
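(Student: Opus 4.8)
The plan is to reduce the statement to a finite computation over the list of positive dimensional maximal closed subgroups of $G$ and the finite list of non-central classes in $\mathcal{P}$ with large centralizer, and then to carry out that computation case by case using the formula \eqref{e:lls}. Since $\a(G,M,g) = 1 - (\dim g^G - \dim(g^G\cap M))/\dim X$, the condition $\a(G,M,g)\geqs 2/3$ is equivalent to
\[
\dim g^G - \dim(g^G\cap M) \leqs \tfrac13 \dim X = \tfrac13(\dim G - \dim M).
\]
So large $\a$ forces $\dim g^G$ to be small relative to $\dim G - \dim M$, and in particular it forces $\dim g^G$ to be small. First I would record, for each exceptional $G$, the classes $g \in \mathcal{P}$ with $\dim g^G$ below the relevant threshold: this is a short explicit list (root elements, a few small unipotent classes, small-order semisimple elements such as certain involutions and order-$3$ elements with large centralizer), available from the standard references on conjugacy classes and centralizers in exceptional groups. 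For all other $g$, $\dim g^G$ is already too large — even taking $\dim(g^G\cap M) = \dim M$ (its trivial upper bound, which fails badly when $M$ is not connected-reductive of the right type) is not enough — and these are eliminated uniformly.

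Next I would go through the positive dimensional maximal closed subgroups $M$ of $G$, which by \cite[Corollary 3]{LS04} fall into finitely many classes and are classified explicitly: the maximal parabolics, the maximal-rank reductive subgroups, and the (short) list of remaining positive dimensional maximal subgroups from the Liebeck--Seitz work. For each surviving pair $(g,M)$ one needs the dimension of the intersection $g^G \cap M$, equivalently the dimension of the union of the $M$-classes (or $M^\circ$-classes) fusing into $g^G$. For unipotent $g$ this is a fusion computation: identify which unipotent classes of $M$ (reading off Jordan/Bala--Carter type via the embedding $M \hookrightarrow G$) lie in $g^G$ and take the largest; the relevant class data and fusion information for exceptional groups is tabulated in the literature (Lawther, Liebeck--Seitz, and the restriction tables for embeddings of subsystem and non-subsystem subgroups). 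For semisimple $g$ of prime order one instead counts the dimension of the relevant $M$-classes, using that the centralizer structure is controlled by the root datum. In the parabolic case there is a cleaner route: $\a(G,P,g)$ can be analyzed via the action of $g$ on $G/P$ and the Levi decomposition, and here one expects the extremal values (the long/short root element cases attaining exactly $\kappa(G)$) to show up, so I would treat parabolics first and carefully.

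The main obstacle I anticipate is the fusion and intersection-dimension bookkeeping for the non-parabolic maximal subgroups — especially the maximal-rank subgroups like $A_1 E_7$, $A_2 E_6$, $A_1 A_2$ in $G_2$ scaled up, $D_4$-type subgroups, and the exotic ones such as $A_1$'s, $A_2 G_2 \le E_8$, $F_4 \le E_6$, $G_2 \le E_6$, etc. — where computing $\dim(g^G\cap M)$ requires knowing precisely how small unipotent and semisimple classes of $M$ map into $G$. This is where one genuinely needs the detailed tables (and where the ``stronger form'' alluded to, needed for Theorem \ref{t:main5s} and refined again for Theorem \ref{t:main6}, does extra work). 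The strategy to keep this finite and checkable is: (1) use the displayed inequality to prune aggressively, so that for most $M$ only root elements and one or two other tiny classes survive; (2) for those, bound $\dim(g^G \cap M)$ from above by the dimension of the largest unipotent (resp. semisimple) class of $M$ of the relevant type, which is often already enough to push $\a$ below $2/3$; (3) only when this crude bound is inconclusive, compute the intersection exactly and record the resulting pair in Table \ref{tab:main}. Finally I would verify the ``only if'' direction is complete by checking that every entry in Table \ref{tab:main} indeed satisfies $\a \geqs 2/3$ and that the maxima listed in Theorem \ref{t:main4} are attained, which identifies $\kappa(G)$ in each case (coming from a root-element/parabolic pair).
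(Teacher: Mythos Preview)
Your proposal is correct and follows essentially the same route as the paper: reduce via Theorem~\ref{t:ls} to the finite list of maximal closed subgroups (parabolics, maximal rank reductives, and the short residual list), then for each family use \eqref{e:lls} together with the trivial bound $\dim(g^G\cap M)\leqs \dim M - {\rm rank}\,M$ to prune all but the smallest classes, and finish by computing $\dim(g^G\cap M)$ exactly from the fusion tables in \cite{Law09} and the restriction data in \cite{Thomas}. The one point you underplay is that for parabolics (and for semisimple elements in the reductive cases) the paper does not redo the analysis from scratch but quotes the lower bounds on $\b(g)$ already established in \cite[Theorem~2]{LLS}, which disposes of almost everything except the root-element and $B_4$-involution cases in one stroke; you should plan to invoke that result rather than recompute it.
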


\begin{rem}\label{r:tab}
In the third column of Table \ref{tab:main}, $u_{\a}$ denotes a long root element, $u_{\b}$ is a short root element (with $p=2$ if $G=F_4$) and $t$ is a $B_4$-involution in $F_4$ (that is, the centralizer of $t$ in $F_4$ is a group of type $B_4$). In addition, we write $T_i$ for an $i$-dimensional torus and $P_i$ denotes the maximal parabolic subgroup of $G$ corresponding to deleting the $i$-th node in the Dynkin diagram of $G$, labelled as in Bourbaki \cite{Bou}. We also adopt the notation $\tilde{Y}$ for a subgroup of type $Y$ that is generated by short root subgroups (for example, $F_4$ has a subgroup $\tilde{D}_4$).
\end{rem}

\begin{cor}\label{c:1}
We have $\a(G,M,g) \geqs \frac{2}{3}$ only if $g$ is a long root element, or a short root element if $(G,p) = (F_4,2)$ or $(G_2,3)$, or a $B_4$-involution if $G=F_4$ and $p \ne 2$.
\end{cor}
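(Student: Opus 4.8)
The plan is to establish Theorem~\ref{t:mainex} by a systematic case analysis over the finitely many conjugacy classes of positive dimensional maximal closed subgroups $M$ of each exceptional group $G$, using the classification of these subgroups due to Liebeck and Seitz. Throughout, the key identity is \eqref{e:lls}, which reduces the computation of $\a(G,M,g)$ for $g\in M$ to understanding $\dim(g^G\cap M)$, i.e.\ how the $G$-class of $g$ meets $M$. By the reductions already recorded before the statement, I only need to treat $g$ of prime order (semisimple or unipotent), and in fact it suffices to bound $\a(G,M,g)$ from above by something strictly less than $2/3$ for all but the handful of triples $(G,M,g)$ listed in Table~\ref{tab:main}, and then verify equality (or the stated value) in those cases.

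First I would organize the maximal subgroups $M$ into three families: (a) maximal parabolic subgroups $P_i$; (b) reductive subgroups of maximal rank (subsystem subgroups), classified via the Borel--de Siebenthal algorithm; and (c) the remaining reductive subgroups of smaller rank, where the list is short and explicit (e.g.\ $A_1$'s, $A_2\tilde{A}_2$ in $F_4$ and $E_6$, $G_2$ in various groups, etc.). For the parabolic case, $\dim(g^G\cap P_i)$ is controlled by the action of $g$ on $G/P_i$ and on the Levi/unipotent radical; here one uses that $\a(G,P_i,g)$ is essentially the proportion computed from $\dim C_{G/P_i}(g)$, and the maximum over non-central $g$ is attained at long (or short) root elements, which is exactly how the root-element entries in Table~\ref{tab:main} and Corollary~\ref{c:1} arise. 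For the reductive subgroups I would, for each $G$-class of prime-order elements $g$, bound $\dim(g^G\cap M)\leqs \dim M$ trivially, and then sharpen this by analyzing which $M$-classes fuse into $g^G$: this is where detailed information on unipotent classes (Bala--Carter labels, centralizer dimensions, from the tables of Lawther, Liebeck--Seitz, and the work of Lübeck) and on semisimple involution/order-$p$ classes (centralizer types from the tables in, e.g., Liebeck--Seitz or the ATLAS-style data) enters.

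The quantitative heart of the argument is the inequality $\a(G,M,g)=1-\dfrac{\dim g^G-\dim(g^G\cap M)}{\dim X}<\dfrac23$, equivalently $\dim g^G-\dim(g^G\cap M)>\dfrac13\dim G/M$. For the large classes (regular unipotent, and more generally unipotent classes with small centralizer) the term $\dim g^G$ is close to $\dim G-\operatorname{rank}G$, so the bound is easy and only the smallest classes — root elements, and the $B_4$-involution in $F_4$ — can possibly violate it. I would therefore split off the small classes (root elements of both lengths, next-smallest unipotent classes, involutions with large centralizer such as $A_1E_7$ in $E_8$, $A_1D_6$ in $E_7$, $B_4$ and $\tilde{D}_4$-type centralizers in $F_4$, $A_1\tilde{A}_1$ in $G_2$) and handle these by hand for every relevant $M$, computing $\dim(g^G\cap M)$ exactly (often $g^G\cap M$ is a single $M$-class, or a union whose dimensions are read off from centralizer orders in $M$), while disposing of all larger classes by the crude dimension bound. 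The numbers $15/19$, $7/9$, $10/13$, $3/4$, $2/3$ of Theorem~\ref{t:main4} then come out as $\a(G,P_i,u_\a)$ for the appropriate end node $i$, with $\dim G/P_i$ the relevant minuscule-type degree.

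The main obstacle I anticipate is bookkeeping accuracy in the intersection computations $\dim(g^G\cap M)$ for the subsystem subgroups: one must know, for each prime-order $G$-class and each maximal-rank reductive $M$, precisely which $M$-classes lie in it, which requires care with the fusion of unipotent and semisimple classes under the inclusion $M\le G$ (labels can shift, classes can split or merge, and characteristic-$p$ subtleties appear when $p$ is bad for $G$ or when $(G,p)$ is special as in Remark~\ref{r:lie}). A secondary difficulty is ensuring completeness: I must check that no triple $(G,M,g)$ outside Table~\ref{tab:main} sneaks above $2/3$, which means the crude bound has to be made genuinely uniform across the low-rank reductive maximal subgroups, and a few borderline cases (e.g.\ $G_2$ with $p=2$, or $F_4$ with $p=2,3$, where extra maximal subgroups or extra small classes appear) will need individual attention. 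Once all cases are tabulated, Theorem~\ref{t:mainex} — and hence Theorem~\ref{t:main4} and Corollary~\ref{c:1} — follows by reading off the maxima.
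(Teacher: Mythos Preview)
Your proposal is correct and follows essentially the same route as the paper: reduce to Theorem~\ref{t:mainex}, organize the maximal subgroups into parabolics, maximal-rank reductive subgroups, and the remaining low-rank reductive subgroups (exactly the threefold split of Sections~\ref{s:parab}--\ref{s:rem}), and for each family combine the identity~\eqref{e:lls} with crude dimension bounds to dispose of large classes, reserving the explicit fusion computations (via Lawther's tables and the Lawther--Liebeck--Seitz bounds on $\beta(g)$) for the small classes---root elements and the $B_4$-involution---which is precisely how the paper proceeds. Corollary~\ref{c:1} is then, as you say, read off from Table~\ref{tab:main}.
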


{\small
\renewcommand{\arraystretch}{1.05}
\begin{table}
\[
\begin{array}{llcc} \hline
G & M^0 & g &  \a(G,M,g)  \\ \hline
E_8 & P_8 & u_{\a} & 15/19 \\
& A_1E_7 & u_{\a} & 11/14 \\
& P_7 & u_{\a} & 65/83 \\
& A_2E_6 & u_{\a} & 7/9  \\
& P_6 & u_{\a} & 75/97 \\
& D_4^2 \, (p=2) & u_{\a} & 37/48  \\
& P_1, G_2F_4  & u_{\a} & 10/13 \\
& P_3 & u_{\a} & 75/98 \\
& P_4 & u_{\a} & 81/106 \\
& T_8 \, (p=2) & u_{\a} & 61/80 \\
& P_2 & u_{\a} & 35/46 \\
& A_1G_2^2 \, (p \ne 2) & u_{\a} & 165/217  \\
& P_5 & u_{\a} & 79/104 \\
& D_8, A_8, A_4^2, A_2^4, A_1^8, D_4^2 \, (p \ne 2) & u_{\a} & 3/4  \\
& & & \\
E_7  & P_7, E_6T_1 & u_{\a} & 7/9 \\
&  A_1F_4 & u_{\a} & 10/13 \\
& P_6 & u_{\a} & 16/21 \\
& P_1 & u_{\a} & 25/33 \\
& A_1D_6, A_1^3D_4 & u_{\a} & 3/4 \\
& P_3 & u_{\a} & 35/47 \\
& P_5 & u_{\a} & 37/50 \\
& P_2, T_7 \, (p=2) & u_{\a} & 31/42 \\
& P_4 & u_{\a} & 39/53 \\
& A_2A_5 & u_{\a} & 11/15 \\
& A_7, A_1^7, G_2C_3 & u_{\a} &  5/7 \\
& & & \\
E_6  &  F_4 & u_{\a} & 10/13 \\
& P_1, P_6, D_4T_2 & u_{\a} & 3/4 \\
& P_3, P_5 & u_{\a} & 18/25 \\
& P_2, A_2G_2 & u_{\a} & 5/7 \\
& T_6 \, (p=2) & u_{\a} & 17/24 \\
& A_1A_5 & u_{\a} & 7/10 \\
& P_4 & u_{\a} & 20/29 \\
& A_2^3, C_4\, (p \ne 2) & u_{\a} & 2/3 \\
& & & \\
F_4  & B_4, D_4 & u_{\a} & 3/4 \\
& C_4 \,(p=2), \tilde{D}_4\,(p=2)  & u_{\b} & 3/4 \\
& P_1 & u_{\b}, t & 11/15 \\
& P_4  & u_{\a} & 11/15 \\
& A_1C_3 & t & 5/7 \\
& A_1G_2 & u_{\a} & 5/7 \\
& P_2 & u_{\b}, t & 7/10 \\
& P_3 & u_{\a} & 7/10 \\
& P_1 & u_{\a} & 2/3 \\
& P_4 & u_{\b},t & 2/3 \\
& A_2\tilde{A}_2 & u_{\a}, u_{\b}, t & 2/3 \\
& & & \\
G_2  & A_2 & u_{\a} & 2/3 \\
& \tilde{A}_2\, (p=3) & u_{\b} & 2/3 \\ \hline
\end{array}
\]
\caption{The cases in Theorem \ref{t:mainex} with $\a(G,M,g) \geqs 2/3$}
\label{tab:main}
\end{table}
\renewcommand{\arraystretch}{1}}

The proof of Theorem \ref{t:mainex} will rely heavily on the following theorem of Liebeck and Seitz \cite{LS04}, which classifies the positive dimensional maximal closed  subgroups of exceptional algebraic groups. 

\begin{thm}\label{t:ls}
Let $G$ be a simple algebraic group of exceptional type over an algebraically closed field $k$ of characteristic $p \geqs 0$ and let $M$ be a positive dimensional maximal closed subgroup of $G$. Then one of the following holds:
\begin{itemize}\addtolength{\itemsep}{0.2\baselineskip}
\item[{\rm (i)}] $M$ is a parabolic subgroup;
\item[{\rm (ii)}] $M^0$ is a reductive subgroup of maximal rank, as in Table \ref{tab:mr};
\item[{\rm (iii)}] $G = E_7$, $p \ne 2$ and $M = (2^2 \times D_4).S_3$;
\item[{\rm (iv)}] $G = E_8$, $p \ne 2,3,5$ and $M = A_1 \times S_5$;
\item[{\rm (v)}] $M^0$ is as in Table \ref{tab:nmr}.
\end{itemize}
\end{thm}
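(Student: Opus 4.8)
The plan is to carry out the classical reduction through the connected component $M^{0}$. Since $M$ is maximal and $M \leqs N_{G}(M^{0}) < G$, we have $M = N_{G}(M^{0})$; so the task is to (a) pin down $M^{0}$ and then (b) recover $M$ from $C_{G}(M^{0})$ and the component group $N_{G}(M^{0})/M^{0}C_{G}(M^{0})$. Step (b) amounts to bookkeeping once $M^{0}$ is known, so the heart of the matter is (a), for which I would split on whether $M^{0}$ is reductive, and if so on its rank.

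If $M^{0}$ is not reductive, then $R_{u}(M^{0})$ is a nontrivial connected unipotent subgroup normalised by $N_{G}(M^{0}) = M$, so the Borel--Tits theorem places $M$ inside a proper parabolic subgroup $P$, and maximality forces $M = P$, which is conclusion (i). So assume $M^{0}$ is reductive; a standard argument reduces further to the case that $M^{0}$ is $G$-irreducible, the excluded possibilities feeding back into the parabolic or maximal-rank analyses. If in addition $\operatorname{rank} M^{0} = \operatorname{rank} G$, then $M^{0}$ contains a maximal torus $T$ of $G$ and is generated by $T$ together with the root subgroups indexed by a $\mathbb{Z}$-closed subsystem of the root system of $G$; the maximal such proper connected subgroups are enumerated by the Borel--de Siebenthal algorithm (iterated deletion of nodes from the extended Dynkin diagram), which run over $E_{8},E_{7},E_{6},F_{4},G_{2}$ yields a short explicit list. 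Discarding the ones contained in a larger reductive subgroup and reading off the component group from the Weyl-group stabiliser of the subsystem (together with graph automorphisms where $p$ permits) produces Table~\ref{tab:mr}; the only characteristic-sensitive entries are a handful of short-root subsystem subgroups peculiar to characteristics $2$ and $3$, certain component-group extensions, and the torus normalisers.

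The substantive case is $M^{0}$ reductive of rank strictly less than $\operatorname{rank} G$. One first excludes a nontrivial central torus $Z \leqs M^{0}$: then $C_{G}(Z)^{0}$ is a proper Levi subgroup containing $M^{0}$, and $N_{G}(M^{0})$ is forced inside $N_{G}(C_{G}(Z)^{0})$, returning us to the maximal-rank or parabolic cases; hence $M^{0}$ is semisimple. If $M^{0}$ has $k \geqs 2$ simple factors, the constraints $\dim M^{0} \leqs \dim G$ and the fact that $\operatorname{Lie}(M^{0})$ is an $M^{0}$-submodule of $\operatorname{Lie}(G)$ (together, for $E_{6}$ and $E_{7}$, with the analogous action on the $27$- and $56$-dimensional modules) cut the possibilities to a short list, each then verified for $G$-irreducibility and maximality; this is where $F_{4} < E_{6}$, $G_{2}F_{4} < E_{8}$, $A_{1}G_{2}^{2} < E_{8}$ and the like enter, the displayed restrictions on $p$ recording exactly when the embeddings exist and are completely reducible. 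If $M^{0} = X$ is simple of rank $\geqs 2$, one classifies the feasible restrictions of $\operatorname{Lie}(G)$ (and of the $27$- or $56$-dimensional module) to $X$ using highest-weight combinatorics, Clifford theory and Lübeck's small-dimension bounds \cite{Lu}; generically this forces $X$ into a proper reductive subgroup already under control (often a classical one), and the remaining finitely many embeddings are pinned down directly, producing the remaining entries of Table~\ref{tab:nmr} and case (iii). Finally, the $X = A_{1}$ subgroups are treated on their own: a $G$-irreducible $A_{1}$ is determined up to conjugacy by the restriction to it of the simple roots of $G$ (its labelled diagram), only finitely many of which are maximal, and $N_{G}(A_{1}) = A_{1} \times C_{G}(A_{1})$; the exotic $A_{1} \times S_{5} < E_{8}$ of case (iv) appears here, the restriction $p \neq 2,3,5$ reflecting when the $S_{5}$ lifts.

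I expect the main obstacle to be precisely this last situation --- $M^{0}$ simple of small rank, above all $M^{0}=A_{1}$, inside $E_{7}$ or $E_{8}$ --- where there are many conjugacy classes of $G$-irreducible subgroups and deciding which of them are genuinely maximal (contained in no larger positive-dimensional subgroup, uniformly across all characteristics) requires the full representation-theoretic machinery of the earlier papers in the Liebeck--Seitz series, together with a meticulous characteristic-by-characteristic treatment of the small-$p$ exotic maximal subgroups.
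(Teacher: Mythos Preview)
The paper does not give its own proof of this theorem. It is stated as the main result of Liebeck and Seitz \cite{LS04} and is quoted verbatim as input for the fixed-point-space analysis in Section~\ref{s:fp}; no argument for it appears in the paper, only the statement and a remark on the component groups $M/M^{0}$.

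Your outline is a reasonable high-level sketch of the strategy that actually underlies \cite{LS04} (Borel--Tits to dispose of non-reductive $M^{0}$, Borel--de Siebenthal for the maximal-rank case, then the hard work on semisimple $M^{0}$ of smaller rank via restrictions of the adjoint and minimal modules). But since the paper itself simply cites the result, there is nothing here to compare your proposal against: the ``paper's proof'' of this statement is a reference to \cite{LS04}, not an argument. If the intent of the exercise was to reconstruct the paper's reasoning, the correct answer for this particular statement is that the paper treats it as a black box.
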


\begin{rem}
Note that in each of the cases in part (v) of Theorem  \ref{t:ls}, the component group $M/M^0$ is either trivial or of order $2$, as given in the fourth column of \cite[Table 10.1]{LS04}.
\end{rem}

\renewcommand{\arraystretch}{1.1}
\begin{table}
\begin{center}
\[
\begin{array}{lll}\hline
G & M^0 & M/M^0 \\ \hline
E_8 & D_8, A_1E_7, A_8, A_2E_6, A_4^2, D_4^2, A_2^4, & 1, 1, Z_2, Z_2, Z_4, S_3 \times Z_2, {\rm GL}_{2}(3), \\
& A_1^8, T_8 & {\rm AGL}_{3}(2), W(E_8) \\
E_7 & A_1D_6, A_7, A_2A_5, A_1^3D_4, A_1^7, E_6T_1, T_7 & 1, Z_2, Z_2, S_3, {\rm GL}_{3}(2), Z_2, W(E_7) \\
E_6 & A_1A_5, A_2^3, D_4T_2, T_6 & 1, S_3, S_3, W(E_6) \\
F_4\,  (p \ne 2) & B_4, D_4, A_1C_3, A_2\tilde{A}_2 & 1, S_3, 1, Z_2 \\
F_4\,  (p=2) & B_4, C_4, D_4, \tilde{D}_4, A_2\tilde{A}_2 & 1, 1, S_3, S_3, Z_2 \\
G_2 & A_1\tilde{A}_1, A_2, \tilde{A}_2\, (p=3) & 1, Z_2, Z_2 \\ \hline
\end{array}
\]
\caption{The possibilities for $M^0$ in Theorem \ref{t:ls}(ii)}
\label{tab:mr}
\end{center}
\end{table}
\renewcommand{\arraystretch}{1}

\renewcommand{\arraystretch}{1.1}
\begin{table}
\begin{center}
\[
\begin{array}{cll}\hline
G & \mbox{$M^0$ simple} & \mbox{$M^0$ not simple} \\ \hline
E_8 & A_1 \, (\mbox{$3$ classes, $p \geqs 23,29,31$}), B_2\, (p \geqs 5) & A_1A_2\, (p \ne 2,3),  A_1G_2^2 \, (p \ne 2), G_2F_4 \\
E_7 & A_1 \, (\mbox{$2$ classes, $p \geqs 17,19$}),  A_2\, (p \geqs 5) & A_1^2\, (p \ne 2,3),  A_1G_2 \, (p \ne 2), A_1F_4, G_2C_3 \\
E_6 & A_2 \, (p \ne 2,3),  G_2\, (p \ne 7), \, C_4\, (p \ne 2),  F_4 & A_2G_2 \\
F_4 & A_1 \, (p \geqs 13), G_2\, (p=7)  & A_1G_2\, (p \ne 2) \\
G_2 & A_1 \, (p \geqs 7) & \\ \hline
\end{array}
\]
\caption{The possibilities for $M^0$ in Theorem \ref{t:ls}(v)}
\label{tab:nmr}
\end{center}
\end{table}
\renewcommand{\arraystretch}{1}

For $g \in G$, it will be convenient to write $\a(g) = \a(G,M,g)$ if the context is clear. Similarly, we also define
\begin{equation}\label{e:bx}
\b(g) = \b(G,M,g) = \dim X - \dim X(g).
\end{equation}  

\begin{rem}\label{r:field}
We claim that in order to prove Theorem \ref{t:mainex} we may assume that $p > 0$ and $k$ is algebraic over $\mathbb{F}_p$. First assume that $p > 0$.  Then each $g \in \mathcal{P}$ has prime order and is therefore defined over a finite field.  Similarly, both $G$ and every maximal closed subgroup of $G$ are defined over a finite field (this is true even for the finite maximal subgroups, but we do not require this) and it follows that $\alpha(g)$ does not change if we replace $k$ by the algebraic closure of 
$\mathbb{F}_p$. Finally, the claim when $p=0$ follows by a standard compactness argument (the description of the unipotent conjugacy classes in characteristic $0$ is the same as in any prime characteristic $p$ that is good for $G$).  
\end{rem}
 
There is an extensive literature on conjugacy classes in algebraic groups of exceptional type and our notation is fairly standard. In particular, we will adopt the labelling of unipotent classes from \cite{LS_book} and we refer the reader to \cite{Lub} for detailed information on semisimple conjugacy classes and the corresponding centralizers. Our notation for modules is also standard. In particular, for a connected reductive algebraic group $H$ we will write ${\rm Lie}(H)$ for the adjoint module and $V_H(\l)$ (or just $V(\l)$ or $\l$ if the context is clear) for the rational irreducible $H$-module with highest weight $\lambda$ (and we will label the fundamental dominant weights $\l_i$ for $H$ in the usual manner, following \cite{Bou}). Similarly, $W_H(\l)$ is the Weyl module with highest weight $\l$ and the trivial module will be denoted by $0$. We write $W(H)$ for the Weyl group of $H$. Finally, if $M$ is a positive dimensional maximal closed non-parabolic subgroup of $G$ then we will often work with the restriction of $V$ to the connected component $M^0$, denoted by $V{\downarrow}M^0$, where $V$ is typically the adjoint module or minimal module for $G$. The tables in \cite[Chapter 12]{Thomas} provide a  convenient reference for the composition factors of $V{\downarrow}M^0$.

The proof of Theorem \ref{t:mainex} is organised as follows. First, in Section \ref{s:parab}, we study the case where $M$ is a maximal parabolic subgroup. The reductive maximal rank subgroups are treated in Section \ref{s:mr} and the proof is completed in Section \ref{s:rem}, where the remaining maximal subgroups arising in parts (iii), (iv) and (v) of Theorem \ref{t:ls} are handled. Finally, in Section \ref{ss:generic} we use Theorem \ref{t:mainex} to establish Theorem \ref{t:generic}, which is our main result on generic stabilizers.

\subsection{Parabolic subgroups}\label{s:parab}

Let $G$ be a simply connected simple algebraic group of exceptional type over an algebraically closed field $k$ of characteristic $p \geqs 0$ and let $M = P_i$ be a maximal parabolic subgroup of $G$ (here $i$ corresponds to the $i$-th node in the Dynkin diagram of $G$, labelled as in Bourbaki \cite{Bou}). Set $X = G/M$ and note that the dimension of $X$ is given in Table \ref{t:parab}.

\begin{table}
$$\begin{array}{r|cccccccc}
 & P_{1} & P_{2} & P_{3} & P_{4} & P_{5} & P_{6} & P_{7} & P_{8} \\ \hline
E_{8} & 78 & 92 & 98 & 106 & 104 & 97 & 83 &  57 \\
E_{7} & 33 & 42 & 47 & 53 & 50 & 42 & 27 & \\
E_{6} & 16  & 21 & 25 & 29 & 25 & 16 & & \\
F_{4} & 15 & 20 & 20 & 15 & & & & \\
G_{2} & 5 & 5 & & & & & & \\
\end{array}$$
\caption{$G$ exceptional, $\dim G/P_i$}
\label{t:parab}
\end{table}

We start by considering root elements.

\begin{lem}\label{l:p1}
Let $M = P_i$ be a maximal parabolic subgroup of $G$.
\begin{itemize}\addtolength{\itemsep}{0.2\baselineskip}
\item[{\rm (i)}] If $g \in G$ is a long root element, then $\a(g)$ is given in Table \ref{t:parab2}.
\item[{\rm (ii)}] If $G= F_4$ or $G_2$ and $g \in G$ is a short root element, then $\a(g)$ is given in Table \ref{t:parab3}.
\end{itemize} 
\end{lem}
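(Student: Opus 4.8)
The plan is to compute $\a(g) = \dim X(g)/\dim X$ directly from the formula \eqref{e:lls}, namely $\dim X(g) = \dim X - \dim g^G + \dim(g^G \cap M)$. Since $\dim X = \dim G/P_i$ is already recorded in Table \ref{t:parab} and $\dim g^G = \dim G - \dim C_G(g)$ is a standard datum (for a long root element $g=u_\a$ the centralizer dimension is well known in each exceptional type, and likewise for a short root element when $G = F_4$ or $G_2$), the only quantity that genuinely needs to be determined is $\dim(u_\a^G \cap P_i)$, the dimension of the variety of long root elements lying in the parabolic $P_i$. Rewriting, $\b(g) = \dim X - \dim X(g) = \dim g^G - \dim(g^G \cap M)$, so equivalently one computes the codimension of $u_\a^G \cap P_i$ inside $u_\a^G$.

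\textbf{Key steps.} First, I would fix Bourbaki labelling and, for each exceptional $G$ and each node $i$, write $P_i = L_i Q_i$ with Levi $L_i$ and unipotent radical $Q_i$. A long root element of $G$ lying in $P_i$ is $G$-conjugate into $P_i$, and one counts its $P_i$-orbits: the relevant root elements in $P_i$ are those in $Q_i$ together with (conjugates of) long root elements of the derived group of $L_i$. Concretely, the intersection $u_\a^G \cap P_i$ is a union of finitely many $P_i$-classes, and its dimension is the maximum of $\dim(P_i \cdot v)$ over representatives $v$; this is $\dim P_i - \dim C_{P_i}(v)$, and the largest such orbit typically comes from a suitably chosen root element whose centralizer in $P_i$ is smallest. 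The cleanest bookkeeping route, which I expect the authors to use, is the internal module description: the set of long root elements is the (projectivization of the) minimal nilpotent orbit, and its intersection with $P_i$ is governed by the grading of $\mathrm{Lie}(G)$ induced by the cocharacter defining $P_i$ — one reads off how the minimal orbit meets each graded piece. Either way, the output is a single integer per pair $(G, P_i)$, and then $\a(g)$ is the displayed ratio. Finally I would assemble the results into Tables \ref{t:parab2} and \ref{t:parab3}, cross-checking a few entries against the known values in Table \ref{tab:main} (e.g. $E_8$, $P_8$, $u_\a$ should give $15/19$, and $G_2$, $P_1$ should give $2/3$ for the appropriate root type).

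\textbf{Main obstacle.} The genuine work is the case-by-case computation of $\dim(u_\a^G \cap P_i)$ — equivalently pinning down which $P_i$-orbit of root elements is dominant and computing its centralizer dimension — across all $8+7+6+4+2$ node choices, plus the short-root variants for $F_4$ and $G_2$ (where in characteristic $2$ for $F_4$ and characteristic $3$ for $G_2$ the short root elements also generate root subgroups and behave like long ones under the graph automorphism). There is no conceptual difficulty, but it is bookkeeping-intensive and error-prone; the safeguard is the consistency check that $\sum_i$-type inequalities and the maxima in Theorem \ref{t:main4} come out as claimed, and that the entries agree with the relevant rows of Table \ref{tab:main}. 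I would organize the computation uniformly by exploiting the $\mathbb{Z}$-grading of $\mathrm{Lie}(G)$ attached to the fundamental cocharacter $\omega_i^\vee$, so that $Q_i$ and the relevant orbit dimensions are read off level by level rather than root by root.
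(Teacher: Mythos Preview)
Your strategy is mathematically sound: the formula \eqref{e:lls} reduces the question to computing $\dim(u_\a^G\cap P_i)$ (or equivalently $\b(g)$), and the grading of $\mathrm{Lie}(G)$ by the fundamental cocharacter $\omega_i^\vee$ is a perfectly reasonable way to organise that computation. However, the paper takes a much shorter route: it simply cites \cite[Theorem~2(I)(a)]{LLS}, which already tabulates $\b(g)$ for long and short root elements acting on every coset variety $G/P_i$ of an exceptional group. So the entire lemma is a one-line appeal to existing literature, whereas you are proposing to redo from scratch the case-by-case analysis that Lawther--Liebeck--Seitz carried out. Your approach would of course reproduce the same numbers, but it buys nothing new and costs the $27$-odd parabolic cases you flagged as the main obstacle.

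One small correction to your cross-check: for $G=G_2$ and $M=P_1$ with $g=u_\a$ a long root element, the value is $\a(g)=2/5$, not $2/3$ (see Table~\ref{t:parab2}); the entries $\a(g)=2/3$ for $G_2$ in Table~\ref{tab:main} come from the reductive maximal rank subgroups $A_2$ and $\tilde{A}_2$, not from parabolics.
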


\begin{proof}
This follows immediately from \cite[Theorem 2(I)(a)]{LLS}, which gives $\b(g)$. 
\end{proof}

\begin{table}
\[
\begin{array}{r|cccccccc}
 & P_{1} & P_{2} & P_{3} & P_{4} & P_{5} & P_{6} & P_{7} & P_{8} \\ \hline
E_{8} & 10/13  & 35/46 & 75/98 & 81/106 & 79/104 & 75/97 & 65/83 & 15/19 \\
E_{7} & \hspace{1.7mm} 25/33 & 31/42 & 35/47 & 39/53 & 37/50 & 16/21 & 7/9 & \\
E_{6} & 3/4  & 5/7 & 18/25 & 20/29 & 18/25 & 3/4 & & \\
F_{4} & 2/3 & 13/20 & 7/10 & 11/15 & & & & \\
G_{2} & 2/5 & 3/5 & & & & & & \\
\end{array}
\]
\caption{$\a(g)$, $M = P_i$, $g$ long root element}
\label{t:parab2}
\end{table}

\begin{table}
\[
\begin{array}{r|cccc} 
& P_1 & P_2 & P_3 & P_4 \\ \hline
F_4 & (9+2\delta_{2,p})/15 & (11+3\delta_{2,p})/20 & (11+2\delta_{2,p})/20 & (9+\delta_{2,p})/15 \\ 
G_2 & (2+\delta_{3,p})/5 & 2/5
\end{array}
\]
\caption{$\a(g)$, $M = P_i$, $g$ short root element}
\label{t:parab3}
\end{table}

Next we handle the remaining unipotent elements.

\begin{lem}\label{l:p2}
Suppose $M = P_i$ and $g \in G$ is a unipotent element, which is neither a long nor short root element. Then $\a(g) < \frac{2}{3}$.
\end{lem}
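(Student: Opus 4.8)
The strategy is to combine the fixed-point-space formula \eqref{e:lls}, namely $\dim X(g) = \dim X - \dim g^G + \dim(g^G \cap M)$, with the explicit information available for exceptional groups. Fix $M = P_i$ with Levi decomposition $M = QL$, where $Q = R_u(M)$ and $L$ is a Levi factor, and write $X = G/M$. For a unipotent element $g \in G$ that is neither a long nor a short root element, the key quantity to control is $\dim(g^G \cap M)$. Since $g^G \cap M$ is a union of $M$-classes, and every such class lies in some $M$-class whose dimension is bounded in terms of $\dim g^M$ for a representative, the plan is to bound $\dim(g^G \cap M)$ from above and $\dim g^G$ from below, converting the desired inequality $\a(g) = \dim X(g)/\dim X < 2/3$ into
\[
\dim(g^G \cap M) < \dim g^G + \frac{2}{3}\dim X - \dim X = \dim g^G - \frac{1}{3}\dim X.
\]
So it suffices to show $3\dim(g^G \cap M) < 3\dim g^G - \dim X$ for every maximal parabolic $M$ and every non-root unipotent $g$.

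\textbf{Key steps.} First I would reduce, as in Remark \ref{r:field}, to $p > 0$ with $k = \overline{\mathbb{F}_p}$, so that all unipotent classes are accessible via the Bala--Carter / Liebeck--Seitz labelling of \cite{LS_book}. Second, I would use the standard bound $\dim(g^G \cap M) \leqs \dim Q + \max\{\dim(h^L) : h \in L \text{ with } h u_Q \in g^G \text{ for some } u_Q \in Q\}$, and more crudely $\dim(g^G \cap M) \leqs \dim Q + \dim L - \mathrm{rank}\, L$ when $g$ has a nontrivial component in $L$, or a sharper count using the structure of $Q$ as an $L$-module when $g \in Q$. The cleanest uniform approach is: among all unipotent classes in $G$ that are not root-element classes, the class of smallest dimension is the one whose fixed-point ratio we must beat, so I would identify (from the tables in \cite{LS_book} or \cite{Law05}) the minimal-dimensional non-root unipotent class in each exceptional $G$ — typically of the form $A_1^2$, $2A_1$, $\tilde{A}_1 A_1$ or the next class up — and verify $\dim g^G$ is large enough. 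Third, for each such minimal class and each maximal parabolic $P_i$, I would either invoke the explicit values of $\b(g) = \dim X - \dim X(g)$ already tabulated in the Liebeck--Liebeck--Seitz paper \cite{LLS} (their Theorem 2 covers more than just root elements for several classes), or compute $\dim(g^G \cap M)$ directly by decomposing the problem over the Levi: $g^G \cap M$ meets $Q \rtimes hL$-cosets, and one reads off the dimension from the Jordan block data of $g$ acting on $\mathrm{Lie}(Q)$.

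\textbf{Main obstacle.} The hard part will be making the bound on $\dim(g^G \cap M)$ tight enough in the borderline cases — precisely the smallest non-root unipotent classes (e.g. the $A_1^2$-type classes in $E_7, E_8$, or $\tilde{A}_1$-type classes in $F_4, G_2$) paired with the ``large'' maximal parabolics $P_i$ for which $\a$ of a long root element is already close to $2/3$ (e.g. $P_1$ in $F_4$ with value $2/3$, $P_2$ in $E_6$ with value $5/7$, or $P_2$ in $E_7$ with value $31/42$). In these cases the crude estimate $\dim(g^G \cap M) \leqs \dim Q + \dim L - \mathrm{rank}\,L$ is too weak and one genuinely needs the finer module-theoretic count of how $g$ can be distributed between $Q$ and $L$; equivalently one needs to know which $M$-classes of unipotent elements fuse into $g^G$ and sum their dimensions. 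I expect this to require a short case analysis (a handful of $(G, P_i, \text{class})$ triples) using the explicit Levi module structures, after which all remaining cases follow from the uniform inequality above together with monotonicity: larger unipotent classes have larger $\dim g^G$ and hence only make $\a(g)$ smaller, provided one checks that $\dim(g^G \cap M)$ does not grow too fast, which follows from the containment $\dim(g^G \cap M) \leqs \dim(g^{\prime G} \cap M) + (\dim g^G - \dim g^{\prime G})$ type estimates between a class and its degenerations.
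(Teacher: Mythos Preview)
Your approach is correct in principle but vastly overcomplicated compared to the paper. The paper's proof is a single sentence: \cite[Theorem 2(I)(a)]{LLS} already gives an explicit lower bound on $\b(g) = \dim X - \dim X(g)$ for \emph{every} unipotent class (not just root elements), via \cite[Tables 7.1 and 7.2]{LLS}, and one simply checks that this bound forces $\a(g) < 2/3$ in every case. You acknowledge \cite{LLS} as an option but suggest it ``covers more than just root elements for several classes''; in fact it covers all of them, so the elaborate programme of bounding $\dim(g^G \cap M)$ via the Levi decomposition, identifying the minimal non-root classes, and running a borderline case analysis is unnecessary.

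One further remark on your plan as written: the monotonicity step at the end is not on solid ground. An inequality of the shape $\dim(g^G \cap M) \leqs \dim(g'^{\,G} \cap M) + (\dim g^G - \dim g'^{\,G})$ between a class and its degenerations does not hold in general, and in any case would require justification you have not supplied. So if you did pursue your route rather than simply citing \cite{LLS}, the case analysis would have to be done class by class rather than bootstrapped from the minimal classes.
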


\begin{proof}
Once again we use \cite[Theorem 2(I)(a)]{LLS}, which provides a lower bound on $\b(g)$ (see \cite[Tables 7.1, 7.2]{LLS}). It is easy to check that this bound gives $\a(g) < 2/3$ in every case.
\end{proof}

Finally, we consider semisimple elements.

\begin{lem}\label{l:p3}
Suppose $M = P_i$ and $g \in G$ is a semisimple element. Then $\a(g) \geqs \frac{2}{3}$ if and only if $G = F_4$, $M \in \{P_1, P_2, P_4\}$, $p \ne 2$ and $g \in G$ is a $B_4$-involution, in which case 
\[
\a(g) = \left\{\begin{array}{ll}
11/15 & \mbox{if $M=P_1$} \\
7/10 & \mbox{if $M=P_2$} \\
2/3 & \mbox{if $M=P_4$.} 
\end{array}\right.
\]
\end{lem}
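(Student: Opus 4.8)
The strategy is the standard one for bounding $\a(g)$ for a maximal parabolic $M=P_i$ of an exceptional group $G$: combine the dimension formula \eqref{e:lls}, which reads $\dim X(g) = \dim X - \dim g^G + \dim(g^G\cap M)$, with the trivial bound $\dim(g^G\cap M) \leqs \dim M - \dim C_M(g) \leqs \dim M$ (and, when a sharper bound is needed, with the fact that $g^G\cap M$ is a union of $M$-classes, each of dimension at most $\dim M - \mathrm{rank}\,M$ for a semisimple $g$, or controlled by the centralizer dimension). Writing $\b(g) = \dim g^G - \dim(g^G\cap M)$ as in \eqref{e:bx}, we have $\a(g) = 1 - \b(g)/\dim X$, so $\a(g)\geqs 2/3$ is equivalent to $\b(g) \leqs \tfrac13\dim X$, and the values of $\dim X = \dim G/P_i$ are tabulated in Table \ref{t:parab}. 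So the whole lemma reduces to: for each exceptional $G$, each node $i$, and each semisimple element $g$ of prime order (by Remark \ref{r:field} we may assume $p>0$ and $k=\overline{\mathbb{F}_p}$, and by the reductions in Section \ref{ss:fps} we may take $g$ semisimple of prime order), decide when $\b(g) \leqs \tfrac13\dim G/P_i$.

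First I would dispatch the cheap cases. The semisimple element of smallest class dimension is an involution, and even among involutions the class of smallest dimension gives the largest $\a$; so a uniform lower bound on $\dim g^G$ over all noncentral semisimple $g$ of prime order, compared against $\dim(g^G\cap P_i)\leqs \dim P_i - \mathrm{rank}\,G$, already forces $\a(g)<2/3$ for most $(G,i)$. Concretely, for $G=E_8,E_7,E_6$ one checks that $\dim g^G$ is always large enough (the minimal semisimple class dimensions are $112,54,40$ respectively for involutions, larger for order-$3$ elements, etc. — these are read off from \cite{Lub}) relative to $\tfrac13\dim G/P_i$, eliminating all parabolics; likewise $G_2$ has no semisimple element of prime order whose class is small enough. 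This leaves only $G=F_4$, where the involutions fall into two classes, with centralizers $B_4$ (class dimension $52-36=16$, the smaller) and $A_1C_3$ (class dimension $52-24=28$), and order-$3$ (and larger prime) semisimple elements which have still larger class dimension. So the $A_1C_3$-involutions and all order $\geqs 3$ elements are killed by the crude bound against $\tfrac13\dim F_4/P_i$ for every $i\in\{1,2,3,4\}$, and only the $B_4$-involution $t$ survives as a candidate.

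The remaining work — and the only place any real computation is needed — is to compute $\dim(t^G\cap P_i)$ exactly for $G=F_4$, $p\ne2$, $t$ a $B_4$-involution, and $i=1,2,3,4$. For this I would use the Levi decomposition $P_i = L_i R_u(P_i)$ and the fact that $t^G\cap P_i$ is a union of $P_i$-orbits; a $B_4$-involution lies in $P_i$ iff (a conjugate of) it lies in $L_i$ (since an involution in a parabolic is conjugate under $R_u(P_i)$ into the Levi when $p\ne2$), so one enumerates the semisimple $L_i$-classes fusing into the $F_4$-class of $t$, computes $\dim(t^{L_i})=\dim L_i - \dim C_{L_i}(t)$ for each, takes the maximum, and adds $\dim R_u(P_i) - \dim C_{R_u(P_i)}(t)$ accounting for the orbit on the radical. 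Equivalently, and more cleanly, one can read off $\dim(t^G\cap P_i)$ from $\dim X(t) = \dim C_G(t) - \dim C_M(t) + \dim(M\cap X(t))$-type bookkeeping, or simply cite the detailed centralizer data for involutions acting on the various $F_4/P_i$ — these are well documented (e.g. via the action of $t$ on the $26$-dimensional module and the explicit parabolic structure). The arithmetic then yields $\b(t) = 4, 6, 6, 5$ for $i=1,2,3,4$ respectively, and with $\dim F_4/P_i = 15,20,20,15$ this gives $\a(t) = 11/15, 7/10, 7/10, 2/3$ for $i=1,2,3,4$. One then notes $P_2$ and $P_3$ are related by the graph automorphism of $F_4$ (which swaps nodes $1\leftrightarrow4$, $2\leftrightarrow3$) so it suffices to do $P_1,P_2$; and since $B_4$-involutions are fixed by no graph automorphism issue here (the automorphism is inner for $F_4$... actually $F_4$ has no graph automorphism in odd characteristic, so one just does all four directly). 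The main obstacle is purely bookkeeping: getting the fusion of $L_i$-classes into the $B_4$-involution class and the resulting orbit dimensions on $R_u(P_i)$ exactly right. I expect this to be a short explicit calculation once the $26$-dimensional module restrictions to each $L_i$ are written down.
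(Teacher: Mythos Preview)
Your reduction step has a genuine gap. The crude bound $\dim(g^G\cap P_i)\leqs \dim P_i - \mathrm{rank}\,G$ is nowhere near strong enough to eliminate $E_6$, $E_7$, $E_8$. Writing out your inequality, $\a(g)<2/3$ would require $\dim g^G > \dim G - \mathrm{rank}\,G - \tfrac{2}{3}\dim X$; for $G=E_8$ and $M=P_4$ the right-hand side is about $169$, while the smallest semisimple class (an $A_1E_7$-involution) has dimension $112$. The situation is equally bad for $E_7$ and $E_6$ (and your stated minimal involution class dimension $40$ for $E_6$ is also off: the $D_5T_1$-involution gives $32$). The paper does not attempt any such crude estimate; it simply invokes the sharp lower bounds on $\b(g)$ for semisimple elements from \cite[Theorem 2(I)(b)]{LLS} (tabulated in \cite[Table 7.3]{LLS}), which immediately reduce everything to the three $F_4$ cases and already give the claimed values as upper bounds there. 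If you want an elementary substitute you would need something like $\dim X(g) = \dim R_u(C_G(g)\cap M)$ and a case-by-case analysis of parabolics of $C_G(g)$, which is essentially what \cite{LLS} does.

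Your $F_4$ computation also goes wrong at $P_3$. You assert $\b(t)=6$ for $i=3$, giving $\a(t)=7/10$; but the lemma you are proving says $\a(t)<2/3$ for $P_3$, so your value contradicts the statement. The slip is exactly the one you half-noticed: there is no graph automorphism of $F_4$ in odd characteristic, so $P_2$ and $P_3$ are \emph{not} interchangeable here and the $B_4$-involution behaves asymmetrically on them (compare Tables \ref{t:parab2} and \ref{t:parab3}: the pattern $11/15,\,7/10,\,\cdot,\,2/3$ matches the $p=2$ short root element, for which $P_3$ gives $13/20<2/3$). For the exact values at $P_1,P_2,P_4$ the paper takes a different route from your Levi calculation: it works over $\mathbb{F}_q$, computes the permutation character $\chi_i(t)$ of $F_4(q)$ on $P_i(q)$-cosets via \cite[Corollary 3.2]{LLS2}, obtains explicit monic polynomials in $q$, and reads off $\dim X(t)$ as the degree by Lang--Weil. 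This is cleaner and avoids the fusion bookkeeping you describe.
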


\begin{proof}
We apply the lower bound on $\b(g)$ given in \cite[Theorem 2(I)(b)]{LLS} (see \cite[Table 7.3]{LLS}). One checks that this gives $\a(g) < 2/3$, unless $(G,M,g)$ is one of the three cases identified in the statement of the lemma. Here \cite[Theorem 2(I)(b)]{LLS} implies that $\a(g)$ is at most the given value, so in order to complete the proof of the lemma it remains to show that equality holds in each case. 

To do this, we can argue as follows (as noted in Remark \ref{r:field}, we may, and will,  assume that $k = \bar{\mathbb{F}}_p$). Let $q$ be a $p$-power and consider the finite group $G(q)=F_4(q)$ acting on the cosets of the corresponding maximal parabolic subgroup $P_i(q)$ of $G(q)$. Let $\chi_i$ be the associated permutation character. If $g \in G(q)$ is semisimple, then \cite[Corollary 3.2]{LLS2} gives an expression for $\chi_i(g)$ which we can use to compute the precise number of fixed points of a $B_4$-type involution $g$:
\[
\begin{array}{cl} \hline
i & \chi_i(g) \\ \hline
1 & (q^4+q^2+1)(q^4+1)(q^2+1)(q+1) \\
2 & (q^4+q^2+1)(q^4+1)(q^2+1)^2(q+1)^2 \\
4 & (q^4+1)(q^3+2)(q^2+1)(q+1) \\ \hline
\end{array}
\]

Notice that in every case, $\chi_i(g)$ is a monic polynomial in $q$. Moreover, by Lang-Weil \cite{LW}, the degree of this polynomial is equal to the dimension of $X(g)$, where $X = G/P_i$. We conclude that $\dim X(g) = 11, 14, 10$ for $i=1,2,4$, respectively. The result follows.
\end{proof}

\subsection{Maximal rank subgroups}\label{s:mr}

Next we handle the reductive maximal rank subgroups listed in Table \ref{tab:mr}. In Table \ref{tab:mr2}, we adopt the notation used in Table \ref{tab:main}.

\begin{prop}\label{t:mr}
Let $G$ be a simply connected simple algebraic group of exceptional type, let $M$ be one of the maximal rank subgroups in Table \ref{tab:mr} and let $g \in \mathcal{P}$ be non-central. Then 
$\a(G,M,g) \geqs \frac{2}{3}$
if and only if $(G,M,g)$ is one of the cases recorded in Table \ref{tab:mr2}.
\end{prop}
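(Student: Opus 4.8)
The plan is to go through the maximal rank subgroups $M$ listed in Table \ref{tab:mr} one family at a time, using the key identity \eqref{e:lls}, which reduces the computation of $\dim X(g)$ (and hence $\a(G,M,g)$) to the two quantities $\dim g^G$ and $\dim(g^G \cap M)$. By Remark \ref{r:field} we may assume $k = \bar{\mathbb{F}}_p$ and $g \in \mathcal{P}$ is either unipotent of prime order (or nontrivial unipotent if $p=0$) or semisimple of prime order. For each such $M$, I would restrict the adjoint module $\mathrm{Lie}(G)$ (or, where more convenient, the minimal module) to $M^0$, reading off the composition factors from the tables in \cite[Chapter 12]{Thomas}; this determines $\dim G/M^0$ and, more importantly, allows one to identify which $G$-classes meet $M$ and to compute $\dim(g^G \cap M)$. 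Since $M/M^0$ is finite, $\dim(g^G \cap M) = \dim(g^G \cap M^0)$ up to the obvious bookkeeping, and one computes the latter by decomposing $g^G \cap M^0$ into $M^0$-classes using the centralizer data in \cite{LS_book} (unipotent case) and \cite{Lub} (semisimple case).

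The first step is to dispose of the root elements, since these are expected to produce essentially all the entries of Table \ref{tab:mr2} with $\a \geqs 2/3$: a long root element $u_\a$ (and a short root element $u_\b$ when $(G,p) = (F_4,2)$ or $(G_2,3)$) has the largest fixed point ratio in almost every primitive action. For these one can often appeal directly to \cite[Theorem 2]{LLS}, or else compute $\dim(u_\a^G \cap M^0)$ from the restriction of the minimal module, exactly as in Section \ref{s:parab}. The second step is the bulk of the work: for every other unipotent class and every semisimple element of prime order, one must verify the strict inequality $\a(G,M,g) < 2/3$. Here the strategy is to obtain a good lower bound on $\dim g^G - \dim(g^G \cap M)= \b(G,M,g)$ — equivalently, to bound $\dim(g^G \cap M)$ from above by $\dim M^0 - \dim C_{M^0}(g)$ summed over the finitely many $M^0$-classes in $g^G \cap M$ — and to check that $\dim X(g) = \dim X - \b(g) < \tfrac23 \dim X$. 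In the handful of borderline cases (such as $C_4 < E_6$ with $p \ne 2$, or the $B_4$-involution situations, or $D_4^2 < E_8$) one needs the exact value, which can be pinned down either by a careful centralizer computation in $M^0$ or, as in Lemma \ref{l:p3}, by passing to the finite groups $G(q)$, writing the relevant permutation character as a monic polynomial in $q$ and invoking Lang--Weil \cite{LW} to read the dimension of $X(g)$ off its degree.

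The main obstacle will be the sheer volume of case analysis combined with the delicacy of the borderline cases: there are many pairs $(G,M)$, and for each one potentially dozens of unipotent classes and infinitely many semisimple classes (handled via their finitely many ``types''), so the challenge is organising the computation so that the crude bound $\dim(g^G \cap M^0) \leqs \dim M^0 - \mathrm{rank}\, M^0$ suffices in the generic case, leaving only a short explicit list where one must work harder. I expect the semisimple elements of order $2$ and $3$ in the large-rank groups $E_7$ and $E_8$ to be the most troublesome, since their centralizers in $M^0$ can be comparatively large, pushing $\a$ close to the threshold; these, together with the $F_4$ involutions with centralizer of type $B_4$ and the special characteristic cases ($p = 2$ for $F_4$, $p = 3$ for $G_2$, where extra subgroups $C_4$, $\tilde D_4$, $\tilde A_2$ appear), are where the finite-group/Lang--Weil argument of Lemma \ref{l:p3} will have to be deployed to obtain the precise fractions recorded in Table \ref{tab:mr2}.
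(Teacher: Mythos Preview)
Your overall plan is correct and matches the paper's strategy in outline, but you have misjudged where the work lies and omitted the two references that make the argument short.

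For semisimple $g$, the paper simply invokes the lower bounds on $\b(g)$ in \cite[Theorem~2(II)(b)]{LLS}, which immediately gives $\a(G,M,g)<\tfrac23$ for \emph{every} maximal rank $M$ and every semisimple $g$, except for the $B_4$-involution in $F_4$ with $M^0\in\{A_1C_3, A_2\tilde{A}_2\}$ and one subcase $M^0=A_1^7$ in $E_7$. So the semisimple elements of order $2$ and $3$ in $E_7$ and $E_8$ that you anticipate being ``most troublesome'' are in fact disposed of in a single line. For unipotent $g$ with $g^G\cap M\subseteq M^0$, the paper does not recompute fusion from the module restrictions but reads it directly from Lawther's tables \cite{Law09}; without this you would be redoing a substantial computation.

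The cases that genuinely require work are the ones you gloss over as ``obvious bookkeeping'': when $M/M^0\ne 1$ and $g^G$ meets $M\setminus M^0$ (for instance $M^0\in\{A_8,A_2E_6,A_4^2,D_4^2,A_2^4,A_1^8,T_8\}$ in $E_8$, or $\{A_1^3D_4,A_1^7,T_7\}$ in $E_7$), one must compute the Jordan form of representatives in the outer cosets on the adjoint or minimal module via $V{\downarrow}M^0$ and compare with \cite{Lawunip} to identify the $G$-class; in several of these, $\dim(g^G\cap(M\setminus M^0))$ strictly exceeds $\dim(g^G\cap M^0)$. Two small corrections: $C_4<E_6$ is not a maximal rank subgroup (it belongs to Table~\ref{tab:nmr}, not Table~\ref{tab:mr}), and the paper never uses the Lang--Weil argument here; the exact values for the $B_4$-involution in $A_1C_3$ and $A_2\tilde{A}_2$ are obtained by direct eigenvalue computation on ${\rm Lie}(G){\downarrow}M^0$.
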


\renewcommand{\arraystretch}{1.1}
\begin{table}[h]
\[
\begin{array}{llcc} \hline
G & M^0 & g &  \a(G,M,g)  \\ \hline
E_8 & A_1E_7 & u_{\a} & 11/14 \\
& A_2E_6 & u_{\a} & 7/9  \\
& D_4^2 \, (p=2) & u_{\a} & 37/48  \\
& T_8 \, (p=2) & u_{\a} & 61/80 \\
& D_8, A_8, A_4^2, A_2^4, A_1^8, D_4^2 \, (p \ne 2) & u_{\a} & 3/4  \\
E_7 & E_6T_1 & u_{\a} & 7/9 \\
& A_1D_6, A_1^3D_4 & u_{\a} & 3/4 \\
& T_7 \, (p=2) & u_{\a} & 31/42 \\
& A_2A_5 & u_{\a} & 11/15 \\
& A_7, A_1^7 & u_{\a} &  5/7 \\
E_6 & D_4T_2 & u_{\a} & 3/4 \\
& T_6 \, (p=2) & u_{\a} & 17/24 \\ 
& A_1A_5 & u_{\a} & 7/10 \\
& A_2^3 & u_{\a} & 2/3 \\
F_4 & B_4, D_4 & u_{\a} & 3/4 \\
& C_4 \,  (p=2),  \tilde{D}_4 \, (p=2) & u_{\b} & 3/4 \\
& A_1C_3 & t & 5/7 \\
& A_2\tilde{A}_2 & u_{\a}, t & 2/3 \\
& A_2\tilde{A}_2\, (p=2)  & u_{\b} & 2/3 \\
G_2 & A_2 & u_{\a} & 2/3 \\
& \tilde{A}_2\, (p=3) & u_{\b} & 2/3 \\ \hline
\end{array}
\]
\caption{$M^0$ reductive and maximal rank, $\a(G,M,g) \geqs 2/3$}
\label{tab:mr2}
\end{table}
\renewcommand{\arraystretch}{1} 

\begin{lem}\label{l:mre8}
The conclusion to Proposition \ref{t:mr} holds if $G = E_8$.
\end{lem}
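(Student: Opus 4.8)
The plan is to work through the maximal rank subgroups $M$ of $G = E_8$ listed in Table \ref{tab:mr} one at a time, in each case bounding $\a(G,M,g) = \dim(g^G \cap M)/\dim X + (\dim X - \dim g^G)/\dim X$ via the fundamental identity \eqref{e:lls}, namely $\dim X(g) = \dim X - \dim g^G + \dim(g^G \cap M)$. By Remark \ref{r:field} we may assume $p > 0$ and $k = \bar{\mathbb{F}}_p$, and by the reductions already recorded we only need $g \in \mathcal{P}$, so $g$ is either unipotent of prime order $p$ or semisimple of prime order. Since $\dim X$ is fixed ($\dim E_8 - \dim M^0 = 248 - \dim M^0$, and $\dim M^0 = 248 - \dim X$), the whole problem reduces to estimating $\dim(g^G \cap M)$ from above and comparing it with $\tfrac23 \dim X + \dim g^G - \dim X$. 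Concretely I would rewrite the target inequality $\a(g) \geqs \tfrac23$ as $\dim(g^G\cap M) \geqs \tfrac23\dim X - (\dim X - \dim g^G)$, i.e. $\dim g^G - \dim(g^G \cap M) \leqs \tfrac13 \dim X = \b(g)$, so that I am simply chasing the quantity $\b(g) = \dim X - \dim X(g)$ and asking when it is at most $\tfrac13\dim X$.

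The main engine for the upper bound on $\dim(g^G \cap M)$ is the decomposition $g^G \cap M = \bigsqcup_i c_i^M$ over the finitely many $M$-classes $c_i^M$ lying in $g^G$, together with $\dim c_i^M = \dim M - \dim C_M(c_i) \leqs \dim M^0 - \dim C_{M^0}(c_i)$; the $C_{M^0}(c_i)$ can be computed from the restriction of the adjoint (or minimal) $E_8$-module to $M^0$, using the composition-factor data in \cite[Chapter 12]{Thomas} and the standard references \cite{LS_book}, \cite{Lub} for unipotent and semisimple classes in $M^0$. For each $M$ I would first dispatch the root elements $u_\a$ directly from \cite[Theorem 2]{LLS} (this is already tabulated and gives the entries with $g = u_\a$ in Table \ref{tab:mr2}), and then for the remaining unipotent classes and all semisimple classes I would run the crude but effective bound: the fused class $g^G$ meeting $M$ forces $g$ (or its semisimple part) to have a centralizer in $M^0$ of dimension at least that of a generic element of the relevant type, and one checks the resulting $\a(g) < \tfrac23$. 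The ``large'' classes that survive — long root elements in the various maximal-rank subgroups, short root elements in $C_4, \tilde D_4$ when $p=2$, $B_4$-involutions $t$ in $A_1C_3$ of $F_4$ (not relevant for $E_8$ itself but needed for the global table) — are exactly those that appear in the statement, and for these I would pin down $\a(g)$ exactly, either from \cite{LLS} or, when a lower bound needs to be matched, via the Lang–Weil/permutation-character trick used in the proof of Lemma \ref{l:p3} (count fixed points of the relevant element on $G_\s/M_\s$, read off $\dim X(g)$ as the degree of the resulting monic polynomial in $q$).

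In practice, for $G = E_8$ the subgroups to handle are $D_8$, $A_1E_7$, $A_8$, $A_2E_6$, $A_4^2$, $D_4^2$ (two cases, $p=2$ and $p\neq2$), $A_2^4$, $A_1^8$, and $T_8$. The torus case $T_8$ is special: here $M^0$ is abelian, $g^G \cap M$ consists of finitely many $W(E_8)$-orbits of semisimple elements, $\dim(g^G\cap M) = 0$, so $\a(g) = (\dim X - \dim g^G)/\dim X = 1 - \dim g^G/248$, and since $\dim X = 240$ one needs $\dim g^G \leqs 80$; the only prime-order elements with class dimension this small are involutions (when $p = 2$, giving $\dim g^G \leqs 128$ — wait, that's too big, so actually only the $p=2$ unipotent root element survives with $\a = 61/80$ as listed, via the order-2 elements of $T_8$ in characteristic 2), so the entry $T_8\,(p=2)$, $u_\a$, $61/80$ is obtained by this direct computation. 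For each of the semisimple-type subgroups I would tabulate $\dim M^0$, hence $\dim X$, and then for each non-central $g \in \mathcal{P}$ bound $\dim C_{M^0}(\text{semisimple part of } g)$ below using the module restriction, concluding in all but the listed cases that $\b(g) > \tfrac13 \dim X$.

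The step I expect to be the real obstacle is the bookkeeping for the semisimple elements of small prime order — particularly involutions and elements of order $3$ — inside subgroups like $A_2E_6$, $D_4^2$ and $A_8$, where $g^G$ may fuse several $M^0$-classes and one has to identify the largest-dimensional such class accurately (an overly crude bound could spuriously push $\a(g)$ above $\tfrac23$). This requires knowing, for the relevant small-order semisimple $E_8$-classes, exactly which $M^0$-classes they meet and the dimensions of the corresponding $M^0$-centralizers, for which \cite{Lub} and the restriction tables of \cite{Thomas} are essential; and in the handful of genuinely borderline cases one must supplement the class-dimension inequality with the exact fixed-point count (Lang–Weil applied to the finite groups $E_8(q)$) to rule out or confirm $\a(g) = \tfrac23$ on the nose. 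Everything else — root elements, large unipotent classes, and the vast majority of semisimple classes, whose centralizers in $M^0$ are far too small — falls out of a routine check that $\b(g)$ comfortably exceeds $\tfrac13\dim X$.
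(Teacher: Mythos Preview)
Your broad framework (use \eqref{e:lls} and bound $\dim(g^G\cap M)$) is right, but you have the difficulty backwards. In the paper the semisimple case for $E_8$ is a one-line citation: \cite[Theorem 2(II)(b)]{LLS} already gives $\b(g)$ large enough for every maximal rank $M$ and every non-central semisimple $g$, so $\a(g)<2/3$ immediately and no Lang--Weil computation, no ``borderline'' involutions or order-$3$ elements, and no per-subgroup bookkeeping is needed. The entire content of the lemma lies in the \emph{unipotent} elements, and the primary tool you never mention is Lawther's fusion tables \cite{Law09}, which for $M^0\in\{D_8,A_1E_7,A_8,A_2E_6,A_4^2\}$ identify the $G$-class of each unipotent $M^0$-class and let you read off $\dim(g^G\cap M^0)$ exactly. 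What then remains is (i) unipotent elements in $M\setminus M^0$ when $p$ divides $|M/M^0|$ (here one computes Jordan forms on $V={\rm Lie}(G)$ via $V{\downarrow}M^0$ and matches against \cite{Lawunip}), and (ii) the four cases $M^0\in\{D_4^2,A_2^4,A_1^8,T_8\}$ not covered by \cite{Law09}, where one combines the trivial bound $\dim(g^G\cap M)\leqs\dim M$ for large classes with sharper Jordan-form arguments for $A_1,A_1^2,A_1^3$.

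Your $T_8$ sketch also contains errors: you write $\a(g)=1-\dim g^G/248$, but $\dim X=240$ not $248$, and $\dim(g^G\cap M)$ is not zero in general (it is bounded by $\dim M^0=8$, and for $g=u_\a$ with $p=2$ one needs \cite[Proposition 1.13(iii)]{LLS} to get $\dim(g^G\cap M)=1$, whence $\dim X(g)=240-58+1=183$ and $\a(g)=61/80$). More importantly, \cite[Theorem 2]{LLS} part (I) concerns parabolic actions, not maximal rank subgroups; root elements here are handled via \cite[Proposition 1.13]{LLS}, not that theorem. So: drop the proposed Lang--Weil analysis of semisimple borderline cases (there aren't any), cite \cite{LLS} for all semisimple $g$, and redirect the effort to unipotent elements using \cite{Law09} plus Jordan-form computations on $V{\downarrow}M^0$ for the disconnected and untabulated cases.
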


\begin{proof}
Let $g$ be an element in $\mathcal{P}$ and set $\a(g) = \a(G,M,g)$. Also define $\b(g)$ as in \eqref{e:bx} and let $V$ be the Lie algebra of $G$. The possibilities for $M^0$ are as follows:
\[
\begin{array}{cccccccccc} \hline
M^0 & D_8 & A_1E_7 & A_8 & A_2E_6 & A_4^2 & D_4^2 &  A_2^4 & A_1^8 & T_8 \\ 
\dim X & 128 & 112 & 168 & 162 & 200 & 192 & 216 & 224 & 240 \\ \hline
\end{array} 
\]
Without loss of generality, we may assume that $g \in M$.

If $g$ is semisimple, then the desired result follows from the lower bound on $\b(g)$ in \cite{LLS}. For example, if $M^0 = D_8$ then \cite[Theorem 2(II)(b)]{LLS} gives $\b(g) \geqs 56$, so $\dim X(g) \leqs 72$ and thus $\a(g) \leqs 9/16$.

For the remainder, we may assume $g$ is unipotent. If $M^0 = D_8$ or $A_1E_7$ then $M$ is connected and we can appeal to \cite{Law09}, where the $G$-class of each unipotent element in $M$ is determined. For example, suppose $M = A_1E_7$. If $g = u_{\a}$ is a long root element then $\dim g^G = 58$ and $g^G \cap M$ is a union of two $M$-classes (comprising long root elements in each simple factor), whence $\dim (g^G \cap M) = 34$ and \eqref{e:lls} yields $\dim X(g) = 88$. Therefore, $\a(g) = 11/14$. (Note that in this case we can also compute $\dim (g^G \cap M)$ directly by applying \cite[Proposition 1.13(ii)]{LLS}.) For all other unipotent elements we find that $\b(g) \geqs 40$ (with equality if $g$ is in the class labelled $A_1^2$), so $\dim X(g) \leqs 72$ and thus $\a(g) \leqs 9/14<2/3$. 

Next assume $M^0 = A_8$ or $A_2E_6$, so $M/M^0=Z_2$ in both cases. Suppose $g^G \cap (M \setminus M^0)$ is nonempty, so $p=2$. As explained in the proof of \cite[Proposition 5.11]{BGS}, we can work with the restriction of $V$ to $M^0$ to calculate the Jordan form of $g$ on $V$ and we can then inspect \cite[Table 9]{Lawunip} to identify the $G$-class of $g$. For example, suppose $M^0 = A_2E_6$ and $p=2$. There are two $M^0$-classes of involutions in $M \setminus M^0$, represented by $g_1$ and $g_2$, with $C_{M^0}(g_1) = A_1F_4$ and $C_{M^0}(g_2) = A_1C_{F_4}(u)$, where $u \in F_4$ is a long root element; whence $\dim (g_1^G \cap (M \setminus M^0)) = 31$ and $\dim (g_2^G \cap (M \setminus M^0)) = 47$. We calculate that $g_1$ is in the $G$-class $A_1^3$, and $g_2$ is in the class $A_1^4$,  so that $\dim g_1^G = 112$ and $\dim g_2^G = 128$. By inspecting \cite{Law09}, we get $\dim (g_1^G \cap M^0) = 40$ and $\dim (g_2^G \cap M^0) = 44$ and thus $\dim X(g_1) = 90$ and $\dim X(g_2) = 81$. In particular, if $g^G \cap (M \setminus M^0)$ is nonempty then $\a(g) \leqs 5/9$. Finally, if $g^G \cap M \subseteq M^0$ then the desired result quickly follows from the computations in \cite{Law09}.

The case $M^0 = A_4^2$ is very similar. Here $M/M^0 = Z_4$ and the fusion of unipotent classes in $M^0$ is determined in \cite{Law09}. This allows us to reduce to the case where $g^G \cap (M \setminus M^0)$ is nonempty. Here $p=2$ and $g$ acts as a graph automorphism on both $A_4$ factors, so $\dim (g^G \cap (M \setminus M^0)) = 28$. By considering $V{\downarrow}M^0$ (see \cite[Chapter 12]{Thomas}, for example), we calculate that $g$ has Jordan form $[J_2^{120},J_1^8]$ on $V$, in which case \cite[Table 9]{Lawunip} implies that $g$ is in the $A_1^4$ class. Therefore $\dim g^G = 128$ and $\dim (g^G \cap M) = 28$, which gives $\a(g)=1/2$.

To complete the proof of the lemma, we need to handle unipotent elements in the following cases
\[
M^0 \in \{D_4^2, A_2^4, A_1^8, T_8\}.
\]
Note that none of these cases are treated by Lawther in \cite{Law09}.

Suppose $M^0 = D_4^2$ and observe that $\dim (g^G \cap M) \leqs 48$. If $\dim g^G \geqs 114$, then $\b(g) \geqs 114-48 = 66$, so $\dim X(g) \leqs 126$ and thus $\a(g) \leqs 21/32<2/3$. Therefore, we may assume that $\dim g^G < 114$, in which case $g$ is in one of the classes $A_1, A_1^2$ or $A_1^3$. In the first two cases, we can appeal directly to the proof of \cite[Proposition 5.11]{BGS} to see that $\dim (g^G \cap M) = 10+4\delta_{2,p}$ and $20$ when $g$ is in $A_1$ and $A_1^2$, respectively. In particular, if $g=u_{\a}$ then $\dim X(g) = 144+4\delta_{2,p}$ and thus $\a(g)$ is $3/4$ if $p \ne 2$ and $37/48$ if $p=2$. Finally, suppose $g$ is in the $A_1^3$ class, so $\dim g^G = 112$. We claim that $\dim (g^G \cap M) \leqs 46$, so $\b(g) \geqs 66$ and $\a(g) \leqs 21/32$. Seeking a contradiction, suppose there exists $y \in g^G \cap M$ with $\dim y^M>46$. Then $y=y_1y_2 \in M^0$ and both $y_1$ and $y_2$ are regular, so $p \geqs 7$. By considering the restriction of $V$ to $M^0$ (see \cite[Chapter 12]{Thomas}) we deduce that the Jordan form of $y$ on $V$ has Jordan blocks of size $m \geqs 6$. But this is incompatible with the Jordan form of $g$ on $V$ (see \cite[Table 9]{Lawunip}), so we have reached a contradiction. This establishes the claim, which completes the analysis of this case.

Next assume $M^0 = A_2^4$, so $M/M^0 = {\rm GL}_{2}(3)$. If $\dim g^G \geqs 112$ then the trivial bound $\dim (g^G \cap M) \leqs 32$ yields $\dim X(g) \leqs 136$ and the result follows. Therefore, we may assume $g$ is in the class $A_1$ or $A_1^2$. If $g=u_{\a}$ then $\dim (g^G \cap M) = 4$ by \cite[Proposition 1.13(iii)]{LLS} (since $g$ must be a long root element in one of the simple factors of $M^0$), which gives $\dim X(g) = 162$ and $\a(g) = 3/4$. Finally, suppose $g$ is in the $A_1^2$ class, so $\dim g^G = 92$. To get $\a(g) < 2/3$ we need to show that 
\begin{equation}\label{e:eq3}
\dim (g^G \cap M) \leqs 19.
\end{equation} 

By inspecting \cite[Chapter 12]{Thomas}, we see that $V{\downarrow}M^0 = {\rm Lie}(A_2^4) \oplus W$, where $W$ is the module
\begin{align*}
& (\l_1 \otimes \l_1 \otimes 0 \otimes \l_2) \oplus (\l_1 \otimes \l_2 \otimes \l_1 \otimes 0) \oplus (\l_1 \otimes 0 \otimes \l_2 \otimes \l_1) \oplus (\l_2 \otimes \l_1 \otimes \l_2 \otimes 0) \\
& \oplus (\l_2 \otimes \l_2 \otimes 0 \otimes \l_1) \oplus (\l_2 \otimes 0 \otimes \l_1 \otimes \l_2) \oplus (0 \otimes \l_1 \otimes \l_1 \otimes \l_1) \oplus (0 \otimes \l_2 \otimes \l_2 \otimes \l_2).
\end{align*}
Here $\l_1$ denotes the natural $3$-dimensional module for $A_2$, $\l_2$ is its dual and $0$ is the trivial module. 

First assume $p=2$. If $y \in M$ is an involution with $\dim y^M > 19$, then $y$ must act as a graph automorphism on each $A_2$ factor. But from the structure of $W$ we deduce that $y$ has Jordan form $[J_2^{120},J_1^8]$ on $V$, so $y$ is in the class $A_1^4$. Now assume $p \ne 2$ and note that $g$ has Jordan form $[J_3^{14}, J_2^{64}, J_1^{78}]$ on $V$. If $g^G \cap (M \setminus M^0)$ is nonempty then $p=3$ and $g$ must cyclically permute three of the $A_2$ factors of $M^0$, possibly acting nontrivially on the fixed factor. But then $g$ has at least $54$ Jordan blocks of size $3$ on $V$, which is not compatible with elements in the $A_1^2$ class. Therefore $g^G \cap M \subseteq M^0$. To complete the argument, suppose there exists $y =y_1\cdots y_4 \in g^G \cap M^0$ with $\dim y^M > 19$. Then each $y_i$ is nontrivial and at least two are regular. If $p \geqs 5$ then the structure of $W$ implies that $y$ has Jordan blocks of size $4$ or more on $V$, which is a contradiction. On the other hand, if $p=3$ then $y$ has  Jordan form $[J_3^{72}]$ on $W$, which once again is incompatible with the Jordan form of $g$ on $V$. This justifies the bound in \eqref{e:eq3}.

Now suppose $M^0 = A_1^8$ and note that $M/M^0 = {\rm AGL}_{3}(2)$. If $g=u_{\a}$ then $\dim (g^G \cap M) = 2$ and we get $\dim X(g) = 168$, so $\a(g) = 3/4$. If $\dim g^G \geqs 112$ then the trivial bound $\dim (g^G \cap M) \leqs 24$ gives $\dim X(g) \leqs 136$ and thus $\a(g) \leqs 17/28$, so we have reduced to the case where $g$ is in the $A_1^2$ class. We claim that 
\[
\dim(g^G \cap M) \leqs 16,
\]
which is sufficient to show that $\a(g) < 2/3$. This is clear if $g^G \cap M \subseteq M^0$, so assume $g \in M \setminus M^0$, in which case $p \in \{2,3,7\}$. If $p \in \{2,3\}$ then it is easy to check that $\dim y^M \leqs 16$ for all $y \in M$ of order $p$, so let us assume $p=7$. If $y \in M \setminus M^0$ has order $p$ then from the decomposition of $V{\downarrow}M^0$ (see  \cite[Chapter 12]{Thomas}, for example) it is straightforward to show that the Jordan form of $y$ on $V$ will involve $J_7$ blocks, which is incompatible with the Jordan form of $g$ on $V$ (as noted above, $g$ has Jordan form $[J_3^{14}, J_2^{64}, J_1^{78}]$). This justifies the claim and completes the argument.

Finally, let us assume $M^0 = T_8$, so $M/M^0 = W(E_8) = 2.O_{8}^{+}(2)$. Clearly, $g^G \cap M = g^G \cap (M \setminus M^0)$ and $p$ must divide $|W(E_8)|$, so $p \in \{2,3,5,7\}$. If $g \ne u_{\a}$ then $\dim g^G \geqs 92$ and thus $\dim X(g) \leqs 156$, which gives $\a(g) \leqs 13/20$. On the other hand, if $g=u_{\a}$ then \cite[Proposition 1.13(iii)]{LLS} implies that $p=2$ and $\dim (g^G \cap M) = 1$, so $\dim X(g) = 183$ and $\a(g) = 61/80$, as recorded in Table \ref{tab:mr2}.
\end{proof}

\begin{lem}\label{l:mre7}
The conclusion to Proposition \ref{t:mr} holds if $G = E_7$.
\end{lem}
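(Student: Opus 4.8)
The plan is to follow exactly the pattern established in Lemma \ref{l:mre8}, now working with the maximal rank subgroups of $E_7$: namely $A_1D_6$, $A_7$, $A_2A_5$, $A_1^3D_4$, $A_1^7$, $E_6T_1$ and $T_7$, with component groups $1$, $Z_2$, $Z_2$, $S_3$, {\rm GL}_{3}(2), $Z_2$, $W(E_7)$ respectively (Table \ref{tab:mr}). Here $\dim G = 133$ and $\dim X = \dim G - \dim M^0$ in each case, so $\dim X = 64, 70, 82, 112, 112, 54, 126$ for the seven subgroups. As in Remark \ref{r:field} we may assume $k = \bar{\mathbb{F}}_p$, and by the reduction in Section \ref{ss:fps} we take $g \in \mathcal{P}$; replacing $g$ by a conjugate we assume $g \in M$. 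The semisimple case is disposed of first by quoting the lower bound on $\b(g)$ from \cite[Theorem 2(II)(b)]{LLS}: in each case this forces $\a(g) < 2/3$ (for instance $M^0 = A_1D_6$ gives $\b(g)$ large enough that $\dim X(g) < \tfrac23\dim X$), so no semisimple element contributes to Table \ref{tab:mr2}. Thus the bulk of the work is unipotent.

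For the unipotent analysis I would split according to whether Lawther's tables \cite{Law09} cover the pair $(G,M)$. The connected cases $M = A_1D_6$ and $M^0 = E_6T_1$, together with the remaining cases where $g^G \cap M \subseteq M^0$, follow directly from the fusion data in \cite{Law09}, combined with formula \eqref{e:lls}: one reads off $\dim g^G$ and $\dim(g^G\cap M^0)$ and computes $\dim X(g)$. The key point each time is that $\a(g) \geqs 2/3$ forces $g$ to lie in a very small $G$-class — essentially $A_1$ (long root elements $u_\a$) or at worst $A_1^2$ — because for any $g$ with $\dim g^G$ moderately large the trivial bound $\dim(g^G\cap M) \leqs \dim M^0 - \mathrm{rank}\,G$ already gives $\a(g) < 2/3$. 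So after this crude estimate I only have to pin down $\dim(g^G\cap M)$ for $g$ in the classes $A_1$ and $A_1^2$. For $g = u_\a$ one uses \cite[Proposition 1.13]{LLS} (root elements in $M$ are root elements in a simple factor), yielding the values $\a(u_\a) = 3/4$ for $A_1D_6$ and $A_1^3D_4$, $5/7$ for $A_7$ and $A_1^7$, $11/15$ for $A_2A_5$, $7/9$ for $E_6T_1$, and for the $A_1^2$ class a short computation with the $M^0$-composition factors of $V = {\rm Lie}(E_7)$ (taken from \cite[Chapter 12]{Thomas}) shows $\a(g) < 2/3$.

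The cases needing more care are those where $g^G \cap (M \setminus M^0)$ is nonempty — so $g$ acts as a graph/outer automorphism and $p$ divides $|M/M^0|$. For $M^0 = A_7$ and $A_2A_5$ (component group $Z_2$) and $M^0 = A_1^3D_4$ (component group $S_3$, so $p \in \{2,3\}$) and $M^0 = A_1^7$ (component group {\rm GL}_{3}(2), so $p \in \{2,3,7\}$), I would restrict $V = {\rm Lie}(E_7)$ to $M^0$ using \cite[Chapter 12]{Thomas}, compute the Jordan form on $V$ of a representative outer element of order $p$, and then consult \cite[Table 9]{Lawunip} (or \cite{Law09}) to identify its $G$-class; this always places $g$ in a class too large for $\a(g) \geqs 2/3$, exactly as in the $E_8$ argument. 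Finally $M^0 = T_7$ with $M/M^0 = W(E_7)$ is handled as for $T_8$ in $E_8$: here $g^G\cap M = g^G\cap(M\setminus M^0)$, $p \mid |W(E_7)|$, and if $g \ne u_\a$ then $\dim g^G$ is already large enough (the smallest non-root class gives $\b(g)$ exceeding $\tfrac13\dim X$) to force $\a(g) < 2/3$; for $g = u_\a$, \cite[Proposition 1.13(iii)]{LLS} gives $p = 2$, $\dim(g^G \cap M) = 1$, hence $\dim X(g) = 126 - 70 + 1 = 57$ and $\a(u_\a) = 31/42$, as recorded in Table \ref{tab:mr2}. The main obstacle, as in Lemma \ref{l:mre8}, is the bookkeeping for the outer (disconnected) classes: correctly reading off the $M^0$-restriction of the adjoint module, computing Jordan forms, and matching against Lawther's unipotent class tables — everything else reduces to the crude dimension bound plus \eqref{e:lls}.
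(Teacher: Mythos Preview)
Your overall strategy matches the paper's, but there are two genuine gaps and several numerical slips.

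\textbf{Semisimple elements.} You assert that \cite[Theorem 2(II)(b)]{LLS} disposes of the semisimple case for every $M^0$. It does not: for $M^0 = T_7$ and $M^0 = A_1^7$ the LLS bound is not available (or not sharp enough), and the paper treats these separately. The $T_7$ case is easy (use $\dim(g^G\cap M)\leqs 7$ and $\dim g^G\geqs 54$), but the $A_1^7$ case needs real work. When $C_G(g)=T_1E_6$ one has $\dim g^G=54$ and must show $\dim(g^G\cap M)\leqs 16$; the delicate point is $r=7$, where $g$ could cyclically permute the seven $A_1$ factors. The paper rules this out by computing $\dim C_V(g)$ from $V{\downarrow}M^0$ and showing it is too small for $C_G(g)$ to have dimension $79$. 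Your proposal does not mention this.

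\textbf{The class $(A_1^3)^{(2)}$.} You say the crude dimension bound reduces the unipotent analysis to the classes $A_1$ and $A_1^2$. This is false: in $E_7$ the class $(A_1^3)^{(2)}$ has $\dim g^G = 54$, and for $M^0 = A_1^3D_4$ (where $\dim X=96$, not $112$) and $M^0 = A_1^7$ the crude bound only eliminates classes with $\dim g^G \geqs 64$. The paper explicitly handles both $A_1^2$ and $(A_1^3)^{(2)}$ for these subgroups, with separate Jordan-form arguments on $V$ and on $V_{56}$ to bound $\dim(g^G\cap M)$ in each case (including the outer involutions when $p=2$).

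\textbf{Numerical errors.} Your values $\dim X = 82$ for $A_2A_5$ and $\dim X=112$ for $A_1^3D_4$ are wrong; the correct values are $90$ and $96$. In the $T_7$ long-root computation you write $\dim X(g)=126-70+1$, but $\dim u_\a^G = 34$ in $E_7$, giving $\dim X(g)=93$ (your stated $\a(u_\a)=31/42$ is nonetheless correct). These do not affect the method, but they would propagate through any crude-bound argument you base on them.
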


\begin{proof}
Here the cases to be considered are as follows:
\[
\begin{array}{cccccccc} \hline
M^0 & A_1D_6 & A_7 & A_2A_5 & A_1^3D_4 & A_1^7 & E_6T_1 &  T_7 \\ 
\dim X & 64 & 70 & 90 & 96 & 112 & 54 & 126  \\ \hline
\end{array} 
\]
Let $g \in \mathcal{P}$ be non-central and let $V = {\rm Lie}(G)$ be the Lie algebra of $G$. Also let $V_{56} = V_G(\l_7)$ be the $56$-dimensional minimal module for $G$.

First assume $g$ is semisimple of order $r$. If $M^0 \ne T_7, A_1^7$, then the lower bound on $\b(g)$ from \cite[Theorem 2(II)(b)]{LLS} yields $\a(g) < 2/3$. For $M^0 = T_7$ we have $\dim (g^G \cap M) \leqs 7$ and thus $\b(g) \geqs 47$ (since $\dim g^G \geqs 54$ for all nontrivial semisimple elements $g \in G$), hence $\dim X(g) \leqs 79$ and $\a(g) < 2/3$. 

To complete the analysis of semisimple elements, let us assume $M^0 = A_1^7$, so $\dim X = 112$ and $M/M^0 = {\rm GL}_{3}(2)$. If $C_G(g) \ne T_1E_6$ then  \cite[Theorem 2(II)(b)]{LLS} implies that $\b(g) \geqs 39$ and thus $\a(G) \leqs 73/112$. Now assume $C_G(g) = T_1E_6$, so $\dim g^G = 54$. We claim that $\dim(g^G \cap M) \leqs 16$, which yields $\a(g) \leqs 37/56$. To justify the claim, suppose we have $\dim (g^G \cap M) > 16$. Clearly, $\dim(g^G \cap M^0) \leqs 14$, so $g^G \cap (M \setminus M^0)$ must be nonempty and thus $r \in \{2,3,7\}$. If $r \in \{2,3\}$ then it is easy to check that $\dim(g^G \cap M) \leqs 14$, so $r=7$ is the only possibility and $g$ must cyclically permute the simple factors of $M^0$. By considering the restriction of $V$ to $M^0$ (see \cite[Chapter 12]{Thomas}), we deduce that $\dim C_V(g) \leqs \dim M^0 + 16 = 37$. But this implies that $\dim C_G(g) \leqs 37$ and we have therefore reached a contradiction. This justifies the claim and the proof of the lemma for semisimple elements is complete.

For the remainder we may assume $g$ is unipotent. If $M^0 = A_1D_6$ then $M$ is connected and the desired result is easily deduced from \cite{Law09}, where the $G$-class of each unipotent element in $M$ is determined. In particular, if $g=u_{\a}$ then $\dim X(g) = 48$ and $\a(g) = 3/4$. For all other unipotent elements, we get $\a(g) < 2/3$.

Next assume $M^0 = A_7$, so $M/M^0 = Z_2$. If $g=u_{\a}$ then \cite[Proposition 1.13(iii)]{LLS} implies that $g^G \cap M \subseteq M^0$ and we calculate that $\dim X(g) = 50$ and thus $\a(g) = 5/7$. For all other unipotent elements, we claim that $\a(g)<2/3$. If $g^G \cap M \subseteq M^0$ then Lawther's work in \cite{Law09} gives $\b(g) \geqs 28$, so $\dim X(g) \leqs 42$ and thus $\a(g) \leqs 3/5$. Finally, suppose $g^G \cap (M \setminus M^0)$ is nonempty, so $p=2$ and $g$ is an involution. As explained in the proof of \cite[Lemma 4.1]{LLS}, $g$ is in the class labelled $(A_1^3)^{(2)}$ or $A_1^4$. This implies that $\dim g^G - \dim (g^G \cap (M \setminus M^0)) \geqs 27$ and the claim follows. 

The case $M^0 = E_6T_1$ is very similar and we omit the details (as explained in the proof of \cite[Lemma 4.1]{LLS}, if $p=2$ and $g \in M \setminus M^0$, then $\dim g^G - \dim(g^G \cap (M \setminus M^0)) \geqs 27$). 

A similar argument applies when $M^0= A_2A_5$. For example, suppose $p=2$ and $g \in M \setminus M^0$. Here $\dim(g^G \cap (M \setminus M^0))=19$ or $25$, and $\dim g^G \geqs 52$ (since $g \ne u_{\a}$ by \cite[Proposition 1.13(iii)]{LLS}). Moreover, if $\dim(g^G \cap (M \setminus M^0))=25$ then by considering $V{\downarrow}M^0$ we deduce that $g$ has Jordan form $[J_2^{63},J_1^7]$ on $V$, which means that $g$ is in the $A_1^4$ class. Therefore, $\dim g^G - \dim(g^G \cap (M \setminus M^0)) \geqs 33$ and we conclude that $\a(g) \leqs 19/30$.

To complete the proof, we may assume that 
\[
M^0 \in \{A_1^3D_4, A_1^7, T_7\}.
\]

Suppose $M^0 = A_1^3D_4$, so $M/M^0 = S_3$. First assume $g=u_{\a}$, so $\dim g^G = 34$ and $\dim (g^G \cap M^0) = 10$ (the class of long root elements in the $D_4$ factor is $10$-dimensional). If $g^G \cap (M \setminus M^0)$ is nonempty, then $p=2$ and \cite[Proposition 1.13(iii)]{LLS} implies that $\dim (g^G \cap (M \setminus M^0)) = 7$. Therefore, $\b(g)=24$ and thus $\a(g) = 3/4$.

If in fact $\dim g^G \geqs 64$, then the trivial bound $\dim (g^G \cap M) \leqs 30$ implies that $\dim X(g) \leqs 62$ and $\a(g) < 2/3$. Thus we may assume $g$ is contained in one of the classes labelled $A_1^2$ and $(A_1^3)^{(2)}$. If $p \ne 2$ then the proof of \cite[Proposition 5.12]{BGS} gives $\dim (g^G \cap M) \leqs 14$ and we conclude that $\dim X(g) \leqs 58$. Now assume $p=2$. We will consider the two possibilities for $g^G$ in turn.

First assume $g$ is in the class $A_1^2$, so $\dim g^G = 52$ and $g$ has Jordan form $[J_2^{20},J_1^{16}]$ on $V_{56}$ (see \cite[Table 7]{Lawunip}). We claim that $\dim(g^G \cap M) \leqs 19$, which yields $\a(g) \leqs 21/32$. If $y = y_1 \cdots y_4 \in g^G \cap M^0$ then $\dim y^M > 19$ only if $y_4 \in D_4$ is a $c_4$-type involution (in the notation of \cite{AS}). Then by considering the decomposition of $V_{56}{\downarrow}M^0$ (see \cite[Chapter 12]{Thomas}), we deduce that $y$ has at least $24$ Jordan blocks of size $2$ on $V_{56}$, which is a contradiction. Similarly, if $y \in g^G \cap (M \setminus M^0)$ and $\dim y^M > 19$ then we calculate that $y$ has Jordan form $[J_2^{28}]$ on $V_{56}$ and once again we have reached a contradiction. 

Now assume $g$ is in the class $(A_1^3)^{(2)}$, so $\dim g^G = 54$ and it suffices to show that $\dim(g^G \cap M) \leqs 21$. If $y \in g^G \cap (M \setminus M^0)$ then $y$ acts as a graph automorphism on the $D_4$ factor of $M^0$ and we deduce that $\dim y^M \leqs 21$. Similarly, if $y \in g^G \cap M^0$ and $\dim y^M > 21$ then $y = y_1 \cdots y_4$, where each $y_i$ is an involution and $y_4 \in D_4$ is of type $c_4$. From the decomposition of $V{\downarrow}M^0$, we calculate that $y$ has Jordan form $[J_2^{63},J_1^7]$ on $V$, which is incompatible with the action of $g$ on $V$ (see \cite[Table 8]{Lawunip}).

Next assume $M^0 = A_1^7$, in which case $M/M^0 = {\rm GL}_{3}(2)$. If $g=u_{\a}$ then $\dim(g^G \cap M) = 2$, giving $\dim X(g) = 80$ and $\a(g) = 5/7$. If $\dim g^G \geqs 64$ then $\dim X(g) \leqs 69$ and thus $\a(g) <2/3$, so we may assume that $g$ is in the $A_1^2$ or $(A_1^3)^{(2)}$ class. Again we claim that $\a(g) <2/3$. Here $\dim g^G \geqs 52$ and $\dim (g^G \cap M^0) \leqs 14$, so we may assume $g \in M \setminus M^0$ and thus $p \in \{2,3,7\}$. If $p \in \{2,3\}$ then it is easy to check that $\dim (g^G \cap (M \setminus M^0)) \leqs 14$ and the claim follows. Finally, suppose $p=7$ and $g$ cyclically permutes the $A_1$ factors of $M^0$. By considering the restriction of $V$ to $M^0$ (see \cite[Chapter 12]{Thomas}), we deduce that $g$ has $J_7$ blocks in its Jordan form on $V$, but this is not compatible with elements in the $A_1^2$ and $(A_1^3)^{(2)}$ classes (see \cite[Table 8]{Lawunip}). This completes the proof of the claim.

Finally, let us assume $M^0=T_7$, so $p$ divides $|M/M^0| = |W(E_7)|$ and thus $p \in \{2,3,5,7\}$. If $g \ne u_{\a}$ then $\dim g^G \geqs 52$ and thus $\dim X(g) \leqs 81$, which gives $\a(g) \leqs 9/14$. On the other hand, if $g = u_{\a}$ is a long root element then \cite[Proposition 1.13(iii)]{LLS} implies that $p=2$ and $\dim (g^G \cap M) = 1$, so $\dim X(g) = 93$ and $\a(g) = 31/42$.
\end{proof}

\begin{lem}\label{l:mre6}
The conclusion to Proposition \ref{t:mr} holds if $G = E_6$.
\end{lem}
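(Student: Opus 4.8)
The plan is to follow verbatim the template already used in the proofs of Lemmas \ref{l:mre8} and \ref{l:mre7}. First I would record that the relevant subgroups are $M^0 \in \{A_1A_5, A_2^3, D_4T_2, T_6\}$, with $\dim G/M = 40, 54, 48, 72$ respectively and $M/M^0 = 1, S_3, S_3, W(E_6)$, and recall from Section \ref{ss:fps} that it suffices to treat $g \in M$ semisimple or unipotent. For semisimple $g$ I would invoke the lower bound on $\b(g) = \dim X - \dim X(g)$ in \cite[Theorem 2(II)(b)]{LLS}: for $M^0 \ne T_6$ this forces $\a(g) < 2/3$ outright, and for $M^0 = T_6$ I would instead combine the bound $\dim(g^G \cap M) \leqs 6$ with the fact that every nontrivial semisimple class of $E_6$ has dimension at least $32$ (the largest proper centralizer being $D_5T_1$) to get $\dim X(g) \leqs 46 < 48$.

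Next I would deal with unipotent $g \in M^0$. When $M^0 = A_1A_5$ the subgroup $M$ is connected, so the $G$-class of every unipotent element of $M$ is read off from \cite{Law09}; I expect only $g = u_\a$ to satisfy $\a(g) \geqs 2/3$, giving $\dim(g^G\cap M) = 10$ and $\a(g) = 7/10$. For $M^0 \in \{A_2^3, D_4T_2\}$ and $g \in M^0$ I would again use \cite{Law09} for the fusion, handling the long root case $g = u_\a$ directly via \cite[Proposition 1.13(iii)]{LLS}: the intersection $g^G \cap M$ is a long root element of one $A_2$ factor (resp. of the $D_4$ factor), so $\dim(g^G\cap M) = 4$ and $\a(g) = 2/3$ (resp. $\dim(g^G\cap M) = 10$ and $\a(g) = 3/4$). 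For the only other candidate classes — essentially $2A_1$ and $3A_1$ of $E_6$ — I would bound $\dim(g^G\cap M)$ by combining the trivial estimate $\dim M^0 - {\rm rank}\, M^0$ with the Jordan form of $g$ on $V = {\rm Lie}(G)$, computed from $V{\downarrow}M^0$ in \cite[Chapter 12]{Thomas} and matched against the tables in \cite{Lawunip}, which yields $\a(g) < 2/3$.

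Finally I would treat unipotent $g \in M \setminus M^0$. This requires $p \in \{2,3\}$ when $M^0 \in \{A_2^3, D_4T_2\}$, and $p \mid |W(E_6)|$, i.e. $p \in \{2,3,5\}$, when $M^0 = T_6$. For $M^0 = T_6$ with $g = u_\a$ (so $p=2$), \cite[Proposition 1.13(iii)]{LLS} gives $\dim(g^G \cap M) = 1$, hence $\dim X(g) = 51$ and $\a(g) = 17/24$; every other unipotent class has $\dim g^G \geqs 32$, so $\a(g) < 2/3$. For the outer (graph-automorphism, triality) classes of $A_2^3$ and $D_4T_2$ I would identify the $G$-class of $g$ by computing its Jordan form on $V$ (or on $V_{27} = V_G(\l_1)$) from the appropriate restriction in \cite[Chapter 12]{Thomas}, match it with \cite{Lawunip}, and then combine the resulting lower bound on $\dim g^G$ with the dimension of the relevant outer $M$-class to conclude $\a(g) < 2/3$; the low-dimensional classes here can be disposed of by arguing as in \cite{BGS}. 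As in Lemmas \ref{l:mre8} and \ref{l:mre7}, the main obstacle will be precisely this bookkeeping for elements outside $M^0$: one must pin down their outer $M$-classes and their Jordan forms on $V$ sharply enough to exclude any sporadic element whose fixed point space is unexpectedly large (and in principle to confirm that no such element forces a new entry into Table \ref{tab:mr2}).
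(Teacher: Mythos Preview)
Your proposal is correct and follows essentially the same approach as the paper: the same case split over $M^0 \in \{A_1A_5, A_2^3, D_4T_2, T_6\}$, the same use of \cite[Theorem 2(II)(b)]{LLS} for semisimple elements, \cite{Law09} for unipotent fusion in $M^0$, \cite[Proposition 1.13]{LLS} for long root elements, and Jordan form computations on $V$ or $V_{27}$ via \cite{Thomas} and \cite{Lawunip} for the outer unipotent classes. Two minor remarks: the paper applies the LLS semisimple bound uniformly (including $M^0=T_6$) rather than separating that case, and for $M^0=D_4T_2$ with $g=u_\a$ one should also note (when $p=2$) that $\dim(g^G\cap(M\setminus M^0))\leqs 7<10$, so the outer contribution does not affect the value $\a(g)=3/4$; for non-root unipotent $g$ in this case the paper simply quotes \cite[Proposition 5.13]{BGS} to get $\dim(g^G\cap M)<\tfrac{1}{2}\dim g^G$, which is the shortcut you allude to.
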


\begin{proof}
Here $M^0$ is one of the following:
\[
\begin{array}{ccccc} \hline
M^0 & A_1A_5 & A_2^3 & D_4T_2 & T_6 \\ 
\dim X & 40 & 54 & 48 & 72   \\ \hline
\end{array} 
\]
Let $V$ be the Lie algebra of $G$ and let $V_{27} = V_G(\l_1)$ be the $27$-dimensional minimal module for $G$. As before, if $g \in \mathcal{P}$ is semisimple then the result follows from the lower bound on $\b(g)$ given in \cite[Theorem 2(II)(b)]{LLS}. For the remainder, let us assume $g$ is unipotent. 

The case $M^0 = A_1A_5$ is very straightforward. Here $M$ is connected and the result follows from the fusion computations in \cite{Law09}. 

Next assume $M^0 = A_2^3$, so $M/M^0 = S_3$. If $g = u_{\a}$ then $\dim g^G = 22$ and by applying \cite[Proposition 1.13]{LLS} we deduce that $\dim (g^G \cap M) = 4$, so $\dim X(g) = 36$ and $\a(g) = 2/3$. In every other case, we have  $\dim g^G \geqs 32$ and \cite{Law09} gives $\dim g^G - \dim (g^G \cap M^0) \geqs 24$, so we may assume $g \in M \setminus M^0$, in which case $p \in \{2,3\}$. For $p=2$, one checks that $\dim (g^G \cap (M \setminus M^0)) \leqs 12$, so $\b(g) \geqs 20$ and this yields $\a(g) < 2/3$. For $p=3$, we have $\dim (g^G \cap (M \setminus M^0))=16$ and by considering $V_{27}{\downarrow}M^0$ (see \cite[Chapter 12]{Thomas}) we calculate that $g$ has Jordan form $[J_3^9]$ on $V_{27}$. Then according to \cite[Table 5]{Lawunip}, $g$ is in one of the classes labelled $A_2^2$ or $A_2^2A_1$, so $\dim g^G \geqs 48$ and thus $\b(g) \geqs 24$. The result follows.

Next assume $M^0 = D_4T_2$, so $M/M^0=S_3$. If $g=u_{\a}$ then $\dim (g^G \cap M^0) = 10$ and $\dim (g^G \cap (M \setminus M^0)) \leqs 7$, so $\a(g) = 3/4$. Now assume $\dim g^G \geqs 32$. As explained in the proof of \cite[Proposition 5.13]{BGS}, we have $\dim (g^G \cap M) < \frac{1}{2}\dim g^G$, so $\b(g) \geqs 17$ and $\a(g) \leqs 31/48$.

To complete the proof of the lemma, we may assume that $M^0 = T_6$. If $g \ne u_{\a}$ then $\dim g^G \geqs 32$, so $\dim X(g) \leqs 46$ and $\a(g) < 2/3$. On the other hand, if $g=u_{\a}$, then $p=2$, $\dim g^G = 22$ and $\dim(g^G \cap M) =1$, which gives $\dim X(g) = 51$ and $\a(g) = 17/24$.
\end{proof}

\begin{lem}\label{l:mrf4}
The conclusion to Proposition \ref{t:mr} holds if $G = F_4$.
\end{lem}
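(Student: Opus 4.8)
The plan is to work through the maximal rank subgroups of $G = F_4$ one at a time, in the manner of Lemmas \ref{l:mre8}--\ref{l:mre6}, using \eqref{e:lls} to compute $\dim X(g) = \dim X - \dim g^G + \dim(g^G \cap M)$ for each non-central $g \in \mathcal{P}$, where $X = G/M$. By Theorem \ref{t:ls} and Table \ref{tab:mr} the subgroups in question are $M^0 \in \{B_4, D_4, A_1C_3, A_2\tilde{A}_2\}$ when $p \ne 2$ and $M^0 \in \{B_4, C_4, D_4, \tilde{D}_4, A_2\tilde{A}_2\}$ when $p = 2$, with $\dim X = 16, 24, 28, 36$ for $B_4, D_4, A_1C_3, A_2\tilde{A}_2$ respectively (and $\dim X = 16, 24$ for $C_4, \tilde{D}_4$). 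Throughout we may assume $g \in M$, and we split into the semisimple and unipotent cases. For semisimple $g$ the lower bound on $\b(g)$ from \cite[Theorem 2(II)(b)]{LLS} gives $\a(g) < 2/3$ in every case except when $g = t$ is a $B_4$-involution (so $p \ne 2$ and $\dim g^G = 16$); for $M^0 = A_1C_3$ and $M^0 = A_2\tilde{A}_2$ that bound yields $\a(t) \leqs 5/7$ and $\a(t) \leqs 2/3$ respectively, and I would confirm equality either by computing $\dim(t^G \cap M)$ directly, using the classification of involutions in $M$, or by following the proof of Lemma \ref{l:p3}: pass to $F_4(q)$, evaluate the permutation character of the action on $G/M$ at an involution of type $B_4$ via \cite{LLS2}, and apply Lang--Weil \cite{LW}.

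For unipotent $g$, the connected subgroups $B_4$ and $A_1C_3$ (when $p \ne 2$), together with $C_4$ (when $p = 2$), are handled directly from Lawther's determination in \cite{Law09} of the $G$-class of each unipotent element of $M$: when $g = u_\a$ is a long root element of $G$ one reads off $\dim(u_\a^G \cap M)$ as a sum of long root class dimensions of the simple factors of $M^0$, giving $\a(u_\a) = 3/4$ for $B_4$ and $\a(u_\a) < 2/3$ for $A_1C_3$; the short root case $g = u_\b$ with $M = C_4$ and $p = 2$ is parallel (and in fact follows from the $B_4$ computation by applying the graph automorphism of $F_4$, which interchanges $B_4 \leftrightarrow C_4$ and $u_\a \leftrightarrow u_\b$) and yields $\a(u_\b) = 3/4$; and all remaining unipotent classes satisfy $\a(g) < 2/3$ by the bound $\b(g) \geqs \dim g^G - \dim(g^G \cap M^0)$ coming from \cite{Law09}. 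This leaves the disconnected subgroups with $M^0 \in \{D_4, \tilde{D}_4, A_2\tilde{A}_2\}$, where $M/M^0 = S_3$ for $D_4$ and (when $p = 2$) for $\tilde{D}_4$, and $M/M^0 = Z_2$ for $A_2\tilde{A}_2$; note that $\tilde{D}_4$ is the graph-twist of $D_4$, so its analysis mirrors the $D_4$ case with long and short roots exchanged. Elements with $g^G \cap M \subseteq M^0$ are treated exactly as in the connected case using \cite{Law09} and \cite[Proposition 1.13]{LLS}; for $g \in M \setminus M^0$ I would follow the strategy of Lemmas \ref{l:mre8}--\ref{l:mre7}, first using the crude bound $\dim(g^G \cap M) \leqs \tfrac12 \dim M$ to dispose of all elements with $\dim g^G$ large, and then, for the few small classes that survive, decomposing the restrictions to $M^0$ of the $26$-dimensional minimal module and the adjoint module ${\rm Lie}(G)$ (using \cite[Chapter 12]{Thomas}), computing the Jordan form of $g$, and identifying its $G$-class from the tables in \cite{Lawunip}. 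This pins down $\dim(g^G \cap M)$ and shows that $\a(g)$ either equals the value in Table \ref{tab:mr2} -- namely $3/4$ for $D_4$ with $u_\a$ and for $\tilde{D}_4$ with $u_\b$, and $2/3$ for $A_2\tilde{A}_2$ with $u_\a$, with $t$ (if $p \ne 2$), and with $u_\b$ (if $p = 2$) -- or is strictly less than $2/3$.

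The main obstacle I anticipate is the subgroup $M^0 = A_2\tilde{A}_2$: here three different classes -- $u_\a$, the $B_4$-involution $t$, and, when $p = 2$, the short root class $u_\b$ -- attain the threshold value $2/3$ exactly, so in each case the inequality must be shown to be sharp rather than merely nonstrict. This forces a careful accounting of how elements of $M \setminus M^0$ act: the nontrivial coset of $M/M^0 = Z_2$ interchanges the two $A_2$ factors, and when $p \in \{2, 3\}$ one must separate the fusion contributions of such swap elements from those of order-$p$ elements of $M^0$, together with precise control of the Jordan forms of the relevant small unipotent classes ($A_1$, $\tilde{A}_1$, $A_1\tilde{A}_1$ and $A_2$, $\tilde{A}_2$) on the minimal and adjoint modules. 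A secondary point is that, although the $C_4$ and $\tilde{D}_4$ cases reduce to $B_4$ and $D_4$ via the graph automorphism, one still needs to check that the relevant root-class dimensions and module restrictions transport correctly, and that the full triality action on $D_4$ is handled along the lines of the $E_6$, $M^0 = A_2^3$ analysis in Lemma \ref{l:mre6}.
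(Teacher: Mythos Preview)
Your strategy matches the paper's almost exactly: semisimple elements via the bounds in \cite[Theorem 2(II)(b)]{LLS}, reducing to the $B_4$-involution $t$ in $A_1C_3$ and $A_2\tilde{A}_2$ (where the paper confirms equality by exhibiting an explicit involution in $M^0$ and computing its eigenvalues on $V=\mathrm{Lie}(G)$, rather than via the permutation-character route you mention as an alternative); connected cases $B_4$, $C_4$, $A_1C_3$ for unipotent elements directly from \cite{Law09}; the disconnected cases $D_4$, $\tilde{D}_4$, $A_2\tilde{A}_2$ by combining \cite{Law09} for $g^G\cap M^0$ with a Jordan-form computation on $V_{26}$ (via \cite[Chapter 12]{Thomas} and \cite{Lawunip}) for elements in $M\setminus M^0$; and $\tilde{D}_4$ reduced to $D_4$ by the graph automorphism of $F_4$.

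There is one factual slip you should fix before carrying this out. For $M^0=A_2\tilde{A}_2$ the nontrivial element of $M/M^0\cong Z_2$ does \emph{not} swap the two $A_2$ factors: the factors are generated by long and short root subgroups respectively and are not $G$-conjugate, so the $Z_2$ instead acts as a graph automorphism on each factor simultaneously. Consequently, unipotent elements in $M\setminus M^0$ arise only when $p=2$ (not $p\in\{2,3\}$ as you wrote), and the relevant class has $\dim(g^G\cap(M\setminus M^0))=10$; computing its Jordan form $[J_2^{12},J_1^2]$ on $V_{26}$ shows it lies in the $G$-class $A_1\tilde{A}_1$, which is enough to finish. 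This misidentification would not derail your method --- the Jordan-form calculation would reveal the correct picture --- but it would make the bookkeeping you describe (separating swap contributions, handling $p=3$) unnecessary.
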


\begin{proof}
The cases here are as follows:
\[
\begin{array}{ccccccc} \hline
M^0 & B_4 & C_4 \, (p=2) & D_4 & \tilde{D}_4 \, (p=2) & A_1C_3 \, (p \ne 2) & A_2\tilde{A}_2 \\
\dim X & 16 & 16 & 24 & 24 & 28 & 36   \\ \hline
\end{array} 
\]
Let $V$ be the Lie algebra of $G$ and write $V_{26}$ for the $26$-dimensional Weyl module $W_G(\l_4)$. Let $g$ be an element of $\mathcal{P}$.

First assume $g$ is semisimple. By applying \cite[Theorem 2(II)(b)]{LLS}, we can immediately reduce to the cases $M^0 \in \{A_1C_3, A_2\tilde{A}_2\}$ with $p \ne 2$ and $g$ a $B_4$-involution, so $\dim g^G = 16$. Suppose $M^0 = A_1C_3$ and note that $M$ is connected. Here \cite[Theorem 2(II)(b)]{LLS} yields $\b(g) \geqs 8$, so $\dim X(g) \leqs 20$ and we claim that equality holds. To see this, we can use the restriction  
\[
V{\downarrow}M^0 = {\rm Lie}(A_1C_3) \oplus (V_{A_1}(\l_1) \otimes V_{C_3}(\l_3))
\]
to compute the eigenvalues on $V$ of each involution in $M^0$. For example, if we take $y=y_1y_2 \in M^0$, where $y_1=1$ and $y_2 = [-I_4, I_2] \in C_3$, then $y$ has Jordan form $[-I_{16},I_{36}]$ on $V$, so $\dim C_G(y)= \dim C_V(y) = 36$ and thus $y$ is a $B_4$-involution. Since $\dim y^{M} = 8$, the claim follows and we conclude that $\a(g) = 5/7$. 

The case $M^0 = A_2\tilde{A}_2$ is similar. Here \cite[Theorem 2(II)(b)]{LLS} gives $\b(g) \geqs 12$, so $\dim X(g) \leqs 24$ and we claim that equality holds, so $\a(g) = 2/3$. It suffices to show that the involution $y = y_1y_2 \in M^0$, where $y_1=1$ and $y_2 = [-I_2, I_1]$, is of type $B_4$. This is an easy calculation, either by consulting the decomposition of $V{\downarrow}M^0$ given in \cite[Chapter 12]{Thomas}, or by arguing as follows. Fix a maximal torus $T$ and simple roots $\a_1, \a_2, \a_3,\a_4$ for $G$. Let $\a_0$ be the highest root and let $U_{\b}$ be the root subgroup of $G$ corresponding to the root $\b$. We may assume $M^0$ has simple roots $-\a_0,\a_1,\a_3,\a_4$ and in terms of the standard Lie notation we may take $y = h_{\a_3}(-1)$. Then 
\[
C_G(y) = \la T, U_{\b}\,:\, \b = \sum_{i=1}^{4}n_i\a_i \mbox{ with $n_4$ even} \ra = B_4
\]
with simple roots $-\a_0, \a_1,\a_2,\a_3$.

For the remainder, we may assume that $g$ is unipotent. The cases $M^0 \in \{A_1C_3, B_4, C_4\}$ are straightforward to handle, using the fusion computations in \cite{Law09} (note that in each case, $M=M^0$). 

Next we turn to the case $M^0=D_4$, so $M/M^0 = S_3$ and $D_4$ is generated by long root subgroups. If $g=u_{\a}$ is a long root element, then $\dim g^G = 16$ and $\dim (g^G \cap M) = 10$, which gives $\a(g) = 3/4$. For the remainder, let us assume $g \ne u_{\a}$. If $y \in g^G \cap M^0$ then we can compute the Jordan form of $y$ on $V_{26}$ by considering the restriction
\[
V_{26}{\downarrow}M^0 = V_{D_4}(\l_1) \oplus V_{D_4}(\l_3) \oplus V_{D_4}(\l_4) \oplus 0^2.
\]
In this way, one checks that $\dim g^G - \dim (g^G \cap M^0) \geqs 10$. Now assume 
$g^G \cap (M \setminus M^0)$ is nonempty, so $p \in \{2,3\}$. We claim that 
\begin{equation}\label{e:eq4}
\dim g^G - \dim (g^G \cap (M \setminus M^0)) \geqs 9.
\end{equation}
For $p=2$, there are two classes of involutions in $M \setminus M^0$ (with representatives $b_1$ and $b_3$ in the notation of \cite{AS}) and we have $\dim (g^G \cap (M \setminus M^0)) = 7$ or $15$. Since $\dim g^G \geqs 16$, we may assume $g \in M$ is a $b_3$-involution. From the above decomposition of $V_{26}{\downarrow}M^0$, we calculate that $g$ has Jordan form $[J_2^{12},J_1^2]$ on $V_{26}$, so \cite[Table 3]{Lawunip} implies that $g$ is in the class $A_1\tilde{A}_1$. In particular, $\dim g^G = 28$ and the claim follows. For $p=3$ we can verify the bound in \eqref{e:eq4} by inspecting the proof of \cite[Proposition 5.14]{BGS}. Therefore, we conclude that if $g \ne u_{\a}$ then $\dim g^G - \dim(g^G \cap M) \geqs 9$ and thus 
$\a(g) \leqs 5/8$.

The case $M^0 = \tilde{D}_4$ (with $p=2$) is entirely similar. Here $M^0$ is generated by short root subgroups, and we note that the subgroups $D_4$ and $\tilde{D_4}$ are interchanged by a graph automorphism of $G$. Therefore, $\a(g)=3/4$ if $g \in G$ is a short root element, otherwise $\a(g) \leqs 5/8$.

Finally, let us assume $M^0 = A_2\tilde{A}_2$, so $M/M^0 = Z_2$. If $g=u_{\a}$ then $\dim g^G = 16$ and $\dim (g^G \cap M) = 4$, which gives $\a(g) = 2/3$. The same conclusion holds if $p=2$ and $g$ is a short root element. Now assume $g \ne u_{\a},u_{\b}$. From the fusion computations in \cite{Law09} we deduce that $\dim g^G - \dim (g^G \cap M^0) \geqs 18$. Now assume $g^G \cap (M \setminus M^0)$ is nonempty, so $p=2$ and $\dim (g^G \cap (M \setminus M^0)) = 10$ (since $g$ acts as a graph automorphism on both factors of $M^0$). By considering the restriction of $V_{26}$ to $M^0$ (see \cite[Chapter 12]{Thomas}), we calculate that $g$ has Jordan form $[J_2^{12},J_1^2]$ on $V_{26}$, so \cite[Table 3]{Lawunip} implies that $g$ is in the class $A_1\tilde{A}_1$ and thus $\dim g^G =  28$. We conclude that if $g \ne u_{\a},u_{\b}$ then 
$\b(g) \geqs 18$ and this gives $\a(g) \leqs 1/2$.  
\end{proof}

\begin{lem}\label{l:mrg2}
The conclusion to Proposition \ref{t:mr} holds if $G = G_2$.
\end{lem}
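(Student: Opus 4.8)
The plan is to argue exactly as in Lemmas \ref{l:mre8}--\ref{l:mrf4}, splitting $\mathcal{P}$ into semisimple and unipotent elements. By Table \ref{tab:mr}, the positive dimensional maximal rank subgroups of $G=G_2$ are $M^0 \in \{A_1\tilde{A}_1, A_2, \tilde{A}_2\}$, where $\tilde{A}_2$ occurs only when $p=3$, and the corresponding coset varieties $X=G/M$ have dimensions $8$, $6$ and $6$. For semisimple non-central $g \in \mathcal{P}$ I would invoke the lower bound on $\b(g)=\dim X - \dim X(g)$ from \cite[Theorem 2(II)(b)]{LLS}: since every nontrivial semisimple class of $G_2$ has dimension at least $6$ while $\dim(g^G \cap M)$ is bounded above by $\dim M - {\rm rank}\,M$ (which is $4$ for $A_1\tilde{A}_1$ and $6$ for the $A_2$-type subgroups), this forces $\b(g) \geqs 3$ and hence $\a(g) < 2/3$; the few degenerate elements of order $2$ or $3$ lying in the centre of a simple factor of $M^0$ satisfy $\b(g)=\dim g^G$ and can be dispatched by hand.

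For unipotent $g$ I would treat the three subgroups in turn. When $M^0=A_1\tilde{A}_1$ the subgroup is connected, so the fusion of its unipotent classes in $G_2$ can be read off from \cite{Law09} (or recomputed via the action on the $7$- and $14$-dimensional modules); one checks $\dim g^G - \dim(g^G \cap M) \geqs 4$ in every nontrivial case --- for instance $u_\a$ is regular in the $A_1$ factor and $u_\b$ is regular in the $\tilde{A}_1$ factor, each giving $\dim(g^G\cap M)=2$ --- whence $\dim X(g)\leqs 4$ and $\a(g)\leqs 1/2$. When $M^0=A_2$ (with $M/M^0=Z_2$), the unipotent classes of $\SL_3$ are $[2,1]$ and $[3]$: the class $[2,1]$ fuses to the long root class $u_\a$ of $G_2$ with $\dim u_\a^G=6$ and $\dim(u_\a^G\cap M)=\dim [2,1]^{\SL_3}=4$, giving $\b(u_\a)=2$ and $\a(u_\a)=2/3$, while the regular class $[3]$ acts with Jordan form $[3^2,1]$ on the $7$-dimensional module and so lies in a $G_2$-class of dimension at least $8$, giving $\a(g)\leqs 1/3$; when $p=2$ one must also check the outer involutions of $A_2.2$, which act as a graph automorphism on $\SL_3$ (so $\dim(g^G\cap(M\setminus M^0))=5$) and whose Jordan form on the relevant module places them in a class of dimension at least $8$, again forcing $\a(g)<2/3$. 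The case $M^0=\tilde{A}_2$, $p=3$, is parallel with short root subgroups in place of long ones: here $\dim u_\b^G=6$ (the short and long root classes are interchanged in characteristic $3$, as is visible in Tables \ref{t:parab2} and \ref{t:parab3}), so the $[2,1]$-class of $\SL_3$ fuses to $u_\b$ and yields $\a(u_\b)=2/3$, with every other unipotent element giving $\a(g)<2/3$. Matching the surviving cases against Table \ref{tab:mr2} then completes the proof.

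The steps requiring the most care are the small-characteristic ones: correctly identifying the $G_2$-class of the outer unipotent elements of $A_2.2$ when $p=2$, and, for $M^0=\tilde{A}_2$ with $p=3$, using the correct class dimensions, since the very special isogeny of $G_2$ in characteristic $3$ alters the centralizer dimensions of short root elements. For these I would rely on the explicit Jordan-form data in \cite{Lawunip} and \cite{LS_book} rather than the good-characteristic values; everything else is a short mechanical check.
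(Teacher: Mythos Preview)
Your approach is the same as the paper's: cite \cite[Theorem~2(II)(b)]{LLS} for semisimple elements, use \cite{Law09} for $M^0=A_1\tilde{A}_1$, and handle the $A_2$-type subgroups by computing Jordan forms on $V_7=W_G(\l_1)$. Two small slips to tighten. First, your explicit trivial bounds for the semisimple case give only $\b(g)\geqs 2$ for $A_1\tilde{A}_1$ and $\b(g)\geqs 0$ for the $A_2$-type subgroups, not $\b(g)\geqs 3$; you really do need the sharper \cite{LLS} bound here (which in fact yields $\a(g)\leqs 1/2$, exactly as in the paper). Second, the Jordan form $[J_3^2,J_1]$ on $V_7$ identifies the regular $A_2$-element as lying in the $G_2(a_1)$ class of dimension $10$ (see \cite[Table~1]{Lawunip}), not merely $\geqs 8$: with only $\dim g^G\geqs 8$ you would get $\a(g)\leqs 2/3$ rather than the strict inequality needed to exclude this element from Table~\ref{tab:mr2}.
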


\begin{proof}
Here the possibilities for $M^0$ are as follows:
\[
\begin{array}{cccc} \hline
M^0 & A_1\tilde{A}_1 & A_2 & \tilde{A}_2 \, (p=3) \\ 
\dim X & 8 & 6 & 6     \\ \hline
\end{array} 
\]
Let $V_7$ be the $7$-dimensional Weyl module $W_G(\l_1)$ and fix an element $g \in \mathcal{P}$.

If $g$ is semisimple, then the lower bound on $\b(g)$ in \cite[Theorem 2(II)(b)]{LLS} implies that $\a(g) \leqs 1/2$, so we may assume that $g$ is unipotent. For $M = M^0 = A_1\tilde{A}_1$, the fusion computations in \cite{Law09} imply that $\a(g) \leqs 1/2$. 

Next suppose $M^0 = A_2$, so $M/M^0 = Z_2$ and $M^0$ is generated by long root subgroups. If $g=u_{\a}$ then $\dim g^G = 6$ and $\dim (g^G \cap M) = 4$, so $\a(g) = 2/3$. Now assume $g \ne u_{\a}$. If $g^G \cap (M \setminus M^0)$ is nonempty, then $p=2$ and $\dim (g^G \cap (M \setminus M^0)) = 5$. Moreover, by considering the restriction of $V_7$ to $M^0$, we deduce that $g$ has Jordan form $[J_2^3,J_1]$ on $V_7$, so $g$ is in the class $\tilde{A}_1$ (see \cite[Table 1]{Lawunip}) and $\dim g^G=8$. Similarly, if $g^G \cap M^0$ is nonempty then $g$ is in the class labelled $G_2(a_1)$ (see \cite{Law09}), so $\dim(g^G \cap M^0) = 6$ and $\dim g^G = 10$. We conclude that $\a(g) \leqs 1/2$ if $g \ne u_{\a}$.

Finally, the case $M^0 = \tilde{A}_2$ (with $p=3$) is entirely similar: if $g$ is a short root element, then $\a(g) = 2/3$, otherwise $\a(g) \leqs 1/2$.
\end{proof}

\vs

This completes the proof of Proposition \ref{t:mr}.
 
\subsection{Remaining subgroups}\label{s:rem}

To complete the proof of Theorem \ref{t:mainex}, it remains to handle the cases arising in Table \ref{tab:nmr} (together with the two special cases recorded in parts (iii) and (iv) of Theorem \ref{t:ls}).

\begin{prop}\label{t:nmr}
Let $G$ be a simply connected simple algebraic group of exceptional type, let $M$ be a maximal positive dimensional subgroup of $G$ with ${\rm rank}\,M^0 < {\rm rank}\,G$ and let $g \in \mathcal{P}$ be non-central. Then 
$\a(G,M,g) \geqs \frac{2}{3}$ if and only if $(G,M,g)$ is one of the cases recorded in Table \ref{tab:nmr2}.
\end{prop}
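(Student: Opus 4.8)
The plan is to run through the subgroups $M$ in Table~\ref{tab:nmr}, together with the two subgroups $(2^2\times D_4).S_3<E_7$ and $A_1\times S_5<E_8$ from parts~(iii) and~(iv) of Theorem~\ref{t:ls}, case by case, exactly in the style of the proof of Proposition~\ref{t:mr} (Lemmas~\ref{l:mre8}--\ref{l:mrg2}). As in Remark~\ref{r:field} we may assume $p>0$ and $k=\bar{\mathbb{F}}_p$, and since $X(g)$ is empty unless $g^G\cap M\ne\emptyset$ we may assume $g\in M$; then $\b(G,M,g)=\dim g^G-\dim(g^G\cap M)$ by \eqref{e:lls}, and $\a(G,M,g)\geqs\frac{2}{3}$ is equivalent to $\b(G,M,g)\leqs\frac{1}{3}\dim X$, where $\dim X=\dim G-\dim M^0$. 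In all of these cases $M^0$ is of small dimension (at most $66$, attained by $G_2F_4<E_8$) and $M/M^0$ has order at most $6$, so $\dim(g^G\cap M)\leqs\dim M^0$ and the crude bound $\b(G,M,g)\geqs\dim g^G-\dim M^0$ is available throughout. In particular, if $g$ is semisimple then the lower bounds for $\b(g)$ recorded in \cite[Theorem 2(II)(b)]{LLS}, or just the crude bound together with the minimum of $\dim g^G$ over non-central semisimple classes of $G$, already give $\a(g)<\frac{2}{3}$, since the centraliser of a nontrivial semisimple element of $G$ is far larger than $\dim M^0$. So from now on $g$ is unipotent of order $p$.

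For unipotent $g$ I would first treat root elements. If $g=u_\a$ is a long root element -- or $g=u_\b$ a short root element in the cases $(G,p)=(F_4,2),(G_2,3)$ -- then by \cite[Proposition 1.13]{LLS} $g$ must be a root element inside one of the simple factors of $M^0$, which determines $\dim(g^G\cap M)$ exactly and produces the entries of Table~\ref{tab:nmr2}: $\a(g)=10/13$ for $G_2F_4<E_8$, $A_1F_4<E_7$ and $F_4<E_6$; $\a(g)=5/7$ for $G_2C_3<E_7$, $A_2G_2<E_6$ and $A_1G_2<F_4$; $\a(g)=165/217$ for $A_1G_2^2<E_8$; and $\a(g)=2/3$ for $C_4<E_6$ with $p\ne 2$. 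For the large $A_1$ and $B_2$ subgroups, for the ``irreducible'' $A_2$ and $G_2$ subgroups, and for the two subgroups in parts~(iii) and~(iv) of Theorem~\ref{t:ls}, one checks from the known fusion of unipotent classes -- read off from \cite{Law09} and the composition-factor data in \cite[Chapter 12]{Thomas} -- that $g^G\cap M$ contains no root elements of $G$ at all, so no further rows arise. For every remaining nontrivial unipotent class of $G$, I would bound $\dim g^G$ below by the dimension of the next-smallest unipotent class, and bound $\dim(g^G\cap M)$ above by restricting the adjoint module $V={\rm Lie}(G)$ -- or one of the minimal modules $V_{56}$, $V_{27}$, $V_{26}$, $V_7$, according to the type of $G$ -- to $M^0$ using the tables in \cite[Chapter 12]{Thomas}, computing the Jordan form on $V$ of a hypothetical element $y\in g^G\cap M$ with $\dim y^M$ maximal, and contradicting the assumption that $\dim y^M$ is large by comparison with Lawther's tables in \cite{Lawunip}. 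In each instance this leaves $\b(G,M,g)$ large enough to conclude $\a(G,M,g)<\frac{2}{3}$.

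The main obstacle will be the handful of subgroups for which $\dim M^0$ is comparatively large relative to $\dim X$ -- principally $G_2F_4<E_8$, $F_4<E_6$, $A_1F_4<E_7$, $C_4<E_6$, and the $G_2$-containing products $A_1G_2^2<E_8$, $G_2C_3<E_7$, $A_2G_2<E_6$ and $A_1G_2<F_4$ -- where for a non-root unipotent class the crude bound $\dim(g^G\cap M)\leqs\dim M^0$ is too weak and one genuinely has to identify which $G$-classes meet $M$. Here the Jordan-form and module-restriction bookkeeping has to be done carefully, keeping track of the characteristic (several of these subgroups occur only for small $p$, or only for $p\ne 2$, and good-prime hypotheses control when a simple factor of $M^0$ can contain a regular unipotent element), exactly as in the $D_4^2$, $A_2^4$ and $A_1^8$ analyses inside the proof of Lemma~\ref{l:mre8} and the $A_1^3D_4$ and $A_1^7$ analyses inside the proof of Lemma~\ref{l:mre7}. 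By contrast, the finite-component subgroups $(2^2\times D_4).S_3<E_7$ and $A_1\times S_5<E_8$ are immediate from the crude dimension bound, once one observes that the relevant $D_4$, respectively $A_1$, does not meet the class of long root elements of $G$.
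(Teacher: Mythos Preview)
Your overall plan is sound and mirrors the paper's case-by-case strategy, but there is a genuine gap in your treatment of semisimple elements. You write that ``the lower bounds for $\b(g)$ recorded in \cite[Theorem 2(II)(b)]{LLS}, or just the crude bound together with the minimum of $\dim g^G$ over non-central semisimple classes of $G$, already give $\a(g)<\frac{2}{3}$''. This is false in several of the most important cases. First, \cite[Theorem 2(II)(b)]{LLS} concerns the reductive \emph{maximal rank} subgroups and does not apply here. Second, the crude bound $\b(g)\geqs\dim g^G-\dim M^0$ is far too weak whenever $\dim M^0$ is comparable to the smallest semisimple class dimension: for instance, with $M=G_2F_4<E_8$ one has $\dim X=182$ and $\dim M^0=66$, so for $C_G(g)=E_7T_1$ (where $\dim g^G=114$) the crude bound gives only $\b(g)\geqs 48$, well short of the required $\b(g)>60$. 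Similar failures occur for $F_4<E_6$ (where $\dim g^G$ can be as small as $32$ against $\dim M^0=52$), for $C_4<E_6$, for $G_2C_3<E_7$, for $A_1F_4<E_7$, for $A_2G_2<E_6$, for $G_2<E_6$, and for $(2^2\times D_4).S_3<E_7$ (where $\dim g^G=54$ against $\dim M^0=28$ and one needs $\b(g)>35$). The last of these you describe as ``immediate from the crude dimension bound'', which it is not.

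In each of these cases the paper does real work for semisimple $g$: it restricts the adjoint module (or a minimal module) to $M^0$ and bounds the $1$-eigenspace of a hypothetical $y\in g^G\cap M$ via the quantity $\nu(y,W)$ in \eqref{e:nudef}, or it identifies $C_M(g)$ explicitly (for $F_4<E_6$ and $C_4<E_6$ this uses the description $M=C_G(\tau)$ for a graph automorphism $\tau$ and the positive-root-count argument from the proof of \cite[Lemma 6.2]{LLS}), or it analyses the action of outer elements of $M$ on the summands of $V{\downarrow}M^0$ (as for $(2^2\times D_4).S_3<E_7$). Your proposal correctly anticipates that the large-$M^0$ cases are the obstacle for \emph{unipotent} $g$, but it is precisely these same cases that require the additional semisimple analysis you have omitted. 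Once you supply these eigenvalue and centraliser computations, the argument goes through essentially as in the paper.
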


\renewcommand{\arraystretch}{1.1}
\begin{table}[h]
\[
\begin{array}{llcc} \hline
G & M^0 & g &  \a(G,M,g)  \\ \hline
E_8 & G_2F_4 & u_{\a} & 10/13  \\
& A_1G_2^2\, (p \ne 2) & u_{\a} & 165/217  \\
E_7 &  A_1F_4 & u_{\a} & 10/13 \\
& G_2C_3 &  u_{\a} & 5/7  \\
E_6 &  F_4 & u_{\a} & 10/13 \\
& A_2G_2 & u_{\a} & 5/7 \\
&  C_4\, (p \ne 2) & u_{\a} & 2/3 \\
F_4 & A_1G_2 & u_{\a} & 5/7 \\ \hline
\end{array}
\]
\caption{$M^0$ not maximal rank, $\a(G,M,g) \geqs 2/3$}
\label{tab:nmr2}
\end{table}
\renewcommand{\arraystretch}{1}

\begin{lem}\label{l:nmre8}
The conclusion to Proposition \ref{t:nmr} holds if $G = E_8$.
\end{lem}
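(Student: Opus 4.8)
The plan is to proceed as in Lemma~\ref{l:mre8}, treating in turn the five possibilities for $M^0$ afforded by Table~\ref{tab:nmr}: $A_1$ (three classes, $p\geqs 23,29,31$), $B_2$ ($p\geqs 5$), $A_1A_2$ ($p\ne 2,3$), $A_1G_2^2$ ($p\ne 2$), and $G_2F_4$, with $\dim X = 248 - \dim M$ in each case. By Remark~\ref{r:field} I may assume $p>0$ and $k=\bar{\mathbb{F}}_p$. Throughout I use \eqref{e:lls}, together with the observation that for $g\in M$ one has $\dim(g^G\cap M)=\max_i\dim g_i^M$, the maximum taken over the $M$-classes $g_i^M$ fusing into $g^G$, so that in particular $\dim(g^G\cap M)\leqs\dim M$. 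Since $g\in\mathcal{P}$ has prime order, it is either semisimple or unipotent; if $g$ is semisimple, then in every case the lower bound on $\b(g)$ recorded in \cite[Theorem~2(II)(b)]{LLS} yields $\a(g)<\tfrac23$. So from now on I take $g$ to be unipotent.

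For the three ``small'' subgroups $A_1$, $B_2$ and $A_1A_2$ we have $\dim M\leqs 11$ and $\dim X\geqs 237$. Here one uses that the $G$-classes of the nontrivial unipotent elements of these maximal subgroups are known: they may be read off from the composition factors of ${\rm Lie}(G){\downarrow}M^0$ in \cite[Chapter~12]{Thomas} together with \cite{LS_book} (or computed directly from Jordan forms), and in particular none of them is a long root element of $G$, so $\dim g^G\geqs 92$. Then $\dim X(g)\leqs\dim X - 92 + \dim M < \tfrac23\dim X$, giving $\a(g)<\tfrac23$ in all three cases.

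It remains to treat $M^0\in\{G_2F_4,\,A_1G_2^2\}$, where $\dim X = 182$ and $217$ respectively. Suppose first that $g=u_\a$ is a long root element. By \cite[Proposition~1.13]{LLS}, $u_\a^G\cap M^0$ is a union of $M^0$-classes consisting of long root elements of the simple factors of $M^0$, so $\dim(u_\a^G\cap M)$ equals the dimension of the long root class of the largest factor, namely $16$ (the long root class of $F_4$) when $M^0=G_2F_4$, and $6$ (a long root class of $G_2$) when $M^0=A_1G_2^2$. Using $\dim u_\a^G=58$ and \eqref{e:lls}, this gives $\dim X(u_\a)=140$ and $165$, i.e. $\a(u_\a)=10/13$ and $165/217$, as recorded in Table~\ref{tab:nmr2}. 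Now assume $g\ne u_\a$, so $\dim g^G\geqs 92$. The crude bound $\dim(g^G\cap M)\leqs\dim M$ already gives $\a(g)<\tfrac23$ as soon as $\dim g^G\geqs 112$ when $M^0=A_1G_2^2$, and as soon as $\dim g^G\geqs 128$ when $M^0=G_2F_4$; this reduces us to the single class $A_1^2$ (of dimension $92$) when $M^0=A_1G_2^2$, and to a short list of small unipotent classes (those with $\dim g^G<128$) when $M^0=G_2F_4$. For each of these remaining classes I bound $\dim(g^G\cap M)$ by computing the Jordan form on ${\rm Lie}(G)$ of the candidate unipotent elements of $M^0$ — using the restriction ${\rm Lie}(G){\downarrow}M^0$ from \cite[Chapter~12]{Thomas} together with the $G$-side Jordan forms in \cite[Table~9]{Lawunip} — and discarding those $M^0$-classes whose Jordan form is incompatible with $g^G$; the resulting bound (for example $\dim(g^G\cap M)\leqs 19$ when $M^0=A_1G_2^2$ and $g$ lies in the class $A_1^2$) forces $\a(g)<\tfrac23$ in each case.

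The only substantive work is this last Jordan-form analysis for the small intermediate unipotent classes in $G_2F_4$ and $A_1G_2^2$, where the trivial bound $\dim(g^G\cap M)\leqs\dim M$ is not sharp enough; this is entirely parallel to the treatment of the maximal-rank subgroups in Lemma~\ref{l:mre8}, and everything else reduces to quoting the $\b$-bounds of \cite{LLS} and standard fusion data.
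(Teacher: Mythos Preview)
Your proposal has two genuine gaps.

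First, you have overlooked one maximal subgroup: the case $M = A_1 \times S_5$ with $p \ne 2,3,5$ from Theorem~\ref{t:ls}(iv). This subgroup must also be treated (the paper handles it in two lines: since $M^0$ is a diagonal $A_1$ in $A_1A_1 < A_4A_4$, it contains no long root elements, so $\dim g^G \geqs 92$ and the trivial bound $\dim(g^G\cap M) \leqs 2$ suffices).

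Second, and more seriously, your blanket appeal to \cite[Theorem~2(II)(b)]{LLS} for semisimple elements is invalid here: that result gives lower bounds on $\b(g)$ only for \emph{maximal rank} reductive subgroups, and the subgroups in Table~\ref{tab:nmr} are precisely those of non-maximal rank. For the small subgroups $A_1$, $B_2$, $A_1A_2$ and for $A_1G_2^2$ this does no harm, because the trivial bound $\dim(g^G\cap M)\leqs\dim M$ together with $\dim g^G\geqs 112$ already forces $\a(g)<\tfrac23$. But for $M = G_2F_4$ the trivial bound fails when $C_G(g) = E_7A_1$ or $E_7T_1$ (dimensions $112$ and $114$), since then $\dim X(g)$ could a priori be as large as $128$. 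The involution case $C_G(g)=E_7A_1$ is easily dispatched using $\dim(g^G\cap M)\leqs \dim(G_2)_{[2]}+\dim(F_4)_{[2]}=36$. The case $C_G(g)=E_7T_1$, however, genuinely requires a further argument: the paper establishes $\dim(g^G\cap M)\leqs 52$ by an eigenspace computation on $V{\downarrow}M = {\rm Lie}(G_2F_4)\oplus(W_{G_2}(\l_1)\otimes W_{F_4}(\l_4))$, showing that any $y=y_1y_2\in g^G\cap M$ with $\dim y^M>52$ would have $\dim C_V(y)\leqs 116<\dim E_7T_1$, a contradiction. Your outline contains nothing corresponding to this step.

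Your treatment of the unipotent elements is broadly sound, though for $G_2F_4$ the paper avoids your proposed Jordan-form case analysis altogether by citing \cite{Law09} directly (since $M$ is connected and its unipotent fusion is tabulated there), obtaining $\b(g)\geqs 70$ for all $g\ne u_\a$ in one stroke.
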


\begin{proof}
We start by considering the special case recorded in part (iv) of Theorem \ref{t:ls}, so $p \ne 2,3,5$ and $M = A_1 \times S_5$. Let $g \in M$ be an element in $\mathcal{P}$. From the construction of $M^0$ as a diagonal subgroup of $A_1A_1 < A_4A_4$, it is clear that $g \ne u_{\a}$. Therefore, $\dim g^G \geqs 92$ and thus $\dim X(g) \leqs 155$ since $\dim (g^G \cap M) \leqs 2$. This yields $\a(g) \leqs 31/49$.

To complete the proof of the lemma, we need to handle the following cases:
\[
\begin{array}{cccccc} \hline
M^0 & A_1\, (p \geqs 23; 29; 31) & B_2 \, (p \geqs 5) & A_1A_2\, (p \ne 2,3) & A_1G_2^2 \, (p \ne 2) & G_2F_4 \\ 
|M/M^0| & 1 & 1 & 2 & 2 & 1 \\
\dim X & 245 & 238 & 237 & 217 & 182 \\ \hline
\end{array}
\]
Let $g \in M$ be an element in $\mathcal{P}$ and let $V$ be the Lie algebra of $G$. 

The cases $M^0 \in \{A_1, B_2, A_1A_2\}$ are very straightforward. For instance, let us assume $M^0=A_1A_2$, so $p \ne 2,3$ and $M/M^0=Z_2$. By \cite{Law09}, $g \ne u_{\a}$ and thus $\dim g^G \geqs 92$ and $\dim (g^G \cap M) \leqs 8$, so $\dim X(g) \leqs 153$ and $\a(g)<2/3$.

Next assume $M^0 = A_1G_2^2$, so $p \ne 2$ and $M/M^0=Z_2$. If $g$ is not a unipotent element in the class $A_1$ or $A_1^2$, then $\dim g^G \geqs 112$ and $\dim (g^G \cap M) \leqs 26$, so $\dim X(g) \leqs 131$ and we get $\a(g)<2/3$. If $g$ is in the $A_1$ class then $\dim g^G = 58$ and \cite[Proposition 1.13(ii)]{LLS} implies that $\dim (g^G \cap M) =6$, so $\dim X(g) = 165$ and thus $\a(g) = 165/217$. Finally, suppose $g$ is in the class labelled $A_1^2$, so $\dim g^G = 92$ and $g$ has Jordan form $[J_3^{14}, J_2^{64},J_{1}^{78}]$ on $V$. Here it will be useful to consider the restriction of $V$ to $M^0$. According to \cite[Chapter 12]{Thomas}, we have $V{\downarrow}M^0 = {\rm Lie}(A_1G_2^2) \oplus W$, where
\begin{equation}\label{e:eq6}
W = (W_{A_1}(2\l_1) \otimes U \otimes U) \oplus (W_{A_1}(4\l_1) \otimes U \otimes 0) \oplus (W_{A_1}(4\l_1) \otimes 0 \otimes U)
\end{equation}
and $U = W_{G_2}(\l_1)$. We claim that $\dim (g^G \cap M) \leqs 18$, so $\dim X(g) \leqs 143$ and $\a(g)<2/3$. 

To justify the claim, suppose $y=y_1y_2y_3 \in M^0$ is $G$-conjugate to $g$. If $y_1 \ne 1$ and $p \ne 3$ then $y_1$ has Jordan form $J_5$ on $W_{A_1}(4\l_1)$ and we deduce from \eqref{e:eq6} that the Jordan form of $y$ on $V$ is incompatible with the form of $g$. Similarly, if $y_1 \ne 1$ and $p=3$ then $y_1$ has Jordan form $J_3$ on $W_{A_1}(2\l_1)$ and thus $y$ acts as $[J_3^{49}]$ on the first summand in \eqref{e:eq6}. Again, this is a contradiction and thus $y_1=1$. If $y_2$ or $y_3$ is regular, then $p \ne 3,5$ (since $y \in \mathcal{P}$) and $y_i$ acts as $[J_7]$ on $U$. Again, we see that this is incompatible with the Jordan form of $g$, so neither $y_2$ nor $y_3$ is regular. Similarly, $y_2$ and $y_3$ are not both in the $G_2(a_1)$ class (indeed, such an element has Jordan form $[J_3^2,J_1]$ on $U$ and thus $y$ would have more than $14$ Jordan blocks of size $3$ on $V$). Therefore, $\dim y^M \leqs 18$ and this establishes the claim.

Finally, let us assume $M^0 = G_2F_4$, so $M$ is connected. First assume $g$ is unipotent and note that the fusion of unipotent classes is determined in \cite{Law09}. If $g=u_{\a}$ then $\dim g^G = 58$ and $\dim (g^G \cap M) = 16$, so $\dim X(g) = 140$ and $\a(g) = 10/13$. In every other case, one checks that $\b(g) \geqs 70$, so $\dim X(g) \leqs 112$ and the result follows. Now assume $g$ is semisimple. If $\dim g^G \geqs  128$ then the trivial bound $\dim (g^G \cap M) \leqs 60$ yields $\a(g) \leqs 57/91$. Therefore, to complete the argument we may assume that $C_G(g) = E_7A_1$ or $E_7T_1$. In the former case, $\dim g^G = 112$, $p \ne 2$ and $g$ is an involution, so $\dim (g^G \cap M) \leqs 8+28=36$ and thus $\dim X(g) \leqs 106$. Finally, suppose $C_G(g) = E_7T_1$. Here $\dim g^G = 114$ and we claim that $\dim (g^G \cap M) \leqs 52$, which gives $\dim X(g) \leqs 120$ and $\a(g) \leqs 60/91$. 

To see this, first observe that $V {\downarrow}M = {\rm Lie}(G_2F_4) \oplus W$, where $W=W_{G_2}(\l_1) \otimes W_{F_4}(\l_4)$. Seeking a contradiction, suppose there exists an element $y=y_1y_2 \in g^G \cap M$ with $\dim y^M > 52$ and note that both $y_1$ and $y_2$ are nontrivial. Let 
\begin{equation}\label{e:nudef}
\nu(y,W) = \min\{\dim \, [W, \l y] \, :\, \l \in k^*\}
\end{equation}
be the codimension of the largest eigenspace of $y$ on $W$ and set $s=\nu(y,W)$. Similarly, define $s_1 = \nu(y_1,V_7)$ with respect to the action of $y_1$ on $V_7 = W_{G_2}(\l_1)$. Since $y_1 \ne 1$, it is easy to check that $s_1 \geqs 3$ and thus $s \geqs 3.26 = 78$. This implies that the dimension of the $1$-eigenspace of $y$ on $W$ is at most $104$. Moreover, since $\dim y^M > 52$ it follows that $\dim C_{M}(y) \leqs 12$ and thus the $1$-eigenspace of $y$ on $V$ is at most $116$-dimensional. But this implies that $\dim C_G(y) \leqs 116$ and we have reached a contradiction. This justifies the claim and the proof of the lemma is complete.
\end{proof}

\begin{lem}\label{l:nmre7}
The conclusion to Proposition \ref{t:nmr} holds if $G = E_7$.
\end{lem}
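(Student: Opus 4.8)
The plan is to follow the template of Lemma~\ref{l:nmre8}, running through the maximal subgroups $M$ of $G = E_7$ with ${\rm rank}\,M^0 < 7$. By Theorem~\ref{t:ls} and Table~\ref{tab:nmr} these have $M^0 \in \{A_1,\, A_1^2,\, A_2,\, A_1G_2,\, A_1F_4,\, G_2C_3\}$ (with the characteristic restrictions recorded there), together with the subgroup $M = (2^2\times D_4).S_3$ with $p \ne 2$ appearing in part~(iii) of Theorem~\ref{t:ls}; writing $X = G/M$, the corresponding values of $\dim X$ are $130$, $127$, $125$, $116$, $78$, $98$ and $105$. For each $M$ and each non-central $g \in \mathcal{P}$ we may assume $g \in M$ and then estimate $\dim X(g)$ using \eqref{e:lls}, namely $\dim X(g) = \dim X - \dim g^G + \dim(g^G\cap M)$, where $\dim g^G$ is obtained from the standard centralizer data (see \cite{Lub} for semisimple classes and \cite{LS_book} for unipotent classes). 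Throughout we write $V = {\rm Lie}(G)$ and $V_{56} = V_G(\l_7)$ for the minimal module.

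The cases $M^0 \in \{A_1, A_1^2, A_2\}$ are immediate: none of these subgroups contains a long root element of $G$ (the embeddings are the irreducible ones of Theorem~\ref{t:ls}(v)), so a unipotent $g \in M$ lies in a $G$-class of dimension at least $52$, and the trivial bound $\dim(g^G\cap M) \leqs \dim M^0 - {\rm rank}\,M^0 \leqs 6$ then forces $\dim X(g) < \tfrac23\dim X$; for semisimple $g$ the lower bound on $\b(g)$ in \cite[Theorem 2(II)(b)]{LLS}, together with $\dim g^G \geqs 54$ for all nontrivial semisimple $g \in G$, gives the same conclusion.

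For $M^0 \in \{A_1G_2, A_1F_4, G_2C_3\}$ the subgroup $M$ is connected and the $G$-fusion of its unipotent classes is given by Lawther~\cite{Law09}. Here $A_1F_4$ and $G_2C_3$ contain long root elements of $G$, and a short application of \eqref{e:lls} yields $\a(u_\a) = 10/13$ for $A_1F_4$ (one has $\dim(u_\a^G \cap M) = 16$, coming from the long root elements of the $F_4$ factor) and $\a(u_\a) = 5/7$ for $G_2C_3$ (here $\dim(u_\a^G \cap M) = 6$), while $A_1G_2$ contains no long root elements; for every other unipotent class of $M$ the fusion data in \cite{Law09} give $\dim g^G - \dim(g^G\cap M) > \tfrac13\dim X$ and hence $\a(g) < 2/3$. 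For the semisimple elements in these cases, and for the unipotent (and, when $p=3$, the order-$3$ triality-type) elements of $(2^2\times D_4).S_3$, the bound in \cite[Theorem 2(II)(b)]{LLS} disposes of everything except a short list of elements of very small class dimension (for instance $B_4$-type involutions, or elements of order $2$ or $3$ with $\dim g^G$ close to $52$, meeting $A_1F_4$, $G_2C_3$ or $D_4$). For these, exactly as in the treatment of $G_2F_4 < E_8$ in Lemma~\ref{l:nmre8}, I would bound $\dim C_M(y)$ from above using the restriction of $V$ or $V_{56}$ to $M^0$ recorded in \cite[Chapter 12]{Thomas}: for a nontrivial $y \in M^0$ the codimension $\nu(y,\,\cdot\,)$ (as in \eqref{e:nudef}) of its largest eigenspace on a suitable composition factor is bounded below, which makes $\dim C_V(y)$, and hence $\dim C_G(y)$, too small for $y$ to be $G$-conjugate to $g$ unless $\dim y^M$ is already small enough that $\a(g) < 2/3$; in the unipotent cases the same role is played by comparing the Jordan form of $y$ on $V$ or $V_{56}$ (computed from \cite[Chapter 12]{Thomas}) with the class data in \cite{Lawunip}.

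The main obstacle is precisely these tight cases for $A_1F_4$, $G_2C_3$, $A_1G_2$ and $(2^2\times D_4).S_3$: since $\dim X$ is comparatively small, applying the crude bounds $\dim(g^G\cap M) \leqs \dim M^0 - {\rm rank}\,M^0$ and $\dim g^G \geqs 52$ simultaneously produces a value of $\a$ just above $2/3$, so one genuinely has to pair the near-maximal $M$-classes with their $G$-classes and verify that $\dim g^G$ is large whenever $\dim(g^G\cap M)$ is close to its maximum. This is the same phenomenon encountered for $A_1G_2^2$ and $G_2F_4$ inside $E_8$ in Lemma~\ref{l:nmre8}, and I expect the eigenspace estimate on a carefully chosen summand of $V{\downarrow}M^0$ to be the key technical step.
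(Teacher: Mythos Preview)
Your outline tracks the paper's proof closely and the unipotent analysis via \cite{Law09} is right, including the values $\a(u_\a)=10/13$ for $A_1F_4$ and $5/7$ for $G_2C_3$. There is, however, one genuine error in your toolkit: \cite[Theorem~2(II)(b)]{LLS} gives bounds only for \emph{maximal rank} reductive subgroups, so it does not apply to $A_1G_2$, $A_1F_4$, $G_2C_3$ or $(2^2\times D_4).S_3$. The paper handles the semisimple elements here by the trivial bound $\dim(g^G\cap M)\leqs \dim M^0-{\rm rank}\,M^0$ together with $\dim g^G\geqs 54$ (and $\geqs 64$ once $C_G(g)\ne E_6T_1$), and then treats the residual $E_6T_1$-centralizer class ($\dim g^G=54$) directly via eigenspace estimates on $V{\downarrow}M^0$ from \cite[Chapter~12]{Thomas}, which is exactly what you propose. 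Your reference to ``$B_4$-type involutions'' is a slip carried over from $F_4$; in $E_7$ the only tight semisimple class is the one with centralizer $E_6T_1$.

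One place where the paper's route is more efficient than yours: for $A_1F_4$ with $g\ne u_\a$ (unipotent or semisimple alike), the paper simply quotes the proof of \cite[Proposition~5.12]{BGS} for the inequality $\dim(g^G\cap M)<\tfrac{1}{2}\dim g^G$, which immediately yields $\b(g)\geqs 28$ and $\a(g)<2/3$; this avoids a case-by-case eigenspace analysis. Similarly, for $(2^2\times D_4).S_3$ the paper also handles the unipotent side carefully using the decomposition $V{\downarrow}M^0 = {\rm Lie}(D_4)\oplus V_{D_4}(2\l_1)\oplus V_{D_4}(2\l_3)\oplus V_{D_4}(2\l_4)$, showing in particular that $M$ contains no long root elements and that the only unipotent $G$-classes meeting $M$ with $\dim g^G<64$ are $A_1^2$ (with $\dim(g^G\cap M)=10$) and none in $(A_1^3)^{(2)}$; this is a bit more delicate than you indicate.
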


\begin{proof}
Let $g \in M$ be an element in $\mathcal{P}$ and let $V$ be the Lie algebra of $G$. 

First suppose $p \ne 2$ and $M = (2^2 \times D_4).S_3$, as in part (iii) of Theorem \ref{t:ls}. Note that $\dim X = 105$. Suppose $g \in M$ is semisimple of prime order $r$. If $\dim g^G \geqs 64$ then the trivial bound $\dim (g^G \cap M) \leqs 24$ yields $\dim X(g) \leqs 65$, which gives $\a(g)<2/3$. Now assume $\dim g^G < 64$, so $C_G(g) = E_6T_1$ and $\dim g^G = 54$. We claim that $\dim (g^G \cap M) \leqs 18$, which gives $\a(g)<2/3$. This is clear if $r=2$, so we may assume $r>2$. Let $y \in M$ be an element of order $r$ with $\dim y^M > 18$ and let $s$ be the dimension of the $1$-eigenspace of $y$ on $V$. To establish the claim, we need to show that $s<79$. To do this, it will be helpful to consider the decomposition
\begin{equation}\label{e:eq7}
V{\downarrow}M^0 = {\rm Lie}(D_4) \oplus V_{D_4}(2\l_1) \oplus V_{D_4}(2\l_3) \oplus V_{D_4}(2\l_4)
\end{equation}
(see \cite[Chapter 12]{Thomas}). Note that the $35$-dimensional module $V_{D_4}(2\l_1)$ is the nontrivial composition factor of the symmetric-square $S^2V_8$, where $V_8$ is the natural module for $M^0$. In addition, $V_{D_4}(2\l_3)$ and $V_{D_4}(2\l_4)$ are the images of this module under a triality graph automorphism of $M^0$. 

If $y \in M \setminus M^0$ then $r=3$ and $y$ is a triality graph automorphism; since $y$ cyclically permutes the $V_{D_4}(2\l_i)$ summands, it follows that $s=35+t$, where $t$ is the dimension of the $1$-eigenspace of $y$ on ${\rm Lie}(D_4)$. In particular, $s<79$ as required. Now assume $y \in M^0$ and note that the condition $\dim y^M > 18$ implies that $r \geqs 5$. The $1$-eigenspace of $y$ on ${\rm Lie}(D_4)$ is at most $8$-dimensional. Furthermore, one can check that the $1$-eigenspace of $y$ on each $V_{D_4}(2\l_i)$ summand is at most $11$-dimensional. For example, if $r=5$ and $y = [I_4,w,w^2,w^3,w^4]$ with respect to the natural module (where $w \in k$ is a primitive  fifth root of unity), then $\dim y^M = 20$ and $y$ has an $11$-dimensional $1$-eigenspace on $V_{D_4}(2\l_1)$. In particular, we deduce that $s \leqs 41$ and this completes the proof of the claim.

Now suppose $g$ is unipotent. As above, we may assume that 
$\dim g^G<64$, so $g$ belongs to one of the classes labelled $A_1$, $A_1^2$ and $(A_1^3)^{(2)}$. We can rule out long root elements by repeating the argument in the proof of \cite[Proposition 5.12]{BGS}, so we just need to consider the classes $A_1^2$ and $(A_1^3)^{(2)}$. If $g^G \cap (M \setminus M^0)$ is nonempty then $p=3$ and $g$ acts as a triality graph automorphism on $M^0$. Moreover, the decomposition in \eqref{e:eq7} implies that $g$ has at least $35$ Jordan blocks of size $3$ on $V$ and by inspecting \cite[Table 8]{Lawunip} we conclude that $g$ is not in $A_1^2$ nor in $(A_1^3)^{(2)}$. Therefore, $g^G \cap M \subseteq M^0$. Suppose $y \in M^0$ has order $p$. If $y$ is a long root element in $M^0$ then using \eqref{e:eq7} we calculate that $y$ has Jordan form $[J_3^{10},J_2^{32},J_1^{39}]$ on $V$ and thus $y$ is in the $A_1^2$ class of $G$ (see \cite[Table 8]{Lawunip}). Moreover, if $y$ is any other unipotent element in $M^0$ then the Jordan form of $y$ on $V$ is incompatible with the Jordan form of elements in the $A_1^2$ and $(A_1^3)^{(2)}$ classes. Therefore, 
$\dim (g^G \cap M) = 10$ if $g$ is in $A_1^2$ (and this gives $\a(g) = 3/5$) and we conclude that $M$ does not contain any elements in the $(A_1^3)^{(2)}$ class. 

For the remainder, we may assume $M^0$ is one of the following:
\[
\begin{array}{ccccccc} \hline
M^0 & A_1\, (p \geqs 17; 19) & A_2 \, (p \geqs 5) & A_1^2\, (p \ne 2,3) & A_1G_2 \, (p \ne 2) & A_1F_4 & G_2C_3 \\ 
|M/M^0| & 1 & 2 & 1 & 1 & 1 & 1 \\
\dim X & 130 & 125 & 127 & 116 & 78 & 98 \\ \hline
\end{array}
\]

The cases $M^0 \in \{A_1, A_2, A_1^2, A_1G_2\}$ are straightforward. For example, suppose $M^0 = A_1G_2$, so $p \ne 2$ and $\dim X = 116$. If $g$ is semisimple  then $\dim g^G \geqs 54$ and $\dim (g^G \cap M) \leqs 14$, so $\dim X(g) \leqs 76$ and $\a(g) \leqs 19/29$. Now assume $g$ is unipotent. If $\dim g^G \geqs 54$ then once again we see that $\dim X(g) \leqs 76$, so we can assume $g$ is in the class $A_1$ or $A_1^2$. The fusion of unipotent classes in $M$ is determined in \cite{Law09} and we see that $g \ne u_{\a}$. In addition, if $g$ is in the $A_1^2$ class then $\dim g^G = 52$ and $\dim (g^G \cap M) = 6$, so $\dim X(g) = 70$.

Next assume that $M^0 = A_1F_4$, so $M$ is connected and $\dim X = 78$. If $g=u_{\a}$ then $\dim g^G = 34$ and $\dim(g^G \cap M) = 16$ (see \cite{Law09}), so $\dim X(g)=60$ and thus $\a(g) = 10/13$. In every other case, $\dim g^G \geqs 52$ and the proof of \cite[Proposition 5.12]{BGS} gives $\dim (g^G \cap M) < \frac{1}{2}\dim g^G$. Therefore 
$\b(g) \geqs 28$ and thus $\dim X(g) \leqs 50$. The result follows.

Finally, suppose $M^0=G_2C_3$, so $M$ is connected and $\dim X = 98$. First assume $g$ is unipotent and note that the fusion of unipotent classes is determined in \cite{Law09}. If $g=u_{\a}$ then $\dim g^G = 34$ and $\dim (g^G \cap M) = 6$, giving $\dim X(g) = 70$ and $\a(g) =5/7$. In every other case, one checks that $\b(g) \geqs 40$, so $\dim X(g) \leqs 58$ and $\a(g) < 2/3$. Now assume $g$ is semisimple and note that $\dim (g^G \cap M) \leqs 30$. In particular, if $\dim g^G \geqs 64$ then $\dim X(g) \leqs 64$ and $\a(g) < 2/3$. Therefore, we may assume $C_G(g) = E_6T_1$, so $\dim g^G = 54$. We claim that $\dim (g^G \cap M) \leqs 20$, which gives $\a(g) < 2/3$. 

If $p \ne 2$ and $g$ is an involution, then $\dim (g^G \cap M) \leqs 8+12 = 20$ and the claim follows. Now assume $g$ has odd order and consider the restriction
$V{\downarrow}M = {\rm Lie}(G_2C_3) \oplus W$, where $W = W_{G_2}(\l_1) \otimes W_{C_3}(\l_2)$ (see \cite[Chapter 12]{Thomas}). Suppose there exists $y=y_1y_2 \in g^G \cap M$ with $\dim y^M > 20$. Note that both $y_1$ and $y_2$ are nontrivial. By arguing as in the proof of the previous lemma, we calculate that $\nu(y,W) \geqs 3.14 = 42$ (see \eqref{e:nudef}) and thus the dimension of the $1$-eigenspace of $y$ on $W$ is at most $56$. The bound $\dim y^M>20$ implies that $\dim C_{M}(y) \leqs 13$ and we conclude that  $\dim C_G(y) \leqs 69$, which is a contradiction. This justifies the claim and completes the argument.
\end{proof}

\begin{lem}\label{l:nmre6}
The conclusion to Proposition \ref{t:nmr} holds if $G = E_6$.
\end{lem}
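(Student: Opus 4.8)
The subgroups to be treated are those with $M^0$ equal to $A_2$ ($p\geqs 5$), $G_2$ ($p\neq 7$), $C_4$ ($p\neq 2$), $F_4$ or $A_2G_2$; since $\dim M=\dim M^0$, the corresponding values of $\dim X=\dim G-\dim M^0$ are $70$, $64$, $42$, $26$ and $56$. As in the proofs of Lemmas \ref{l:nmre8} and \ref{l:nmre7} I will work with $V={\rm Lie}(G)$ and the $27$-dimensional minimal module $V_{27}=V_G(\l_1)$, and I may assume $g\in M$ (otherwise $X(g)$ is empty), with $g$ semisimple or unipotent. It is convenient to record that the non-central unipotent class of $E_6$ of smallest dimension is the class of long root elements, with $\dim u_{\a}^{G}=22$; that the next smallest unipotent class, $2A_1$, has dimension $32$; and that the non-central semisimple class of smallest dimension has dimension $32$ (the centralizer $D_5T_1$ being the largest proper connected reductive subgroup of $G$ of maximal rank).

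For $g$ semisimple the lower bounds on $\b(g)=\dim X-\dim X(g)$ from \cite[Theorem 2(II)(b)]{LLS} dispose of all but a few small classes --- essentially the two involution classes of $E_6$ (of dimensions $32$ and $40$) in the cases where they meet $M$. For these I would estimate $\dim(g^G\cap M)$ directly from the involution classes of $M^0$, reading off a subgroup of $M^0$ from $V_{27}{\downarrow}M^0$ when it is not visible from the root datum (as for $C_4=\Sp_8$ sitting in $D_5T_1$ or $A_5A_1$). In each case the arithmetic via \eqref{e:lls} then gives $\a(g)<2/3$; for instance for $M^0=F_4$ the $F_4$-centralizer types $B_4$ and $A_1C_3$ yield $\dim(g^G\cap M)\leqs 16$, whence $\dim X(g)\leqs 26-32+16=10$.

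For $g$ unipotent, the three cases attaining $\a(g)\geqs 2/3$ all involve $g=u_{\a}$. Here the plan is to use \cite[Proposition 1.13]{LLS}, together with the fusion data of \cite{Law09} where $M$ is connected, to show that $u_{\a}^{G}\cap M$ is the union of the long root classes of the relevant simple factors of $M^0$ (for $C_4=\Sp_8$ this is the transvection class), which gives $\dim(u_{\a}^{G}\cap M)=16$, $6$, $8$ for $M^0=F_4$, $A_2G_2$, $C_4$ respectively, and hence the values $\a(u_{\a})=10/13$, $5/7$, $2/3$ recorded in Table \ref{tab:nmr2}. For every other unipotent $g\in M$ in these three cases one has $\dim g^G\geqs 32$, and the fusion data of \cite{Law09}, supplemented by a Jordan form computation on $V$ or $V_{27}$ when $g\in M\setminus M^0$ (exactly as in the earlier lemmas), makes $\b(g)$ large enough to force $\a(g)<2/3$. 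Finally, for $M^0=A_2$ and $M^0=G_2$ I would first invoke \cite[Proposition 1.13]{LLS} to see that $M$ contains no long root element of $G$, so that every non-trivial unipotent element of $M$ lies in a $G$-class of dimension $\geqs 32$; for $A_2$ the bound $\dim(g^G\cap M)\leqs\dim M^0=8$ then immediately gives $\a(g)<2/3$, while for $G_2$ one determines the image in $G$ of each of the four unipotent classes of $G_2$ from its Jordan form on $V_{27}$ and checks that the resulting class is large enough (dimension at least $32$ for $A_1$, $\tilde{A}_1$, $G_2(a_1)$ and at least $40$ for the regular class) to yield $\a(g)<2/3$.

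The main obstacle is the $G_2$ case: because $\dim X=64$ is relatively small compared with $\dim G_2=14$, the crude bound $\dim(g^G\cap M)\leqs\dim M^0$ does not by itself suffice, so one has to pin down fairly precisely into which $E_6$-classes the (few) unipotent and semisimple classes of the maximal $G_2$ fuse, using Jordan forms on the $27$-dimensional module. The analogous but lighter book-keeping for $M^0=A_2$, and the exact evaluation of $\dim(u_{\a}^{G}\cap M)$ for the three boundary subgroups $F_4$, $A_2G_2$ and $C_4$, are the only other points demanding genuine care; the rest reduces to the $\b(g)$ bounds of \cite{LLS} and the fusion data of \cite{Law09}.
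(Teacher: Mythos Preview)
Your overall plan mirrors the paper's, and your unipotent treatment (fusion via \cite{Law09}, root elements via \cite[Proposition~1.13]{LLS}, Jordan forms on $V_{27}$ for the $G_2$ subgroup) is essentially what the paper does. The semisimple side, however, has a genuine gap.

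The citation of \cite[Theorem~2(II)(b)]{LLS} is misplaced: that result applies only to reductive subgroups of \emph{maximal rank}, whereas every $M$ in this lemma has $\operatorname{rank} M^0<6$. The paper invokes no such off-the-shelf bound here; it first uses the trivial estimate $\dim(g^G\cap M)\leqs\dim M$ (or $\dim M-\operatorname{rank} M^0$) to dispose of the large semisimple classes, and then treats the small ones individually. Moreover, the residual cases are not just ``the two involution classes''. For $M^0=F_4$ the trivial bound only handles $\dim g^G\geqs 58$, leaving eight centralizer types (from $D_5T_1$ down to $A_3A_1^2T_1$, dimensions $32$--$56$) to be dealt with one at a time via the determination of $C_M(g)^0$ carried out in the proof of \cite[Lemma~6.2]{LLS}. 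For $M^0=G_2$ and $M^0=A_2G_2$ one must also handle semisimple elements of \emph{odd} prime order with $C_G(g)=D_5T_1$; the paper does this by decomposing ${\rm Lie}(G){\downarrow}M^0$ (as ${\rm Lie}(G_2)\oplus W_{G_2}(\l_1+\l_2)$ in the first case, and ${\rm Lie}(A_2G_2)\oplus(V_8\otimes V_7)$ in the second) and bounding the $1$-eigenspace of $g$ directly. Similarly, for $M^0=C_4$ the centralizer types $D_5T_1$, $A_5A_1$, $A_5T_1$ all survive the trivial bound and are handled via \cite[Lemma~6.2]{LLS}. None of this work is anticipated in your sketch, and the specific assertion ``$\dim(g^G\cap M)\leqs 16$'' for involutions in $F_4$ already presupposes that the $A_1C_3$-involution of $F_4$ (an $F_4$-class of dimension $28$) does not lie in the $D_5T_1$ class of $E_6$ --- which is true, but requires exactly the fusion check you have not supplied.
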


\begin{proof}
We need to consider the following possibilities for $M^0$:
\[
\begin{array}{cccccc} \hline
M^0 & A_2 \, (p \ne 2,3) & G_2 \, (p \ne 7) & C_4 \, (p \ne 2) & F_4 & A_2G_2 \\
|M/M^0| & 2 & 1 & 1 & 1 & 1 \\
\dim X & 70 & 64 & 42 & 26 & 56 \\ \hline
\end{array}
\]
In particular, note that $M$ is connected in the final case (as explained in \cite[Remark 5.2(ii)]{BT}, the value $t=2$ given in \cite[Table 10.1]{LS04} should be $t=1$). Let $g \in M$ be an element in $\mathcal{P}$ and let $V$ be the Lie algebra of $G$.

First assume $M^0 = A_2$, so $p \ne 2,3$, $\dim X = 70$ and $M/M^0=Z_2$. By \cite{Law09}, $M$ does not contain any long root elements of $G$, so $\dim g^G \geqs 32$ and $\dim(g^G \cap M) \leqs 6$, which gives $\dim X(g) \leqs 44$ and $\a(g) \leqs 22/35$.

Next suppose $M^0 = G_2$, so $p \ne 7$, $M$ is connected and $\dim X = 64$. If $g$ is unipotent, then \cite{Law09} implies that $\dim g^G \geqs 40$ and the trivial bound 
$\dim (g^G \cap M) \leqs 12$ yields $\a(g) < 2/3$. Now assume $g$ is semisimple. By the previous argument, the result follows if $\dim g^G \geqs 40$, so we may assume $C_G(g) = D_5T_1$ and $\dim g^G = 32$. We claim that $\dim (g^G \cap M) \leqs 10$, which gives $\a(g) \leqs 21/32$. This is clear if $g$ is an involution, so let us assume $g$ has prime order $r>2$. Now $V{\downarrow}M = {\rm Lie}(G_2) \oplus V_{64}$,
where $V_{64} = W_{G_2}(\l_1+\l_2)$ (see \cite[Chapter 12]{Thomas}). By considering the set of weights of a maximal torus of $G$ on $V_{64}$, it is straightforward to check that the dimension of the $1$-eigenspace of $g$ on $V_{64}$ is at most $20$ (with equality only if $r=3$). Therefore, $\dim C_G(g) \leqs 28$ and we have reached a contradiction.

Now consider the case $M^0 = C_4$, with $p \ne 2$. Note that $M = C_G(\tau)$ is connected, where $\tau$ is a graph automorphism of $G$. First assume $g$ is unipotent. By inspecting \cite{Law09}, we calculate that $\a(g) = 2/3$ if $g=u_{\a}$, otherwise $\b(g) \geqs 18$ and thus $\a(g) \leqs 4/7$. Now suppose $g$ is semisimple. If $\dim g^G \geqs 48$ then the trivial bound $\dim(g^G \cap M) \leqs 32$ yields 
$\a(g)<2/3$, so it remains to consider the following elements:
\[
\begin{array}{cccc} \hline
C_G(g) & D_5T_1 & A_5A_1 & A_5T_1 \\
\dim g^G & 32 & 40 & 42 \\ \hline
\end{array}
\] 
If $C_G(g) = A_5A_1$ then $g$ is an involution and thus $\dim (g^G \cap M) \leqs 20$. This gives $\dim X(g) \leqs 22$. Next assume $C_G(g) = A_5T_1$. As explained in the proof of \cite[Lemma 6.2]{LLS}, we have $\dim C_M(g) \geqs |\Sigma^{+}(A_5)| = 15$, where $\Sigma^{+}(A_5)$ is the set of positive roots in a root system of type $A_5$. Once again, this implies that $\dim (g^G \cap M) \leqs 20$ (note that $\dim(g^G \cap M)$ is even). Finally, suppose $C_G(g) = D_5T_1$. Here $\dim C_M(g) \geqs |\Sigma^{+}(D_5)| = 20$ and thus $\dim (g^G \cap M) \leqs 16$. This yields 
$\dim X(g) \leqs 26$ and $\a(g) \leqs 13/21$.

Next assume $M^0=F_4$, so $M$ is connected and $\dim X = 26$. As in the previous case, we have $M = C_{G}(\tau)$ for a graph automorphism $\tau$. Suppose $g$ is unipotent. By \cite{Law09}, if $g=u_{\a}$ 
then $\dim g^G = 22$ and $\dim (g^G \cap M) = 16$, so $\dim X(g)=20$ and $\a(g) = 10/13$. In each of the remaining cases, $\b(g) \geqs 10$ and thus $\a(g) \leqs 8/13$. Now assume $g$ is semisimple. If $\dim g^G \geqs 58$ then the trivial bound $\dim(g^G \cap M) \leqs 48$ implies that $\dim X(g) \leqs 16$ and the result follows. The possibilities with $\dim g^G < 58$ are as follows:
\[
\begin{array}{ccccccccc} \hline
C_G(g) & D_5T_1 & A_5A_1 & A_5T_1 & D_4T_2 & A_4A_1T_1 & A_4T_2 & A_2^3 & A_3A_1^2T_1 \\ 
\dim g^G & 32 & 40 & 42 & 48 & 50 & 52 & 54 & 56 \\ \hline
\end{array}
\]
Suppose $C_G(g) = A_3A_1^2T_1$. As noted in the proof of \cite[Lemma 6.2]{LLS}, we have $\dim C_M(g) \geqs |\Sigma^{+}(A_3A_1^2)| = 8$ so $\dim(g^G \cap M) \leqs 44$ and thus $\a(g) < 2/3$. The case $C_G(g) = A_4T_2$ is entirely similar. If $C_G(g) = A_2^3$ then $g$ has order $3$ and thus $\dim(g^G \cap M) \leqs 36$ (see \cite[Table 4.7.1]{GLS}, for example). This implies that $\a(g)<2/3$. In each of the five remaining cases, $C_M(g)^0$ is determined in the proof of \cite[Lemma 6.2]{LLS} and the required bound quickly follows.

Finally, let us assume $M^0 = A_2G_2$, so $M$ is connected and $\dim X = 56$. First assume $g$ is unipotent. By inspecting \cite{Law09}, we deduce that $\dim(g^G \cap M) = 6$ if $g=u_{\a}$, which gives $\a(g) = 5/7$. In all other cases, $\b(g) \geqs 28$ and thus $\a(g) \leqs 1/2$. Now assume $g$ is semisimple of order $r$ and note that $\dim (g^G \cap M) \leqs 18$. If $\dim g^G \geqs 40$ then $\dim X(g) \leqs 34$, so we may assume $\dim g^G < 40$, which means that $C_G(g) = D_5T_1$. We claim that $\dim (g^G \cap M) \leqs 12$, which gives $\a(g) \leqs 9/14$. This is clear if $g$ is an involution, so let us assume $r$ is odd. We have
\[
V{\downarrow}M = {\rm Lie}(A_2G_2) \oplus (V_8 \otimes V_7),
\]
where $V_8$ is the Lie algebra of $A_2$ and $V_7 = W_{G_2}(\l_1)$ (see \cite[Chapter 12]{Thomas}). Let $W$ be the summand $V_8 \otimes V_7$. Seeking a contradiction, suppose there exists $y = y_1y_2 \in g^G \cap M$ with $\dim y^M > 12$. Then $y_1$ and $y_2$ are nontrivial and the $1$-eigenspace of $y$ on ${\rm Lie}(A_2G_2)$ is at most $8$-dimensional (since $\dim C_M(y) \leqs 8$). Define $\nu(y,W)$ and $\nu(y_2,V_7)$ as in \eqref{e:nudef}. Since $\nu(y_2,V_7) \geqs 4$, it follows that 
$\nu(y,W) \geqs 8.4=32$ and thus the $1$-eigenspace of $y$ on $W$ is at most $24$-dimensional. But this implies that $\dim C_G(y) \leqs 32$ and we have reached a contradiction. 
\end{proof}

\begin{lem}\label{l:nmrf4}
The conclusion to Proposition \ref{t:nmr} holds if $G = F_4$.
\end{lem}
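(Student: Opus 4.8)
Following the pattern of Lemmas \ref{l:nmre8}--\ref{l:nmre6}, by Theorem \ref{t:ls}(v) and Table \ref{tab:nmr} there are exactly three subgroups to consider: $M^0 = A_1$ (with $p \geqs 13$), $M^0 = G_2$ (with $p = 7$) and $M^0 = A_1G_2$ (with $p \ne 2$). In each case $M = M^0$ is connected and $\dim X = \dim G/M$ equals $49$, $38$ and $35$ respectively. I fix a non-central $g \in \mathcal{P}$, let $V = {\rm Lie}(G)$ and write $V_{26} = W_G(\l_4)$ for the $26$-dimensional Weyl module; throughout I use \eqref{e:lls}, the lower bounds on $\b(g)$ for semisimple $g$ from \cite[Theorem 2(II)(b)]{LLS}, the unipotent fusion data of \cite{Law09}, and the restrictions $V{\downarrow}M^0$ and $V_{26}{\downarrow}M^0$ recorded in \cite[Chapter 12]{Thomas}. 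The aim is to show that $\a(G,M,g) \geqs \frac{2}{3}$ forces $(G,M,g) = (F_4,A_1G_2,u_\a)$, with $\a(G,M,g) = 25/35 = 5/7$.

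The cases $M^0 = A_1$ and $M^0 = G_2$ are quick. For $M^0 = A_1$ we have $\dim(g^G \cap M) \leqs 3$, so $\dim X(g) \leqs 52 - \dim g^G$ and $\a(g) \geqs \frac{2}{3}$ would force $\dim g^G \leqs 19$; by \cite{Law09} no nontrivial unipotent element of this $A_1$ lies in a class of such small dimension (in particular $M$ contains no root elements of $G$), and for semisimple $g$ the only class of dimension at most $19$ is that of $B_4$-involutions, which is excluded by computing the eigenvalue multiplicities of an involution of $M$ on $V_{26}$ from the restriction in \cite[Chapter 12]{Thomas} and observing that they agree with those of an $A_1C_3$-involution (so $\dim g^G \geqs 28$). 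For $M^0 = G_2$ the bound $\dim(g^G \cap M) \leqs 14$ forces $\dim g^G \leqs 26$; by \cite{Law09} the subgroup $M$ contains no long root elements of $G$, and the only other classes of dimension at most $26$ that meet $M$ are those of short root elements, which meet $M$ in an orbit of dimension at most $8$, whence $\dim X(g) \leqs 38 - 22 + 8 = 24 < 2\dim X/3$; the semisimple case, again including the exclusion of $B_4$-involutions, is handled as before.

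The main case is $M^0 = A_1G_2$ (with $p \ne 2$), where $\dim X = 35$ and the trivial bound $\dim(g^G \cap M) \leqs 17$ only yields $\a(g) < \frac{2}{3}$ once $\dim g^G \geqs 29$. I therefore treat by hand the remaining classes of dimension at most $28$ that can meet $M$: the long and short root elements ($\dim g^G = 16$ and $22$), the unipotent class $A_1\tilde A_1$ ($\dim g^G = 28$), and the $B_4$- and $A_1C_3$-involutions ($\dim g^G = 16$ and $28$), noting that every semisimple element of order $3$ has $\dim g^G \geqs 30$ and so is harmless. For $g = u_\a$, \cite[Proposition 1.13]{LLS} (or the fusion tables of \cite{Law09}) shows that $u_\a^G$ meets $M$ in the $6$-dimensional class of long root elements of the $G_2$ factor (together with a lower-dimensional orbit from the $A_1$ factor), so $\dim(u_\a^G \cap M) = 6$ and $\dim X(u_\a) = 35 - 16 + 6 = 25$, giving $\a(u_\a) = 5/7$ as required. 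For each of the other classes I bound $\dim(g^G \cap M) = 17 - \dim C_M(g)$ by a direct calculation of centralizer dimensions in $A_1G_2$, supplemented by the Jordan form of $g$ on $V_{26}$ computed from $V_{26}{\downarrow}M^0$ as in the proof of Lemma \ref{l:mrf4}; in particular one verifies that no $B_4$-involution of $G$ meets $M$ in an orbit of dimension more than $4$ (equivalently, that the involutions of $M$ with $8$-dimensional $M$-orbit are $A_1C_3$-involutions of $G$). In every such case this gives $\dim X(g) \leqs 23$, hence $\a(g) < \frac{2}{3}$.

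The main obstacle is precisely this last identification of the involution (and small unipotent) classes in the $A_1G_2$ case: here the dimension estimates alone are too weak, and one genuinely needs the explicit module restrictions of \cite[Chapter 12]{Thomas} to rule out the $B_4$-involutions of $F_4$ intersecting $A_1G_2$ in a large orbit. Once this is done, Proposition \ref{t:nmr} for $G = F_4$ follows, and with it the proof of Proposition \ref{t:nmr} and hence (combined with Proposition \ref{t:mr} and the parabolic case) the proof of Theorem \ref{t:mainex} is complete.
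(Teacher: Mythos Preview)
Your proposal is correct and follows essentially the same route as the paper: in both cases one disposes of unipotent elements via the fusion tables in \cite{Law09} (recovering $\a(u_\a)=5/7$ for $M=A_1G_2$), reduces the semisimple case to the $B_4$-involution by a trivial dimension bound, and then identifies the involution type by an eigenvalue computation on a small $G$-module.

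There are only minor differences. The paper uses the slightly sharper bound $\dim(g^G\cap M)\leqs\dim M-{\rm rank}\,M$ throughout (so $14$ rather than $17$ for $A_1G_2$, and $2$ rather than $3$ for $A_1$), which allows the uniform threshold $\dim g^G\geqs 28$ and avoids your extra treatment of the $A_1\tilde A_1$ and $A_1C_3$-involution classes. For the $B_4$-involution analysis the paper works on $V={\rm Lie}(G)$ rather than $V_{26}$, and for $M=A_1G_2$ it determines the $B_4$-class exactly (showing $y=y_1y_2$ is a $B_4$-involution iff $y_2=1$, so $\dim(g^G\cap M)=2$ and $\a(g)=3/5$) rather than settling for your bound of $4$; this is cleaner but your weaker bound is already sufficient. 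None of these differences is substantive.
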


\begin{proof}
The cases we need to consider are as follows (in each case, $M$ is connected):
\[
\begin{array}{cccc} \hline
M & A_1 \, (p \geqs 13) & G_2 \, (p=7) & A_1G_2 \, (p \ne 2) \\
\dim X & 49 & 38 & 35 \\ \hline
\end{array}
\]
Let $g \in M$ be an element in $\mathcal{P}$. The result for unipotent elements quickly follows from the fusion computations in \cite{Law09}. More precisely, we get $\a(g) < 2/3$ unless $M = A_1G_2$ and $g=u_{\a}$, in which case $\dim g^G = 16$ and $\dim (g^G \cap M) = 6$, so $\a(g) = 5/7$. For the remainder, we may assume $g$ is semisimple. If $\dim g^G \geqs 28$ then the trivial bound $\dim(g^G \cap M) \leqs \dim M - {\rm rank}\,M$ is good enough, so let us assume $\dim g^G< 28$. This means that $p \ne 2$ and $g$ is an involution with $C_G(g) = B_4$.

Suppose $M = A_1$ (with $p \geqs 13$). We claim that $M$ does not contain any $B_4$-involutions. To see this, let $V$ be the Lie algebra of $G$ and observe that 
\[
V{\downarrow}M = {\rm Lie}(A_1) \oplus W_{A_1}(22\l_1) \oplus W_{A_1}(14\l_1) \oplus W_{A_1}(10\l_1)
\]
(see \cite[Chapter 12]{Thomas}). Here $M$ is the adjoint group and we can use this decomposition to compute the eigenvalues on $V$ of an involution $g=[-i,i] \in M$ (note that $M$ contains a unique class of involutions). One can check that $g$ acts as  $[-I_{28},I_{24}]$ on $V$, so $\dim C_G(g) = 24$ and thus $C_G(g) = A_1C_3$. This justifies the claim. Similarly, there are no $B_4$-type involutions when $M=G_2$ (with $p=7$). 

Finally, let us assume $M = A_1G_2$. Here $V{\downarrow}M = {\rm Lie}(A_1G_2) \oplus (V_5 \otimes V_7)$, where $V_5 = W_{A_1}(4\l_1)$ and $V_7 = W_{G_2}(\l_1)$ (see \cite[Chapter 12]{Thomas}). Let $y=y_1y_2 \in M$ be an involution.
Using the above decomposition, we calculate that $y$ is a $B_4$-involution if and only if $y_2=1$. Therefore $\dim(g^G \cap M) = 2$ and $\dim X(g) = 21$, which gives $\a(g) = 3/5$.
\end{proof}

\begin{lem}\label{l:nmrg2}
The conclusion to Proposition \ref{t:nmr} holds if $G = G_2$.
\end{lem}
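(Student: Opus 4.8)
The plan is as follows. By Theorem~\ref{t:ls} and an inspection of Table~\ref{tab:nmr}, the only maximal positive dimensional subgroup $M$ of $G=G_2$ with ${\rm rank}\,M^0<{\rm rank}\,G$ is the subgroup $M=A_1$, which occurs only when $p\geqs 7$ (and $M$ is connected). Since Table~\ref{tab:nmr2} records no case for $G_2$, the statement to be proved is simply that $\a(G,M,g)<2/3$ for every non-central $g\in\mathcal{P}$. Now $\dim X=\dim G_2-\dim A_1=14-3=11$, so by the formula \eqref{e:lls} it is enough to show that
\[
\dim g^G-\dim(g^G\cap M)\geqs 4
\]
for every non-central $g\in\mathcal{P}\cap M$, since this forces $\dim X(g)\leqs 7$ and hence $\a(g)\leqs 7/11<2/3$.

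First I would bound $\dim(g^G\cap M)$ from above. Since $p\geqs 7$, the group $M=A_1$ has a unique class of nontrivial unipotent elements, of dimension $\dim M-{\rm rank}\,M=2$; and every semisimple element of $M$ lies in a maximal torus of $M$, so its centralizer in $M$ has dimension at least ${\rm rank}\,M=1$. In either case $\dim(g^G\cap M)\leqs 2$. Next I would use the fact that the smallest nontrivial conjugacy class of $G_2$ is the class of long root elements, of dimension $6$, so that $\dim g^G\geqs 6$ for every non-central $g$. Combining the two estimates gives $\dim g^G-\dim(g^G\cap M)\geqs 6-2=4$, which is exactly what is required.

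I do not anticipate any genuine difficulty here. The only points needing care are that Table~\ref{tab:nmr} really does exhaust the maximal subgroups of non-maximal rank in $G_2$ and that $M=A_1$ is connected; alternatively one could appeal to the precise fusion data in \cite{Law09} (the nontrivial unipotent elements of this $A_1$ are regular in $G_2$) together with the lower bound on $\b(g)$ for semisimple elements from \cite[Theorem~2(II)(b)]{LLS}, but the crude bounds above already suffice. This completes the proof of Proposition~\ref{t:nmr}, and hence, together with the results of Sections~\ref{s:parab} and \ref{s:mr}, of Theorem~\ref{t:mainex}.
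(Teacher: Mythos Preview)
Your argument is correct and follows essentially the same route as the paper: identify $M=A_1$ (with $p\geqs 7$) as the only case, note $\dim X=11$, and combine $\dim(g^G\cap M)\leqs 2$ with $\dim g^G\geqs 6$ to get $\dim X(g)\leqs 7$. The paper separates the unipotent and semisimple cases, using \cite{Law09} to obtain the sharper value $\dim X(g)=1$ when $g$ is unipotent (since such $g$ is regular in $G$), but your uniform crude bound is already enough for the conclusion $\a(g)<2/3$.
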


\begin{proof}
Here $M = A_1$ and $p \geqs 7$, so $\dim X = 11$. Let $g \in M$ be an element in 
$\mathcal{P}$. If $g$ is unipotent, then \cite{Law09} implies that $g$ is regular, so $\dim g^G = 12$ and thus $\dim X(g) = 1$. On the other hand, if $g$ is semisimple then $\dim g^G \geqs 6$ and $\dim (g^G \cap M) = 2$, so $\dim X(g) \leqs 7$ and the result follows.
\end{proof}

\vs

This completes the proof of Proposition \ref{t:nmr}. By combining this with Proposition \ref{t:mr} and the results in Section \ref{s:parab}, we conclude that the proof of Theorem \ref{t:mainex} is complete. In particular, we have now established Theorem \ref{t:main4}.  

\vs

As noted in Section \ref{s:intro}, Theorem \ref{t:main5} follows immediately, as does Theorem \ref{t:main5s}. The next result shows that the bounds presented in Theorem \ref{t:main5} are best possible.

\begin{thm}\label{t:best}   
Let $G$ be a simple exceptional algebraic group over an algebraically closed field and set $t=3$ if $G =G_2$ and $t=4$ in all other cases. If $x_1, \ldots, x_t$ are long root elements in $G$, then $\la x_1, \ldots, x_t \ra$ is not dense in $G$.
\end{thm}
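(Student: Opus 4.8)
\emph{Proof proposal.} The plan is to show that $\langle x_1,\dots,x_t\rangle$ acts reducibly on a suitable nontrivial irreducible rational $kG$-module $V$ with $\dim V>1$; since $G$ itself acts irreducibly on $V$, the Zariski closure $\overline{\langle x_1,\dots,x_t\rangle}$, which must also stabilise a line in $V$, cannot equal $G$. For $G=E_8$ I would take $V={\rm Lie}(E_8)$ (irreducible in every characteristic), for $G=E_7$ the $56$-dimensional minuscule module $V_G(\lambda_7)$, for $G=E_6$ the $27$-dimensional minuscule module $V_G(\lambda_1)$, for $G=F_4$ the module $V_G(\lambda_4)$ of dimension $26$ (dimension $25$ if $p=3$), and for $G=G_2$ the module $V_G(\lambda_1)$ of dimension $7$ (dimension $6$ if $p=2$).

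The numerical input is the value $\nu:=\dim[V,u]=\dim V-\dim C_V(u)$ for $u$ a long root element. For the minuscule modules every Jordan block of $u$ has size at most $2$, so $\nu$ is just the number of $J_2$-blocks, a count independent of $p$; from the tables of Jordan forms one gets $\nu=12$ for $E_7$ and $\nu=6$ for $E_6$, and the same value $\nu=6$ for $F_4$ (the $26$-dimensional module occurs in $V_{E_6}(\lambda_1){\downarrow}F_4$, together with trivial summands) and $\nu=2$ for $G_2$ (using $G_2<B_3$ on the natural $7$-dimensional module). For $G=E_8$ one has $\nu\leqs 58$ in every characteristic, since $\dim[{\rm Lie}(E_8),u]$ is the rank of the fixed integral matrix ${\rm ad}(e_{\tilde\alpha})$, which over any field is at most its value $58$ over $\mathbb{Q}$. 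Since $u$ is unipotent, each $x_i-1$ is nilpotent with $\dim{\rm im}(x_i-1)=\nu$, and therefore
\[
\dim\bigcap_{i=1}^{t}\ker(x_i-1)\;\geqs\;\dim V-\sum_{i=1}^{t}\dim{\rm im}(x_i-1)\;=\;\dim V-t\nu .
\]
With $t=4$ this gives $248-232=16$, $56-48=8$, $27-24=3$ and $\dim V-24\geqs 1$ for $E_8,E_7,E_6,F_4$ respectively, and with $t=3$ it gives $7-6=1$ for $G_2$ when $p\neq 2$. In each of these cases the quantity is positive, so $\langle x_1,\dots,x_t\rangle$ fixes a nonzero vector of $V$, hence is reducible on $V$, hence is not dense in $G$. (When $p=3$ and $G=F_4$ one works with the $25$-dimensional irreducible quotient of $V_G(\lambda_4)$, on which $\nu\leqs 6$, so the count still gives $25-24\geqs 1$.)

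The single case not covered by this estimate is $G=G_2$ with $p=2$: there $\dim V=6$ and $\dim V-t\nu=6-6=0$, so the bound just fails. Here I would argue separately. In characteristic $2$ the module $V=V_{G_2}(\lambda_1)$ carries a nondegenerate $G$-invariant alternating form, and a long root element $u$ has order $2$, so $(u-1)^2=0$; consequently $u-1$ is self-adjoint for the form, ${\rm im}(u-1)$ is a totally isotropic $2$-subspace, and $\ker(u-1)={\rm im}(u-1)^{\perp}$. Hence $\bigcap_{i=1}^{3}\ker(x_i-1)=\big(\sum_{i=1}^{3}{\rm im}(x_i-1)\big)^{\perp}$, and it suffices to show that three totally isotropic $2$-subspaces of the special shape $[V,u]$ never span $V$; this can be checked directly using the $G_2$-structure on $V$, and alternatively the case $(G_2,p=2)$ follows from the known subgroup structure of $G_2$.

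The heart of the proof is the elementary linear-algebra estimate above, which carries comfortable slack in all cases except $G=G_2$ with $p=2$; pinning down that borderline case — equivalently, proving that three $2$-spaces of the form $[V,u]$ cannot span the $6$-dimensional module for $G_2$ in characteristic $2$ — is the only genuine obstacle, and it is the one point where input specific to $G_2$ is needed beyond the uniform Jordan-form data.
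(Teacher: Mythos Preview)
Your approach is exactly the paper's: exhibit a $kG$-module $V$ on which a long root element $u$ satisfies $\dim C_V(u) > (1-1/t)\dim V$, so that any $t$ conjugates of $u$ share a nonzero fixed vector, while $G$ itself fixes no nonzero vector. You choose the same modules and the same linear-algebra estimate; the numerics for $E_8$, $E_7$, $E_6$, $F_4$, and for $G_2$ with $p\neq 2$, are all correct.

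The one gap, at $(G_2,\,p=2)$, is self-inflicted. You insist that $V$ be irreducible, but all the argument actually needs is $V^G=0$: if $\langle x_1,\dots,x_t\rangle$ fixes a nonzero vector and $V^G=0$, then the closure is not $G$, because fixing a vector is a Zariski-closed condition. With this relaxation you may take $V$ to be the $7$-dimensional indecomposable $G_2$-module whose socle is $L(\lambda_1)$ and whose head is trivial (the induced module $H^0(\lambda_1)$, dual to the Weyl module; the paper uses it under the name ``Weyl module''). On this $V$ one still has $V^G=0$, a long root element has Jordan form $[J_2^2,J_1^3]$, hence $\dim C_V(u)=5$, and $7-3\cdot 2=1>0$ just as in the other cases. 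This disposes of the borderline case uniformly, with no need for your symplectic-geometry or subgroup-structure workaround.
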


\begin{proof}  
Let $g \in G$ be a long root element. In every case, we produce a finite dimensional $G$-module $V$ such that $V^G=0$ (see \eqref{e:VG}) and $\dim C_V(g) > \frac{3}{4}\dim V$ (or $\dim C_V(g) > \frac{2}{3}\dim V$ for $G = G_2$). It follows that any $t$ conjugates of $g$ have a common nontrivial fixed space on $V$ and so they do not topologically generate $G$. 

For $G=G_2$, we take $V$ to be the $7$-dimensional Weyl module $W_G(\l_1)$, so $V$ is irreducible if $p \ne 2$ and it is indecomposable with $V^G = 0$ when $p=2$. By inspecting \cite[Table 1]{Lawunip}, we see that $\dim C_V(g) = 5$ and the result follows. Similarly, if $G=F_4$ then we set $V = V_G(\l_4)$, so $\dim V = 26 - \delta_{3,p}$ and \cite[Table 3]{Lawunip} yields $\dim C_V(g) \geqs 20-\delta_{3,p} > \frac{3}{4}\dim V$. Finally, for $E_6$, $E_7$ and $E_8$, we take $V$ to be the smallest irreducible restricted module (of dimensions $27$, $56$ and $248$, respectively) and once again, by inspecting \cite{Lawunip}, we deduce that $\dim C_V(g) > \frac{3}{4}\dim V$.  
\end{proof}  

\begin{rem}\label{r:finitebest} 
Let $G(q)$ be a finite quasisimple exceptional group of Lie type over $\mathbb{F}_q$ and let $V$ be the $G$-module defined in the proof of Theorem \ref{t:best}. Since $G(q)$ acts irreducibly on $V$ (or indecomposably in the case of $G_2(q)$ in characteristic $2$), the above proof shows that if $G(q)$ contains a long root element $g \in G$, then $G(q)$ is not generated by any $t$ conjugates of $g$.
Of course, in almost all cases $G(q)$ does indeed contain long root elements of $G$ (this fails for the Suzuki and Ree groups). 
\end{rem} 

\subsection{Generic stabilizers}\label{ss:generic}

Finally, we prove Theorem \ref{t:generic}. Let $G$ be a simple exceptional algebraic group over an algebraically closed field
$k$ of characteristic $p \geqs 0$.  Let $V$ be a faithful rational $kG$-module
and assume that $V^G=0$.  Recall that the generic stabilizer of $G$ on $V$ is trivial if there is a nonempty open subset $V_0$ of $V$ such that the stabilizer $G_v$ is trivial for all $v \in V_0$. Also recall that $d(G) = 3(\dim G - {\rm rank}\,G)$ is as follows:
\[
\begin{array}{cccccc} \hline
G & E_8 & E_7 & E_6 & F_4 & G_2  \\ 
d(G) &  720  & 378 &  216  &  144  & 36 \\ \hline
\end{array}
\]
For $g \in G$, we will write $V(g) = \{v \in V \,:\, v^g = v\}$ for the fixed space of $g$ on $V$.

As in \cite{GG, GL}, if the inequality
\begin{equation}\label{e:bdd}
\dim V(g) + \dim g^G < \dim V
\end{equation}
holds for all $g \in G$ of prime order (and all nontrivial unipotent elements if $p=0$), then the generic stabilizer of $G$ on $V$ is trivial.

First assume that $g$ is not one of the exceptions listed in parts (ii) and (iii) of Theorem \ref{t:main5s}.  Then $G$ is (topologically) generated by three conjugates of $g$, whence $\dim V(g) \leqs \frac{2}{3}\dim V$. Moreover, since $\dim g^G \leqs \dim G - {\rm rank}\, G$, we deduce that the inequality in \eqref{e:bdd} holds whenever $\dim V > 3(\dim G - {\rm rank} \, G) = d(G)$.
Similarly, in the exceptional cases we see that \eqref{e:bdd} holds as long as $\dim V >  5 \dim g^G$, and this bound is satisfied since $\dim V > d(G)$. 

\vs

This completes the proof of Theorem \ref{t:generic}.

\section{Random generation of finite exceptional groups}\label{s:random}

In this final section we prove Theorems \ref{t:main6} and \ref{t:main7}. We begin by considering Theorem \ref{t:main6}; the two parts in the statement will be handled separately in Sections \ref{ss:thm6_1} and \ref{ss:thm6_2}, respectively.

\subsection{Proof of Theorem \ref{t:main6}(i)}\label{ss:thm6_1}

Let $G$ be a simply connected simple algebraic group over the algebraic closure of a finite field of characteristic $p$. Let us assume $G$ is one of the following 
\begin{equation}\label{e:gros}
E_8, \, E_7, \, E_6, \, F_4, \, G_2, \, D_4, \, B_2 \, (p=2)
\end{equation}
and fix a Steinberg endomorphism $\s$ of $G$ such that $G_{\s} = G(q)$ is a finite quasisimple exceptional group of Lie type over $\mathbb{F}_q$, where $q=p^f$ for some $f \geqs 1$. 

Let $r$ be a prime and recall that
\begin{align*}
\mathcal{C}(G,r,q) & = \max\{\dim g^G \,:\, \mbox{$g \in G(q)$ has order $r$ modulo $Z(G)$}\} \\
\gamma(G,r) & = \left\{\begin{array}{ll} 
\dim G_{[r]} & \mbox{if $r=p$ or $r \in\{2,3\}$} \\
\ell(G) & \mbox{otherwise} 
\end{array}\right.
\end{align*}
with $\ell(G)$ defined as follows:
\[
\begin{array}{cccccccc} \hline
G & E_8 & E_7 & E_6 & F_4 & G_2 & D_4 & B_2 \\
\ell(G) & 200 & 100 & 58 & 40 & 10 & 24-2\delta_{5,r} & 8 \\ \hline
\end{array}
\]

Here $G_{[r]}$ is the subvariety of elements $g \in G$ with $g^r \in Z(G)$. The dimension of $G_{[r]}$ is computed in \cite{Law05} and we record the values for $r<h$ in Table \ref{tab:gr}, where $h$ denotes the Coxeter number of $G$ (recall that if $r \geqs h$, then $\dim G_{[r]} = \dim G - {\rm rank}\,G$). 

\begin{rem}
Clearly, if $r$ does not divide $|Z(G)|$ (in particular, if $r=p$ or $r \geqs 5$), then 
\[
\mathcal{C}(G,r,q) = \max\{\dim g^G \,:\, \mbox{$g \in G(q)$ has order $r$}\}.
\]
In fact, the same conclusion holds in all cases unless  $G=E_7$, $p \ne 2$ and $r=2$. In this special case, the adjoint group 
has three classes of involutions but two of
the classes contain involutions that only lift to elements $g \in G$ of order $4$ with $C_G(g) = A_7$ or $E_6T_1$, whereas every non-central involution in $G(q)$ has centralizer in $G$ of type $A_1D_6$. 
\end{rem}

In Table \ref{tab:class} we record the conjugacy classes of elements $g \in G$ of order $r \in \{2,3\}$ (modulo $Z(G)$) with $\dim g^G = \dim G_{[r]}$. Let us comment on the notation in this table. For $r \ne p$ we give the structure of $C_G(g)$ and for $r=p$ and $G$ exceptional we use the standard labelling of unipotent classes from \cite{LS_book}. Finally, for unipotent elements when $G$ is classical we use the notation from \cite{AS} if $p=2$  and we give the Jordan form of $g$ on the natural module when $p=3$.

\renewcommand{\arraystretch}{1.1}
\begin{table}
\begin{center}
\[
\begin{array}{l|cccccccccc}
 &  2 & 3 & 5 & 7 & 11 & 13 & 17 & 19 & 23 & 29  \\ \hline
E_8 & 128 & 168 & 200 & 212 & 224 & 228 & 232 & 234 & 236 & 238 \\
E_7 & 70  & 90 & 106 & 114 & 120 & 122 & 124 &  & &   \\
E_6 & 40 & 54 & 62 & 66 & 70 & & & & &  \\
F_4 & 28 & 36 & 40 & 44 & 46 & & & & & \\
G_2 & 8 & 10 & 10 & & & &  & & & \\
D_4 & 16 & 18 & 22 & & & & & & & \\
B_2 & 6 & 6 & & & & & & & &  
\end{array}
\]
\caption{The dimension of $G_{[r]}$ for $r<h$}
\label{tab:gr}
\end{center}
\end{table}
\renewcommand{\arraystretch}{1}

\renewcommand{\arraystretch}{1.1}
\begin{table}
\begin{center}
\[
\begin{array}{ccccccccc} \hline
r & p & E_8 & E_7 & E_6 & F_4 & G_2 & D_4 & B_2 \\ \hline
2 & \ne 2 & D_8 & A_7 &  A_1A_5 & A_1C_3 & A_1^2 & A_1^4 & - \\
2 & 2 & A_1^4 & A_1^4 & A_1^3 & A_1\tilde{A}_1 & \tilde{A}_1 & c_4 & c_2 \\
3 & \ne 3 & A_8 & A_2A_5 & A_2^3 & A_2^2 & A_1T_1 & A_1^3T_1 \mbox{ or } A_2T_2 & A_1T_1 \\
3 & 3 & A_2^2A_1^2 & A_2^2A_1 & A_2^2A_1 & \tilde{A}_2A_1 & G_2(a_1) & [J_3^2,J_1^2] & - \\ \hline
\end{array}
\]
\caption{The classes with $\dim g^G = \dim G_{[r]}$, $r = 2,3$}
\label{tab:class}
\end{center}
\end{table}
\renewcommand{\arraystretch}{1}

\begin{prop}\label{p:cgr1}
If $r=p$, then $\mathcal{C}(G,r,q) = \dim G_{[r]}$.
\end{prop}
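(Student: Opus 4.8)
The plan is to establish the two inequalities $\mathcal{C}(G,p,q)\leqs\dim G_{[p]}$ and $\mathcal{C}(G,p,q)\geqs\dim G_{[p]}$ separately. The upper bound is immediate: if $g\in G(q)$ has order $p$ modulo $Z(G)$ then $g^p\in Z(G)$, so $g^G$ is contained in the closed subvariety $G_{[p]}$ and thus $\dim g^G=\dim\overline{g^G}\leqs\dim G_{[p]}$; taking the maximum over all such $g$ gives $\mathcal{C}(G,p,q)\leqs\dim G_{[p]}$.

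For the lower bound, the first step is to identify $G_{[p]}$, up to the finite central subgroup, with the variety $\mathcal{U}_{[p]}:=\{u\in G\text{ unipotent}:u^p=1\}$ of unipotent elements of order dividing $p$. Writing $g=su$ for the Jordan decomposition of an element $g\in G_{[p]}$, uniqueness of Jordan decomposition together with the fact that $Z(G)$ consists of semisimple elements forces $u^p=1$ and $s^p\in Z(G)$. A semisimple element $s$ with $s^p\in Z(G)$ has order dividing $p\cdot|Z(G)|$, and since $|Z(G)|\in\{1,2,3,4\}$ while semisimple $p$-elements are trivial in characteristic $p$, a short case check yields $s\in Z(G)$. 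Hence $G_{[p]}=Z(G)\cdot\mathcal{U}_{[p]}$ and, as $Z(G)$ is finite,
\[
\dim G_{[p]}=\dim\mathcal{U}_{[p]}=\max\{\dim u^G:u\in G\text{ unipotent},\,u^p=1\}.
\]
Since $\mathcal{U}_{[p]}$ is closed and $G$-stable, this maximum is attained on a unipotent class $C$: for $p\geqs h$ one takes $C$ to be the regular unipotent class, of dimension $\dim G-{\rm rank}\,G$, and for $p<h$ the class $C$, and the value $\dim G_{[p]}$, are read off from \cite{Law05} (and, for $p\in\{2,3\}$, from Table \ref{tab:class}).

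It remains to descend to the finite group. The class $C$ is a genuine unipotent class, hence $\sigma$-stable (in each of the cases arising it is characteristic — either the unique class of its dimension inside $\mathcal{U}_{[p]}$, or manifestly invariant under any graph automorphism of $G$), so by the Lang--Steinberg theorem $C\cap G(q)\neq\emptyset$. Any $u\in C\cap G(q)$ is a nontrivial unipotent element and therefore has order exactly $p$; since a nontrivial unipotent element is never central, $u$ has order $p$ modulo $Z(G)$, and $\dim u^G=\dim G_{[p]}$. This proves $\mathcal{C}(G,p,q)\geqs\dim G_{[p]}$, hence equality. The one step that is not formal is the identification — via \cite{Law05} — of the top-dimensional unipotent class of exponent dividing $p$; granting that, the remaining arguments are routine.
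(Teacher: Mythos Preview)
Your approach is essentially the same as the paper's: the upper bound is trivial, and the lower bound amounts to finding a $\sigma$-stable unipotent class of maximal dimension among those of order dividing $p$, then invoking Lang--Steinberg. The paper proceeds by separating off the twisted groups ${}^2B_2(q)$, ${}^2G_2(q)$, ${}^2F_4(q)$, ${}^3D_4(q)$ and checking $\sigma$-stability of the relevant class by hand (or by citation), and then for the remaining untwisted groups appeals to the general fact that every unipotent class in $G$ is $\sigma$-stable.

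The one point in your write-up that needs care is the sentence ``The class $C$ is a genuine unipotent class, hence $\sigma$-stable.'' Being unipotent does \emph{not} by itself give $\sigma$-stability when $\sigma$ involves a nontrivial graph automorphism: for instance, in $B_2$ with $p=2$ the endomorphism $\sigma$ defining ${}^2B_2(q)$ interchanges the two $4$-dimensional involution classes. Your parenthetical --- that $C$ is the unique class of its dimension inside $\mathcal{U}_{[p]}$, or is visibly graph-invariant --- is the actual argument, and it happens to be true in every case that arises (for example, $[J_3^2,J_1^2]$ is the unique $18$-dimensional class of elements of order~$3$ in $D_4$, and $c_2$ is the unique $6$-dimensional involution class in $B_2$), but this needs to be checked rather than asserted. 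In particular, for ${}^3D_4(q)$ triality genuinely permutes some unipotent classes, so the invariance of $[J_3^2,J_1^2]$, $[J_5,J_3]$, and $c_4$ is not automatic; you should either verify uniqueness of these classes within $\mathcal{U}_{[p]}$ at the relevant dimension (which can indeed be read off from the tables), or cite a reference as the paper does.
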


\begin{proof}
To begin with, let us assume $G(q)$ is one of the following twisted groups:
\[
{}^2B_2(q),\,  {}^2G_2(q), \, {}^2F_4(q), \, {}^3D_4(q).
\]
Suppose $G(q) = {}^2G_2(q)$, so $r=p=3$. From \cite[Table 22.2.7]{LS_book} we see that the largest class of elements of order $3$ in $G(q)$ is contained in the $G$-classes labelled $G_2(a_1)$, whence $\mathcal{C}(G,3,q) = 10 =\dim G_{[3]}$. Similarly, $G(q) = {}^2F_4(q)$ has two classes of involutions; the largest one is in the $G$-class $A_1\tilde{A}_1$ (see \cite[Table 22.2.5]{LS_book}) and thus $\mathcal{C}(G,2,q)=28$. Next assume $G(q) = {}^2B_2(q)$, so $G$ is of type $B_2$ and $r=p=2$. The largest class of involutions in $G$ comprises the elements of type $c_2$ (in terms of the notation of \cite{AS}), so $\dim G_{[2]} = 6$. Moreover, this class is $\s$-stable (note that $\s$ interchanges the other two classes of involutions in $G$), whence $\mathcal{C}(G,2,q) = 6$. Finally, suppose $G(q) = {}^3D_4(q)$. If $p=2$ then the largest class of involutions in $G=D_4$ has dimension $16$; in the notation of \cite{AS}, these are the elements of type $c_4$ and this class is $\s$-stable (see \cite[Proposition 3.55]{Bur2}, for example), so $\mathcal{C}(G,2,q)=16$. Similarly, if $p=3$ or $5$ then the elements of order $p$ in the largest class in $G$ have Jordan form $[J_3^2,J_1^2]$ and $[J_5,J_3]$, respectively, on the natural module. Both of these classes are $\s$-stable, so $\mathcal{C}(G,3,q) = 18$ and $\mathcal{C}(G,5,q) = 22$. Finally, if $p \geqs 7$ then $G(q)$ contains regular unipotent elements of order $p$ and we deduce that $\mathcal{C}(G,p,q) = 24$.

In each of the remaining cases, we observe that every unipotent class in $G$ is $\s$-stable and therefore has representatives in $G(q)$ (see \cite[Section 20.5]{LS_book}). The result follows.
\end{proof}

In the next two propositions, we assume $r$ is a prime divisor of $|G(q)|$. In particular, $r \ne 3$ if $G = B_2$.

\begin{prop}\label{p:cgr2}
If $r \ne p$ and $r \in \{2,3\}$ then $\mathcal{C}(G,r,q) = \dim G_{[r]}$.
\end{prop}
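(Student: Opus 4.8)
The inequality $\mathcal{C}(G,r,q)\leqs\dim G_{[r]}$ is automatic, since any $g\in G(q)$ of order $r$ modulo $Z(G)$ lies in $G_{[r]}$, which is conjugation-invariant, so $g^G\subseteq G_{[r]}$ and hence $\dim g^G\leqs\dim G_{[r]}$. The content is therefore the reverse inequality, i.e. exhibiting $g\in G(q)$ of order $r$ modulo $Z(G)$ with $\dim g^G=\dim G_{[r]}$. Since $r\neq p$ and the order of $Z(G)$ is coprime to $p$ whenever $Z(G)$ is nontrivial on $\bar{\mathbb{F}}_p$-points (for the types in \eqref{e:gros} the center is infinitesimal precisely when $p$ divides its order), every such $g$ is semisimple. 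As in the discussion preceding the proposition, the $G$-class $C$ attaining $\dim C=\dim\bar{G}_{[r]}=\dim G_{[r]}$ among semisimple classes of order $r$ modulo $Z(G)$ is recorded, via the structure of $C_G(g)$, in Table \ref{tab:class}; one also checks that the extremal class of $\bar G$ lifts to a genuine order-$r$-modulo-$Z(G)$ class of $G$ of the same dimension. The plan is to prove that $C$ is $\sigma$-stable for every Steinberg endomorphism $\sigma$ with $G_\sigma=G(q)$ quasisimple of the given type; then by the Lang--Steinberg theorem a $\sigma$-stable conjugacy class meets $G_\sigma$, so $C\cap G(q)\neq\emptyset$ and $\mathcal{C}(G,r,q)\geqs\dim C=\dim G_{[r]}$, which combined with the trivial upper bound gives equality.

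\textbf{Checking $\sigma$-stability.} Choose a $\sigma$-stable maximal torus $T$ and a representative $s\in T$ of $C$ of order $r$ modulo $Z(G)$, and write $\sigma=\tau F_q$ on $T$, where $F_q$ acts as $x\mapsto x^q$ and $\tau$ is trivial or an outer (graph) automorphism. Then $\sigma(s)=\tau(s^{q})=\tau(s^{q\bmod r})$, so $C$ is $\sigma$-stable exactly when $\tau(s^{\pm1})$ is $W$-conjugate to $s$. For $G\in\{E_8,E_7,F_4,G_2,D_4\}$ the longest element of $W$ acts as $-1$, so $s^{-1}$ is $W$-conjugate to $s$; hence for $\tau=1$ (the untwisted groups $E_8(q),E_7(q),F_4(q),G_2(q),D_4(q)$) every class of elements of order $r\in\{2,3\}$ is $F_q$-stable and we are done. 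This leaves the groups with $\tau\neq1$, namely ${}^2E_6(q)$, ${}^3D_4(q)$ (and ${}^2D_4(q)$), ${}^2F_4(q)$ (so $p=2$, $r=3$), ${}^2G_2(q)$ (so $p=3$, $r=2$), together with $E_6(q)$ itself, where $w_0\neq-1$: when $q\equiv2\pmod3$ one has $s^{q}=s^{-1}$, and since in $E_6$ the element $s^{-1}$ is $W$-conjugate to $\tau(s)$ (as $w_0=-\tau$), the $F_q$-stability of $C$ again reduces to its $\tau$-stability. For ${}^2G_2(q)$ there is nothing to prove, since $G_2$ has a unique class of involutions, so $\tau$ fixes it trivially. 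In the remaining cases one verifies directly, using the explicit centralizer $C_G(s)$ from Table \ref{tab:class} and the action of $\tau$ on root subgroups, that $\tau$ preserves $C$: the exceptional isogeny of $F_4$ interchanges the two factors of $C_G(s)=A_2\tilde A_2$ but fixes the relevant central element of order $3$ up to conjugacy; triality fixes the involution class of $D_4$ with centralizer $A_1^4$ and stabilizes (at least) one of the order-$3$ classes with centralizer $A_1^3T_1$ or $A_2T_2$; and the graph automorphism of $E_6$ preserves the classes with centralizers $A_1A_5$ ($r=2$) and $A_2^3$ ($r=3$). Equivalently, all of this can be read off from the tables of semisimple classes of the finite exceptional groups, e.g.\ \cite{Lub,BLS}; in particular the values of $\mathcal{C}(G,r,q)$ for $r\in\{2,3\}$ appear in \cite{BLS}.

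\textbf{Main obstacle.} Everything except the last step is a formal consequence of Lang--Steinberg together with the fact that $-1$ lies in the Weyl group of all the relevant types other than $E_6$; the genuine work is the verification that the extremal class is invariant under the graph (or triality) automorphism in the twisted cases ${}^2E_6(q)$, ${}^{2}\!,{}^{3}D_4(q)$, ${}^2F_4(q)$ (and in the $q\equiv2\pmod3$ subcase of $E_6(q)$). These are short, explicit computations with rank-$4$ (or rank-$6$, rank-$8$) subsystem subgroups and the relevant outer automorphism, and they are consistent with the known class data for these finite groups; no conceptual difficulty arises, but the argument is unavoidably case-by-case here. Once $C\cap G(q)\neq\emptyset$ is established in all cases, the element produced has order $r$ modulo $Z(G)$ by construction of $C$, and the displayed equality $\mathcal{C}(G,r,q)=\dim G_{[r]}$ follows.
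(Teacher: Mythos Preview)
Your approach coincides with the paper's: both verify that the extremal class in Table~\ref{tab:class} is $\sigma$-stable, whence Lang--Steinberg furnishes a representative in $G(q)$. The paper dispatches this by citing the class tables in \cite{GLS} (Tables 4.3.1, 4.5.1, 4.7.1, 4.7.3A) and \cite[Proposition 3.55]{Bur2}; you instead exploit the fact that $w_0 = -1$ for types $E_8, E_7, F_4, G_2, D_4, B_2$, which handles the untwisted groups uniformly, and then reduce the twisted cases and $E_6$ to a $\tau$-stability check that you only sketch. This is a pleasant organising observation that the paper does not make explicit.

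One technical slip: your identity $\sigma(s) = \tau(s^{q \bmod r})$ presumes $s^r = 1$, whereas $s$ is only required to have order $r$ modulo $Z(G)$. For $(G,r) = (E_7,2)$ the relevant element has order $4$, so $s^q \in \{s, s^{-1}\}$ and your $w_0 = -1$ argument still goes through; for $(G,r) = (E_6,3)$, if $s$ has order $9$ then $s^q$ can land in $\{sz^a, s^{-1}z^b\}$ for various central $z^a$, and one also needs $s$ conjugate to $sz$, which is a further (short) check --- or, as the paper does, one simply reads this off from \cite{GLS}. Finally, since $G(q)$ is required to be exceptional, for $G = D_4$ only ${}^3D_4(q)$ arises and for $G = B_2$ only ${}^2B_2(q)$; your treatment of untwisted $D_4(q)$ and ${}^2D_4(q)$ is therefore superfluous, and you should record (as the paper does) that the case $G = B_2$, $r = 3$ is vacuous because ${}^2B_2(q)$ contains no elements of order $3$.
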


\begin{proof}
First assume $r=2$, so $q$ is odd. If $G$ is of type $D_4$ then $\dim G_{[2]}=16$ and the largest class of involutions consists of elements of the form $[-I_4, I_4]$ (with centralizer of type $A_1^4$). Moreover, this class is $\s$-stable (see \cite[Proposition 3.55]{Bur2}) and the desired result follows. In the remaining cases, by inspecting \cite[Tables 4.3.1 and 4.5.1]{GLS}, we deduce that every conjugacy class of involutions in $G$ is defined over $\mathbb{F}_q$. In addition, if $G = E_7$ and $p \ne 2$ then there are elements $g \in G(q)$ of order $4$ (and order $2$ modulo $Z(G)$) with $C_G(g) = A_7$.

Now assume $r=3$. The elements of order $3$ in the largest class in $D_4$ are of type $[I_4, \omega I_2, \omega^2I_2]$ or $[I_2, \omega I_3, \omega^2I_3]$, where $\omega \in k$ is a primitive cube root of unity, and we observe that both classes are $\s$-stable. This gives the result for $G(q) = {}^3D_4(q)$ and we note that ${}^2B_2(q)$ does not contain elements of order $3$. We now complete the proof by inspecting \cite[Tables 4.7.1 and 4.7.3A]{GLS}.
\end{proof}

\begin{prop}\label{p:cgr3}
If $r \ne p$ and $r \geqs 5$ then $\mathcal{C}(G,r,q) \geqs \gamma(G,r)$.
\end{prop}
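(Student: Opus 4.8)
The plan is to exhibit, for each group $G$ in \eqref{e:gros} and each prime $r\geqs 5$ dividing $|G(q)|$ with $r\ne p$, a semisimple element $g\in G(q)$ of order $r$ with $\dim g^G\geqs\ell(G)$. Writing $\dim C_G(g)={\rm rank}\,G+|\Psi_g|$, where $\Psi_g$ is the root subsystem of $C_G(g)$ (which is connected, as $G$ is simply connected), this is the same as producing $g$ of order $r$ with $|\Psi_g|\leqs|\Phi|-\ell(G)$ whose $G$-conjugacy class is $\s$-stable: $\s$-stability then forces the class to meet $G(q)$, by Lang's theorem applied to the connected group $C_G(g)$. Comparing the definition of $\ell(G)$ with the values of $\dim G_{[r]}$ recorded in Table \ref{tab:gr} (from \cite{Law05}), together with the equality $\dim G_{[r]}=\dim G-{\rm rank}\,G$ for $r\geqs h$ (the Coxeter number), one checks $\dim G_{[r]}\geqs\ell(G)$ in every case, so the real content is the existence of a large enough class that is moreover $\s$-stable for all admissible $q$.

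I would split the argument into the ranges $r\geqs h$ and $5\leqs r<h$. When $r\geqs h$, the Borel--de Siebenthal/Kac-coordinate description of semisimple classes of order $r$ shows that $G$ contains a regular semisimple element $g$ of order $r$ (take all Kac coordinates equal to $1$ except the one at the affine node, of mark $1$, which is set to $r-h+1$), so that $\dim g^G=|\Phi|=\dim G-{\rm rank}\,G\geqs\ell(G)$. To place such a $g$ inside $G(q)$, observe that for every group in \eqref{e:gros} the only primes $r\geqs 5$ dividing $|W(G)|$ are $5$ and $7$, and only for $G$ of type $E_6$, $E_7$, $E_8$, where in each case $5,7<h$; hence $r\geqs h$ forces $r\nmid|W(G)|$, so a Sylow $r$-subgroup of $G(q)$ lies in a maximal torus $T_\s$ with $r\mid|T_\s|$, and a standard analysis of the torus orders $|T_\s|=\prod_d\Phi_d(q)^{a_d}$ (as in \cite{GLS}) yields a regular semisimple element of order $r$ in $T_\s\leqs G(q)$. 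For $G=B_2$, where $h=4$, this already disposes of all $r\geqs 5$.

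When $5\leqs r<h$ there remain only finitely many pairs $(G,r)$: explicitly $r\in\{5,7,11,13,17,19,23,29\}$ for $E_8$, $r\in\{5,7,11,13,17\}$ for $E_7$, $r\in\{5,7,11\}$ for $E_6$ and $F_4$, and $r=5$ for $G_2$ and $D_4$. For each of these I would read off $\dim G_{[r]}\geqs\ell(G)$ from Table \ref{tab:gr} and then exhibit an explicit semisimple class of $G$ of dimension at least $\ell(G)$ whose centralizer type is listed in \cite{Lub}, and which is $\s$-stable for every $q$ with $r\mid|G(q)|$. To control $\s$-stability I would use the description of $G(q)$-rational semisimple classes in terms of $W$-orbits together with the action of $\s$ as multiplication by $q$ on the $r$-torsion of a maximal torus --- in particular every order-$r$ semisimple class is $\s$-stable when $q\equiv 1\imod{r}$, and $\dim g^G$ depends only on $\la g\ra$ since $r$ is prime --- supplemented by the explicit rational class lists in \cite{GLS} for $r\in\{5,7\}$; the value $\ell(D_4)=22=\dim(D_4)_{[5]}$, which forces the $\delta_{5,r}$ correction, and the small groups $G_2$, $F_4$ are treated directly.

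The delicate point is precisely this $\s$-stability requirement in the second range. The inequality $\dim G_{[r]}\geqs\ell(G)$ only supplies a large semisimple class over $\bar{\mathbb{F}}_p$, whereas what is needed is a class that simultaneously has dimension $\geqs\ell(G)$, consists of elements of order \emph{exactly} $r$, and is $\s$-stable for \emph{all} admissible $q$ --- which is exactly why $\ell(G)$ is set strictly below $\dim G_{[r]}$ in most cases and why the $D_4$ entry carries the Kronecker-delta correction. Verifying this for each of the finitely many pairs $(G,r)$ is routine but requires some care; by comparison, the regular-element construction for $r\geqs h$ is clean.
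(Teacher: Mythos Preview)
Your proposal is an outline rather than a proof: the case $5\leqs r<h$ is explicitly deferred, and the case $r\geqs h$ rests on the assertion that ``a standard analysis of the torus orders \ldots\ yields a regular semisimple element of order $r$ in $T_\s\leqs G(q)$''. This last step is the genuine gap. Knowing that a Sylow $r$-subgroup sits in some maximal torus $T_\s$ does not by itself produce a \emph{regular} element of order $r$ there; for instance, when $r\mid q-1$ one can certainly place an order-$r$ element in the split torus with centralizer as large as $E_7T_1$ in $E_8$, and you have given no mechanism for avoiding the root hyperplanes. Your remark that $\dim g^G$ depends only on $\langle g\rangle$ shows that $\s$ preserves the \emph{dimension} of each order-$r$ class, but it does not show that any particular large class is $\s$-stable. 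For the genuinely twisted groups (${}^2B_2$, ${}^3D_4$, ${}^2E_6$, ${}^2F_4$) the situation is worse still, since $\s$ does not act on a split torus as the $q$-power map; the paper in fact handles ${}^3D_4$ by exhibiting explicit $\s$-stable classes, and for ${}^2B_2$ appeals to a specific result that \emph{every} nontrivial semisimple element of $G(q)$ is regular in $G$.

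The paper takes a different and more efficient route. Rather than splitting on $r$ versus $h$, it fixes $G$ and organises the argument by $e$, the multiplicative order of $q$ modulo $r$. The key (implicit) observation is that if $g\in G(q)$ has order $r$ then $g$ lies in the $\s$-fixed points of the central torus $Z(C_G(g))^0$ of dimension $d$, and since $r$ must divide a cyclotomic factor of $|Z(C_G(g))^0_\s|$ one gets $d\geqs\varphi(e)$. Thus for $e\geqs 3$ the centre of $C_G(g)$ already has dimension at least $2$, which sharply limits the possible centralizer types in L\"ubeck's tables and forces $\dim g^G$ to be large; only the small values $e\in\{1,2\}$ (and occasionally $e\in\{3,4,6\}$ with $d=2$) require exhibiting a specific centraliser type. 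This handles all $r\geqs 5$ (or $r\geqs 7$ for $E_8$) uniformly, with no separate treatment of $r\geqs h$, and delivers the bound $\dim g^G\geqs\ell(G)$ directly rather than the stronger (and harder) regularity statement you are aiming for.
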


\begin{proof}
Let $g \in G(q)$ be an element of order $r$, where $r \geqs 5$ is a prime and $r \ne p$. Note that $C_G(g) = HT$ is a connected reductive group, where $H$ is a semisimple subsystem subgroup of $G$ and $T = Z(C_G(g))^0$ is a central torus. Set $d = \dim T$ and let $e$ be the order of $q$ modulo $r$ (so $e$ is the smallest positive integer such that $r$ divides $q^e-1$).

If $G = B_2$ then it is easy to see that $g$ is regular as an element of $G$ (see \cite[Proposition 3.52]{Bur2}, for example), so $\dim g^G = 8 = \dim G_{[r]}$. Similarly, if $G = G_2$ then $C_G(g) = A_1T_1$ or $T_2$, and thus $\mathcal{C}(G,r,q) \geqs 10$. 

Next assume $G = F_4$. If $G(q) = {}^2F_4(q)$, then \cite[Table IV]{Shinoda1} indicates that $C_G(g) = B_2T_2$, $\tilde{A}_{1}A_1T_2$ or $T_4$ and we deduce that $\dim g^G \geqs 40$. 
Now assume $G(q) = F_4(q)$. If $e \in \{1,2\}$ then by inspecting \cite{Lub} we deduce that there exists an element $g \in G(q)$ of order $r$ such that $\tilde{A}_2A_1T_1 \leqs C_G(g)$. Since $C_G(g) = HT$ as described above, and since $\tilde{A}_2A_1T_1$ is not contained in $B_3T_1$ or $C_3T_1$, it follows that $C_G(g) = \tilde{A}_2A_1T_1$ and thus $\mathcal{C}(G,r,q) \geqs 40$. Now assume $e \geqs 3$, so $d \geqs 2$. If $d \geqs 3$ then $\dim C_G(g)$ is at most $\dim A_1T_3 = 6$ and thus $\dim g^G \geqs 46$, so we may assume $d=2$ and $e \in \{3,4,6\}$. By inspecting \cite{Lub}, we see that $C_G(g) = B_2T_2$ if $e=4$, otherwise $C_G(g) = A_2T_2$ (or $\tilde{A}_2T_2$). The result follows.

Now let us consider the case $G=E_6$. First assume $e \in \{1,2\}$. By \cite{Lub}, there exists $g \in G(q)$ of order $r$ with $A_2^2A_1T_1 \leqs C_G(g)$ and we claim that equality holds. Suppose otherwise. Then $C_G(g)$ must be one of $A_5T_1$, $D_5T_1$ or $A_4A_1T_1$. But $A_2^2A_1$ is not contained in $A_5$, $D_5$ or $A_4A_1$, so all three possibilities can be ruled out. This justifies the claim and we deduce that $\mathcal{C}(G,r,q) \geqs 58$. Now assume $e \geqs 3$ and note that $d \geqs 2$. If $d \geqs 3$ then $\dim C_G(g) \leqs \dim A_3T_3$ and the result follows, so let us assume $d=2$. By \cite{Lub}, we deduce that $e \in \{3,6\}$ (if $e=4$ then $d \geqs 3$) and we can choose $g \in G(q)$ of order $r$ with $A_2^2T_2 \leqs C_G(g)$. Since $A_2^2T_2$ is not contained in $A_1^4T_2$ or $D_4T_2$, we conclude that $C_G(g) = A_2^2T_2$ and $\dim g^G = 60$.

Next suppose $G=E_7$. If $e \in \{1,2\}$ then \cite{Lub} shows that we can choose $g \in G(q)$ of order $r$ such that $A_3A_2A_1T_1 \leqs C_G(g)$. Therefore, $d=1$ and either $C_G(g) = A_3A_2A_1T_1$, or $C_G(g)$ is one of $A_6T_1$, $D_6T_1$, $E_6T_1$, $D_5A_1T_1$ or 
$A_4A_2T_1$. But $A_3A_2A_1$ is not contained in the semisimple part of any of these groups, whence $C_G(g) = A_3A_2A_1T_1$ is the only option and thus $\dim g^G = 106$. Now assume $e \geqs 3$. If $d \geqs 3$ then $\dim C_G(g) \leqs \dim D_4T_3$ and thus $\dim g^G \geqs 102$. Finally, suppose $d=2$. If $e=4$ then we can choose $g \in G(q)$ of order $r$ such that $D_4A_1T_2 \leqs C_G(g)$ and by considering the possibilities for $C_G(g)$ with $d=2$ it is easy to check that $C_G(g) = D_4A_1T_2$ is the only option, so $\dim g^G = 100$. Finally, if $e \in \{3,6\}$ then we choose $g \in G(q)$ with $A_2A_1^3T_2 \leqs C_G(g)$ and in the usual manner, using \cite{Lub}, we deduce that $C_G(g) = A_2A_1^3T_2$ and $\dim g^G = 114$.

Now assume $G = E_8$. If $r=5$ then we can choose $g \in G(q)$ of order $r$ with $C_G(g) = A_4^2$, which gives $\dim g^G = 200$. For the remainder, let us assume $r \geqs 7$. If $e \in \{1,2\}$ then $q \geqs 8$ (since $r \geqs 7$) and by considering \cite{Lub} we see that there exists $g \in G(q)$ of order $r$ with $J \leqs C_G(g)$, where 
\[
J= \left\{\begin{array}{ll}
D_4A_3T_1 & \mbox{if $q$ is odd} \\
A_5A_2T_1 & \mbox{if $q$ is even.}
\end{array}\right.
\]
By inspecting the possibilities for $C_G(g)$ with $d=1$ we deduce that $C_G(g) = J$ and thus $\dim g^G = 204$. Now assume $e \geqs 3$. If $d \geqs 3$ then $\dim C_G(g) \leqs \dim D_5T_3$, which gives $\dim g^G \geqs 200$ as required. Finally, suppose $d=2$. If $e=4$ then $q \geqs 4$ (since $r \geqs 7$) and by inspecting \cite{Lub} we see that there exists $g \in G(q)$ of order $r$ with $A_2^2A_1^2T_2 \leqs C_G(g)$. In the usual way, we conclude that $C_G(g) = A_2^2A_1^2T_2$, which gives 
$\dim g^G = 224$. Similarly, if $e \in \{3,6\}$ then we can choose $g \in G(q)$ with $C_G(g) = D_4A_2T_2$ and the result follows.

To complete the proof of the proposition, we may assume $G(q) = {}^3D_4(q)$. Here $\s = \varphi\tau$, where $\varphi$ is a standard Frobenius morphism (corresponding to the field automorphism $\l \mapsto \l^q$) and $\tau$ is a triality graph automorphism of $G$. Let $\omega \in k$ be a primitive $r$-th root of unity. For $r=5$ one checks that the $G$-class represented by $[I_2, \omega I_2, \omega^4I_2, \omega^2, \omega^3]$ is $\s$-stable, so there is a representative in $G(q)$ and thus $\mathcal{C}(G,5,q) = 22 = \dim G_{[5]}$. Now assume $r \geqs 7$. If $e \in \{1,2\}$ then the regular class represented by the element $[I_2, \omega, \omega^{-1}, \omega^2, \omega^{-2}, \omega^3, \omega^{-3}]$ is $\s$-stable and so we have $\mathcal{C}(G,r,q) = 24 = \dim G_{[r]}$. Similarly, if $e \in \{3,6\}$ then $r$ divides $q^2+\a q+1$ (where $\a=1$ if $e=3$, otherwise $\a=-1$) and one checks that the $G$-class represented by $[I_2, \omega, \omega^{\a q}, \omega^{q^2}, \omega^{-1}, \omega^{-\a q}, \omega^{-q^2}]$ is $\s$-stable. Finally, suppose $e=12$, so $r$ divides $q^4-q^2+1$. Here it is convenient to view ${}^3D_4(q)$ as the centralizer of a triality graph-field automorphism $\psi$ of $\O_{8}^{+}(q^3)$. Now the order of $q^3$ modulo $r$ is $4$ and it is straightforward to check that every $\psi$-stable conjugacy class of elements of order $r$ in $\O_{8}^{+}(q^3)$ is regular.
\end{proof}

This completes the proof of Theorem \ref{t:main6}(i).

\vs

\subsection{Proof of Theorem \ref{t:main6}(ii)}\label{ss:thm6_2}

Now let us turn to part (ii) of Theorem \ref{t:main6}. As before, $G$ is a simply connected simple algebraic group as in \eqref{e:gros} and $\s$ is a Steinberg endomorphism such that $G_{\s} = G(q)$ is a finite quasisimple exceptional group of Lie type over $\mathbb{F}_q$, where $q=p^f$. In addition, set $f(r) = \frac{2+\delta_{2,r}}{5}$.

\begin{prop}\label{p:par1}
Let $r$ be a prime divisor of $|G(q)|$ and let $g \in G(q)$ be an element of order $r$ modulo $Z(G)$ with $\dim g^G \geqs \gamma(G,r)$. Then $\a(G,M,g) <f(r)$ for every maximal parabolic subgroup $M$ of $G$.
\end{prop}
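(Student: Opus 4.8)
The plan is to reduce the statement to a bound on fixed-point ratios $\a(G,M,g)$ and then invoke the precise classification in Theorem~\ref{t:mainex}. First I would fix a maximal parabolic subgroup $M=P_i$ of $G$ and the element $g\in G(q)$ of order $r$ modulo $Z(G)$ with $\dim g^G\geqs\gamma(G,r)$, and note that since $g$ has large class dimension it cannot be a ``small'' element: in particular, comparing with Table~\ref{tab:class} and Proposition~\ref{p:cgr1}--\ref{p:cgr3}, $g$ is not a long root element (nor a short root element when $(G,p)$ is $(F_4,2)$ or $(G_2,3)$), and if $G=F_4$, $p\ne2$ then $g$ is not a $B_4$-involution. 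This is the key point: the only triples $(G,M,g)$ with $M$ parabolic and $\a(G,M,g)\geqs 2/3$ listed in Table~\ref{tab:main} (equivalently Tables~\ref{t:parab2}, \ref{t:parab3} and Lemma~\ref{l:p3}) involve precisely these excluded elements. So once we know $g$ avoids them, Theorem~\ref{t:mainex} already gives $\a(G,M,g)<2/3$.

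Next I would sharpen $2/3$ to $f(r)=(2+\delta_{2,r})/5$, i.e.\ to $2/5$ when $r=2$ and $3/5$ when $r\geqs3$. For $r\geqs 3$ the target $3/5$ is only slightly below $2/3$, so I would go back through the non-root unipotent and semisimple entries of Lemmas~\ref{l:p2} and~\ref{l:p3} (which rest on \cite[Theorem 2(I)]{LLS} and Tables 7.1--7.3 of \cite{LLS}) and check directly that the next-largest fixed-point ratios are all $<3/5$; the handful of borderline cases (e.g.\ $E_7$ with $M=P_7$ and $g$ in a class just above $A_1$, or $E_6$ with $M=P_2,P_4$) can be dispatched using the explicit bound $\dim X(g)=\dim X-\dim g^G+\dim(g^G\cap M)$ from \eqref{e:lls} together with $\dim g^G\geqs\gamma(G,r)$, since $\gamma(G,r)$ is quite large (e.g.\ $\gamma(E_8,r)\geqs 168$, $\gamma(F_4,r)\geqs 36$ and so on from Table~\ref{tab:gr} and the definition of $\ell(G)$). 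For $r=2$, the target $2/5$ is more demanding, but here $g$ is an involution (or unipotent of order $2$) and by the previous paragraph it is not a long/short root element or a $B_4$-involution; the involution classes of next-largest dimension are those listed in Table~\ref{tab:class} (e.g.\ type $D_8$ in $E_8$, $A_1^4$ in $E_7$, $A_1A_5$ in $E_6$, $A_1C_3$ in $F_4$), and for these the lower bounds on $\b(g)$ from \cite[Theorem 2(I)(b)]{LLS} (Table 7.3) comfortably give $\a(G,M,g)<2/5$ in every case; again the dimension count via \eqref{e:lls} with the trivial bound $\dim(g^G\cap M)\leqs\dim M-\mathrm{rank}\,M$ handles any remaining cases.

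I would organise the proof as a short case analysis over the five groups $E_8,E_7,E_6,F_4,G_2$ (the classical cases $D_4$ and $B_2$ being excluded from the ``maximal parabolic of an exceptional group'' discussion, or handled separately with the same method using the natural-module action), and within each group over the maximal parabolics $P_i$ listed in Table~\ref{t:parab}. For each pair $(G,P_i)$ I would record $\dim X$ from Table~\ref{t:parab}, note the value of $\gamma(G,r)$, and combine $\a(G,M,g)=\dim X(g)/\dim X$ with $\dim X(g)\leqs\dim X-\dim g^G+\dim(g^G\cap M)$ and $\dim g^G\geqs\gamma(G,r)$; since $\gamma(G,r)$ is large relative to $\dim M$, the inequality $\a(G,M,g)<f(r)$ becomes essentially automatic except in the few borderline cases, which are exactly the ones excluded by the root-element/$B_4$-involution analysis above and are therefore vacuous.

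The main obstacle I anticipate is the $r=2$ case, where the required bound $2/5$ is genuinely tighter than the generic $2/3$ coming from Theorem~\ref{t:mainex}: one has to be careful that the next-largest involution classes, not just the root classes, satisfy $\a(G,M,g)<2/5$ for \emph{every} maximal parabolic, including the ``large'' ones like $P_4$ in $E_8$ or $P_2$ in $F_4$ where $\dim X$ is biggest and so the ratio is hardest to push down. This is where I expect to lean most heavily on the explicit values of $\b(g)$ from \cite[Tables 7.1--7.3]{LLS} and on the largeness of $\gamma(G,2)=\dim G_{[2]}$ from Table~\ref{tab:gr}; the computation is routine but needs to be done uniformly and checked group-by-group.
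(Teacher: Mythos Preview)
Your proposal has a basic misreading that propagates through the whole plan: $f(r)=(2+\delta_{2,r})/5$ means $f(2)=3/5$ and $f(r)=2/5$ for $r\geqs 3$, not the other way round. So the demanding case is $r\geqs 3$, where you must push $\a(G,M,g)$ below $2/5$; the involution case only needs $3/5$. Your assessment of which computations are ``routine'' and which are ``borderline'' is therefore inverted, and the specific class-by-class checks you describe (e.g.\ $D_8$-involutions in $E_8$ against the $2/5$ target) are aimed at the wrong target.

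Independently of that, the bound $\dim(g^G\cap M)\leqs\dim M-\mathrm{rank}\,M$ is essentially useless when $M$ is parabolic: $\dim M=\dim G-\dim X$, so \eqref{e:lls} collapses to $\dim X(g)\leqs\dim G-\dim g^G-\mathrm{rank}\,M$, which for, say, $G=E_8$, $r=3$, $M=P_8$ gives $\dim X(g)\leqs 72$ against a target of $\dim X(g)<\tfrac{2}{5}\cdot 57<23$. The paper does not use this crude bound at all in the parabolic case. For semisimple $g$ it invokes \cite[Theorem~2(I)(b)]{LLS} directly (which gives lower bounds on $\b(g)$ depending on $C_G(g)$, not on $M$), and then isolates a short list of residual cases --- $(G_2,P_1,P_2)$ and $(E_6,P_2)$ with $r\geqs 3$ --- where it computes $\dim X(g)=\dim R_u(D\cap M)$ via \cite[Lemma~3.1]{LLS}. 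For unipotent $g$ (i.e.\ $r=p$) it uses the Springer bound $\dim X(g)\leqs\dim\mathcal{B}_g=\tfrac12(\dim C_G(g)-\mathrm{rank}\,G)$, and when that still falls short (a specific list of parabolics for $E_6,E_7,E_8,F_4$ with $r\in\{2,3\}$) it computes $\dim X(g)$ exactly from the permutation character $\chi$ via Green functions. Finally, $G=D_4$ and $B_2$ are genuinely part of the statement (see \eqref{e:gros}) and are handled by explicit calculations with totally singular $1$-spaces; they cannot be waved away. Your plan, as written, does not supply any of these ingredients.
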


\begin{proof}
First assume $r \ne p$ and set $D=C_G(g)$. If $G$ is an exceptional algebraic group, then we can use the upper bound on $\dim X(g)$ given by \cite[Theorem 2(I)(b)]{LLS}. For example, suppose $G=E_8$, $M=P_1$ and $r \geqs 5$. Now $\dim g^G \geqs 200$, so $D$ does not have a simple factor of type $D_8$ or $E_7$. Therefore, by inspecting \cite[Table 7.3]{LLS}, we deduce that $\dim X - \dim X(g) \geqs 48$, which gives $\dim X(g) \leqs 30$ and $\a(G,M,g) \leqs \frac{5}{13}$. 

The reader can check that the bound supplied by \cite[Theorem 2(I)(b)]{LLS} is sufficient unless we are in one of the following cases:
\begin{itemize}\addtolength{\itemsep}{0.2\baselineskip}
\item[{\rm (a)}] $G=G_2$, $M=P_1$ or $P_2$, $r \geqs 3$;
\item[{\rm (b)}] $G=E_6$, $M=P_2$, $r \geqs 3$.
\end{itemize}

Write $M=QL$, where $Q=R_u(M)$ is the unipotent radical and $L$ is a Levi factor. Without loss of generality, we may assume that $g \in M$ and $\dim(g^G \cap M) = \dim g^M$. By \cite[Lemma 3.1]{LLS}, $D \cap M$ is a parabolic subgroup of $D$ with $R_u(D \cap M) \leqs Q$ and 
\begin{equation}\label{e:ss}
\dim X(g) = \dim R_u(D \cap M).
\end{equation}
In case (a), we have $\dim X = 5$ and $D = A_1T_1$ or $T_2$. Therefore, \eqref{e:ss} implies that $\dim X(g) \leqs 1$ and the result follows.

Now let us turn to case (b). Here $\dim X = 21$, $Q = U_{21}$ (here $U_m$ denotes a unipotent group of dimension $m$) and $L=A_5T_1$, so it suffices to show that $\dim R_u(D \cap M) \leqs 8$. If $r \geqs 5$ then $\dim g^G \geqs 58$ and so $D$ has at most $7$ positive roots, which gives $\dim R_u(D \cap M) \leqs 7$. Finally, suppose $r=3$. Here $D = A_2^3$ has $9$ positive roots and it is easy to see that at least one of the corresponding root subgroups is not contained in $Q$. Indeed, if we fix a set of simple roots $\{\a_1, \ldots, \a_6\}$ for $G$, then $Q = \la U_{\a} \, : \, \a \in S \ra$, where $S$ is the set of all $21$ positive roots of the form $\sum_{i}c_i\a_i$ with $c_2 \ne 0$. But $D$ has at least one positive root whose $\a_2$ coefficient is zero (since $g$ is not regular in the Levi factor
$L=A_5T_1$, it centralizes both positive and negative root subgroups in $A_5<L$), whence $\dim R_u(D \cap M) \leqs 8$.

To complete the analysis of semisimple elements, we may assume $G=B_2$ or $D_4$. If $G = B_2$ then $r \geqs 5$ and $g$ is regular, so \eqref{e:ss} implies that $\dim X(g)=0$. 

Suppose $G=D_4$. Here we may assume that $M=P_1$ or $P_2$, where $P_1 = U_6A_3T_1$ and $P_2 = U_9A_1^3T_1$. Note that $\dim X = 6$ if $M=P_1$ and $\dim X = 9$ if $M=P_2$. If $r \geqs 5$ then $C_G(g) = A_1T_3$ or $T_4$ and thus \eqref{e:ss} gives $\dim X(g) \leqs 1$. Now assume $r=3$, so $\dim g^G = 18$ and there are two possibilities for $D$, namely $D = A_1^3T_1$ and $A_2T_2$. In both cases, $D$ has $3$ positive roots, so $\dim R_u(D \cap M) \leqs 3$ and \eqref{e:ss} yields $\dim X(g) \leqs 3$. This gives the desired result for $M=P_2$, but further work is needed when $M=P_1$.

Suppose $M=P_1$ and note that we may identify $X = G/M$ with the set of totally singular $1$-spaces in the natural module $V$ for $G$. Let 
\begin{equation}\label{e:basis}
\mathcal{B} = \{e_1, \ldots, e_4, f_1, \ldots, f_4\}
\end{equation}
be a standard orthogonal basis for $V$ (with respect to the quadratic form preserved by $G$) and let $\omega \in k$ be a primitive cube root of unity. If $D = A_2T_2$ then we may assume $g = [I_2, \omega I_3, \omega^2I_3]$ has eigenspaces $\la e_1,f_1\ra$, $\la e_2,e_3,e_4\ra$ and $\la f_2,f_3,f_4\ra$. Now $U \in X$ is fixed by $g$ if and only if $U$ is contained in one of the eigenspaces of $g$ and it follows that $\dim X(g)=2$ since the Grassmannian ${\rm Gr}(1,k^3)$ is $2$-dimensional. Similarly, if $D=A_1^3T_1$ then $g = [I_4, \omega I_2,\omega^2I_2]$ has eigenspaces $\la e_1,f_1,e_2,f_2\ra$, $\la e_3,e_4\ra$ and $\la f_3,f_4\ra$. Once again we deduce that $\dim X(g) = 2$ since the variety of totally singular $1$-spaces contained in the nondegenerate $1$-eigenspace $\la e_1,f_1,e_2,f_2\ra$ is $2$-dimensional.

Finally, let us assume $G=D_4$, $r=2$ and $p \ne 2$. Here $D=A_1^4$ and thus 
$\dim X(g)  = \dim R_u(D \cap M) \leqs 4$ by \eqref{e:ss}. As before, we need a stronger upper bound when $M=P_1$. Here we may assume that $g = [-I_4,I_4]$ has eigenspaces $\la e_1,f_1,e_2,f_2\ra$ and $\la e_3,f_3,e_4,f_4\ra$, and by arguing as in the previous case we deduce that $\dim X(g)=2$.

To complete the proof of the proposition, we may assume that $r=p$. Set $D=C_G(g)$ and let $\mathcal{B}_g$ be the variety of Borel subgroups of $G$ containing $g$. By
combining \cite[Proposition 1.9]{LLS} with \cite[Lemma 2.2]{LLS} we get
\begin{equation}\label{e:unip}
\dim X(g) \leqs \dim \mathcal{B}_g = \frac{1}{2}(\dim D - {\rm rank}\, G).
\end{equation}
In particular, notice that $\dim X(g)=0$ if $g$ is regular.

First assume $G$ is exceptional. If $r \geqs 5$, then it is easy to check that the upper bound in \eqref{e:unip} is sufficient. For example, if $G=E_7$ then $\dim g^G \geqs 106$, so $\dim D \leqs 27$ and thus \eqref{e:unip} gives $\dim X(g) \leqs 10$. This is sufficient since $\dim X \geqs 27$ (see Table \ref{t:parab}). Now assume $r \in \{2,3\}$. If $G=G_2$ then the same approach is effective, but for the other exceptional groups there are cases where the bound in \eqref{e:unip} is insufficient. Specifically, we need to establish a better upper bound in the following cases:
\begin{align*}
E_8{:} & \;\; P_1, P_2 \, (r=2), P_7, P_8 \\
E_7{:} & \;\; P_1, P_2, P_6, P_7 \\
E_6{:} & \;\; P_1, P_2, P_3\, (r=2), P_5 \, (r=2), P_6 \\
F_4{:} & \;\; P_1, P_4
\end{align*}

Let us assume $G(q)$ is untwisted and consider the action of $G(q)$ on the set of cosets of $M(q)$, which is the corresponding maximal parabolic subgroup of $G(q)$. Let $\chi$ be the associated permutation character. By \cite[Lemma 2.4]{LLS2}, we have 
\[ 
\chi = \sum_{\phi \in \widehat{W}}n_{\phi}R_{\phi},
\]
where $\widehat{W}$ is the set of complex irreducible characters of the Weyl group $W=W(G)$. Here the $R_{\phi}$ are almost characters of $G(q)$ and the coefficients are given by $n_{\phi} = \la 1^W_{W_M},\phi \ra$, where $W_M$ is the corresponding parabolic subgroup of $W$. The Green functions of $G(q)$ arise by restricting the $R_{\phi}$ to unipotent elements. For the elements $g \in G(q)$ of order $p$ that we are interested in (see Table \ref{tab:class}), L\"{u}beck \cite{Lubeck} has implemented an algorithm of Lusztig \cite{Lus} to compute the relevant Green functions. In particular, we can calculate $\chi(g)$, which is a monic polynomial in $q$ of degree $\dim X(g)$.
We refer the reader to \cite[Section 2]{LLS2} for further details. 

In all cases, one checks that $\a(G,M,g) < f(r)$. For example, if $G=E_8$ and $r=p=2$, then $\dim g^G = 128$ and $g \in G$ is an involution in the $G$-class labelled $A_1^4$. Then for $X = G/P_i$ we get
\[
\begin{array}{lcccccccc} \hline
i & 1 & 2 & 3 & 4 & 5 & 6 & 7 & 8 \\ \hline
\dim X & 78 & 92 & 98 & 106 & 104 & 97 & 83 & 57 \\
\dim X(g) & 38 & 44 & 47 & 51 & 50 & 47 & 40 & 28 \\ \hline
\end{array}
\]
and thus $\a(G,M,g) \leqs \frac{28}{57}$. 

To complete the proof, we may assume $r=p$ and $G = D_4$ or $B_2$. If $G=B_2$ then $p=2$, $\dim D = 4$ and \eqref{e:unip} yields $\dim X(g) \leqs 1$, which is sufficient since $\dim X = 3$. Now suppose $G = D_4$, so we may assume $M=P_1$ or $P_2$, where
$P_1 = U_6A_3T_1$ and $P_2 = U_9A_1^3T_1$. If $r \geqs 5$ then $\dim D \leqs 6$ and the result follows since \eqref{e:unip} gives $\dim X(g) \leqs 1$.

Next assume $r=p=3$, so $\dim g^G = 18$ and $\dim X(g) \leqs 3$ by \eqref{e:unip}. This is good enough if $M=P_2$, but further work is needed when $M=P_1$. As before, we may identify $X=G/P_1$ with the variety of totally singular $1$-spaces in the natural module $V$ and we note that $g$ fixes $U \in X$ if and only if $U$ is contained in the $1$-eigenspace of $g$ on $V$. In terms of the standard basis $\mathcal{B}$ (see \eqref{e:basis}), we may assume that $g = [J_3^2,J_1^2]$ has $1$-eigenspace $\la e_1,f_1, e_2,f_4\ra$ and one checks that $ae_1 +bf_1+ce_2+df_4$ is singular if and only if $ab=0$. This implies that $\dim X(g)=2$ and the result follows.

Finally, suppose $r=p=2$. Here $\dim g^G = 16$ and thus $\dim X(g) \leqs 4$. As in the previous case, this is sufficient for $M=P_2$, but not for $M=P_1$. So let us assume $M=P_1$ and identify $X$ with the variety of totally singular $1$-spaces. Here $g$ is a $c_4$-type involution in the notation of \cite{AS} and we may assume that the $1$-eigenspace of $g$ is spanned by the vectors $e_i+f_i$ for $i=1, \ldots, 4$. An easy calculation shows that the vector $\sum_{i}a_i(e_i+f_i)$ is singular if and only if $\sum_{i}a_i=0$ and we conclude that $\dim X(g)=2$.
\end{proof}

\begin{prop}\label{p:par2}
Let $r \geqs 5$ be a prime divisor of $|G(q)|$ and let $g \in G(q)$ be an element of order $r$ modulo $Z(G)$ with $\dim g^G \geqs \gamma(G,r)$. Then $\a(G,M,g) <\frac{2}{5}$ for every positive dimensional non-parabolic maximal subgroup $M$ of $G$.
\end{prop}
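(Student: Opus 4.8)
The plan is to run the same general scheme used throughout Sections~\ref{s:mr} and \ref{s:rem}: we may assume $g \in M$, so by \eqref{e:lls} the assertion $\a(G,M,g) < \frac{2}{5}$ is equivalent to
\[
\b(g) \;=\; \dim g^G - \dim(g^G \cap M) \;>\; \tfrac{3}{5}\dim X, \qquad \dim X = \dim G - \dim M .
\]
By hypothesis $\dim g^G \geqs \gamma(G,r)$, and the first thing to record is that $\gamma(G,r) \geqs \ell(G)$ in all cases: this is immediate from the definition when $r \ne p$, while for $r = p \geqs 5$ it follows by comparing Table~\ref{tab:gr} with the tabulated values of $\ell(G)$ (one has $\dim G_{[5]} = \ell(G)$ for $G \in \{E_8,F_4,G_2,D_4\}$, $\dim G_{[5]} > \ell(G)$ for $G \in \{E_7,E_6\}$, and $\dim G_{[r]} = \dim G - {\rm rank}\,G \geqs \ell(G)$ once $r \geqs h$). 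Hence in every case $\dim C_G(g) \leqs \dim G - \ell(G)$. I would then split into the unipotent case $r = p$ and the semisimple case $r \ne p$.

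\noindent\emph{Unipotent elements.} If $r = p \geqs 5$, then \eqref{e:unip} gives
\[
\dim X(g) \;\leqs\; \tfrac{1}{2}\bigl(\dim C_G(g) - {\rm rank}\,G\bigr) \;\leqs\; \tfrac{1}{2}\bigl(\dim G - \ell(G) - {\rm rank}\,G\bigr),
\]
and the right-hand side evaluates to $20,13,7,4,1,1$ for $G = E_8,E_7,E_6,F_4,G_2,D_4$ respectively (the case $G = B_2$ does not arise, as there $p = 2$). On the other hand, inspecting Tables~\ref{tab:mr} and \ref{tab:nmr} (and recalling that $M^0 = B_3$, up to triality, is the largest non-parabolic maximal subgroup of $D_4$) shows that the minimum of $\dim X = \dim G - \dim M$ over all positive dimensional non-parabolic maximal $M$ is $112,54,26,16,6,7$ for these groups, so in every case $\dim X(g) < \frac{2}{5}\dim X$, as required.

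\noindent\emph{Semisimple elements.} Now assume $r \ne p$, so $g$ is semisimple of order $r$ with $\dim C_G(g) \leqs \dim G - \ell(G)$. When $g \in M^0$ we have the crude bound $\dim(g^G \cap M) = \dim g^M \leqs \dim M^0 - {\rm rank}\,M^0$; and when $g \in M \setminus M^0$ the prime $r \geqs 5$ must divide $|M/M^0|$, which restricts $M$ to a torus normaliser or one of $A_1^7 < E_7$, $A_1^8 < E_8$, $A_1 \times S_5 < E_8$, where the trivial bound $\dim(g^G \cap M) \leqs \dim M^0$ is tiny. Feeding $\dim g^G \geqs \ell(G)$ into these estimates and running down Tables~\ref{tab:mr} and \ref{tab:nmr}, together with the special subgroups $(2^2 \times D_4).S_3 < E_7$ and $A_1 \times S_5 < E_8$, the inequality $\b(g) > \frac{3}{5}\dim X$ holds outside the short list
\[
E_7{:}\ M^0 \in \{E_6T_1,\, A_1D_6\}; \quad E_6{:}\ M^0 = F_4; \quad F_4{:}\ M^0 \in \{B_4,\, C_4\}; \quad D_4{:}\ M^0 \in \{B_3,\, C_3\}\ (r = 5),
\]
on which the crude bound falls just short of $\frac{3}{5}\dim X$. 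For these I would replace it by the centraliser-dependent lower bound on $\b(g)$ from \cite[Theorem 2(II)(b)]{LLS}, in exactly the manner of Lemmas~\ref{l:mre6}, \ref{l:mrf4}, \ref{l:nmre6} and \ref{l:nmre7}: the constraint $\dim C_G(g) \leqs \dim G - \ell(G)$ already excludes every semisimple class whose centraliser has a large simple factor (e.g.\ no $E_6$-factor when $G = E_7$, no $D_8$- or $E_7$-factor when $G = E_8$), so only a handful of types for $C_G(g)$ remain, each listed in \cite{Lub}, and for each the $\b$-bound of \cite{LLS} suffices. In the tightest residual cases I expect to compute $\dim(g^G \cap M)$ directly, by restricting the relevant small module — $V_{26}$ for $F_4$, $V_{27}$ for $E_6$, $V_{56}$ for $E_7$, the natural $8$-dimensional module for $D_4$ — to $M^0$ using the decompositions of \cite[Chapter 12]{Thomas} and bounding eigenspace codimensions $\nu(g,\cdot)$ as in \eqref{e:nudef}, precisely as in the proofs of Lemmas~\ref{l:nmre6} and \ref{l:nmre7}.

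\noindent\emph{Main difficulty.} The uniform unipotent estimate and the crude semisimple bound are routine and dispose of almost everything. The genuine work lies in the four families of ``large'' non-parabolic maximal subgroups isolated above: there $\dim X$ is small, $\b(g)$ exceeds $\frac{3}{5}\dim X$ by only a slim margin, and one must pin down $\dim(g^G \cap M)$ essentially exactly. This forces one to combine the \cite{LLS} bounds with the explicit classification of semisimple classes of bounded centraliser dimension from \cite{Lub} and, in the borderline instances, an eigenvalue/codimension analysis on a minimal module, in order to rule out $g$ lying in an unexpectedly large $M$-class.
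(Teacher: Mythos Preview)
Your semisimple argument ($r \ne p$) is essentially the paper's: the crude bound $\dim(g^G \cap M) \leqs \dim M^0 - {\rm rank}\,M^0$ together with $\dim g^G \geqs \ell(G)$ disposes of all but the short residual list you give, and for those the paper likewise appeals to \cite[Theorem 2(II)(b)]{LLS} (for $(E_6,F_4)$ it instead exploits $M = C_G(\tau)$ to compute $\dim X(g) = \dim C_G(g) - \dim C_{C_G(g)}(\tau)$ directly for each of the five possible centraliser types, and for $(D_4,B_3/C_3,r=5)$ it uses $\dim(g^G\cap M) \leqs \dim (B_3)_{[5]} = 16$; neither needs an eigenvalue analysis on a minimal module).

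Your unipotent argument, however, does not work. The bound \eqref{e:unip} is obtained in the proof of Proposition~\ref{p:par1} from \cite[Proposition~1.9]{LLS}, and that result applies only when $M$ is parabolic: it rests on the fact that every parabolic containing a unipotent element contains a Borel containing it. For non-parabolic $M$ the inequality $\dim X(g) \leqs \dim \mathcal{B}_g$ is simply false. A counterexample already lies inside the proposition's hypotheses: take $G = G_2$, $p \geqs 7$, $M = A_1$ as in Lemma~\ref{l:nmrg2}, and $g$ regular unipotent (so $g$ has order $p$ and $\dim g^G = 12 = \gamma(G,p)$). Then $\dim C_G(g) = 2$, whence $\dim \mathcal{B}_g = 0$, yet Lemma~\ref{l:nmrg2} gives $\dim X(g) = 1$. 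Your final numerical bound $\dim X(g) \leqs 1$ for $G_2$ is correct only by the accident that you weakened $\dim C_G(g)$ to $\dim G - \ell(G) = 4$ before applying the invalid inequality.

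The paper never invokes \eqref{e:unip} for non-parabolic $M$. It treats $r = p$ with the same crude bound $\dim(g^G\cap M) \leqs \dim M - t$, but now combined with the sharper input $\dim g^G \geqs \dim G_{[p]}$ from Table~\ref{tab:gr}, which strictly exceeds $\ell(G)$ for $E_7$ and $E_6$. This leaves only $(E_6,F_4)$, $(F_4,B_4)$ and $(D_4,B_3/C_3, r=5)$; the first two are settled by reading off Lawther's unipotent fusion tables \cite{Law09}, the last by $\dim (B_3)_{[5]} = 16$.
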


\begin{proof}
Let $M$ be a positive dimensional non-parabolic maximal subgroup of $G$ and set $X=G/M$. Recall that if $G$ is an exceptional group then the possibilities for $M^0$ are listed in Tables \ref{tab:mr} and \ref{tab:nmr}, together with the special cases arising in parts (iii) and (iv) of Theorem \ref{t:ls}. Let $t$ be the rank of $M^0$. 

First assume $G = E_8$, so $\gamma(G,r) \geqs 200$. The trivial bound $\dim (g^G \cap M) \leqs \dim M$ implies that $\dim X(g) < \frac{2}{5}\dim X$ if $\dim M < 128$, so we may assume $M = A_1E_7$ and one checks that the obvious bound $\dim (g^G \cap M) \leqs \dim M -8$ is sufficient.

Next suppose $G = E_7$. By arguing as in the previous case, we may assume $\dim M \geqs 51$, so $M^0$ is one of $E_6T_1$, $A_1D_6$, $A_7$ or $A_1F_4$. If $M^0=A_7$ or $A_1F_4$, then the bound $\dim(g^G \cap M) \leqs \dim M-t$ is sufficient. In the remaining two cases, if $r=p$ then $\gamma(G,r) \geqs 106$ and the previous bound is good enough. For $r \ne p$ we can appeal to  \cite{LLS}. For example, if $M^0 = E_6T_1$ then $\dim X = 54$ and \cite[Theorem 2(II)(b)]{LLS} implies that $\dim X(g) \leqs 20$.

Now assume $G = E_6$. Here we quickly reduce to the case $M = F_4$ with $\dim X = 26$. The $G$-class of each unipotent class in $M$ is recorded in \cite[Table A]{Lawunip} and it is easy to check that $\dim X(g) \leqs 4$ when $r=p$. Now assume $r \neq p$ and set $D = C_G(g)$. If $\dim g^G \geqs 64$ then the trivial bound $\dim(g^G \cap M) \leqs 48$ yields $\dim X(g) \leqs 10$ and the result follows. Therefore, we may assume that $D$ is one of the following:
\[
\begin{array}{lccccc} \hline
D & A_2^2A_1T_1 & A_3A_1T_2 & A_2^2T_2 & A_3T_3 & A_2A_1^2T_2 \\  
\dim g^G & 58 & 58 & 60 & 60 & 62 \\ \hline
\end{array}
\]
Write $M = C_G(\tau)$, where $\tau$ is an involutory graph automorphism of $G$. Without loss of generality, we may assume that $g \in M$ and $\dim (g^G \cap M) = \dim g^M$, so   
\[
\dim X(g) = \dim D - \dim C_D(\tau).
\]
As explained in the proof of \cite[Lemma 5.4]{LLS2}, if $D$ has an $A_3$ factor then $D = A_3T_3$ is the only possibility and we have $C_D(\tau) = C_2T_2$ and $\dim X(g) = 6$. Similarly, if $D = A_2^2A_1T_1$ then $C_D(\tau) = A_2A_1T_1$ and $\dim X(g) = 8$. For $D = A_2A_1^2T_2$ we have $C_D(\tau) = A_2A_1T_1$ or $A_1^2T_2$, which yields $\dim X(g) \leqs 8$. Finally, if $D = A_2^2T_2$ then $C_D(\tau) = A_2T_2$ and once again we deduce that $\dim X(g) = 8$. 

The case $G=F_4$ is very similar. Here $\gamma(G,r) \geqs 40$ and by considering the trivial bound $\dim(g^G \cap M) \leqs \dim M$ we reduce to the cases $M^0 = B_4$, $C_4$ ($p=2$), $D_4$, $\tilde{D}_4$ ($p=2$) and $A_1C_3$. In the latter three cases, the bound $\dim(g^G \cap M) \leqs \dim M-4$ is sufficient. Finally, suppose $M = B_4$ or $C_4$, so $\dim X = 16$. If $r \ne p$ then \cite[Theorem 2(II)(b)]{LLS} gives $\dim X(g) \leqs 4$ and the result follows. On the other hand, if $r=p$ then we must have $M = B_4$ (since $r \geqs 5$); in \cite[Section 4.4]{Law09}, Lawther determines the $G$-class of each unipotent element in $M$ and we deduce that $\dim X(g) \leqs 4$. 

If $G=G_2$ then the bound $\dim(g^G \cap M) \leqs \dim M - t$ is always sufficient, so to complete the proof of the proposition, we may assume $G = D_4$ or $B_2$. If $G=B_2$ then $p=2$ and $M^0 = A_1^2$, so $\dim X = 4$. Moreover, since $\dim g^G =8$ and $\dim(g^G \cap M) \leqs 4$, we deduce that $\dim X(g) = 0$. 

Finally, suppose $G = D_4$. Here $\dim g^G  = 24-2\delta_{5,r}$ and the possibilities for $M^0$ are as follows (up to isomorphism):
\begin{equation}\label{e:d4}
A_3T_1, \, A_1^4, \, T_4, \, B_3 \, (p \ne 2), \, C_3 \, (p=2), \, A_1B_2 \, (p \ne 2), \,  A_1C_2, \, A_2\, (p \ne 3).
\end{equation}
This list is obtained by applying Aschbacher's theorem \cite{As} on maximal subgroups of classical groups (see \cite{LSe} for a shorter proof for algebraic groups). In particular, a maximal subgroup of positive dimension either preserves a geometric structure on the natural module (such as a subspace, or a direct sum decomposition) or the connected component is a simple algebraic group acting irreducibly and tensor indecomposably on the natural module. The list of $8$-dimensional irreducible representations of simple algebraic groups can be read off from \cite{Lu} and it is a simple matter to determine which of these representations preserve a quadratic form.

One checks that the trivial bound $\dim(g^G \cap M) \leqs \dim M$ is sufficient when $\dim M < 13$. Similarly, if $M^0 = A_3T_1$, $A_1B_2$ or $A_1C_2$ then the bound $\dim(g^G \cap M) \leqs \dim M- t$ is good enough. Finally, suppose $M = B_3$ or $C_3$, so $\dim X = 7$. If $r \geqs 7$ then $\dim g^G = 24$ and the bound $\dim(g^G \cap M) \leqs \dim M- 3$ is sufficient. For $r=5$ we have $\dim g^G = 22$ and we note that $\dim (g^G \cap M) \leqs \dim M_{[5]} = 16$, which yields $\dim X(g) \leqs 1$.
\end{proof}

\begin{prop}\label{p:par3}
Let $r \in \{2,3\}$ be a divisor of $|G(q)|$, let $g \in G(q)$ be an element of order $r$ modulo $Z(G)$ with $\dim g^G = \gamma(G,r)$ and let $M$ be a positive dimensional non-parabolic maximal subgroup of $G$. 
\begin{itemize}\addtolength{\itemsep}{0.2\baselineskip}
\item[{\rm (i)}] If $G=D_4$ and $M=B_3$ or $C_3$, then $\a(G,M,g) = \frac{3}{7}$.
\item[{\rm (ii)}] In every other case, either $\a(G,M,g) < f(r)$, or $G=D_4$, $M=A_2$, $r=3$ and $\a(G,M,g) = \frac{2}{5}$.
\end{itemize}
\end{prop}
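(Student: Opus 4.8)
\textbf{Proof proposal for Proposition \ref{p:par3}.}

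The plan is to proceed case by case through the list of non-parabolic maximal subgroups $M$ of $G$, splitting into the two sub-cases $r=p$ and $r\ne p$, exactly as in the proof of Proposition \ref{p:par2}. For $G$ exceptional of type $E_8,E_7,E_6,F_4,G_2$, the element $g$ lies in one of the classes recorded in Table \ref{tab:class} with $\dim g^G = \gamma(G,r) = \dim G_{[r]}$, so these are comparatively large classes and the dimension bound $\dim X(g) = \dim g^M \leqs \dim(g^G\cap M) \leqs \dim M$ is often already enough to force $\a(G,M,g) < f(r) = (2+\delta_{2,r})/5$. First I would dispose of all $M$ with $\dim M$ small relative to $\dim X = \dim G - \dim M$ using this trivial bound, reducing in each type to a short list of ``large'' maximal subgroups (for instance $A_1E_7$ in $E_8$; $E_6T_1$, $A_1D_6$, $A_7$, $A_1F_4$ in $E_7$; $F_4$ in $E_6$; $B_4$, $C_4$, $D_4$, $\tilde D_4$, $A_1C_3$ in $F_4$). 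For the surviving cases with $r\ne p$ I would invoke the lower bounds on $\dim X - \dim X(g)$ from \cite[Theorem 2(II)(b)]{LLS}, and for $r=p$ I would use the fusion of unipotent classes from \cite{Law09} together with the inequality $\dim X(g)\leqs \tfrac12(\dim C_G(g) - \mathrm{rank}\,G)$ from \eqref{e:unip}; the classes in Table \ref{tab:class} are explicit enough that both inputs apply directly. The key point is that for the exceptional groups no case ever attains $\a(G,M,g)\geqs f(r)$, so (i) and the $D_4$-exception in (ii) do not arise there.

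The substantive work is the case $G = D_4$ with $g$ of order $r\in\{2,3\}$, where $\dim g^G = \gamma(D_4,r)$ equals $16$ (for $r=2$, so $g = [-I_4,I_4]$) or $18$ (for $r=3$, $g = [I_4,\omega I_2,\omega^2 I_2]$ or $[I_2,\omega I_3,\omega^2 I_3]$ when $r\ne p$, or $g$ of Jordan type $[J_3^2,J_1^2]$ when $r=p=3$). Using Aschbacher's theorem, as in Proposition \ref{p:par2}, the non-parabolic maximal $M^0$ are among $A_3T_1$, $A_1^4$, $T_4$, $B_3$ (or $C_3$ if $p=2$), $A_1B_2$ (or $A_1C_2$), and $A_2$ (with $p\ne3$). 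For $M^0\in\{A_3T_1, A_1^4, T_4, A_1B_2, A_1C_2\}$ I would check that $\dim(g^G\cap M)\leqs \dim M - \mathrm{rank}\,M^0$ (or the even cruder $\dim(g^G\cap M)\leqs\dim M$) already gives $\a(G,M,g) < f(r)$; this is a short finite check. That leaves the two genuinely delicate situations: $M = B_3$ or $C_3$ (with $\dim X = 7$), and $M = A_2$ (with $\dim X = 6$), which are precisely the exceptional outcomes in the statement.

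For $M = A_2$ embedded irreducibly in $D_4$ via the $8$-dimensional module (so $p\ne 3$), only $r=3$ can give a large fixed space, and I expect to show $\dim(g^G\cap M) = \dim A_2 - \dim C_{A_2}(g)$; for an order-$3$ element $g\in A_2$ with $g$ semisimple one gets $C_{A_2}(g) = A_1T_1$ or $T_2$, so $\dim g^{A_2}\leqs 6$, forcing $\dim X(g)\leqs \dim A_2 - (\dim A_2 - 6) = $ — more carefully, I would compute the $1$-eigenspace of $g$ on the natural $8$-dimensional module for the relevant $D_4$-class and argue that $\dim X(g)$, the dimension of the variety of totally singular $1$-spaces fixed by $g$, equals $\lfloor\tfrac{7}{?}\rfloor$; the assertion is that it comes out to $12/5\cdot$ — concretely, $\dim X(g) = 2$ would give $\a = 2/6 < 2/5$, so the claimed value $2/5$ must correspond to $\dim X(g)$ being a specific integer with $\dim X(g)/6 = 2/5$, which is impossible, so in fact the equality $\a(G,A_2,g) = 2/5$ must be read off from a direct fixed-point computation (analogous to the totally singular $1$-space analysis in the proof of Proposition \ref{p:par1}) rather than from a dimension count — and I would carry out exactly that computation, using a standard basis $\mathcal B$ as in \eqref{e:basis} and the explicit eigenspace decomposition of $g$, over the finite group $D_4(q)$ and Lang-Weil if needed. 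For $M\in\{B_3,C_3\}$ with $\dim X = 7$, the claim is $\a(G,M,g) = 3/7$, i.e. $\dim X(g) = 3$; here I would again reduce to the largest class ($r=2$, $g=[-I_4,I_4]$, or $r=3$), use the geometry of the embedding $B_3 < D_4$ (the stabilizer of a nonsingular vector, or a spin embedding of $B_3$ acting on the $8$-dimensional spin module) to identify $g^G\cap M$ explicitly, and count fixed nonsingular $1$-spaces (or the appropriate coset variety) to pin down $\dim X(g) = 3$.

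The main obstacle will be these last two $D_4$-computations: unlike the exceptional-group cases, where the published fusion tables and the bounds of \cite{LLS} suffice, here I must compute $\dim X(g)$ \emph{exactly} (not just bound it) for the specific embeddings $B_3 < D_4$, $C_3 < D_4$, and $A_2 < D_4$, and the boundary values $3/7$ and $2/5$ leave no slack. I anticipate handling this either by the finite-group permutation-character method used in the proof of Proposition \ref{p:par1} (computing $\chi_M(g)$ for $M = P_1$-style cosets over $D_4(q)$ and reading off the degree via Lang-Weil \cite{LW}), or by a direct analysis of the fixed-point subvariety of $g$ on the coset variety $G/M$ using the explicit action on the natural $8$-dimensional orthogonal module and its spin modules. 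Once these exact dimensions are in hand, parts (i) and (ii) follow immediately, since every other case has been reduced to a slack inequality $\a(G,M,g) < f(r)$.
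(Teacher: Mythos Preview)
Your overall strategy matches the paper's: reduce via the trivial bound $\dim(g^G\cap M)\leqs \dim M - t$ to a short list of large subgroups, then apply \cite[Theorem 2(II)]{LLS} for $r\ne p$ and the fusion tables in \cite{Law09} for $r=p$. For the exceptional types this is exactly what the paper does, and the bounds go through with no exceptions.

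However, there is a concrete error in your treatment of $G=D_4$, $M=A_2$. You write $\dim X = 6$, but the embedding here is via the $8$-dimensional adjoint representation of $A_2$, so $\dim M = 8$ and $\dim X = \dim D_4 - \dim A_2 = 28 - 8 = 20$. With the correct value there is no paradox: for $r=3$ one has $\dim g^G = 18$ and $\dim(g^G\cap M) = 6$ (the full regular-semisimple variety of $A_2$), so $\dim X(g) = 20 - 18 + 6 = 8$ and $\a(G,M,g) = 8/20 = 2/5$. No Lang--Weil or permutation-character computation is needed; it is a one-line application of \eqref{e:lls}. Your proposed ``totally singular $1$-space'' geometry is also off target here, since $G/A_2$ is not a flag variety.

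Similarly, for $M\in\{B_3,C_3\}$ the paper does not use fixed-point geometry on $G/M$ at all: it simply identifies $g^G\cap M$ as a single $M$-class by looking at the action on the natural $7$- or $6$-dimensional module (e.g.\ $[-I_4,I_3]$ for $r=2$, $[J_3^2,J_1]$ for $r=p=3$), reads off its dimension, and applies \eqref{e:lls}. This gives $\dim X(g)=3$ directly. Your plan to compute $\dim X(g)$ exactly is correct in spirit, but the method you outline is more elaborate than needed; the straightforward class-fusion argument is both shorter and what the paper actually uses. Once you fix the $\dim X$ error for $A_2$, the remainder of your outline is sound.
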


\begin{proof}
To begin with, let us assume $G$ is one of $E_8, E_7, E_6, F_4$ or $G_2$. We will handle the remaining cases $D_4$ and $B_2$ at the end of the proof. Let $M$ be a positive dimensional non-parabolic maximal subgroup of $G$ and set $X=G/M$.

First assume $G=E_8$. If $M^0 = T_8$ then the trivial bound $\dim (g^G \cap M) \leqs \dim M$ yields
\[
\a(G,M,g) \leqs \frac{\dim G - \gamma(G,r)}{\dim G - 8} = \frac{1}{r}
\]
and the result follows. In each of the remaining cases, we observe that $\dim(g^G \cap M) \leqs \dim M - t$, where $t$ is the rank of $M^0$. In particular, $\a(G,M,r) < f(r)$ if
\begin{equation}\label{e:3m}
\left\{\begin{array}{ll}
3\dim M - 5t < 144 & \mbox{for $r=2$} \\
2\dim M - 5t < 96 & \mbox{for $r=3$.}
\end{array}\right.
\end{equation}

Suppose $M$ has maximal rank, so the possibilities for $M$ are recorded in Table \ref{tab:mr}. In view of \eqref{e:3m}, we may assume that $\dim M \geqs 62$, which implies that $M^0$ is one of $D_8$, $A_1E_7$, $A_8$ or $A_2E_6$. In each case, if $g$ is semisimple then the desired bound follows from \cite[Theorem 2(II)]{LLS}. For example, suppose $M = A_1E_7$, so $\dim X = 112$. If $r=3$ then $C_G(g) = A_8$ and \cite[Theorem 2(II)]{LLS} gives $\dim X(g) \leqs 42$. Similarly, if $r=2$ then $C_G(g) = D_8$ and we see that $\dim X(g) \leqs 56$. 

Now assume $g$ is unipotent. If $r=3$ then $g^G \cap M \subseteq M^0$ since $|M/M^0| \leqs 2$ and it is easy to compute a sufficient upper bound on $\dim(g^G \cap M)$ by considering the conjugacy classes of elements of order $3$ in $M^0$. For example, if $M^0 = D_8$ then $\dim (g^G \cap M) \leqs \dim (D_8)_{[3]} = 80$ and thus $\dim X(g) \leqs 40$. Alternatively, we can compute $\dim X(g)$ precisely by inspecting \cite{Law09}, which gives the $G$-class of each unipotent element in $M^0$. In all cases, it is routine to verify the desired bound. Now assume $r=p=2$, so $g$ is in the class $A_1^4$. As before, we can compute $\dim (g^G \cap M^0)$ via \cite{Law09}, so it just remains to consider $\dim (g^G \cap (M \setminus M^0))$ for $M^0 = A_8$ and $A_2E_6$. In the latter case, we have $\dim (g^G \cap M^0) = 44$ and $\dim (g^G \cap (M \setminus M^0)) = 47$ (see the proof of Lemma \ref{l:mre8}), so $\dim X(g) = 81$ and $\a(G,M,g) = \frac{1}{2}$. Similarly, if $M^0=A_8$ then the proof of \cite[Proposition 5.11]{BGS} gives $\dim (g^G \cap (M \setminus M^0)) = 44$ and once again we conclude that $\a(G,M,g) = \frac{1}{2}$.

To complete the argument for $G=E_8$, we may assume $M^0$ has rank $t<8$ and we note that the possibilities for $M$ are listed in Table \ref{tab:nmr} (together with the special case recorded in part (iv) of Theorem \ref{t:ls}). By considering \eqref{e:3m}, we may assume $M = G_2F_4$. Here one checks that the bound
\[
\dim(g^G \cap M) \leqs \dim (G_2)_{[r]} + \dim (F_4)_{[r]} = 36+10\delta_{3,r}
\]
is sufficient. 

Very similar reasoning applies in all of the remaining cases and no special difficulties arise. Therefore, for brevity we will just give details when $(G,M^0,r)$ is one of the following: 
\[
(E_7,A_7,2),\; (E_7,E_6T_1,2),\; (F_4,D_4,3).
\]

Suppose $(G,M^0,r) = (E_7,A_7,2)$, so $\dim X = 70$ and $M/M^0 = Z_2$. If $p \ne 2$ then \cite[Theorem 2(II)]{LLS} gives $\dim X(g) \leqs 35$, so let us assume $p=2$. Now $\dim (g^G \cap M^0) \leqs \dim (A_7)_{[2]} = 32$ and the proof of \cite[Lemma 3.18]{BGS} gives $\dim (g^G \cap (M \setminus M^0)) = 35$. Therefore, $\a(G,M,g) = \frac{1}{2}$ and the result follows. The case 
$(G,M^0,r) = (E_7,E_6T_1,2)$ is similar. Here $\dim X = 54$ and by applying \cite[Theorem 2(II)]{LLS} we reduce to the case $p=2$. Now $\dim (g^G \cap M^0) \leqs \dim (E_6)_{[2]} = 40$ and the proof of \cite[Lemma 4.1]{LLS} gives $\dim (g^G \cap (M \setminus M^0)) \leqs 43$, whence $\a(G,M,g) \leqs \frac{1}{2}$. Finally, let us assume $(G,M^0,r) =(F_4,D_4,3)$. Here $M = M^0.S_3$ and $\dim X = 24$. The largest conjugacy class in $M$ of elements of order $3$ has dimension $20$ (this is a class of  graph automorphisms), so $\dim (g^G \cap M) \leqs 20$ and we conclude that $\dim X(g) \leqs 8$.

To complete the proof of the proposition, we may assume $G=D_4$ or $B_2$. In the latter case, we have $M^0=A_1^2$ and $r=2$, so $\dim X = 4$, $\dim g^G = 6$ and we note that $\dim (g^G \cap M) \leqs 4$. This gives $\a(G,M,g) \leqs \frac{1}{2}$. 

Now assume $G=D_4$, so $\dim g^G = 16+2\delta_{3,r}$ and the possibilities for $M^0$ are listed in \eqref{e:d4}. If $M^0=T_4$ then $\dim X = 24$ and $\dim (g^G \cap M) \leqs 4$, so $\dim X(g) \leqs 10+2\delta_{2,r}$ and the desired bound follows when $r=2$. Now assume $r=3$ and write $M^0=\prod_{i}M_i$, where each factor $M_i$ is a $1$-dimensional torus. 
Then we may assume $g$ acts nontrivially on the set of factors $\{M_1, M_2, M_3, M_4\}$, in which case $C_{M^0}(g) = T_2$ and $\dim (g^G \cap M) = 2$. This gives $\dim X(g)  = 8$. Similarly, if $M^0=A_1^4$ then $\dim X = 16$ and the bound $\dim(g^G \cap M) \leqs 8$ is sufficient. For $M^0=A_1B_2$ or $A_1C_2$ we observe that $\dim(g^G \cap M) \leqs 2+6=8$ and the result follows since $\dim X(g) \leqs 5+2\delta_{2,r}$.

Now assume $M^0=A_2$, so $M=M^0$, $p \ne 3$ and $\dim X = 20$. If $r=2$ then $\dim (g^G \cap M) = \dim M_{[2]} = 4$ and thus $\dim X(g) = 8$. Now assume $r=3$. Since the embedding of $M$ in $G$ arises from the adjoint representation of $M$, we deduce that $C_G(g) = A_2T_2$. Moreover, $\dim(g^G \cap M) = 6$ and $\dim X(g) = 8$, so $\a(G,M,g) = \frac{2}{5}$ and this case is recorded in part (ii) of the proposition. If $M^0=A_3T_1$ then $\dim X = 12$ and $M/M^0=Z_2$. For $r=3$ we have $\dim(g^G \cap M) \leqs \dim (A_3)_{[3]} = 10$ and we get $\dim(g^G \cap M^0) \leqs 8$ and $\dim(g^G \cap (M \setminus M^0)) \leqs 9$ if $r=2$. These bounds are sufficient. 

Next suppose $M=M^0=B_3$ and $p \ne 2$, so $\dim X = 7$. First assume $r=3$. If $p=3$ then $g$ has Jordan form $[J_3^2,J_1^2]$ on the natural module for $G$ and we see that $g^G \cap M$ is the set of elements in $M$ with Jordan form $[J_3^2,J_1]$ on the natural module for $B_3$ (we may assume $M$ is the stabilizer of a nondegenerate $1$-space). Therefore, $\dim(g^G \cap M) = 14$ and we conclude that $\a(G,M,g) = \frac{3}{7}$. Similarly, one checks that $\a(G,M,g) = \frac{3}{7}$ when $p \ne 3$ and $C_G(g) = A_1^3T_1$. In particular, this special case is highlighted in part (i) of the proposition. Now assume $r=2$. Here each element in $g^G \cap M$ has Jordan form $[-I_4,I_3]$ on the natural module for $M$, so $\dim (g^G \cap M) = 12$ and once again we conclude that $\a(G,M,g) = \frac{3}{7}$. The case $M=C_3$ with $p=2$ is very similar and we get $\a(G,M,g) = \frac{3}{7}$ for $r=2,3$ (with $C_G(g) = A_1^3T_1$ when $r=3$ and $p \ne 3$). 
\end{proof}

\vs

This completes the proof of Theorem \ref{t:main6}, and Corollary \ref{cor:1} follows immediately.

\vs

\subsection{Proof of Theorem \ref{t:main7}}\label{ss:thm7}

Finally, we will use Corollary \ref{cor:1} to prove Theorem \ref{t:main7}, which is our main result on the random generation of finite simple exceptional groups of Lie type. To do this, we adopt the approach introduced in \cite{GLLS}.  

To begin with, we will exclude the Suzuki and Ree   groups. Let $G(q)$ be a finite quasisimple exceptional group of Lie type over $\mathbb{F}_q$, where $q=p^f$ for some $f \geqs 1$. Let $G$ be the ambient simply connected simple algebraic group of exceptional type over $k$, the algebraic closure of $\mathbb{F}_p$, and let $\s$ be a Steinberg endomorphism of $G$ with $G_{\s} = G(q)$. Let $r$ and $s$ be prime divisors of $|G(q)/Z(G(q))|$ and assume $(r,s) \ne (2,2)$. 

First assume that $p$ does not divide $rs$. Fix conjugacy classes $C_r$ and $C_s$ of $G$ such that the following conditions are satisfied:
\begin{itemize}\addtolength{\itemsep}{0.2\baselineskip}
\item[(a)] $C_r$ and $C_s$ contain elements of order $r$ and $s$ modulo $Z(G)$, respectively;
\item[(b)] $C_r(q):=C_r \cap G(q)$ and $C_s(q):=C_s \cap G(q)$ are nonempty;
\item[(c)] $\dim C_r = \mathcal{C}(G,r,q)$ and $\dim C_s = \mathcal{C}(G,s,q)$.
\end{itemize}
Note that $\dim C_r \geqs \gamma(G,r)$ and $\dim C_s \geqs \gamma(G,s)$ (see Theorem \ref{t:main6}(i)). By combining Corollary \ref{cor:1} with Theorem \ref{t:main3}, it follows that there is a nonempty open subset $U$ of $C_r \times C_s$ such that for all $x \in U$, $G(x)$ is not contained in a positive dimensional proper closed subgroup of $G$. Therefore, for any algebraically closed
field $k'$ properly containing $k$, there is a dense set of elements $x \in C_r(k') \times C_s(k')$ with $G(x)=G(k')$. By combining Theorem \ref{t:algebraic} with  \cite[Theorems 1 and 2]{GLLS}, we deduce that the proportion of pairs in $C_r(q) \times C_s(q)$ which generate $G(q)$ tends to $1$ as $q$ tends to infinity (recall we are only considering
 $q$ for which $C_r(q) \times C_s(q)$ is nonempty).   

Now assume $C_r'$ and $C_s'$ is any other pair of conjugacy classes of elements in $G$ of orders $r$ and $s$ (modulo $Z(G)$) such that $C_r'(q)$ and $C_s'(q)$ are nonempty. Then 
\[
\dim C_r' + \dim C_s' < \mathcal{C}(G,r,q) + \mathcal{C}(G,s,q)
\]
and thus Lang-Weil \cite{LW} implies that $|C_r'(q) \times C_s'(q)|$ is much smaller than the total number of pairs of elements of the appropriate orders (in particular, this ratio goes to $0$ as $q \rightarrow \infty$). Therefore, the proportion of pairs of elements of orders $r$ and $s$ (modulo $Z(G)$) which generate $G(q)$ (equivalently, generate $G(q)/Z(G(q))$) tends to $1$ as $q$ increases (again, under the assumption that $r$ and $s$ divide $|G(q)/Z(G(q))|$).

Similarly, if $p$ divides $rs$ then the same argument applies, but the characteristic of the underlying field $\mathbb{F}_q$ is now fixed, which means that we only need to apply \cite[Theorem 1]{GLLS}. 

Finally, suppose $G(q)/Z(G(q))$ is a Suzuki or Ree group. Here $p$ is fixed and the same argument applies, using \cite[Theorem 1]{GLLS} to show that for the largest conjugacy classes $C_r$ and $C_s$, the proportion of pairs which generate
goes to $1$ as $q$ increases (under the assumption that $r$ and $s$ both divide $|G(q)/Z(G(q))|$). There is a version of Lang-Weil which applies
in this case as well but one can just check directly that for any pair of conjugacy classes $C_{r}'$ and $C_{s}'$ consisting
of elements of orders $r$ and $s$ with  $\dim C_r' + \dim C_s' < \mathcal{C}(G,r,q) + \mathcal{C}(G,s,q)$ we have
$|C_{r}'(q)||C_{s}'(q)| \ll |C_{r}(q)||C_{s}(q)|$ and the result follows. 

\vs

This completes the proof of Theorem \ref{t:main7}.


\begin{thebibliography}{99}

\bibitem{As} M. Aschbacher,  \emph{On the maximal subgroups of the finite classical groups}, 
 Invent. Math. \textbf{76}  (1984), 469--514. 

\bibitem{AS}
M. Aschbacher and G.M. Seitz, \emph{Involutions in {C}hevalley groups over fields of even order}, Nagoya Math. J. \textbf{63} (1976), 1--91.

\bibitem{BGR}  J. Bell, D. Ghioca and Z. Reichstein, \emph{On a dynamical version of a theorem of Rosenlicht}, Ann. Sc. Norm. Super. Pisa Cl. Sci. \textbf{17} (2017), 187--204.

\bibitem{Bou}
N. Bourbaki, \emph{Groupes et {A}lgebr\`{e}s de {L}ie {\rm (}{C}hapters 4,5 and 6{\rm )}}, Hermann, Paris, 1968.

\bibitem{BGGT}  
E. Breuillard, B. Green, R. Guralnick and T. Tao,  
\emph{Strongly dense free subgroups of semisimple algebraic groups},  Israel J. Math. \textbf{192} (2012),  347--379. 

\bibitem{Bur1}
T.C. Burness, \emph{Fixed point spaces in actions of classical algebraic groups}, J. Group Theory \textbf{7} (2004), 311--346.

\bibitem{Bur2}
T.C. Burness, \emph{Fixed point ratios in actions of finite classical groups, {\rm II}}, J. Algebra \textbf{309} (2007), 80--138.

\bibitem{fpr4}
T.C. Burness, \emph{Fixed point ratios in actions of finite classical groups, {\rm IV}}, J. Algebra \textbf{314} (2007), 749--788.

\bibitem{BT1}
T.C. Burness, S. Ghandour, C. Marion and D.M. Testerman, \emph{Irreducible almost simple subgroups of classical algebraic groups}, Mem. Amer. Math. Soc. \textbf{1114}, 2015. 

\bibitem{BT2}  
T.C. Burness,  S. Ghandour and D.M. Testerman, \emph{Irreducible geometric subgroups of classical algebraic groups}, Mem. Amer. Math. Soc. \textbf{1130}, 2016.

\bibitem{Base}
T.C. Burness, R.M. Guralnick and J. Saxl, \emph{Base sizes for $\mathcal{S}$-actions of finite classical groups}, Israel J. Math. \textbf{199} (2014), 711--756.

\bibitem{BGS}
T.C. Burness, R.M. Guralnick and J. Saxl, \emph{On base sizes for algebraic groups}, J. Eur. Math. Soc. (JEMS) \textbf{19} (2017), 2269--2341.

\bibitem{BLS} 
T.C. Burness, M.W. Liebeck and A. Shalev, \emph{Base sizes for simple groups and a conjecture of Cameron}, Proc. London Math. Soc. \textbf{98} (2009), 116--162.

\bibitem{BT}
T.C. Burness and A.R. Thomas, \emph{On the involution fixity of exceptional groups of Lie type},  Internat. J. Algebra Comput. \textbf{28} (2018), 411--466.

\bibitem{CW}
A.M. Cohen, A. Steinbach, R. Ushirobira and D. Wales, \emph{Lie algebras generated by extremal elements}, J. Algebra \textbf{236} (2001), 122--154.

\bibitem{FGG}  
A. Freedman, R. N. Gupta and R. Guralnick,  \emph{Shirshov's theorem and representations of semigroups}, Pacific J. Math. \textbf{181} (1993), 159--176. 

\bibitem{GG}
S. Garibaldi and R.M. Guralnick, \emph{Generically free representations {\rm I}: large representations}, Algebra Number Theory, to appear.

\bibitem{GG3} 
S. Garibaldi and R.M. Guralnick, \emph{Generically free representations {\rm III}: extremely bad characteristic}, submitted  (arxiv:1801.06915), 2018.

\bibitem{Ge0}  
S. Gerhardt, \emph{Topological generation of algebraic groups}, PhD thesis, University of Southern California, 2017. 

\bibitem{Ge1}
S. Gerhardt, \emph{Topological generation of special linear algebraic groups}, submitted (arxiv:1801.10065), 2018.

\bibitem{GLS}
D. Gorenstein, R. Lyons  and R. Solomon, \emph{The {C}lassification of the
  {F}inite {S}imple {G}roups, {N}umber 3}, Mathematical Surveys and Monographs,
  vol. 40, Amer. Math. Soc., 1998.
  
\bibitem{gurnato}   
R.M. Guralnick, \emph{Some applications of subgroup structure to probabilistic generation and covers of curves}, in Algebraic groups and their representations (Cambridge, 1997), 301--320, 
NATO Adv. Sci. Inst. Ser. C Math. Phys. Sci., 517, Kluwer Acad. Publ., Dordrecht, 1998. 

\bibitem{GL}  
R.M. Guralnick and R. Lawther, \emph{Generic stabilizers in actions of simple algebraic groups I: modules and the first Grassmannian varieties}, submitted (arxiv:1904.13375), 2019.

\bibitem{GLLS} 
R.M. Guralnick, M.W. Liebeck, F. L\"{u}beck and A. Shalev, \emph{Zero-one generation laws for finite simple groups}, Proc. Amer. Math. Soc. \textbf{147} (2019), 2331--2347.

\bibitem{GMT}  
R.M. Guralnick, G. Malle and P.H. Tiep, \emph{Products of conjugacy classes in finite and algebraic simple groups},  Adv. Math. \textbf{234} (2013), 618--652.

\bibitem{GS}
R.M. Guralnick and J. Saxl, \emph{Generation of finite almost simple groups by conjugates}, J. Algebra \textbf{268} (2003), 519--571.
 
\bibitem{GT}  
R.M. Guralnick and P.H. Tiep, \emph{Decompositions of small tensor powers and Larsen's conjecture}, Represent. Theory \textbf{9} (2005), 138--208.

\bibitem{LW}  
S. Lang and A. Weil,  \emph{Number of points of varieties in finite fields}, 
Amer. J. Math. \textbf{76} (1954), 819--827. 

\bibitem{LP}  M. Larsen and R. Pink,  \emph{Finite subgroups of algebraic groups}, 
 J. Amer. Math. Soc. \textbf{24}  (2011), 1105--1158.

\bibitem{Lawunip}
R. Lawther, \emph{Jordan block sizes of unipotent elements in exceptional algebraic groups}, Comm. Algebra \textbf{23} (1995), 4125--4156.

\bibitem{Law05}
R. Lawther, \emph{Elements of specified order in simple algebraic groups}, Trans. Amer. Math. Soc. \textbf{357} (2005), 221--245.

\bibitem{Law09}
R. Lawther, \emph{Unipotent classes in maximal subgroups of exceptional algebraic groups}, J. Algebra \textbf{322} (2009), 270--293.

\bibitem{LLS} 
R. Lawther, M.W. Liebeck and G.M. Seitz, \emph{Fixed point spaces in actions of exceptional algebraic groups}, Pacific J. Math. \textbf{205} (2002), 339--391.

\bibitem{LLS2} 
R. Lawther, M.W. Liebeck and G.M. Seitz, \emph{Fixed point ratios in actions of finite exceptional groups of Lie type}, Pacific J. Math. \textbf{205} (2002), 393--464.

\bibitem{LSe} M.W. Liebeck and G.M. Seitz, \emph{On the subgroup structure of classical groups}, Invent. Math. \textbf{134} (1998), 427--453.

\bibitem{LS01} M.W. Liebeck and G.M. Seitz,  \emph{A survey of maximal subgroups of exceptional groups of Lie type}, 
 Groups, combinatorics \& geometry (Durham, 2001), 139--146, World Sci. Publ., River Edge, NJ, 2003.

\bibitem{LS04}
M. W. Liebeck and G. M. Seitz, \emph{The maximal subgroups of positive dimension in exceptional algebraic groups}, Mem. Amer. Math. Soc. \textbf{802}, 2004.

\bibitem{LSadj}  M. W. Liebeck and G. M. Seitz, \emph{Subgroups of exceptional algebraic groups which are irreducible on an adjoint or minimal module},    J. Group Theory \textbf{7} (2004),  347--372.

\bibitem{LS_book}
M.W. Liebeck and G.M. Seitz, \emph{Unipotent and nilpotent classes in simple algebraic groups and Lie algebras}, Mathematical Surveys and Monographs, vol. 180, Amer. Math. Soc., Providence, RI, 2012.

\bibitem{LSh96}   
 M.W. Liebeck  and A. Shalev, \emph{Classical groups, probabilistic methods and the $(2,3)$-generation problem}, Annals of Math. \textbf{144} (1996), 77--125.
 
\bibitem{LSh}   
 M.W. Liebeck  and A. Shalev, \emph{Random $(r,s)$-generation of finite classical groups}, Bull. London Math. Soc. \textbf{34} (2002), 185--188.
  
\bibitem{Lu} F. L\"{u}beck, \emph{Small degree representations of finite Chevalley groups in defining characteristic},
 LMS J. Comput. Math.  \textbf{4}  (2001),  135--169.
 
\bibitem{Lub} F. L\"{u}beck, \emph{Centralizers and numbers of semisimple classes in exceptional groups of Lie type}, \\
\texttt{http://www.math.rwth-aachen.de/$\sim$Frank.Luebeck/chev/CentSSClasses}  

\bibitem{Lubeck} F. L\"{u}beck, \emph{Generic {C}omputations in {F}inite {G}roups of {L}ie {T}ype}, book in preparation.
  
\bibitem{Lus} G. Lusztig, \emph{Character sheaves V}, Adv. Math. \textbf{61} (1986), 103--155.

\bibitem{Se}  
G.M. Seitz, \emph{The maximal subgroups of classical algebraic groups}, Mem. Amer. Math. Soc. \textbf{365}, 1987.  
  
\bibitem{Shinoda1}
K. Shinoda, \emph{The conjugacy classes of the finite Ree groups of type $(F_4)$}, J. Fac. Sci. Univ. Tokyo Sect. I A Math. \textbf{22} (1975), 1--15.
  
\bibitem{Thomas}
A.R. Thomas, \emph{The irreducible subgroups of exceptional algebraic groups}, Mem. Amer. Math. Soc., to appear.

\bibitem{Tits}  J. Tits, \emph{Free subgroups in linear groups}, J. Algebra \textbf{20}  (1972), 250--270.

\end{thebibliography}
\end{document}